\newtheoremstyle{theoremstyle}
  {10pt}      
  {5pt}       
  {\itshape}  
  {}          
  {\bfseries} 
  {:}         
  {.5em}      
  {}          
\newtheoremstyle{examplestyle}
  {10pt}      
  {5pt}       
  {}          
  {}          
  {\bfseries} 
  {:}         
  {.5em}      
  {}          
\theoremstyle{theoremstyle}
\newtheorem{theorem}{Theorem}[section]
\newtheorem*{theorem*}{Theorem}
\newtheorem{lemma}[theorem]{Lemma}
\newtheorem{proposition}[theorem]{Proposition}
\newtheorem*{proposition*}{Proposition}
\newtheorem{corollary}[theorem]{Corollary}
\newtheorem*{corollary*}{Corollary}
\theoremstyle{examplestyle}
\newtheorem{example}[theorem]{Example}
\newtheorem{definition}[theorem]{Definition}
\newtheorem*{definition*}{Definition}
\newtheorem{remark}[theorem]{Remark}
\newtheorem*{remarks*}{Remarks}
\newtheorem*{remark*}{Remark}
\newcommand{\comment}[1]{}
\newcommand{\pic}{\operatorname{Pic}}
\newcommand{\sh}[1]{\mathcal{#1}}
\newcommand{\rk}{\operatorname{rk}}
\newcommand{\Hom}{\operatorname{Hom}}
\newcommand{\Ext}{\operatorname{Ext}}
\newcommand{\End}{\operatorname{End}}
\newcommand{\Z}{\mathbb{Z}}
\newcommand{\Q}{\mathbb{Q}}
\newcommand{\on}{{[n]}}
\newcommand{\ot}{{[t]}}
\newcommand{\bs}{\operatorname{bs}}
\begin{document}

\title[Exceptional Sequences]{Exceptional sequences of invertible sheaves on rational surfaces}

\subjclass[2000]{Primary: 14J26, 14M25, 18E30; Secondary: 14F05}

\author{Lutz Hille}
\address{Mathematisches Institut\\Fachbereich Mathematik und Informatik der Universit\"at M\"unster\\
Einsteinstra\ss e 62\\48149 M\"unster\\Germany}
\email{lhill\_01@uni-muenster.de}

\author{Markus Perling}
\address{Fakult\"at f\"ur Mathematik\\Ruhr-Universit\"at Bochum\\Universit\"atsstra\ss e 150\\44780 Bochum\\Germany}
\email{Markus.Perling@rub.de}

\date{October 2008}

\thanks{The second author was supported by a research grant of the German Research Council (DFG)}

\begin{abstract}
In this article we consider exceptional sequences of invertible sheaves on smooth complete rational
surfaces. We show that to every such sequence one can associate a smooth complete toric surface in
a canonical way. We use this structural result to prove various theorems on exceptional and strongly
exceptional sequences of invertible sheaves on rational surfaces. We construct full strongly
exceptional sequences for a
large class of rational surfaces. For the case of toric surfaces we give a complete classification
of full strongly exceptional sequences of invertible sheaves.
\end{abstract}

\maketitle

\tableofcontents

\section{Introduction}\label{introduction}
The study of derived categories of coherent sheaves on algebraic varieties has gained much attention since
the mid-90's, with some of the main motivations coming from Kontsevich's homological mirror symmetry
conjecture \cite{Kontsevich94} and, evolving from this, the use of derived categories for D-branes in
superstring theory \cite{Douglas}. The object one studies is the derived category $D^b(X)$ of coherent
sheaves over a smooth algebraic variety $X$ defined over some algebraically closed field $\mathbb{K}$.
By definition, $D^b(X)$ is a categorial framework for the homological algebra of coherent sheaves
on $X$. It turns out that $D^b(X)$ carries a very rich structure and encodes information which
might not directly be visible from the geometry of $X$. For an overview we refer to the book
\cite{Huybrechts06} and the survey article \cite{Bridgeland06}. However, despite of many interesting
and deep results, the theory seems far from being developed enough to make $D^b(X)$ an easily accessible
object in any sense. A particular open problem is the construction of suitable generating sets, for which
the framework of exceptional sequences has been developed by the Seminaire Rudakov \cite{Rudakov90}:

\begin{definition*}
A coherent sheaf $\sh{E}$ on $X$ is called {\em exceptional}\, if $\Hom_{\sh{O}_X}(\sh{E}, \sh{E}) =
\mathbb{K}$ and $\Ext^i_{\sh{O}_X}(\sh{E}, \sh{E})$ $= 0$ for every $i \neq 0$. A sequence $\sh{E}_1,
\dots, \sh{E}_n$ of exceptional sheaves is called an {\em exceptional}\, sequence if
$\Ext^k_{\sh{O}_X}(\sh{E}_i, \sh{E}_j) = 0$ for all $k$ and for all $i > j$. If an exceptional sequence
generates $D^b(X)$, then it is called {\em full}. A {\em strongly} exceptional sequence is an
exceptional sequence such that $\Ext^k_{\sh{O}_X}(\sh{E}_i, \sh{E}_j) = 0$ for all $k > 0$ and all $i, j$.
\end{definition*}

If a full exceptional sequence $\sh{E}_1, \dots, \sh{E}_n$ exists on $X$ and $\langle \sh{E}_i \rangle$
denotes the minimal triangulated subcategory of $D^b(X)$ containing $\sh{E}_i$, then $\langle \sh{E}_1
\rangle, \dots \langle \sh{E}_n \rangle $ forms a {\em semi-orthogonal decomposition} of
$D^b(X)$, i.e. we have $\langle \sh{E}_j \rangle \subset \langle \sh{E}_i \rangle^\bot$ for all $i > j$.
Such decompositions naturally arise in birational geometry (see \cite{Orlov93}, \cite{Kawamata08}) and for
Fourier-Mukai transforms (see \cite{HillevandenBergh}).
Full strongly exceptional sequences provide an even stronger characterization of $D^b(X)$ in terms
of representation theory of algebras \cite{Happel88}.
By theorems of Baer \cite{Baer88} and Bondal \cite{Bondal90} for such a sequence there exists 
an equivalence of categories
\begin{equation*}
\operatorname{{\bf R}Hom}(\sh{T}, \, . \,) : D^b(X) \longrightarrow D^b(\End(\sh{T})-\operatorname{mod}),
\end{equation*}
where $\sh{T}:= \bigoplus_{i = 1}^n \sh{E}_i$, which is sometimes called a tilting sheaf. This way the
algebra $\End(\sh{T})$, at least in the derived sense, represents a non-commutative coordinate
system of $X$.

Strongly exceptional sequences have classically been known for the case of $\mathbb{P}^n$
(see \cite{Beilinson78} and \cite{DrezetLePotier}). However, exceptional or strongly exceptional
sequences must not exist in general, and their existence still is an open problem.
For instance, on Calabi-Yau varieties it follows from Serre duality that there
do not even exist exceptional sheaves. On the other hand, by now, exceptional sequences have
been constructed in many interesting cases, including certain types
of homogeneous spaces \cite{Kapranov86},
\cite{Kapranov88}, \cite{Kuznetsov05}, \cite{Samokhin07}, del Pezzo surfaces and almost
del Pezzo surfaces \cite{Gorodentsev89}, \cite{KuleshovOrlov}, \cite{Kuleshov97},
\cite{KarpovNogin98}, and some higher dimensional Fano varieties \cite{Nogin94}, \cite{Samokhin05}.

In this paper we consider exceptional sequences on smooth complete rational surfaces which consist of
invertible sheaves. This special setting is motivated by a conjecture of King \cite{King2}, which
states that on every smooth complete toric variety there exists a strongly exceptional sequence of
invertible sheaves. Invertible sheaves on toric varieties can be described in very explicit combinatorial
terms and a number of examples were well-known when the conjecture was stated. Also of interest here is the fact that toric
varieties can nicely be represented as moduli spaces of certain quiver representations and their universal
sheaf is a good candidate for a (partial) tilting sheaf. Examples of strongly exceptional sequences have
been given from this point of view in \cite{King2} and \cite{AltmannHille} (see also \cite{Broomhead06},
\cite{CrawSmith06}, \cite{BergmanProudfoot}). Other constructions have been given in \cite{CostaMiroRoig},
\cite{CostaMiroRoig05}, and for toric stacks
in \cite{BorisovHua08}. Typically, general constructions are only available for very
special situations such as iterated projective bundles, or small Picard number. It is known
that strongly exceptional sequences of invertible sheaves exist on the toric
$3$-Fanos, and computer experiments indicate that this is also true for $4$-Fanos. However,
general existence theorems are only available for exceptional sequences which are not strongly
exceptional. So it has been shown in \cite{Hille1} that exceptional sequences of invertible
sheaves exist on smooth toric surfaces. The existence of exceptional sequences which do not
necessarily consist of invertible sheaves has been shown for general smooth projective toric
stacks by Kawamata \cite{Kawamata1}. Despite a lot of positive evidence, the existence of
strongly exceptional sequences still is an open problem for toric varieties. In \cite{HillePerling06}
an example was given of a toric surface which does not admit a strongly exceptional sequence
of invertible sheaves, the second Hirzebruch surface iteratively blown up three times.
This counterexample at that time seemed somewhat mysterious, in particular because, having Picard number
$5$, it is surprisingly small. For general rational surfaces there is no bound for the Picard number.
This can be shown by well-known examples, such as simultaneous blow-ups of $\mathbb{P}^2$
in several points, by which any Picard number can be realized (see Theorem \ref{twotimesblowupexistence}).
In the toric case, explicit positive examples with higher Picard numbers
were known to the authors, including further blow-ups of the counterexample
(see example \ref{counterexampleblowup}). So the question is, what is the obstruction for the existence
of a (strongly) exceptional sequence of invertible sheaves on a toric or more general
rational surface? It turns out that toric surfaces are at the heart of the problem, even for the case
of general rational surfaces.
 The most important structural insight of this paper is the following
remarkable observation:

\begin{theorem*}[\ref{canonicaltoricsystemtheorem}]
Let $X$ be a smooth complete rational surface, let $\sh{O}_X(E_1), \dots, \sh{O}_X(E_n)$ be a
full exceptional sequence of invertible sheaves on $X$, and set $E_{n + 1} := E_1 - K_X$. Then
to this sequence there is associated in a canonical way a smooth
complete toric surface with torus invariant prime divisors $D_1, \dots, D_n$ such that
$D_i^2 + 2 = \chi\big(\sh{O}_X(E_{i + 1} - E_i)\big)$ for all $1 \leq i \leq n$.
\end{theorem*}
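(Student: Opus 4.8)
The plan is to translate the exceptional sequence into the system of consecutive differences
$A_i := E_{i+1}-E_i$ for $1\le i\le n$, and to show that the classes $A_1,\dots,A_n$ carry precisely
the intersection data of the torus-invariant boundary of a smooth complete toric surface. First I would
record the two basic inputs. The telescoping sum gives $\sum_{i=1}^n A_i = E_{n+1}-E_1 = -K_X$. The
exceptionality gives, for $a>b$, the vanishing $H^\bullet(E_b-E_a)=\Ext^\bullet(\sh{O}_X(E_a),\sh{O}_X(E_b))=0$,
hence $\chi(\sh{O}_X(E_b-E_a))=0$; using the Riemann--Roch symmetry $\chi(\sh{O}_X(D+K_X))=\chi(\sh{O}_X(-D))$
together with $E_{n+1}=E_1-K_X$, this backward vanishing extends cyclically across the junction
$n\to 1$.

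The second step is the intersection computation, whose only tool is the polarization identity on a
rational surface,
\[
D\cdot D' = \chi\big(\sh{O}_X(D+D')\big)-\chi\big(\sh{O}_X(D)\big)-\chi\big(\sh{O}_X(D')\big)+1 .
\]
Applying it to the two backward differences $P=E_i-E_{i+1}$ and $Q=E_{i+1}-E_j$ (whose Euler characteristics,
and that of $P+Q=E_i-E_j$, all vanish for $j>i+1$) yields $A_i\cdot(E_j-E_{i+1})=1$, independently of $j$.
Telescoping $A_j=(E_{j+1}-E_{i+1})-(E_j-E_{i+1})$ then gives at once $A_i\cdot A_{i+1}=1$ and
$A_i\cdot A_j=0$ for $j\ge i+2$, the cyclic cases following from the periodicity above. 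Thus the $A_i$
form a \emph{toric system}: $A_iA_{i+1}=1$ cyclically, $A_iA_j=0$ for non-adjacent indices, and
$\sum_i A_i=-K_X$.

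Third, I would construct the toric surface canonically from this data. Fullness forces the $A_i$ to span
$\pic(X)_{\Q}$ (rank $n-2$), so $\Lambda:=\ker\big(\Z^n\to\pic X,\ e_i\mapsto A_i\big)$ is a saturated lattice
of rank $2$; set $N:=\Lambda^\ast\cong\Z^2$ and let $v_i\in N$ be the image of $e_i$ under $\Z^n\to N$, taken in
cyclic order. The relations among the $v_i$ are indexed by $\pic(X)^\ast$, which the unimodular intersection
form identifies with $\pic(X)$; pairing against $D=A_i$ gives $\sum_j (A_iA_j)\,v_j=0$, i.e.
$v_{i-1}+v_{i+1}=-A_i^2\,v_i$. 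This is exactly the local fan relation of a smooth toric surface with
$D_i^2=A_i^2$, and $A_iA_{i+1}=1$ forces $\det(v_i,v_{i+1})=\pm1$, the smoothness of each cone. The main
obstacle is \emph{completeness}: a priori the rays $v_i$ could wind around the origin several times. I expect
to settle this numerically. From $(-K_X)^2=\sum_i A_i^2+2\sum_i A_iA_{i+1}=\sum_i A_i^2+2n$ and Noether's
formula $K_X^2=12-\chi_{\mathrm{top}}(X)=12-n$ one obtains $\sum_i A_i^2=12-3n$, which is precisely the
discrete Gauss--Bonnet value $\sum_i(-D_i^2)=3n-12$ characterizing a complete smooth fan winding around
exactly once. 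This is the crux of the argument; with it the $v_i$ assemble into the desired smooth complete
toric surface.

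Finally, the numerical assertion is immediate once the toric system is in place. Since $\sum_k A_k=-K_X$ and
only the two neighbours of $A_i$ meet it nontrivially, $-A_i\cdot K_X=\sum_k A_i\cdot A_k=A_i^2+2$, so
Riemann--Roch gives
\[
\chi\big(\sh{O}_X(E_{i+1}-E_i)\big)=\chi\big(\sh{O}_X(A_i)\big)=1+\tfrac12\big(A_i^2-A_i\cdot K_X\big)=A_i^2+2=D_i^2+2 ,
\]
which is the claim.
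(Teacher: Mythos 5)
Your proposal is correct and follows essentially the same route as the paper: pass to the consecutive differences $A_i$, deduce the toric-system relations from Riemann--Roch plus the exceptional vanishing (your polarization identity is the paper's Lemma \ref{acyclicitylemma}), construct the fan as the Gale dual of the $A_i$, and settle completeness via $\sum_i A_i^2 = 12-3n$ and the winding-number argument, exactly as in Proposition \ref{dualityproposition}. The only points where you assert what the paper proves are the unimodularity $\det(v_i,v_{i+1})=\pm 1$ --- which the paper establishes in Gale-dual form, showing each complementary set $\{A_j\}_{j\neq i,i+1}$ is a $\Z$-basis of $\pic(X)$ by an integrality induction, and which in your setup also follows in one line because the three-term relations place every $v_j$ in the subgroup generated by $v_i,v_{i+1}$, so that subgroup is all of $N$ --- and the ``discrete Gauss--Bonnet'' fact that $\sum_i a_i = 12-3n$ forces winding number one, which the paper proves by subdividing each winding into a complete smooth fan and applying Proposition \ref{twelveminusn}.
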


Of course, this theorem deserves a more detailed explanation which will be given below. For the
convenience of the reader we want first to present the most important consequences derived from
this. Our first main result shows the existence of exceptional sequences in general:

\begin{theorem*}[\ref{exceptionalexistence}]
On every smooth complete rational surface there exists a full exceptional sequence of invertible
sheaves.
\end{theorem*}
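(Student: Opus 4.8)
The plan is to argue by induction on the number of point blow-ups needed to reach a minimal model. By the structure theory of rational surfaces, every smooth complete rational surface admits a birational morphism onto either $\mathbb{P}^2$ or a Hirzebruch surface $\mathbb{F}_e$ with $e \neq 1$, factoring as a composition of blow-downs of $(-1)$-curves; equivalently, $X$ is obtained from a minimal surface by a finite sequence of blow-ups at (possibly infinitely near) points. For the base case one takes the classical full exceptional sequences of invertible sheaves: $\mathcal{O}, \mathcal{O}(1), \mathcal{O}(2)$ on $\mathbb{P}^2$, and the well-known four-term sequences on the $\mathbb{F}_e$. It therefore suffices to show that a full exceptional sequence of invertible sheaves on a rational surface $Y$ can be propagated across a single blow-up.

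So let $\pi : X \to Y$ be the blow-up of a rational surface $Y$ at a point $p$, with exceptional curve $C \cong \mathbb{P}^1$ and $C^2 = -1$, and suppose $\mathcal{O}_Y(E_1), \dots, \mathcal{O}_Y(E_n)$ is a full exceptional sequence. Writing $A_i := E_{i+1} - E_i$ for the associated differences, I would produce the augmented sequence on $X$ by inserting the class $C$ at a chosen position $i$ and correcting its two neighbours, i.e.\ by passing to the length-$(n+1)$ sequence of invertible sheaves whose successive differences are
\[
\pi^* A_1, \ \dots, \ \pi^* A_{i-1}, \ \pi^* A_i - C, \ C, \ \pi^* A_{i+1} - C, \ \pi^* A_{i+2}, \ \dots, \ \pi^* A_n .
\]
A direct check shows these differences again sum to $-K_X = \pi^*(-K_Y) - C$, that consecutive differences meet in $1$, and that the self-intersections drop exactly as under a toric star subdivision (the inserted class satisfies $C^2 = -1$); thus the construction is the divisor-theoretic shadow of the star subdivision underlying Theorem~\ref{canonicaltoricsystemtheorem}. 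Concretely the resulting sheaves are the pullbacks $\mathcal{O}_X(\pi^* E_j)$ together with twists by $\mathcal{O}_X(-C)$ in a window around the insertion point, and in particular the collection contains two adjacent members $\mathcal{O}_X(\pi^* E_{i+1} - C)$ and $\mathcal{O}_X(\pi^* E_{i+1})$ differing by $\mathcal{O}_X(C)$.

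It remains to verify exceptionality and fullness. Each line bundle is exceptional since $\operatorname{Ext}^{>0}(\mathcal{O}_X, \mathcal{O}_X) = H^{>0}(\mathcal{O}_X) = 0$ on a rational surface, so exceptionality reduces to the vanishings $H^\bullet(X, \mathcal{O}_X(F_\ell - F_k)) = 0$ for $k > \ell$, where $F_\ell - F_k$ is the negative of a consecutive partial sum of the differences above. Using the projection formula together with $R\pi_* \mathcal{O}_X = \mathcal{O}_Y$, $R\pi_* \mathcal{O}_X(C) = \mathcal{O}_Y$, and $R\pi_* \mathcal{O}_X(-C) = \mathfrak{m}_p$ (all higher direct images vanishing), each such class pushes forward to a line bundle of the form $\mathcal{O}_Y(-(A_\ell + \cdots + A_{k-1}))$ or $\mathcal{O}_Y(-A_i)$, or to $\mathfrak{m}_p$, whose cohomology vanishes by the exceptionality of the original sequence on $Y$, respectively by $H^\bullet(Y, \mathfrak{m}_p) = 0$ (which uses $H^1(\mathcal{O}_Y) = H^2(\mathcal{O}_Y) = 0$). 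For fullness I would invoke Orlov's blow-up formula \cite{Orlov93}, $D^b(X) = \langle L\pi^* D^b(Y), \mathcal{O}_C(-1) \rangle$: the pullbacks $\mathcal{O}_X(\pi^* E_j)$ generate $L\pi^* D^b(Y)$, while the two adjacent members differing by $\mathcal{O}_X(C)$ produce, via the structure sequence $0 \to \mathcal{O}_X(-C) \to \mathcal{O}_X \to \mathcal{O}_C \to 0$ twisted by $\mathcal{O}_X(\pi^* E_{i+1})$, the sheaf $\mathcal{O}_C$, which is not a pullback and hence, together with $L\pi^* D^b(Y)$, already generates $D^b(X)$; one bootstraps to every remaining pullback by the same twisted structure sequence.

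The main obstacle is the exceptionality step: one must organize the partial-sum classes into a complete list of cases according to how the consecutive block meets the modified window $\{i, i{+}1, i{+}2\}$, and check that in every case the class pushes forward to something with vanishing cohomology, i.e.\ that inserting $C$ never creates a required $\operatorname{Ext}$ that fails to vanish. Everything reduces to the original vanishings on $Y$ and to the elementary cohomology of $\mathcal{O}_X(\pm C)$; the real content is confirming that this bookkeeping closes up for an arbitrary insertion position, which is precisely what makes augmentation a well-defined operation on exceptional sequences of invertible sheaves.
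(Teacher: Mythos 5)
Your proposal is correct, and its skeleton coincides with the paper's: descend to a minimal model ($\mathbb{P}^2$ or a Hirzebruch surface), start from the classical sequences there, augment across each blow-up by inserting the exceptional class and correcting its two neighbours, and invoke Orlov \cite{Orlov93} for fullness. In fact the sequences you build are exactly the paper's \emph{admissible standard augmentations} (Definition \ref{standardaugmentationdefinition}), since augmenting successively along the pullbacks automatically satisfies admissibility. The genuine difference is in how exceptionality is verified. The paper (Proposition \ref{exceptionalsequencesexist}) argues globally on $X$: it writes the entire augmented toric system in the basis $H, R_1, \dots, R_t$ (resp.\ $P, Q, R_1, \dots, R_t$), notes that every partial sum is of the form $lH - \sum_{i \in T}R_i$, $P - \sum_{i \in T}R_i$, $nP + Q - \sum_{i \in T}R_i$, or $R_i - \sum_{j \in T}R_j$, and kills the former by Serre duality on $X$ and the latter by the classification of Proposition \ref{verticalleftorthogonals}; the admissibility condition is needed precisely to exclude the classes $R_i - \sum_{j \in S}R_j$ with $R_j \leq R_i$, which can arise because the paper's augmentations permit insertion orders that are not successive pullbacks. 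Your verification is instead local and inductive: the negative of each partial sum of the augmented system is $-\pi^*A_{I'}$, $-\pi^*A_{I'} + C$, or $-C$, and $R\pi_*$ (projection formula together with $R\pi_*\mathcal{O}_X(C) = \mathcal{O}_Y$ and $R\pi_*\mathcal{O}_X(-C) = \mathcal{I}_p$) reduces its cohomology to $H^\bullet\big(Y, \mathcal{O}_Y(-A_{I'})\big)$, which vanishes by the inductive hypothesis, or to $H^\bullet(Y, \mathcal{I}_p) = 0$; this bookkeeping does close up for any insertion position, as you anticipated. What each buys: your route is leaner and self-contained, needing none of the classification results of section \ref{rationalacyclicsection} and no admissibility discussion, whereas the paper's global analysis of left-orthogonal classes is the machinery it reuses later for the strong-exceptionality and classification theorems, where one must control \emph{all} augmentations rather than exhibit one. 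Two cosmetic points: under the natural normalization the twists by $\mathcal{O}_X(-C)$ affect all members after the insertion point, not only a window around it; and the step ``$\mathcal{O}_C$ is not a pullback, hence generates'' should be backed by the standard remark that the complement of $L\pi^*D^b(Y)$ in Orlov's decomposition is equivalent to $D^b(\mathrm{pt})$, so any object with nonzero component in that complement generates it.
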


We point out that for rational surfaces this theorem is not a big surprise and can also be derived
from results of Orlov \cite{Orlov93}.
However, as noted above, an analogous theorem does not hold if we require the sequences
to be strongly exceptional. A necessary condition for the existence of a full strongly
exceptional sequence seems to be that the surface is not too far away from a minimal model.
By the Enriques classification, every smooth complete rational surface is a blow-up of the
projective plane or some Hirzebruch surface. In fact, we can prove that such sequences exist
on a surface which comes from blowing up a Hirzebruch surface once
or twice, possibly in several points in every step.

\begin{theorem*}[\ref{twotimesblowupexistence}]
Any smooth complete rational surface which can be obtained by blowing up 
a Hirzebruch surface two times (in possibly several points in each step) has a full strongly exceptional
sequence of invertible sheaves.
\end{theorem*}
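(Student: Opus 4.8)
The plan is to construct, for each such surface, an explicit full strongly exceptional sequence of invertible sheaves, using the structural connection to toric surfaces provided by Theorem~\ref{canonicaltoricsystemtheorem} together with an inductive analysis of blow-ups. First I would reduce the problem to the toric case: by the structure theorem, a full exceptional sequence on $X$ determines a toric surface $Y$ whose divisor self-intersections are controlled by the Euler characteristics $\chi(\sh{O}_X(E_{i+1}-E_i))$. The strategy is to start from a strongly exceptional sequence on the Hirzebruch surface itself---which is well-understood, since Hirzebruch surfaces are $\mathbb{P}^1$-bundles over $\mathbb{P}^1$ and carry full strongly exceptional sequences of invertible sheaves coming from the standard iterated-projective-bundle construction---and then track how such a sequence transforms under a single blow-up, and then under a second blow-up.

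\smallskip

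The core of the argument is therefore a blow-up lemma: given a full strongly exceptional sequence of invertible sheaves on a surface $S$, and a point (or several points) to be blown up, I would show how to insert the exceptional divisors into the sequence so as to preserve both the generation of $D^b(\tilde{S})$ and the vanishing of all higher $\Ext$-groups $\Ext^k(\sh{E}_i,\sh{E}_j)=0$ for $k>0$ and all $i,j$. For the generation statement I would use the standard semi-orthogonal decomposition of the derived category of a blow-up (the pullback of $D^b(S)$ together with the exceptional objects supported on the exceptional divisor), so that the real content lies in arranging the twists $\sh{O}_{\tilde S}(-E)$ of the exceptional divisors $E$ at the correct positions in the ordering. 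The key numerical input is that on a rational surface, for invertible sheaves $\sh{O}(D)$ with $D$ in the appropriate range, the higher cohomology $H^{>0}$ and the negative $\Ext$'s vanish, which I would control via Serre duality, the Riemann--Roch formula, and a careful bookkeeping of the intersection numbers of the differences $E_{i+1}-E_i$ dictated by Theorem~\ref{canonicaltoricsystemtheorem}.

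\smallskip

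The reason the hypothesis allows at most \emph{two} blow-up steps is precisely where the difficulty concentrates: after one blow-up the strong exceptionality can be maintained because the new exceptional curves have self-intersection $-1$ and can be twisted into a configuration compatible with the existing sequence, and a second blow-up can still be absorbed because the relevant differences $E_{i+1}-E_i$ remain sufficiently positive (equivalently, the associated toric surface stays within a controlled class). I expect the main obstacle to be verifying that \emph{all} the higher $\Ext$ vanishings survive simultaneously after the second blow-up: strong exceptionality is a global condition on every ordered pair, not merely on consecutive terms, so one must check cohomological vanishing for differences of line bundles that can now be quite negative on some of the exceptional curves. Concretely, the hard part will be to produce an ordering and a choice of twists for which every pairwise difference $E_j - E_i$ with $i<j$ lies in the nef-like cone guaranteeing $H^{>0}=0$, while every difference with $i>j$ forces all cohomology to vanish; I would handle this by a case analysis on how the blown-up points lie relative to the existing torus-invariant configuration, using the structure theorem to keep the combinatorics of the self-intersections $D_i^2$ under control throughout.
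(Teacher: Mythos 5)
Your overall architecture --- start from a strongly exceptional sequence on the Hirzebruch surface, insert the exceptional divisors of the blow-ups, get fullness from the blow-up semi-orthogonal decomposition, and control $\Ext$-vanishing by Riemann--Roch and Serre duality --- is indeed the paper's strategy (augmentation of a standard toric system, with fullness via Proposition \ref{exceptionalsequencesexist} and \cite{Orlov93}). But there is a genuine gap at your core step. The ``blow-up lemma'' you propose --- given \emph{any} full strongly exceptional sequence of invertible sheaves and \emph{any} points to blow up, the exceptional divisors can be inserted so as to preserve strong exceptionality --- cannot exist in that generality: if it did, one could iterate it three times and produce a full strongly exceptional sequence on the three-fold blow-up of $\mathbb{F}_2$ that is the counterexample of \cite{HillePerling06}, where no such sequence exists. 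So the lemma must be conditional on structure that survives the first blow-up step, and identifying that structure is precisely what your plan leaves unspecified. The paper does not argue inductively at all: it writes down \emph{one} explicit toric system adapted to the two-batch hypothesis --- the standard system $P,\, nP+Q,\, P,\, -(a+n)P+Q$ on $\mathbb{F}_a$ has exactly two ``$P$-slots,'' and each simultaneous batch of blow-ups is absorbed into one slot as a chain $R_s,\, R_{s-1}-R_s,\, \dots,\, R_1-R_2,\, P-R_1$ --- and then verifies strong left-orthogonality of \emph{all} interval sums in one pass (Theorem \ref{fablowupexistence}). The two-slot structure is the actual reason the hypothesis allows exactly two steps, not a positivity statement about the differences $E_{i+1}-E_i$.

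Second, your cohomological criterion is off in a way that matters. The differences occurring in the paper's sequence include $R_i$, $R_i-R_j$, $P-R_i$, and $nP+Q-R_i-R_j$, which are nowhere near any ``nef-like cone''; their required $h^1$-vanishing is genuinely geometric, not numerical. Concretely, $R_i-R_j$ is strongly left-orthogonal iff $R_i$ and $R_j$ are \emph{incomparable} (Proposition \ref{verticalleftorthogonals}) --- this is where the hypothesis that each batch consists of simultaneous (non-infinitesimal relative to each other) blow-ups enters --- and the vanishing for $nP+Q-R_i-R_j$ comes from base-point-freeness and counting sections (Lemma \ref{specialorthogonals}, via \cite{Hart2}, V.4). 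None of this can be ``reduced to the toric case'' through Theorem \ref{canonicaltoricsystemtheorem}: the associated toric surface records only the intersection numbers $A_i^2$, i.e.\ numerical data, whereas strong exceptionality hinges on non-numerical $h^1$-vanishing; two sequences with the same toric shadow can differ in strong exceptionality (the paper's own example: the system $H-R_1-R_2, R_2, R_1-R_2, H-R_1, H$ fails to be strongly exceptional in some enumerations exactly when $R_2$ is infinitesimal over $R_1$). For the same reason, on a general rational surface there is no ``torus-invariant configuration'' against which to run your proposed case analysis; the paper replaces this by the partial order on the $R_i$ and the section-counting arguments above.
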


In the toric case, we can show that the converse is also true:

\begin{theorem*}[\ref{toricbound}]
Let $\mathbb{P}^2 \neq X$ be a smooth complete toric surface. Then there exists a full strongly
exceptional sequence of invertible sheaves on $X$ if and only if $X$ can be obtained from a Hirzebruch
surface in at most two steps by blowing up torus fixed points.
\end{theorem*}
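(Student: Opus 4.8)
The plan is to treat the two implications separately: the ``if'' direction will be essentially a corollary of Theorem~\ref{twotimesblowupexistence}, while the ``only if'' direction is the substantial part and will be reduced, via Theorem~\ref{canonicaltoricsystemtheorem}, to a combinatorial statement about self-intersection sequences of smooth complete toric fans.

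For the ``if'' direction, suppose $X$ is obtained from a Hirzebruch surface by blowing up torus fixed points in at most two steps. Blowing up a torus fixed point of a smooth complete toric surface again yields a smooth complete toric surface, so $X$ is in particular a smooth complete rational surface arising from a Hirzebruch surface by at most two blow-ups, each possibly in several points. Theorem~\ref{twotimesblowupexistence} then applies directly and furnishes a full strongly exceptional sequence of invertible sheaves; the degenerate cases of one step, or of $X$ already being a Hirzebruch surface, are subsumed, the latter carrying a classical full strongly exceptional sequence as a $\mathbb{P}^1$-bundle over $\mathbb{P}^1$.

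For the ``only if'' direction, given a full strongly exceptional sequence $\sh{O}_X(E_1), \dots, \sh{O}_X(E_n)$ I would put $A_i := E_{i+1} - E_i$ (with $E_{n+1} = E_1 - K_X$) and apply Theorem~\ref{canonicaltoricsystemtheorem} to obtain a smooth complete toric surface $Y$ with invariant prime divisors $D_1, \dots, D_n$ satisfying $D_i^2 + 2 = \chi(\sh{O}_X(A_i))$. Strong exceptionality says $H^k(\sh{O}_X(E_j - E_i)) = 0$ for all $k > 0$ and all $i, j$, and also for $k = 0$ when $i > j$; since $E_j - E_i$ runs over the consecutive cyclic partial sums of the $A_i$, these vanishings are conditions on every ``interval'' of the toric system. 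In particular $\chi(\sh{O}_X(A_i)) = h^0(\sh{O}_X(A_i)) \ge 0$ for the non-wrap-around indices, whence $D_i^2 \ge -2$ (the cyclic case following by a short separate argument), and the corresponding estimates for the longer sums $A_i + \dots + A_j$, together with the global relation $\sum_i D_i^2 = 12 - 3n$ forced by completeness and smoothness of $Y$, should forbid the deep chains of $(-2)$-curves that characterise fans requiring three or more blow-up steps from a Hirzebruch surface. I would make this precise as a purely combinatorial lemma and prove it by induction on the Picard rank, contracting a suitable $(-1)$-curve of $Y$ while preserving the numerical constraints, with Hirzebruch surfaces as the base case.

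The step I expect to be the genuine obstacle is the transfer of this conclusion from $Y$ back to $X$. In general $Y \neq X$ --- already for $X = \mathbb{P}^1 \times \mathbb{P}^1$ one finds that a standard strongly exceptional sequence produces $Y = \mathbb{F}_2$ --- so the combinatorial classification applied to $Y$ does not immediately bound the number of blow-up steps for $X$. The heart of the matter is therefore to show that $X$ and the canonical toric surface of any of its toric systems share the same depth relative to a Hirzebruch surface; I would approach this by contraposition, analysing how a deep negative configuration in the fan of a surface lying three or more steps from a Hirzebruch surface forces, for \emph{every} toric system on it, some interval $E_j - E_i$ with non-vanishing higher cohomology, contradicting strong exceptionality. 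The finitely many small-Picard-rank surfaces --- notably the Picard-number-five counterexample, which sits exactly at the boundary --- would then be disposed of by direct inspection.
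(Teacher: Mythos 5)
Your ``if'' direction is fine and coincides with the paper's (it is exactly Theorem \ref{twotimesblowupexistence}, resp.\ \ref{fablowupexistence}). The ``only if'' direction, however, has a genuine gap at its foundation. The bound $D_i^2 \geq -2$ requires $\chi(A_i) = h^0(A_i) \geq 0$, i.e.\ strong left-orthogonality of $A_i$, and this is available only for the indices $i \in [n-1]$; for the wrap-around divisor $A_n = -K_X - \sum_{i < n} A_i$ a strongly exceptional sequence imposes no cohomological condition whatsoever, and there is no ``short separate argument'' closing this hole. Concretely, by Proposition \ref{Hirzebruchtoricsystems} the toric system $P,\, sP+Q,\, P,\, -(a+s)P+Q$ on $\mathbb{F}_a$ is strongly exceptional for every $s \geq -1$, and its last member has self-intersection $-(a+2s)$, which is arbitrarily negative; correspondingly $Y(\mathcal{A}) \cong \mathbb{F}_{a+2s}$, whose anticanonical divisor is far from nef. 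The constraint you want (all $D_i^2 \geq -2$, hence $-K_{Y(\mathcal{A})}$ nef, hence $Y(\mathcal{A})$ among the $16$ surfaces of Table \ref{weakdelpezzofigure}) is exactly what the paper proves for \emph{cyclic} strongly exceptional sequences (Theorem \ref{helixpicbound}), and strongly exceptional sequences are not cyclic strongly exceptional in general --- this failure is precisely what makes the non-cyclic case hard. Note also that the relation $\sum_i D_i^2 = 12 - 3n$ combined with $D_i^2 \geq -2$ for $i < n$ only bounds $D_n^2$ from \emph{above}, so no contradiction can be extracted from a single very negative wrap-around entry.

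Second, the step you yourself flag as the obstacle --- transferring a conclusion about $Y(\mathcal{A})$ back to $X$ --- is not a finishing touch but is where essentially all of the paper's work lies, and your plan (contraposition plus induction on fans, then inspection of small Picard ranks) does not engage with the actual mechanism. The paper never bounds $X$ through numerical invariants of $Y(\mathcal{A})$ alone: instead it proves Theorem \ref{toricclassification} --- every full strongly exceptional toric system on a toric surface is, after passing to a normal form (Proposition \ref{normalorderingproposition}), a \emph{standard augmentation} --- via a classification of strongly left-orthogonal divisors by lattice-point counting (Proposition \ref{cutoutproposition}, Corollary \ref{tilingcorollary}, Proposition \ref{degreelowerbound}), the straightening procedure of section \ref{straighteningsection} with its classification (Proposition \ref{straightenedclassification}), and a case-by-case analysis of the exceptional surfaces 6d, 8a, 8c, 9; it then separately proves Theorem \ref{augmentationboundtheorem}, that a full strongly exceptional standard augmentation forces $X$ to be within two blow-ups of a Hirzebruch surface (all of section \ref{augmentationboundproofsection}). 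Theorem \ref{toricbound} is the conjunction of these two statements. Without a substitute for both ingredients --- in particular for the classification of strongly left-orthogonal divisors on $X$ itself, not on $Y(\mathcal{A})$ --- your outline cannot be completed as stated.
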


Note that the blow-up of $\mathbb{P}^2$ at any point is isomorphic to the first Hirzebruch surface. So
there is no loss of generality if only blow-ups of Hirzebruch surfaces are considered. In particular,
Theorem \ref{toricbound} implies that the Picard number of a toric surface on which a full strongly
exceptional sequence of invertible sheaves exists is at most 14. On the other hand, the example given
in \cite{HillePerling06} is a minimal example which does not satisfy the condition of the theorem.

Another important aspect of exceptional sequences is their relation to helix theory as developed
in \cite{Rudakov90}.

\begin{definition*}
An infinite sequence of sheaves $\dots, \sh{E}_i, \sh{E}_{i + 1}, \dots $ is called a {\em cyclic
(strongly) exceptional sequence} if there exists an $n$ such that $\sh{E}_{i + n} \cong \sh{E}_i
\otimes \sh{O}(-K_X)$ for
every $i \in \Z$ and if every winding (i.e. every subinterval $\sh{E}_{i + 1}, \dots, \sh{E}_{i + n}$)
forms a (strongly) exceptional sequence. A cyclic exceptional sequence is {\em full} if every winding
is a full exceptional sequence.
\end{definition*}

Our notion of cyclic strongly exceptional sequences is very close to the geometric helices of
\cite{BondalPolishchuk}, but we want to point out that these notions do not coincide, as we do not
require that our cyclic exceptional sequences are generated by mutations. In fact, if
we consider a winding $\sh{E}_{i + 1}, \dots, \sh{E}_{i + n}$ as the foundation of a helix, then
the $n$-th right mutation of $\sh{E}_i$ coincides with $\sh{E}_{i + n}$ up to a shift in the derived
category. By results of
\cite{Bondal90} a foundation of a helix generates the derived category precisely if any foundation
does. Hence a cyclic exceptional sequence is full if and only if it has any winding which is a full
exceptional sequence. By a result of Bondal
and Polishchuk, the maximal periodicity of a geometric helix on a surface is $3$, which implies
that $\mathbb{P}^2$ is the only rational surface which admits a full geometric helix. Our weaker notion
admits a bigger class of surfaces, but still imposes very strong conditions:

\begin{theorem*}[\ref{helixpicbound}]
Let $X$ be a smooth complete rational surface on which a full cyclic strongly exceptional sequence
of invertible sheaves exists. Then $\rk \pic(X) \leq 7$.
\end{theorem*}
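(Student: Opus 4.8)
The plan is to exploit the canonical toric system from Theorem~\ref{canonicaltoricsystemtheorem}, which attaches to any full exceptional sequence of invertible sheaves $\sh{O}_X(E_1),\dots,\sh{O}_X(E_n)$ a smooth complete toric surface with torus-invariant prime divisors $D_1,\dots,D_n$ satisfying $D_i^2 + 2 = \chi\big(\sh{O}_X(E_{i+1}-E_i)\big)$. The cyclic condition provides an extra, decisive constraint: because $\sh{E}_{i+n} \cong \sh{E}_i \otimes \sh{O}(-K_X)$ and every winding is a \emph{strongly} exceptional sequence, the Euler characteristics $\chi\big(\sh{O}_X(E_{i+1}-E_i)\big)$ become strongly constrained — in particular they should be forced to be nonnegative, since for a strongly exceptional sequence $\Ext^k(\sh{E}_i,\sh{E}_j)=0$ for $k>0$ forces the relevant cohomology to be concentrated in degree zero, making $\chi$ the dimension of a genuine $\Hom$-space. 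Translating this through the defining relation would then give $D_i^2 \geq -2$ for every $i$.

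First I would make the numerical bookkeeping on the associated toric surface precise. Writing $r = \rk\pic(X) = n-2$ for the Picard number, the torus-invariant divisors $D_1,\dots,D_n$ on the associated smooth complete toric surface form a cyclic sequence whose self-intersection numbers $a_i := D_i^2$ satisfy the standard toric relations coming from the fan: summing the wheel relations gives $\sum_{i=1}^n a_i = 3n - 12$ (equivalently $\sum(a_i + 2) = 3n - 6 = 3r$, reflecting that the sum of the self-intersections equals $12$ minus the number of rays for a smooth complete toric surface, via $K^2 = 12 - n$ together with $K^2 = \sum\dots$). The strongly exceptional cyclic hypothesis should, through the mechanism above, force each $a_i \geq -2$, i.e. $a_i + 2 \geq 0$ for all $i$. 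The key point I would extract is that the toric surface cannot have \emph{too many} rays with very negative self-intersection while still closing up into a smooth complete fan.

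The heart of the argument is then a combinatorial/geometric bound on how long such a cyclic chain can be. I would analyze the possible self-intersection sequences $(a_1,\dots,a_n)$ on a smooth complete toric surface subject to $a_i \geq -2$. Toric surfaces with all $a_i \geq -2$ are severely restricted: starting from a minimal model ($\mathbb{P}^2$ or a Hirzebruch surface) and tracking how blow-ups decrease self-intersections, the constraint $a_i \geq -2$ caps the number of admissible blow-ups. Concretely, I expect to show that a smooth complete toric surface whose wheel of self-intersections all exceed $-3$ has at most $n \leq 9$ rays, whence $\rk\pic(X) = n - 2 \leq 7$. This is where the structural theory of toric surfaces (continued-fraction / Hirzebruch–Jung descriptions of the fan, and the behavior of $\sum a_i$ under blow-up) does the real work.

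The hard part will be pinning down exactly why the strongly exceptional cyclic condition forces the uniform lower bound $a_i \geq -2$, rather than a weaker pointwise estimate — one must rule out cancellation in the Euler characteristic by genuinely using vanishing in \emph{all} positive $\Ext$ degrees across the whole cyclic family, not just for a single winding. A subtlety is that $\chi$ computes an alternating sum, so nonnegativity of $\chi$ is not automatic from strong exceptionality of one winding alone; the cyclic structure (all windings strongly exceptional, together with the identification $E_{n+1} = E_1 - K_X$ feeding back into the next period) is what should close the loop and rigidify every $\chi\big(\sh{O}_X(E_{i+1}-E_i)\big)$ simultaneously. Once that uniform bound is in hand, the remaining combinatorial estimate bounding $n$ is comparatively routine, and yields $\rk\pic(X) \leq 7$.
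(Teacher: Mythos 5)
Your proposal takes essentially the same route as the paper: cyclic strong exceptionality makes every difference $A_i = E_{i+1}-E_i$ (including the wrap-around $A_n$, which is the only place the cyclic hypothesis is genuinely needed) strongly left-orthogonal, so $\chi(A_i) = h^0(A_i) \geq 0$, hence $D_i^2 = \chi(A_i) - 2 \geq -2$ on the associated toric surface $Y(\mathcal{A})$; its anticanonical divisor is therefore nef, and the classification of smooth complete toric surfaces with nef anticanonical class (Table~\ref{weakdelpezzofigure}, at most $9$ rays) gives $\rk \pic(X) = n - 2 \leq 7$. Two bookkeeping corrections: the sum formula is $\sum_i a_i = 12 - 3n$, not $3n - 12$ (so $\sum_i (a_i + 2) = 12 - n$, and nonnegativity alone only yields $n \leq 12$; the sharper bound $n \leq 9$ really does require the classification, as you anticipated), and your worried-about ``cancellation in $\chi$'' is a non-issue, since each $A_i$ is a difference of adjacent members of some winding, so strong exceptionality of that single winding already concentrates its cohomology in degree zero.
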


So not even every del Pezzo surface admits such a sequence. However:

\begin{theorem*}[\ref{delPezzocyclicexistence}]
Let $X$ be a del Pezzo surface with $\rk \pic(X) \leq 7$, then there exists a full cyclic
strongly exceptional sequence of invertible sheaves on $X$.
\end{theorem*}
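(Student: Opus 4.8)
The plan is to reduce the statement to the construction of a single combinatorial object --- a \emph{cyclic toric system all of whose proper arcs are nef} --- and then to exploit the ampleness of $-K_X$ to turn nef-ness into the required cohomology vanishing. First I recall that a del Pezzo surface has $\rk\pic(X)\le 7$ exactly when its degree $K_X^2$ is at least $3$, so that $X$ is $\mathbb{P}^2$, $\mathbb{P}^1\times\mathbb{P}^1$, or a blow-up of $\mathbb{P}^2$ in at most six points in general position. By the correspondence underlying Theorem~\ref{canonicaltoricsystemtheorem}, a full cyclic strongly exceptional sequence of invertible sheaves is the same datum as a cyclic toric system $A_1,\dots,A_n$ with $n=\rk\pic(X)+2$ and $\sum_{i=1}^n A_i=-K_X$, the sheaves being recovered through $\sh{O}_X(E_{i+1}-E_i)=\sh{O}_X(A_i)$. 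As the text already observes via the results of Bondal, fullness is automatic once one winding generates $D^b(X)$, which holds since the $n$ classes $[\sh{O}_X(E_i)]$ are of maximal number and span $K_0(X)$.

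The core of the reduction is that strong exceptionality of all windings is equivalent to nef-ness of all proper arcs. For a toric system the intersection relations force, for every proper consecutive cyclic arc $S=A_i+\dots+A_j$, the numerical identity $S\cdot(S+K_X)=-2$, so by Riemann--Roch $\chi(\sh{O}_X(-S))=0$ and $\chi(\sh{O}_X(S))=-K_X\cdot S$. Assume now that every proper arc is nef, and note that the complementary arc $-K_X-S$ is again a proper arc, hence nef as well; being nonzero and nef, both $S$ and $-K_X-S$ have strictly positive intersection with the ample class $-K_X$. Then $-S$ and $K_X+S=-(-K_X-S)$ have negative intersection with $-K_X$, hence are non-effective, so $H^0(\sh{O}_X(-S))=H^2(\sh{O}_X(-S))=0$; since $\chi(\sh{O}_X(-S))=0$ this forces $H^1(\sh{O}_X(-S))=0$, giving $\Ext^\bullet(E_i,E_j)=0$ for $i>j$. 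Dually $S-K_X$ is ample, so Kawamata--Viehweg vanishing yields $H^{>0}(\sh{O}_X(S))=0$, the strong-exceptionality vanishing for $i<j$. Thus the whole theorem reduces to producing \emph{one} cyclic toric system on $X$ whose proper arcs are all nef.

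I would build such a system by induction on the number $r$ of blown-up points, using the augmentation (blow-up) operation on toric systems. For the base cases $\mathbb{P}^2$ and $\mathbb{P}^1\times\mathbb{P}^1$ the standard systems (for instance $(H,H,H)$ on $\mathbb{P}^2$) are manifestly cyclic with all arcs nef. For the inductive step, blowing up a general point $p\in X$ with exceptional class $C$, I insert $C$ into the cyclic system at a suitable position and subtract $C$ from the two neighbouring classes --- the toric analogue of a star subdivision --- and then check that every arc of the enlarged system remains nef. The del Pezzo (general position) hypothesis enters precisely here: it guarantees that the corrected neighbouring classes, and all arcs crossing the new position, stay nef, because $X$ carries no $(-2)$-curves to obstruct positivity.

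The main obstacle is exactly this preservation of nef-ness. Unlike the non-cyclic situation --- where Theorem~\ref{twotimesblowupexistence} already provides a (non-cyclic) full strongly exceptional sequence on every del Pezzo of degree $\ge 3$ --- the cyclic condition couples all arcs simultaneously, so subtracting $C$ near the insertion point threatens the nef-ness of every arc that crosses it, and these must all be controlled at once. This is where the genuine case analysis lies, and it is tightest at degree $3$ (where $\rk\pic(X)=7$), since there both the positivity margin is smallest and the number of arcs to be verified is largest. This is consistent with and complementary to Theorem~\ref{helixpicbound}: the construction must break down in degree $\le 2$, so the delicate point is to show that it survives all the way down to, but no further than, degree $3$.
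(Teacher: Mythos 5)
Your reduction fails at its very first step, and the failure is structural, not a matter of missing details. You propose to produce a cyclic toric system $A_1,\dots,A_n$ all of whose proper arcs are nef. But single elements $A_i$ are themselves proper arcs, and for any toric system one has $\sum_{i=1}^n A_i^2 = K_X^2 = 12 - 3n$ (this is condition (iii) in Definition \ref{toricsystemdef}, forced by $\sum_i A_i = -K_X$ and the intersection relations). For a del Pezzo surface with $\rk\pic(X)\geq 3$ we have $n = \rk\pic(X)+2 \geq 5$, so $\sum_i A_i^2 \leq -3 < 0$, hence some $A_i$ has $A_i^2 < 0$. A nef divisor on a projective surface has nonnegative self-intersection, so that $A_i$ cannot be nef. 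In other words, the combinatorial object you want to construct does not exist on any del Pezzo surface other than $\mathbb{P}^2$ and $\mathbb{P}^1\times\mathbb{P}^1$ --- precisely the cases that matter (blow-ups of $\mathbb{P}^2$ in $1$ to $6$ points) are excluded. Your claimed equivalence is also false in the other direction: the paper's cyclic strongly exceptional system on the degree-$3$ del Pezzo contains the arcs $R_2$ and $R_1 - R_2$, neither of which is nef. (Your implication ``all proper arcs nef $\Rightarrow$ cyclic strongly exceptional'' is correct as far as it goes --- and is essentially the argument behind Theorem \ref{helixpicbound} via the associated toric surface --- but it is vacuous here; note also that your appeal to Kawamata--Viehweg is a characteristic-zero statement, while the paper works over an arbitrary algebraically closed field.)

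The correct criterion is strong left-orthogonality of every arc $\sum_{i\in I} A_i$ for cyclic intervals $I$, which is strictly weaker than nef-ness: by Lemma \ref{h1vanishinglemma} it reduces to $h^1(\pm A_I) = 0$ given the numerical identity you correctly derived, and non-nef classes such as $R_i$ and $R_i - R_j$ (for distinct points) do satisfy it. This is what the paper does: it writes down one explicit toric system on the blow-up of $\mathbb{P}^2$ in six points in general position, namely $H-R_1-R_2-R_5,\ R_2,\ R_1-R_2,\ H-R_1-R_3-R_4,\ R_4,\ R_3-R_4,\ H-R_3-R_5-R_6,\ R_6,\ R_5-R_6$, and checks arc by arc that each sum is strongly left-orthogonal, using Proposition \ref{verticalleftorthogonals} for the classes supported on the $R_i$ and base-point-freeness arguments for linear systems $lH - \sum R_i$ (via \cite{Hart2}, V.4), where the general-position hypothesis enters. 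Even setting aside the flawed reduction, your proposal never carries out its own inductive step: you state yourself that preserving positivity under insertion of the exceptional class ``is where the genuine case analysis lies,'' but that analysis --- which is the entire mathematical content of the theorem --- is not performed.
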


The condition that $-K_X$ is ample can be weakened in general. In the toric case we obtain a complete
characterization for toric surfaces admitting cyclic strongly exceptional sequences:

\begin{theorem*}[\ref{toriccyclicnef} \& \ref{toriccyclicexistence}]
Let $X$ be a smooth complete toric surface, then there exists a full cyclic strongly exceptional sequence
of invertible sheaves on $X$ if and only if $-K_X$ is nef.
\end{theorem*}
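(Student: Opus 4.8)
The plan is to prove both directions of the biconditional in Theorems~\ref{toriccyclicnef} and \ref{toriccyclicexistence}: that a full cyclic strongly exceptional sequence of invertible sheaves on a smooth complete toric surface $X$ exists if and only if $-K_X$ is nef. For the forward direction (existence implies $-K_X$ nef), I would exploit the periodicity built into the definition of a cyclic sequence. Suppose $\dots, \sh{O}_X(E_i), \dots$ is such a sequence with period $n$, so $E_{i+n} = E_i - K_X$. The canonical toric system of Theorem~\ref{canonicaltoricsystemtheorem} attaches to any winding a smooth complete toric surface with invariant divisors $D_1, \dots, D_n$ satisfying $D_i^2 + 2 = \chi\big(\sh{O}_X(E_{i+1} - E_i)\big)$. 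The cyclic structure should force a strong numerical constraint: the differences $A_i := E_{i+1} - E_i$ run cyclically and sum (over one period) to $-K_X$. The key identity to extract is that $\sum_{i=1}^n A_i = -K_X$, so that $-K_X$ is expressed as a sum of the divisor classes governing the toric system. I would then argue that the strong exceptionality of every winding, together with the self-consistency of the associated toric surface across windings, forces each relevant intersection number to be controlled in a way that makes $-K_X$ nef --- concretely, by showing that pairing $-K_X$ with any effective (equivalently, any torus-invariant) curve class is nonnegative, using the combinatorial positivity encoded in the toric system.

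For the reverse direction ($-K_X$ nef implies existence), I would construct the sequence explicitly from the toric data. Since $-K_X$ is nef on a toric surface, the anticanonical class has good positivity, and I expect to build a toric system out of the torus-invariant prime divisors $D_1, \dots, D_n$ of $X$ itself, arranged cyclically according to the fan. The natural candidate is to set the consecutive differences $A_i$ to correspond to the rays of the fan (or to the $D_i$), so that the telescoping partial sums $E_i = \sum_{j<i} A_j$ define the invertible sheaves, and periodicity $E_{i+n} - E_i = -K_X$ is automatic because $\sum_i D_i = -K_X$ on a toric surface. The work then is to verify that every winding is a genuine full strongly exceptional sequence, i.e.\ that all the higher $\Ext$ groups between the sheaves in a winding vanish and that the sequence generates $D^b(X)$. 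For the vanishing I would reduce, via the combinatorial description of cohomology of invertible sheaves on toric surfaces, to checking that the relevant differences $E_i - E_j$ have no higher cohomology, which should follow from the nefness of $-K_X$ controlling the signs of the intersection numbers $D_i^2$ appearing in Theorem~\ref{canonicaltoricsystemtheorem}.

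The main obstacle, I expect, lies in the reverse direction: verifying strong exceptionality of \emph{every} winding simultaneously, not just one. A single winding generating $D^b(X)$ follows from the helix results of Bondal already cited in the excerpt (a cyclic exceptional sequence is full as soon as one winding is), so fullness is comparatively cheap. The delicate point is the uniform higher-$\Ext$ vanishing across all windings, which amounts to a cohomology-vanishing statement for a whole orbit of line-bundle differences under the $-K_X$-shift. Here the nefness of $-K_X$ must be used in an essential and quantitative way: it is precisely the condition that prevents the intersection numbers from dipping into the range where $H^1$ or $H^2$ of some difference $\sh{O}_X(E_i - E_j)$ becomes nonzero. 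I would organize this by translating the $\Ext$-vanishing conditions into inequalities on the $D_i^2$ via the relation $D_i^2 + 2 = \chi(\sh{O}_X(E_{i+1}-E_i))$, and then showing that $-K_X$ nef is exactly equivalent to all these inequalities holding, which closes the biconditional. The toric combinatorics (piecewise-linear support functions, the description of $H^*$ of a line bundle via the fan) should make these inequalities tractable, but keeping track of them consistently over the full cyclic period is where the care is required.
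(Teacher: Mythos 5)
Both directions of your proposal have genuine gaps, and the one in the ``if'' direction is fatal to the construction you propose. You suggest taking the tautological toric system given by $X$'s own invariant prime divisors $D_1, \dots, D_n$, so that consecutive differences of your line bundles are $E_{i+1} - E_i = D_i$; strong exceptionality of a winding then requires $h^1(D_i) = 0$ for every $i$. But whenever some $D_i$ is an invariant $(-2)$-curve, adjunction gives $K_X \cdot D_i = 0$, hence $\chi(D_i) = 1 + \frac{1}{2}(D_i^2 - K_X \cdot D_i) = 0$, while $h^0(D_i) \geq 1$ because $D_i$ is effective; so $h^1(D_i) \neq 0$ and $\Ext^1\big(\sh{O}(E_i), \sh{O}(E_{i+1})\big) \neq 0$. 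Invariant $(-2)$-curves occur on $\mathbb{F}_2$ and on eleven of the sixteen toric surfaces with $-K_X$ nef (table \ref{weakdelpezzofigure}), so nefness does not rescue your candidate: already on $\mathbb{F}_2$ the invariant divisors give the system $P, Q, P, Q - 2P$, and $Q - 2P$ is not strongly left-orthogonal by Proposition \ref{Hirzebruchleftorthogonals}. The paper's proof of Theorem \ref{toriccyclicexistence} is instead a finite classification: nefness of $-K_X$ confines $X$ to the sixteen surfaces of table \ref{weakdelpezzofigure}, and for each one a specific, non-tautological toric system (a standard augmentation, listed in table \ref{cyclictable}) is exhibited and verified to be cyclic strongly exceptional via the lattice-point criteria of Proposition \ref{cutoutproposition} and Corollary \ref{tilingcorollary}. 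As you say, fullness is the cheap part; choosing a system for which \emph{all} cyclic interval sums are strongly left-orthogonal is the real work, and no single uniform formula does it.

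In the ``only if'' direction you are missing the key quantitative input, and your sketch conflates two different toric structures. From $\chi(A_i) \geq 0$ one does get, essentially for free, that the \emph{associated} surface $Y(\mathcal{A})$ has nef anticanonical class (this is the argument of Theorem \ref{helixpicbound}); but $Y(\mathcal{A})$ is in general not isomorphic to $X$, and the identification of Picard lattices does not match nef cones or anticanonical ``positivity,'' so this says nothing yet about $-K_X$ being nef on $X$, i.e.\ about the self-intersection numbers $a_i$ of $X$'s own invariant divisors. The paper's proof of Theorem \ref{toriccyclicnef} rests on Proposition \ref{degreelowerbound}: every strongly left-orthogonal divisor $D$ on a toric surface satisfies $\sum_{i \in I} d_i(D) \geq -1$ for every cyclic interval $I$ of rays of $X$'s fan --- a nontrivial statement whose proof needs the cohomology-vanishing machinery of section \ref{toriccohomologyvanishing}. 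Granting it, one writes $a_i + 2 = d_i(-K_X) = \sum_j d_i(A_j)$ (Proposition \ref{canonicalnefproposition}) and runs a combinatorial argument, grouping the $A_j$ into short toric systems of lengths $3$ and $4$, to rule out $a_i \leq -3$. Your phrase about ``combinatorial positivity encoded in the toric system'' is precisely the statement that has to be proved; nothing in your outline supplies a lower bound for the degrees $d_i(A_j)$ of the members of the system against the invariant curves of $X$ itself.
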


Note that cyclic strongly exceptional sequences have been considered before, most notably in physics
literature (see \cite{HananyHerzogVegh}, \cite{Aspinwall08}, \cite{BergmanProudfoot06},
\cite{HerzogKarp06}), but usually under different names. Theorems \ref{toriccyclicnef} and
\ref{toriccyclicexistence}
have been conjectured in this context. The particular interest here comes from the fact that
the total space $\pi: \omega_X \rightarrow X$ of the canonical bundle $\sh{O}_X(K_X)$
is a local Calabi-Yau manifold. It follows from results of Bridgeland \cite{Bridgeland05} that
a full strongly exceptional sequence $\sh{E}_1, \dots, \sh{E}_n$ on $X$ can be extended to a
cyclic strongly exceptional sequence iff the pullbacks
$\pi^*\sh{E}_1, \dots, \pi^*\sh{E}_n$ form a sequence on $\omega_X$ which is almost
exceptional in the sense that the $\pi^*\sh{E}_i$ generate $D^b(\omega_X)$ and
$\Ext^k(\pi^*\sh{E}_i, \pi^*\sh{E}_j) = 0$ for every $i, j$
and all $k > 0$ (however, due to the fact  that $\omega_X$ is not complete,
we cannot expect that any $\Hom$-groups among the $\pi^*\sh{E}_i$ vanish).
Another interesting observation is that for the toric singularities which arise from
contracting the zero section in $\omega_X$, the endomorphism algebras of $\bigoplus_{i = 1}^n
\pi^*\sh{E}_i$  give examples for non-commutative resolutions in the sense of van den Bergh
\cite{vandenBergh04a}, \cite{vandenBergh04b}.

\

Now we give some more technical explanations concerning Theorem \ref{canonicaltoricsystemtheorem}
and its consequences.
The key idea is astoundingly simple. Let $X$ be a smooth complete rational surface and
$E_1, \dots, E_n$ Cartier divisors on $X$ such
that $\sh{O}_X(E_1),$ $\dots, \sh{O}_X(E_n)$ form an exceptional sequence of
invertible sheaves. For these sheaves, there are natural isomorphisms
$\Ext^k_{\sh{O}_X}\big(\sh{O}_X(E_i),$ $\sh{O}_X(E_j)\big)$ $\cong H^k\big(X, \sh{O}_X(E_j - E_i)\big)$
and therefore it is convenient to bring this exceptional sequence into a normal
form by passing to differences. We set $A_i := E_{i + 1} - E_i$ for $1 \leq i < n$ and
$A_n := -K_X - \sum_{i = 1}^{n - 1} A_i$, where $K_X$ denotes the canonical divisor.
The reason for adding $A_n$ will become clear below. The fact that the $E_i$ form an exceptional sequence
then implies $H^k\big(X, \sh{O}_X(-\sum_{i \in I} A_i)\big) = 0$ for every interval
$I \subset [1, \dots, n - 1]$ and every $k > 0$.
It is an easy consequence of the Riemann-Roch
theorem that moreover the $A_i$ have the following properties:
\begin{enumerate}[(i)]
\item $A_i . A_{i + 1} = 1$ for $1 \leq i < n$ and $A_1 . A_n = 1$;
\item $A_i . A_j = 0$ for $i \neq j$, $\{i, j\} \neq \{1, n\}$, and
$\{i, j\} \neq \{k, k + 1\}$ for any $1 \leq k < n$;
\item $\sum_{i = 1}^n A_i = -K_X$.
\end{enumerate}

\begin{definition*}
We call a set of divisors on $X$ which satisfy the conditions (i), (ii), (iii) above a
{\em toric system}.
\end{definition*}

With respect to a toric system we consider the short exact sequence
\begin{equation*}
0 \longrightarrow \pic(X) \overset{A}{\longrightarrow} \Z^n \longrightarrow \Z^2 \longrightarrow 0,
\end{equation*}
where $A$ maps a divisor class $D$ to the tuple $(A_1 . D, \dots, A_n . D)$.
The images $l_1, \dots, l_n$ of the standard basis of $\Z^n$ in $\Z^2$ are the {\em Gale duals}
of $\mathcal{A} = A_1, \dots, A_n$. It is now an exercise in linear algebra (see Proposition
\ref{dualityproposition}) to show that the $l_i$
generate the fan of a smooth complete toric surface which we denote $Y(\mathcal{A})$.
This means, by passing from $E_1, \dots, E_n$ via its toric system to the vectors $l_1, \dots, l_n$, we
have a canonical way of associating a toric surface to a strongly exceptional sequence of invertible
sheaves on any rational surface. This correspondence is even stronger; as Gale duality is indeed
a duality, we can as well consider the $A_i$ as Gale duals of the $l_i$. But by a standard fact of toric
geometry, the Gale duals of the $l_i$ can be interpreted as the classes of the torus invariant prime
divisors $D_1, \dots, D_n$ on $Y(\mathcal{A})$. Hence, we can identify $\pic(X)$ and $\pic\big(Y(\mathcal{A})\big)$
and the respective intersection products in a natural way, such that $A_i^2 = D_i^2$ for all $i$.
In particular, note that the set of invariant irreducible divisors forms a toric system for any smooth
complete toric surface.

Implicitly, toric systems have already shown up in the classical analysis of del Pezzo surfaces.
In modern form, this seems first to be written in  the first edition of \cite{Manin86} (see also
\cite{Demazure80}). Consider $X$ a $t$-fold blow-up of $\mathbb{P}^2$, i.e. $X = X_t
\overset{b_t}{\rightarrow} X_{t - 1} \overset{b_{t - 1}}{\rightarrow} \cdots
\overset{b_2}{\rightarrow} X_1 \overset{b_1}{\rightarrow} \mathbb{P}^2$. Then we get a nice basis
$H, R_1, \dots, R_t$ of $\pic(X)$, where $H$ is the pull-back of the class of a line on
$\mathbb{P}^2$, and $R_i$ is the pull-back of the exceptional divisor of the blow-up $b_i$. This
basis diagonalizes the intersection product of $\pic(X)$, i.e. $H^2 = 1$, $R_i^2 = -1$ and
$H . R_i = 0$ for all $i$, and $R_i . R_j = 0$ for all $i \neq j$. For simplicity, let us assume
that $t > 5$. Then we construct a graph as follows. For the vertices, we set $A_0 := H - R_1 -
R_2 - R_3$ and $A_i := R_i - R_{i + 1}$ for $i = 1, \dots, t - 1$ and we draw an edge between
$A_i$ and $A_j$ whenever
$A_i . A_j \neq 0$. This way we obtain a graph of type $\mathbb{E}_t$ which is indefinite for
$t > 8$. For $t \leq 8$ it is shown in \cite{Manin86} that the set of divisors $\{D \in \pic(X)
\mid \chi(-D) = -K_X . D = 0\}$ forms a root system which is generated by the $A_i$. In case of
$t = 6$ this root system represents the symmetries of the famous $27$ lines on the cubic surface.
The system of divisors $A_0, \dots, A_{t - 1}$ is almost a toric system. We can turn it into
a proper toric system by removing $A_0$ and adding $A_t
:= R_t$, $A_{t + 1} := H - \sum_{i = 1}^t R_i$, $A_{t + 2} := H$, and $A_{t + 3} := H - R_1$.
This toric system always represents an exceptional sequence which is of the form
$\sh{O}_X, \sh{O}_X(R_1), \dots, \sh{O}_X(R_t), \sh{O}_X(H), \sh{O}_X(2H)$. In case that
the $b_i$ commute, this sequence is even strongly exceptional. Note that there
always are ambiguities concerning the enumeration of the $A_i$; we always can try to change it
cyclically or even choose the reverse enumeration.

This sequence gives an example of an exceptional sequence which is an augmentation of the standard
sequence on $\mathbb{P}^2$. On $\mathbb{P}^2$ there exists a unique toric system, which is of the form
$H$, $H$, $H$. After blowing up once, we can augment this toric system by inserting $R_1$ in any place
and subtracting $R_i$ in the two neighbouring positions, i.e., up to symmetries, we obtain a toric
system $H - R_1, R_1, H - R_1, H$ on $X_1$. Continuing with this, we essentially get two
possibilities on $X_2$, namely
\begin{align*}
& H - R_1 - R_2, R_2, R_1 - R_2, H - R_1, H \\
& H - R_1, R_1, H - R_1 - R_2, R_2, H - R_2.
\end{align*}
It is easy to see that all of these examples lead to strongly exceptional sequences for almost
all enumerations which keep the cyclic order. The only exception being the first one in the case
where $b_2$ is a blow-up of an infinitesimal point.
Here, we necessarily have to choose the enumeration of the $A_i$ such that $A_n = R_1 - R_2$.

Similarly, on any Hirzebruch surface $\mathbb{F}_a$ there exist, in fact infinitely many, toric systems
of the form $P, sP + Q, P, -(a + s)P + Q$ with $s \geq -1$, which correspond to strongly exceptional
sequences. Here,
$P$ and $Q$ are the two generators of the nef cone in $\pic(\mathbb{F}_a)$, where $P$ is the class of
a fiber of the $\mathbb{P}^1$-fibration $\mathbb{F}_a \rightarrow \mathbb{P}^1$ and $Q$ is the generator
with $Q^2 = a$. We can extend these toric systems along blow-ups in an analogous fashion. We call
toric systems obtained this way {\em standard augmentations} (see Definition
\ref{standardaugmentationdefinition}). It turns out that Theorem \ref{toricbound} is a consequence of
the following characterization of strongly exceptional sequences arising from standard augmentations.

\begin{theorem*}[\ref{augmentationboundtheorem}]
Let $\mathbb{P}^2 \neq X$ be a smooth complete rational surface which admits a full strongly
exceptional sequence whose associated toric system is a standard augmentation. Then $X$ can be
obtained by blowing up a Hirzebruch surface two times (in possibly several points in each step).
\end{theorem*}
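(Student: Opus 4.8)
The plan is to prove the contrapositive within the class of standard augmentations: if the blow-up data underlying the standard augmentation cannot be organized into two steps over a Hirzebruch surface, then the associated sequence fails to be strongly exceptional. Recall that a standard augmentation records an iterated sequence of point blow-ups of some $\mathbb{F}_a$, in which augmenting inside a region produced by an earlier augmentation corresponds to blowing up an infinitesimally near point; organizing the blow-ups into two steps is exactly the requirement that every chain of infinitesimally near points have length at most two (Theorem \ref{twotimesblowupexistence} supplies the converse existence statement). So it suffices to show that a chain of length three obstructs strong exceptionality.

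The basic tool is a Riemann--Roch computation for interval divisors. For an interval $I \subset [1, \dots, n-1]$ write $D_I := \sum_{i \in I} A_i$. Using $\chi(\sh{O}_X) = 1$ (as $X$ is rational), $-K_X = \sum_i A_i$, and the intersection rules (i)--(iii) of a toric system, a direct computation gives $D_I^2 = \sum_{i \in I} A_i^2 + 2(|I| - 1)$ and $-K_X . D_I = \sum_{i \in I}(A_i^2 + 2)$, whence $\chi\big(\sh{O}_X(D_I)\big) = \sum_{i \in I}(A_i^2 + 2) = \sum_{i \in I}\chi\big(\sh{O}_X(A_i)\big)$. In particular the Euler characteristic of an interval divisor is additive along the interval, and each single entry contributes $A_i^2 + 2$. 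Since strong exceptionality forces $H^k(X, \sh{O}_X(D_I)) = 0$ for all $k > 0$ and all intervals $I$, it suffices to produce one interval with $\chi(D_I) < 0$ together with $h^0(D_I) = h^2(D_I) = 0$: then $h^1(D_I) = -\chi(D_I) > 0$ gives the required contradiction.

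Next I would exhibit such an interval from a length-three chain. Tracking the augmentation combinatorially, a chain of three infinitesimally near points produces an entry which is the class of a triple strict transform, of the shape $A_i = R - R' - R''$, and hence with $A_i^2 \le -3$; taking $I = \{i\}$ gives $\chi\big(\sh{O}_X(A_i)\big) = A_i^2 + 2 \le -1 < 0$. To reduce the general configuration to this local model I would contract all exceptional curves not belonging to the chain; contractions act on the toric system by deleting and merging entries and, since they only enlarge the space of sections, cannot destroy a cohomology obstruction present on the blow-down. This reduces us to a minimal surface carrying a length-three chain, essentially the three-fold blow-up of a Hirzebruch surface analyzed in \cite{HillePerling06}, where the obstruction is already known.

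The main obstacle is the cohomology bookkeeping that turns $\chi(D_I) < 0$ into $h^1(D_I) > 0$ uniformly, i.e. verifying $h^0(D_I) = 0$ and $h^2(D_I) = h^0(K_X - D_I) = 0$ for the bad interval in every depth-three configuration, and establishing cleanly that a length-three chain always yields an entry with $A_i^2 \le -3$ (or, more generally, an interval with $\sum_{i \in I}(A_i^2 + 2) < 0$ and no sections). When the triple strict transform is an honest irreducible rational curve $C$ with $C^2 = -3$, adjunction gives $C . K_X = 1$ and $\chi\big(\sh{O}_X(C)\big) = -1$ with $h^0 \ge 1$, already forcing $h^1\big(\sh{O}_X(C)\big) > 0$; the remaining work is to show $h^0$ and $h^2$ behave as claimed when the entry is not represented by such a curve, which is where the contraction reduction and the analysis of the base case do the real work. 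Combining these steps, strong exceptionality rules out chains of length three, so the blow-up data fits into two steps over a Hirzebruch surface, proving the theorem.
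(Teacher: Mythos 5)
There is a genuine gap, and it sits at the very first step of your plan. You assert that ``organizing the blow-ups into two steps is exactly the requirement that every chain of infinitesimally near points have length at most two'' and then reduce the theorem to showing that a length-three chain kills strong exceptionality. Neither half of this is correct. The conclusion of the theorem is a property of the surface $X$, not of the particular blow-up presentation underlying the given augmentation: $X$ may carry a standard augmentation whose blow-up data contains a chain of length three and still be obtainable from a Hirzebruch surface in two steps, because one is free to contract a \emph{different} collection of $(-1)$-curves (strict transforms of fibers, or the $(-1)$-section when $X_0 \cong \mathbb{F}_1$). This is precisely what the actual proof of Proposition \ref{standardaugmentationbound} and Proposition \ref{f1augmentationbound} does in the hard cases: when the maximal chain of comparable $R_i$ has length three, strong exceptionality is \emph{not} contradicted; instead, base-locus constraints (Lemma \ref{schlunzlemma}, Lemma \ref{gewenzlemma}, Lemma \ref{fiberlemma}) force multiplicity-one conditions that guarantee an alternative two-step blow-down exists. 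So the implication ``length-three chain $\Rightarrow$ not strongly exceptional,'' on which your whole contrapositive rests, is false.

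The mechanism by which you try to realize that implication also fails: a chain $R \geq R' \geq R''$ of infinitesimally near points does \emph{not} force an entry of the toric system of the form $R - R' - R''$ (with square $\leq -3$). Augmentation positions are independent of where the blown-up points sit geometrically; the three points of a chain can be distributed over different slots of the toric system, producing only entries such as $R''$, $P - R'' - R'$, $R'$, $R$, $P - R$, all of which are strongly left-orthogonal under the relevant base-locus conditions (compare Lemma \ref{fiberlemma}\,(\ref{fiberlemmaii}) and Lemma \ref{schlunzlemma}\,(\ref{schlunzlemmaii})). Your Riemann--Roch bookkeeping is fine as far as it goes --- indeed additivity $\chi(D_I) = \sum_{i \in I}(A_i^2 + 2)$ holds, and any interval with $\chi(D_I) < 0$ has $h^1(D_I) > 0$ with no further argument, since $h^0 - h^1 + h^2 < 0$ already forces $h^1 > 0$ --- but no such interval need exist in the problematic configurations. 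Finally, the appeal to \cite{HillePerling06} does not fill the hole: that paper proves non-existence of strongly exceptional sequences of line bundles on one specific surface, which is neither the local model your reduction produces nor a statement about arbitrary augmentations containing a length-three chain. The real content of the theorem is exactly the case analysis you skip: showing that whenever a strongly exceptional standard augmentation has long chains, the strong left-orthogonality of the interval sums rigidifies the configuration enough to produce a different two-step contraction of $X$.
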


Standard augmentations provide a straightforward procedure which allows to produce strongly exceptional
sequences of invertible sheaves on a large class of rational surfaces. It is natural to ask whether
it is actually possible to get all such sequences this way. The answer so far is: probably yes. Indeed,
Theorem \ref{toricbound} is a corollary of Theorem \ref{augmentationboundtheorem} and the following
result:

\begin{theorem*}[\ref{toricclassification}]
Let $X$ be a smooth complete toric surface, then every full strongly exceptional sequence of invertible
sheaves comes from a toric system which is a standard augmentation.
\end{theorem*}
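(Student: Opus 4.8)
The plan is to argue by induction on $\rho := \rk \pic(X)$, peeling off one blow-up at a time by reversing the augmentation procedure. Write $\sh{O}_X(E_1), \dots, \sh{O}_X(E_n)$ for the given full strongly exceptional sequence and let $\mathcal{A} = A_1, \dots, A_n$ be its toric system, with associated toric surface $Y(\mathcal{A})$ as in Theorem \ref{canonicaltoricsystemtheorem}; recall $n = \rho + 2$ and that $A_i^2 = D_i^2$ under the identification $\pic(X) \cong \pic\big(Y(\mathcal{A})\big)$. The base case is $\rho \leq 2$, where $X$ is $\mathbb{P}^2$ or a Hirzebruch surface $\mathbb{F}_a$; here one classifies the strongly exceptional toric systems directly and checks that each is a base case $P, sP + Q, P, -(a+s)P + Q$ of the augmentation procedure (Definition \ref{standardaugmentationdefinition}), hence a standard augmentation.

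For the inductive step assume $\rho \geq 3$, so $n \geq 5$ and $Y(\mathcal{A})$ is a non-minimal smooth complete toric surface, whence at least one entry satisfies $A_i^2 = -1$. The key point is to promote such a numerical $(-1)$-entry to a genuine contraction on $X$. Since $A_i^2 = -1$, the formula $\chi\big(\sh{O}_X(A_i)\big) = A_i^2 + 2$ from Theorem \ref{canonicaltoricsystemtheorem} gives $\chi\big(\sh{O}_X(A_i)\big) = 1$, while strong exceptionality applied to the length-one interval $I = \{i\}$ forces $H^k\big(X, \sh{O}_X(A_i)\big) = 0$ for $k > 0$, so $h^0\big(X, \sh{O}_X(A_i)\big) = 1$. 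As $X$ is toric, this one-dimensional space is a single $T$-weight space, its unique section is $T$-semi-invariant, and its zero divisor $C \in |A_i|$ is torus invariant. Granting that $C$ is an \emph{irreducible} $(-1)$-curve, contracting it yields a blow-down $\pi : X \to X'$ onto a smooth complete toric surface.

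Having fixed such an $i$, I would de-augment: replace the triple $A_{i-1}, A_i, A_{i+1}$ by $A_{i-1} + A_i$ and $A_{i+1} + A_i$ and delete $A_i$, obtaining a system $\mathcal{A}'$ of length $n - 1$. The toric-system axioms for $\mathcal{A}'$ follow directly from (i)--(iii), and the identities $(A_{i-1} + A_i) . A_i = (A_{i+1} + A_i) . A_i = 0$ together with $A_j . A_i = 0$ for $j \neq i-1, i, i+1$ show that every class of $\mathcal{A}'$ lies in $A_i^\bot = \pi^*\pic(X')$, so $\mathcal{A}'$ descends to a toric system on $X'$. To see that it remains strongly exceptional, note that every interval sum of $\mathcal{A}'$ either is literally an interval sum of $\mathcal{A}$ or, when both merged slots occur, equals $S + C$ for an $\mathcal{A}$-interval sum $S$ with $i \in S$; since $(S + C).C = S.A_i + A_i^2 = 1 - 1 = 0$, the structure sequence $0 \to \sh{O}_X(S) \to \sh{O}_X(S + C) \to \sh{O}_{\mathbb{P}^1} \to 0$ together with $H^{>0}\big(\sh{O}_X(S)\big) = 0$ gives $H^{>0}\big(\sh{O}_X(S + C)\big) = 0$, and $R\pi_*\sh{O}_X = \sh{O}_{X'}$ transports this vanishing to $X'$ (the negative-degree sums are handled dually). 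By the inductive hypothesis $\mathcal{A}'$ is a standard augmentation, and as $\mathcal{A}$ is recovered from $\mathcal{A}'$ by the augmentation attached to $\pi$, so is $\mathcal{A}$. Combined with Theorem \ref{augmentationboundtheorem} this also yields Theorem \ref{toricbound}.

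The main obstacle is the selection step in the second paragraph: producing a $(-1)$-entry $A_i$ whose unique effective member is an \emph{irreducible} torus-invariant $(-1)$-curve. The numerics $A_i^2 = -1$, $A_i . K_X = -1$ are equally satisfied by reducible invariant divisors such as $D_a + D_b$ with $(D_a^2, D_b^2) = (-1, -2)$, so $h^0 = 1$ alone does not force irreducibility. Equivalently, fixing a torus-invariant $(-1)$-curve $D_{j_0}$ of $X$, one must show that the intersection pattern $\big(A_l . D_{j_0}\big)_l$ is the augmentation pattern $(0, \dots, 0, 1, -1, 1, 0, \dots, 0)$ in cyclic order --- that is, that exactly one entry $A_i$ equals $[D_{j_0}]$. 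Ruling out the degenerate configurations, and simultaneously ensuring compatibility with the cyclic order so that the contraction is literally a de-augmentation, is where the strong exceptionality of the \emph{neighbouring} interval sums must be exploited; handling chains of adjacent $(-1)$-curves arising from infinitely near points, and the cyclic and reversal ambiguities in the enumeration of $\mathcal{A}$, will require a careful local analysis around the contracted curve.
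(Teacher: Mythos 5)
Your induction has the right general flavour---the paper's own Proposition \ref{straighteningextensionlemma} also peels off blow-ups one at a time---but the step you defer to ``a careful local analysis'' in your last paragraph is not a loose end; it is the entire difficulty, and in the form you state it your inductive claim is actually false. Two concrete problems. (a) The selection step: entries $A_i = R_k$ with $R_k$ \emph{not} maximal in the partial order of Section \ref{rationalacyclicsection} do occur in strongly exceptional toric systems (e.g.\ the entry $R_s$ in Theorem \ref{p2blowupexistence} when a second-step point is infinitely near the $s$-th first-step point); such an entry has $A_i^2 = -1$ and $h^0(A_i) = 1$, yet its unique effective representative is the reducible divisor $E_k + \sum_{j \in T_k}(\cdots)$, so no contraction is attached to it. You must prove that some \emph{other} entry is an irreducible invariant $(-1)$-curve, and moreover one not located at position $n$: for $i = n$ strong exceptionality gives no control of $h^{>0}(A_n)$, and de-augmenting there merges interval sums with $A_n$, which is only left-orthogonal. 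Proving this selection statement is essentially equivalent to the theorem. (b) The normalization: the phrase ``comes from'' in Theorem \ref{toricclassification} hides the reordering process of Proposition \ref{normalorderingproposition}, and Propositions \ref{8astraightening}, \ref{8cstraightening}, \ref{9straightening} exhibit strongly exceptional toric systems on the surfaces 8a, 8c, 9 (with, e.g., $A_n = -H + R_1 + R_2 + R_3$, so $h^0(A_n) = 0$) that are \emph{not} standard augmentations as given; they only become extensions of $H, H, H$ after inverting $A_n$ and re-merging. Your de-augmentation move never inverts a sign, so it cannot produce this normalization, and your inductive step ``$\mathcal{A}'$ is a standard augmentation, hence so is $\mathcal{A}$'' silently assumes that augmentation commutes with bringing a system into normal form, which you never address.

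For contrast, the paper does not hunt for $(-1)$-entries of $\mathcal{A}$ at all. It straightens the single divisor $A = \sum_{i=1}^{n-1} A_i = E_n - E_1$ (Section \ref{straighteningsection}), proves by lattice-point counting that the existence of a straightened divisor forces the intermediate surface $X_s$ to be $\mathbb{P}^2$, a Hirzebruch surface, or one of the four surfaces 6d, 8a, 8c, 9 (Propositions \ref{straighteningproposition} and \ref{straightenedclassification}), shows that every strongly exceptional system is, \emph{in normal form}, an augmentation of one on $X_s$ (Proposition \ref{straighteningextensionlemma}, where Lemma \ref{glueinglemma} and the normal form play the role your missing irreducibility argument would have to play), and then disposes of the four exceptional surfaces by explicitly tabulating all strongly left-orthogonal divisors of small Euler characteristic (Tables \ref{totallyacyclic8a}, \ref{totallyacyclic8c}, \ref{totallyacyclic9}). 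Until you can supply (a) and build the normalization of (b) into the statement you induct on, the proposal is a plan, not a proof.
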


Conjecturally, this Theorem should generalize to general rational surfaces. However, our result
is based on a rather detailed analysis of cohomology vanishing on toric surfaces which we cannot
easily extend to the general case. Moreover, a standard augmentation does not necessarily look
like a standard augmentation at the first glance. In the phrase ``comes from'' in above theorem
is hidden a normalization process which must be performed and, as such, is almost obvious (see
the end of section \ref{exceptionalsequencessection} for details), but whose necessity significantly
increases the difficulty of the classification. It turns out that in the toric case all ``difficult''
strongly exceptional sequences are related to cyclic exceptional sequences. These in turn are
easier to understand, but in no case it is a priori clear whether a given strongly exceptional
sequence is cyclic. We hope to obtain a more geometric understanding for this in future work.

{\bf Overview.} In section \ref{surfacegeometry}, after surveying
some standard facts on the geometry of smooth complete toric surfaces, we introduce toric systems and
explain their relation to toric surfaces. In section \ref{rationalsurfacesandtoricsystems} we derive
some elementary properties from cohomology vanishing and show that to every exceptional sequence on a
smooth complete rational surface there is associated a toric system. Section \ref{rationalacyclicsection}
contains some general results for cohomology vanishing on rational surfaces. Based on this, we prove
in section \ref{exceptionalsequencessection} our results for exceptional sequences on general
rational surfaces, except for Theorem \ref{augmentationboundtheorem}, which is proved in section
\ref{augmentationboundproofsection}. Sections \ref{toriccohomologyvanishing} to \ref{toricproofs}
are entirely devoted to the case of toric surfaces. In section \ref{toriccohomologyvanishing} we give
a detailed description of cohomology vanishing of divisors on smooth complete toric surfaces. Section
\ref{torictheorems}
contains the main results on strongly exceptional sequences on toric surfaces. In sections
\ref{straighteningsection} and \ref{toricproofs} we give a proof of Theorem \ref{toricclassification}.

{\bf Notation and general conventions.} For some positive integer $l$, we denote $[l] := \{1, \dots, l\}$.
If we use the letter $n$ (or $n - 1$, $n + 1$, $n + k$, etc.), we will usually assume that the elements of
$\on$ are in cyclic order in the sense that we consider $\on$ as a system of representatives of $\Z/n \Z$.
In particular, for some $i \in \on$ and some $j \in \Z$, we identify $i + j$ with the corresponding class
in $\on$. If we use some different letter, say $t$, then we will usually consider the standard total order
on the set $[t]$. Depending on context, we may also consider other partial orders on the set $[t]$. An
{\em interval} $I \subsetneq \on$ is a subset $I = \{i, i + 1, \dots, i + k\}$, where $i \in \on$,
$i + k \leq n$ and $0 \leq k < n - 1$. A {\em cyclic} interval $I \subsetneq \on$ is either an interval
or the union $I = I_1 \cup I_2$ of two intervals such that $1 \in I_1$ and $n \in I_2$. For any
$\Z$-module $K$, we will denote $K_\Q := K \otimes_\Z \Q$.
For some divisor $D$ on a variety $X$, we will usually omit the subscript $X$ for the
corresponding invertible sheaf $\sh{O}_X(D)$ if there is no ambiguity for $X$. We denote $h^i(D) :=
h^i\big(\sh{O}(D)\big) := \dim H^i\big(X, \sh{O}_X(D)\big)$. We will frequently make use of the fact that
for any Cartier divisor $D$ on an algebraic surface $X$ and any blow-up $b: X' \rightarrow X$ there are
isomorphisms $H^i\big(X', b^*\sh{O}_X(D)\big) \cong H^i\big(X, \sh{O}_X(D)\big)$ for every $i \in \Z$.

\

{\bf Acknowledgements.} We would like to thank both the Institut Fourier, Grenoble and the
Mathematisches Forschungsinstitut Oberwolfach, for their hospitality and generous support.

\section{The birational geometry of toric surfaces}\label{surfacegeometry}

For general reference on toric varieties, we refer to  \cite{Oda} and \cite{Fulton}. The specifics
for toric surfaces are taken from \cite{MiyakeOda} and \cite{Oda}. For Gale transformation, we
refer to \cite{GKZ} and \cite{OdaPark}. Let $X$ be a smooth complete toric surface
defined over some algebraically closed field $\mathbb{K}$. That is, there exists a two-dimensional
torus $T \cong (\mathbb{K}^*)^2$ acting on $X$ such that $T$ itself is embedded as maximal open
and dense orbit in $X$ on which the action restricts to the group multiplication of $T$. It is
clear that every such $X$ is rational.

We denote $M = \Hom(T, \mathbb{K}^*) \cong \Z^2$ and $N = \Hom(\mathbb{K}^*, T) \cong \Z^2$ the
character and cocharacter groups of $T$, respectively. The
toric surface $X$ is completely determined by a collection of elements $l_1, \dots, l_n \in N$
with the following properties. We assume that the $l_i$ are circularly ordered and indexed by
elements in $\on$. Then for every $i \in \on$ the pair $l_i, l_{i + 1}$ forms a positively
oriented basis of $N$. Moreover, for every such pair there exists no other $l_k$ such that
$l_k = \alpha_i l_i + \alpha_{i + 1} l_{i + 1}$ for some nonnegative integers $\alpha_i$,
$\alpha_{i + 1}$. Every pair $l_i, l_{i + 1}$ generates a two-dimensional rational polyhedral
cone in the vector space $N_\Q$, and the collection of faces of all these cones is the fan
$\Delta$ associated to $X$. There is a one-to-one correspondence of $1$-dimensional $T$-orbits
in $X$ and the rays in $\Delta$, i.e. the one-dimensional cones, which have the $l_i$ as
primitive vectors. The corresponding orbit closures
we denote by $D_i$. Every $D_i$ is isomorphic to $\mathbb{P}^1$, and for every $i$, the divisors $D_i$
and $D_{i + 1}$ intersect transversely in the torus fixed point associated to the cone generated by
$l_i$ and $l_{i + 1}$, thus $D_i . D_{i + 1} = 1$. This way, the $D_i$ form a cycle of rational curves
in $X$ of arithmetic genus $1$. Moreover, for every $i \in \on$ there exists the unique relation
\begin{equation*}
l_{i - 1} + a_i l_i + l_{i + 1} = 0,
\end{equation*}
where $a_i = D_i^2 \in \Z$ is the self-intersection number of $D_i$.

Clearly, if just the integers $a_i$ are known, we can reconstruct the $l_i$ from the $a_i$ up to an
automorphism of $N$. However, an arbitrary sequence of $a_i$'s does not necessarily lead to a well-defined
smooth toric surface. An admissible sequence $a_1, \dots, a_n$ is determined by the minimal model
program for toric surfaces. Whenever $a_i = -1$ for some $i$, we can equivariantly blow down the
corresponding $D_i$ and obtain another smooth toric surface $X'$ on which $T$ acts. This surface is
specified by a sequence $a_1', \dots, a_{i - 1}', a_{i + 1}', \dots a_n'$ (where, up to a cyclic change
of enumeration, we can assume that $1 < i < n$) such that $a_{i - 1}' = a_{i - 1} + 1$, $a_{i + 1}' =
a_{i + 1} + 1$, and $a_k' = a_k$ for $k \neq i - 1, i, i + 1$. Conversely, an equivariant blow-up at
some point $D_i \cap D_{i + 1}$ is described by changing $a_1, \dots, a_i, a_{i + 1}, \dots a_n$ to
$a_1, \dots, a_{i - 1}, a_i - 1, -1, a_{i + 1} - 1, a_{i + 2}, \dots, a_n$. This way, we arrive at the
same class of minimal models as in the case of general rational surfaces:

\begin{theorem}
Every toric surface can be obtained by a finite sequence of equivariant blow-ups of $\mathbb{P}^2$
or some Hirzebruch surface $\mathbb{F}_a$.
\end{theorem}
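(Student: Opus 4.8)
The plan is to run the minimal model program directly on the combinatorial data. As explained above, a smooth complete toric surface is equivalent to a cyclically ordered system of primitive vectors $l_1,\dots,l_n$ (equivalently, to the self-intersection sequence $a_1,\dots,a_n$ up to an automorphism of $N$), and the two operations described above—contracting a $D_i$ with $a_i=-1$ and its inverse, the equivariant blow-up of a fixed point—are mutually inverse and each stays within the class of smooth complete toric surfaces. First I would set up an induction on $n$: whenever some $a_i=-1$, contract $D_i$ to obtain a smooth complete toric surface with $n-1$ rays. Since $n$ strictly decreases, this terminates after finitely many steps at a surface $X_{\min}$ with no $(-1)$-curve, and $X$ is recovered from $X_{\min}$ by reversing the contractions, i.e. by a finite sequence of equivariant blow-ups at fixed points. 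It therefore suffices to prove the key lemma: \emph{a smooth complete toric surface with $a_i\neq-1$ for all $i$ has $n\le 4$, hence is $\mathbb{P}^2$ or a Hirzebruch surface.}

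For the base cases, a direct computation with the basis condition $\det(l_i\mid l_{i+1})=1$ shows that $n=3$ forces $l_1,l_2,l_3$ to be $e_1,e_2,-e_1-e_2$ up to an automorphism of $N$, so $X=\mathbb{P}^2$, while for $n=4$ one checks that every admissible system is the fan of some $\mathbb{F}_a$. The arithmetic input for the lemma is the identity
\[
\sum_{i=1}^n a_i = 12 - 3n,
\]
which follows from $K_X=-\sum_i D_i$, $D_i\cdot D_{i+1}=1$, and Noether's formula $\chi_{\mathrm{top}}(X)=12\chi(\sh{O}_X)-K_X^2$ together with $\chi_{\mathrm{top}}(X)=n$ (the number of fixed points) and $\chi(\sh{O}_X)=1$.

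To prove the lemma I would analyze the position of the $l_i$ on the boundary of their convex hull $P=\mathrm{conv}\{l_1,\dots,l_n\}$, which contains $0$ in its interior because the fan is complete; since the rays are angularly ordered, every $l_i$ lies on $\partial P$. If $l_i$ is a vertex of $P$, a supporting functional $\phi$ with $\phi(l_i)>\phi(l_j)$ for $j\neq i$ and $\phi(l_i)>0$, applied to $l_{i-1}+l_{i+1}=-a_i l_i$, gives $-a_i<2$, hence $a_i\ge -1$; under the hypothesis $a_i\neq -1$ this forces $a_i\ge 0$ at every vertex. If instead $l_i$ lies in the relative interior of an edge of $P$, then its neighbours $l_{i-1},l_{i+1}$ are collinear with it on a line not through $0$, and the relation forces $a_i=-2$. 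Writing $p\ge 3$ for the number of vertices and summing, the displayed identity yields $\sum_{\text{vertices}} a_i = 12-n-2p\ge 0$, whence $n+2p\le 12$; since $n\ge p$, the only cases with $n>4$ are $p=3$, $n\in\{5,6\}$.

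The main obstacle is the elimination of these residual cases: here the convexity count alone is satisfied, and one must show that no such system actually closes up into a complete fan. I would do this by the winding constraint—propagating the relations $l_{i-1}+a_i l_i+l_{i+1}=0$ around the cycle starting from a normalized basis $l_1=e_1,\,l_2=e_2$ and checking that the forced vectors fail to return with the correct orientation (the determined values produce either a repeated ray or a negatively oriented final pair). This closing-up analysis, rather than the convexity bound, is where the real work lies; once it rules out $n\ge 5$, the lemma and hence the theorem follow.
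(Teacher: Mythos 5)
Your overall skeleton is the standard one and is fine as far as it goes: contracting a ray with $a_i=-1$ yields a smooth complete toric surface with $n-1$ rays, the process terminates, and $X$ is recovered by reversing the contractions; your derivation of $\sum_i a_i = 12-3n$ from Noether's formula is also correct, and is in fact the non-circular way to get it (the paper deduces its Proposition \ref{twelveminusn} from the very theorem you are proving). Note that the paper offers no proof of this theorem at all -- it is stated as the outcome of the toric minimal model program with references to the standard literature -- so everything hinges on whether you actually establish your key lemma, that a smooth complete fan with no $a_i=-1$ has $n\le 4$.

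There your argument has a genuine gap: the claim that ``every $l_i$ lies on $\partial P$'', where $P=\mathrm{conv}\{l_1,\dots,l_n\}$, is false. Take the fan of $\mathbb{F}_a$ with $a\ge 3$, with rays $l_1=(1,0)$, $l_2=(0,1)$, $l_3=(-1,a)$, $l_4=(0,-1)$ and self-intersection sequence $0,-a,0,a$; this satisfies the hypothesis of your lemma (no $a_i=-1$), yet $l_2$ lies strictly inside the triangle $\mathrm{conv}\{l_1,l_3,l_4\}$, since the segment from $(1,0)$ to $(-1,a)$ crosses the $y$-axis at $(0,a/2)$ and $a/2>1$. So besides vertices of $P$ (where your bound $a_i\ge -1$ is correct) and edge-interior rays, there is a third case your trichotomy omits: rays interior to $P$ -- and, as the example shows, these are exactly the rays of very negative self-intersection, i.e.\ exactly the ones the argument must control. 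Your count $\sum_{\text{vertices}} a_i = 12-n-2p$ presupposes that every non-vertex ray contributes $-2$; since interior rays can contribute arbitrarily negative values, the inequality $n+2p\le 12$ does not follow, and the case analysis collapses for all $n$, not merely leaving the residual cases $p=3$, $n\in\{5,6\}$. (A smaller instance of the same oversight: for an edge-interior ray you assume both angular neighbours lie on the same edge, which can likewise fail when a neighbour is interior.) On top of this, the ``closing-up'' analysis that you yourself identify as where the real work lies is only described, never carried out, so even granting the convexity count the proof would be incomplete. A correct elementary argument replaces the convex hull by a height function: first show that a complete smooth fan other than that of $\mathbb{P}^2$ contains a pair of opposite rays $\pm v$; then, if one of the two open half-planes bounded by $\mathbb{R}v$ contains at least two rays, the ray of maximal height there (or, if its self-intersection is $-2$, an extreme ray of the maximal-height collinear block) has self-intersection $-1$. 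Hence a fan with no $(-1)$-rays has exactly one ray on each side of $\mathbb{R}v$, giving $n=4$ and $\mathbb{F}_a$; this is the argument in the sources the paper cites.
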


In particular, the sequences of self-intersection numbers associated to $\mathbb{P}^2$ and the $\mathbb{F}_a$
are $1, 1, 1$ for $\mathbb{P}^2$ and $0, a, 0, -a$
for $\mathbb{F}_a$. Every other admissible sequence $a_1, \dots, a_n$ can be obtained by successive
augmentation of one of these sequences by the aforementioned process. In particular, this implies

\begin{proposition}\label{twelveminusn}
Let $X$ be a smooth complete toric surface determined by self-intersection numbers $a_1, \dots, a_n$.
Then $\sum_{i = 1}^n a_i = 12 - 3n$.
\end {proposition}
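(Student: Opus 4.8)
The plan is to argue by induction on the number of blow-ups needed to reach a minimal model, exploiting the structure of the toric minimal model program recalled just above. By the preceding theorem, $X$ is obtained from either $\mathbb{P}^2$ or some Hirzebruch surface $\mathbb{F}_a$ by a finite sequence of equivariant blow-ups, so it suffices to verify the formula on the two families of minimal models and to check that it is preserved under each blow-up.

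For the base cases I would simply evaluate both sides. For $\mathbb{P}^2$ the sequence is $1, 1, 1$, so $n = 3$ and $\sum_i a_i = 3 = 12 - 3 \cdot 3$. For $\mathbb{F}_a$ the sequence is $0, a, 0, -a$, so $n = 4$ and $\sum_i a_i = 0 = 12 - 3 \cdot 4$, independently of $a$. Thus the identity $\sum_i a_i = 12 - 3n$ holds on every minimal model.

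For the inductive step, suppose the identity holds for a surface with self-intersection sequence $a_1, \dots, a_n$, and consider the surface $X'$ obtained by an equivariant blow-up at a fixed point $D_i \cap D_{i+1}$. By the description recalled above, the new sequence is $a_1, \dots, a_{i-1}, a_i - 1, -1, a_{i+1} - 1, a_{i+2}, \dots, a_n$, which has $n + 1$ entries. Comparing term by term, the sum of self-intersection numbers changes by $(-1) + (-1) + (-1) = -3$, since two existing entries drop by one and a new entry equal to $-1$ is inserted, while $n$ increases by $1$. Hence $\sum_i a_i' = (12 - 3n) - 3 = 12 - 3(n + 1)$, and the identity is preserved. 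The induction then closes.

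The argument is essentially a bookkeeping induction, so I expect no serious obstacle; the only points requiring care are that both families of base cases satisfy the formula (in particular that the $\mathbb{F}_a$ case is independent of $a$) and that the blow-up step alters exactly the three entries $a_i$, $a_{i+1}$, and the inserted $-1$, with no accidental cancellation from the cyclic indexing. As a conceptual cross-check one may instead give a direct proof: using $K_X = -\sum_i D_i$ together with $D_i^2 = a_i$, $D_i . D_{i+1} = 1$, and $D_i . D_j = 0$ otherwise, one computes $K_X^2 = \sum_i a_i + 2n$; combined with Noether's formula $12 \chi(\mathcal{O}_X) = K_X^2 + e(X)$ and the facts $\chi(\mathcal{O}_X) = 1$ and $e(X) = n$ for a smooth complete toric surface, this gives $K_X^2 = 12 - n$ and hence $\sum_i a_i = 12 - 3n$. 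This route is shorter but imports Noether's formula, whereas the inductive proof stays entirely within the combinatorial framework already set up in this section.
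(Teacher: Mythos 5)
Your proposal is correct and coincides with the paper's own (implicit) argument: the paper states the proposition as an immediate consequence of the preceding discussion, namely that the formula holds for the minimal models $\mathbb{P}^2$ and $\mathbb{F}_a$ and that each equivariant blow-up replaces $a_i, a_{i+1}$ by $a_i - 1, -1, a_{i+1} - 1$, decreasing the sum by $3$ while increasing $n$ by $1$. Your Noether-formula cross-check is a nice independent confirmation but goes beyond what the paper does.
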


There is also a local version of above theorem:

\begin{proposition}[\cite{MiyakeOda}]\label{smoothblowdownproposition}
Let $i < k$ such that $l_i, l_k$ form a positively oriented basis. Then there exists a sequence of blow-downs
from $X$ to a smooth complete toric surface $X'$ whose associated primitive vectors are $l_1, \dots, l_i,
l_k, \dots, l_n$.
\end{proposition}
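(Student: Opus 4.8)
The plan is to localize the statement to the smooth two-dimensional cone $\sigma := \langle l_i, l_k \rangle$ and to blow down the rays lying strictly inside $\sigma$ one at a time. Since $l_i, l_k$ form a positively oriented basis, $\sigma$ is a smooth cone, and because the $l_j$ are circularly ordered, the rays $l_{i+1}, \dots, l_{k-1}$ are precisely the rays of $\Delta$ lying in the interior of $\sigma$; together with $l_i$ and $l_k$ they form a smooth subdivision of $\sigma$. As recalled before the theorem, each equivariant blow-down removes one interior ray $l_j$ with $a_j = D_j^2 = -1$, leaving all the other rays — in particular $l_1, \dots, l_i$ and $l_k, \dots, l_n$ — unchanged as vectors in $N$, and keeping the surface smooth and complete. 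Hence it suffices to show that, as long as at least one interior ray is present, some interior ray has self-intersection $-1$: blowing it down decreases the number of interior rays by one, and iterating lands us at the fan with primitive vectors $l_1, \dots, l_i, l_k, \dots, l_n$, which is the desired $X'$.

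The heart of the matter is therefore a local extremal claim. Let $f \in M$ be the linear functional determined by $f(l_i) = f(l_k) = 1$; it is integer-valued on $N$ because $l_i, l_k$ is a $\Z$-basis of $N$, and it is nonnegative on $\sigma$. The only lattice points $w \in \sigma$ with $f(w) \leq 1$ are $0$, $l_i$, and $l_k$, since writing $w = \alpha l_i + \beta l_k$ with $\alpha, \beta \in \Z_{\geq 0}$ forces $\alpha + \beta \leq 1$; equivalently, the unimodular triangle with vertices $0, l_i, l_k$ is empty. Consequently every interior ray generator $l_j$ satisfies $\phi_j := f(l_j) \geq 2$, whereas $\phi_i = \phi_k = 1$. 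Applying $f$ to the defining relation $l_{j-1} + a_j l_j + l_{j+1} = 0$ gives $\phi_{j-1} + \phi_{j+1} = -a_j \phi_j$ for every interior index $j$ (where $i < j < k$ guarantees $j-1 \geq i$ and $j+1 \leq k$); since all $\phi$ are positive, this already forces $a_j < 0$.

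Now choose $j_0$ to be the smallest index with $i < j_0 < k$ at which $\phi_j$ attains its maximum over $i \leq j \leq k$; such an interior maximizer exists because the interior values are $\geq 2$ while $\phi_i = \phi_k = 1$. By minimality $\phi_{j_0 - 1} < \phi_{j_0}$, and trivially $\phi_{j_0 + 1} \leq \phi_{j_0}$, so $-a_{j_0}\phi_{j_0} = \phi_{j_0 - 1} + \phi_{j_0 + 1} < 2\phi_{j_0}$, whence $-a_{j_0} < 2$. Combined with $a_{j_0} < 0$ this yields $a_{j_0} = -1$, so $D_{j_0}$ can be blown down. This completes the induction and hence the proof. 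The one genuinely substantive point — and the step I expect to be the main obstacle to pin down cleanly — is precisely the existence of an interior $(-1)$-ray; the functional $f$ together with the empty–triangle observation is the cleanest route, since it converts the geometric convexity statement that the interior rays bulge strictly beyond the chord $\overline{l_i l_k}$ into the elementary maximum argument above.
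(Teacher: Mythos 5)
Your proposal is correct. Note, however, that the paper itself offers no proof of this proposition at all: it is quoted from Miyake--Oda, so there is no in-paper argument to compare against line by line. What you have written is a complete, self-contained proof, and it is essentially the classical argument by which one factors a smooth subdivision of a smooth two-dimensional cone into star subdivisions: the linear form $f$ with $f(l_i)=f(l_k)=1$ takes value $\geq 2$ on every interior primitive generator (the empty-triangle observation), the relation $l_{j-1}+a_jl_j+l_{j+1}=0$ turns into $\phi_{j-1}+\phi_{j+1}=-a_j\phi_j$, and a maximizer of $\phi$ among interior indices is forced to have $a_j=-1$; induction on the number of interior rays then finishes the proof, since blowing down a $(-1)$-ray leaves all other rays and the smoothness and completeness of the surface intact, exactly as recalled in Section 2 of the paper. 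The only step you pass over quickly is the claim that $l_{i+1},\dots,l_{k-1}$ all lie in the interior of $\sigma=\langle l_i,l_k\rangle$: this needs that the counterclockwise sweep from $l_i$ to $l_k$ through the intermediate rays is the angle determined by $\det(l_i,l_k)=1$, hence lies in $(0,\pi)$, which follows because the rays of a complete fan wind around $N_\Q$ exactly once. That is a standard fact and your phrasing implicitly uses it, so the gap is cosmetic rather than substantive. In short: where the paper buys the statement by citation, your argument supplies the underlying combinatorial mechanism, and it is the same mechanism one finds in the cited literature.
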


The Picard group of $X$ is generated by the $T$-invariant divisors $D_1, \dots, D_n$. More precisely,
we have a short exact sequence
\begin{equation}\label{standardsequence}
0 \longrightarrow M \overset{L}{\longrightarrow} \Z^n \longrightarrow \pic(X) \longrightarrow 0,
\end{equation}
where $L = (l_1, \dots, l_n)$, i.e. the $l_i$ are considered as linear forms on $M$. The $i$-th element
of the standard basis of $\Z^n$ maps to the rational equivalence class of the divisor $D_i$.
There is no canonical choice of coordinates for $\pic(X)$, but there is a very natural and convenient
representation for toric divisors if considered as elements in the group of numerical equivalence classes
of curves $N_1(X)$. Consider the natural pairing on $X$:
\begin{equation*}
N_1(X) \otimes \pic(X) \longrightarrow \Z, \qquad (C, D) \mapsto C . D,
\end{equation*}
which is a non-degenerate bilinear form. The pairing is completely specified by the intersection products
of the $D_i$ among each other, which are given by
\begin{equation*}
D_i . D_j =
\begin{cases}
a_i & \text{if } i = j, \\
1 & \text{if } j \in \{i - 1, i + 1\}, \\
0 & \text{else}.
\end{cases}
\end{equation*}
Denote $\mathfrak{D} := (D_i . D_j)_{i, j = 1, \dots, n}$ the corresponding matrix. Then we
have a linear map $\Z^n \overset{\mathfrak{D}}{\rightarrow} \Z^n$ whose kernel is $M$, the
group of numerically trivial
$T$-invariant divisors. Given a $T$-invariant divisor $D := \sum_{i \in \on} c_i D_i$, its image
$\mathfrak{D}(D)$ is a
tuple of the form $(d_1, \dots, d_n)$, where $d_i := d_i(D) := c_{i - 1} + a_i c_i + c_{i + 1} = D . D_i$.
If we dualize sequence (\ref{standardsequence}), we get
\begin{equation}\label{dualstandardsequence}
0 \longrightarrow \pic(X)^* \longrightarrow \Z^n \overset{L^T}{\longrightarrow} N \longrightarrow 0,
\end{equation}
where $L^T$ denotes the transpose of $L$. The kernel of $L^T$ coincides with the image of $\mathfrak{D}$,
so that we can identify $\pic(X)^*$ with $N_1(X)$ in a natural way as subgroups of $\Z^n$. So, if considered
as a curve, the tuple $(d_1, \dots, d_n)$ is a natural representation of $D$ which does not depend on
the choice of a $T$-invariant representative. Moreover, by sequence (\ref{dualstandardsequence}) we have
for any tuple $(d_1, \dots, d_n) \in N_1(X)$ that
\begin{equation*}
\sum_{i \in \on} d_i l_i = 0.
\end{equation*}
By this we can identify $N_1(X)$ with the set of closed polygonal lines in $N_\Q$ whose segments are given
by some multiple of every $l_i$. We will make use of this and give some more detail in section
\ref{toriccohomologyvanishing}. Note that to determine whether some $D$ is nef, it suffices to test this
on the $T$-invariant divisors. We have:

\begin{proposition}\label{dinefproposition}
Let $D$ a $T$-invariant divisor on $X$, then
\begin{enumerate}[(i)]
\item for every $i \in \on$ we have $d_i = \deg \sh{O}(D)\vert_{D_i}$;
\item $D$ is nef iff $d_i \geq 0$ for every $i \in \on$.
\end{enumerate}
\end{proposition}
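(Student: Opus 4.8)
The plan is to treat part (i) as a matter of definitions and to locate the real content in the non-trivial implication of (ii). For (i), recall that on a smooth surface the intersection number of a Cartier divisor $D$ with an irreducible curve $C$ is \emph{by definition} $D.C = \deg\big(\sh{O}_X(D)\vert_C\big)$. Since each $D_i \cong \mathbb{P}^1$ is an irreducible curve and $d_i$ was defined precisely as $D.D_i$, applying this with $C = D_i$ gives (i) immediately.

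For (ii), the forward implication is formal: if $D$ is nef then $D.C \geq 0$ for every irreducible curve $C$, and in particular $d_i = D.D_i \geq 0$ for each $i \in \on$ since the $D_i$ are irreducible. The substance is the converse, for which I must show that testing positivity against the finitely many invariant curves $D_1, \dots, D_n$ already forces $D.C \geq 0$ for every irreducible curve $C$.

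I would prove the converse using the dictionary between $T$-invariant divisors and support functions. Writing $D = \sum_{i \in \on} c_i D_i$, associate to $D$ the piecewise linear support function $\psi_D$ on $N_\Q$, linear on each maximal cone $\langle l_i, l_{i+1}\rangle$ and normalized so that $\psi_D(l_i) = -c_i$. The standard characterization of nefness on a complete toric variety states that $D$ is nef if and only if $\psi_D$ is (upper) convex, and convexity of a piecewise linear function is a local condition imposed separately across each ray $\rho_i$. A short computation using the wall relation $l_{i-1} + a_i l_i + l_{i+1} = 0$ shows that the kink of $\psi_D$ across $\rho_i$ is measured exactly by $c_{i-1} + a_i c_i + c_{i+1} = d_i$, so that $\psi_D$ is convex precisely when $d_i \geq 0$ for all $i \in \on$. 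This yields the converse and completes (ii).

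Alternatively, and perhaps more in keeping with the curve-theoretic viewpoint developed above, one can argue that the cone of curves $\overline{NE}(X)$ is generated by the classes $[D_1], \dots, [D_n]$: any irreducible curve $C$ degenerates, under the limit of a generic one-parameter subgroup of $T$, to a $T$-invariant effective $1$-cycle lying in the same numerical class, which is a non-negative combination $\sum_i n_i D_i$. Granting this, $d_i \geq 0$ for all $i$ gives $D.C = \sum_i n_i\, d_i \geq 0$ for every irreducible $C$ by linearity, and nefness follows since it may be tested on irreducible curves. The main obstacle is precisely this converse direction: both routes require genuine toric input — either the identification nef $\Leftrightarrow$ convex support function, or the fact that the invariant curves span the Mori cone — and neither is a formal consequence of the non-degenerate pairing and polygonal-line identifications recorded before the statement. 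Everything else is bookkeeping once the reduction to the finitely many $D_i$ is in hand.
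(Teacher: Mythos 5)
Your proof is correct, but there is nothing in the paper to compare it against: the paper states Proposition \ref{dinefproposition} with no proof at all, treating it as a standard fact of toric geometry imported from the references (\cite{Oda}, \cite{Fulton}, \cite{MiyakeOda}) cited at the start of section \ref{surfacegeometry}. What you have written out is precisely the material the paper leaves implicit, and your division of labour is the right one. Part (i) is indeed bookkeeping: the paper has already recorded $d_i = D . D_i$ via the intersection matrix $\mathfrak{D}$, and $D . D_i = \deg \sh{O}(D)\vert_{D_i}$ because $D_i \cong \mathbb{P}^1$ is a smooth irreducible curve. The forward half of (ii) is formal, and you correctly isolate the converse as the only step needing genuine toric input. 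Both of your routes are the canonical ones: the support-function criterion, where the wall relation $l_{i-1} + a_i l_i + l_{i+1} = 0$ shows the kink of $\psi_D$ across the ray through $l_i$ is exactly $c_{i-1} + a_i c_i + c_{i+1} = d_i$ (modulo the sign conventions you flag), or the toric cone theorem, where a generic one-parameter subgroup degenerates any irreducible curve to an effective $T$-invariant $1$-cycle in the same numerical class, so that the classes $[D_1], \dots, [D_n]$ generate the cone of curves. Your closing remark that neither converse is a formal consequence of the pairing set up before the statement matches the paper's own framing, since the sentence immediately preceding the proposition (``to determine whether some $D$ is nef, it suffices to test this on the $T$-invariant divisors'') is exactly the reduction whose proof you supply.
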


In particular:

\begin{proposition}\label{canonicalnefproposition}
Denote $K_X = - \sum_{i = 1}^n D_i$ the canonical divisor on $X$. Then $d_i(K_X) = - a_i - 2$
for all $i$. Then $-K_X$ is nef iff $a_i \geq -2$ for all $i$ and $-K_X$ is ample iff $a_i \geq -1$
for  $i$.
\end{proposition}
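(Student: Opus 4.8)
The plan is to reduce the whole statement to the intersection-number formula recalled above and then feed the result into Proposition \ref{dinefproposition}. First I would use the standard description $K_X = -\sum_{j = 1}^n D_j$ of the canonical class of a smooth complete toric variety, so that in this $T$-invariant representative every coefficient is $c_j = -1$. Plugging this into the formula $d_i(D) = c_{i - 1} + a_i c_i + c_{i + 1}$ recalled above yields, for each $i \in \on$,
\begin{equation*}
d_i(K_X) = c_{i - 1} + a_i c_i + c_{i + 1} = -1 + a_i \cdot (-1) + (-1) = -a_i - 2,
\end{equation*}
which is the first assertion. Equivalently one may compute $K_X . D_i = -(D_{i - 1} . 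D_i + D_i^2 + D_{i + 1} . D_i) = -(1 + a_i + 1)$ directly from the intersection matrix $\mathfrak{D}$. In either case we get $d_i(-K_X) = a_i + 2$.

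For the nef statement I would simply apply Proposition \ref{dinefproposition}(ii) to the divisor $-K_X$: it is nef if and only if $d_i(-K_X) \geq 0$ for every $i \in \on$, that is, if and only if $a_i + 2 \geq 0$, i.e. $a_i \geq -2$ for all $i$.

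For the ampleness statement I would invoke the strict version of the same criterion. Since $X$ is a smooth complete toric surface, its cone of curves $\overline{NE}(X)$ is generated by the classes of the invariant curves $D_1, \dots, D_n$ (a standard fact, see \cite{Oda} or \cite{Fulton}), so by Kleiman's criterion a divisor is ample precisely when it pairs strictly positively with every generator of this cone. Hence $-K_X$ is ample if and only if $d_i(-K_X) = a_i + 2 > 0$ for all $i$, and since the $a_i$ are integers this is equivalent to $a_i \geq -1$ for all $i$, giving the last assertion.

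I expect no serious obstacle here, as the entire statement is a bookkeeping exercise once the intersection data and Proposition \ref{dinefproposition} are in hand. The only point requiring a little care is the ample case: Proposition \ref{dinefproposition} as stated covers only nef divisors, so one must separately record that ampleness on a complete toric surface is tested by strict positivity against the invariant curves $D_i$, which follows from the fact that these curves generate the cone of curves.
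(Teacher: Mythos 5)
Your proposal is correct and follows exactly the route the paper intends: the paper states this proposition without proof, as an immediate consequence ("In particular:") of Proposition \ref{dinefproposition} and the formula $d_i(D) = c_{i-1} + a_i c_i + c_{i+1}$, which is precisely the computation you carry out. Your only addition is to make explicit the strict-positivity criterion for ampleness (via the invariant curves generating the cone of curves), a standard toric fact that the paper leaves implicit; this is a sensible piece of care, not a departure in method.
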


Note that on a smooth toric surface an invertible sheaf is ample if and only if it is very ample.
There are precisely 16 smooth complete toric surface whose anti-canonical divisor is nef
(including the 5 del Pezzo surfaces which admit a toric structure).
These are shown in table \ref{weakdelpezzofigure} in terms of the
self-intersection numbers $a_i$. In this table, the first four surfaces are given their standard names, the
other labels just reflect the length of the sequence $a_1, \dots, a_n$.

\begin{table}[htbp]
\centering
\begin{tabular}{c|c}
Name & self-intersection numbers $a_1, \dots, a_n$ \\ \hline
$\mathbb{P}^2$ & 1, 1, 1 \\
$\mathbb{P}^1 \times \mathbb{P}^1$ & 0, 0, 0, 0 \\
$\mathbb{F}_1$ & 0, 1, 0, -1 \\
$\mathbb{F}_2$ & 0, 2, 0, -2 \\
5a & 0, 0, -1, -1, -1 \\
5b & 0, -2, -1, -1, 1 \\
6a & -1, -1, -1, -1, -1, -1 \\
6b & -1, -1, -2, -1, -1, 0 \\
6c & 0, 0, -2, -1, -2, -1 \\
6d & 0, 1, -2, -1, -2, -2 \\
7a & -1, -1, -2, -1, -2, -1, -1 \\
7b & -1, -1, 0, -2, -1, -2, -2 \\
8a & -1, -2, -1, -2, -1, -2, -1, -2 \\
8b & -1, -2, -2, -1, -2, -1, -1, -2 \\
8c & -1, -2, -2, -2, -1, -2, 0, -2 \\
9 & -1, -2, -2, -1, -2, -2, -1, -2, -2 \\
\end{tabular}
\caption{The 16 complete smooth toric surfaces whose anti-canonical divisor is nef.}\label{weakdelpezzofigure}
\end{table}

The short exact sequence (\ref{standardsequence})  is an example for a {\em Gale} transform. By general
properties of Gale transforms, for any subset $I$ of $\{1, \dots, n\}$,
the set $L_I := \{l_i \mid i \in I\}$ forms a basis of $N$ iff the complementary set $\{D_i \mid i
\notin I\}$ forms a basis of $\pic(X)$, and $L_I$ is a minimal linearly dependent set iff the
complementary
set is a maximal subset of the $D_i$ which is contained in a hyperplane in $\pic(X)$. Moreover, we can
invert any Gale transform by considering the dual short exact sequence. So by the sequence
(\ref{dualstandardsequence}) we get back the $l_i$ from the $D_i$.

\begin{definition}\label{toricsystemdef}
Let $P$ be a free $\Z$-module of rank $n - 2$ together with a integral symmetric bilinear form
$\langle\, , \, \rangle$. A sequence of elements $A_1, \dots, A_n$ in $P$ is called an
{\em abstract toric system} iff it satisfies the following conditions:
\begin{enumerate}[(i)]
\item\label{toricsystemdefi} $\langle A_i , A_{i + 1} \rangle = 1$ for $i \in \on$;
\item\label{toricsystemdefii} $\langle A_i , A_j \rangle = 0$  for $i \neq j$ and $\{i, j\} \neq \{k, k + 1\}$
for all $k \in \on$;
\item\label{toricsystemdefiii} $\sum_{i = 1}^n \langle A_i, A_i \rangle = 12 - 3n$.
\end{enumerate}
\end{definition}

Clearly, for any given smooth complete toric surface $X$, the divisors $D_1, \dots, D_n$ form an abstract toric
system in $\pic(X)$ with respect to the intersection form. We show that the data specifying an abstract toric
system is equivalent to defining a toric surface.

\begin{proposition}\label{dualityproposition}
Let $P$, $\langle\, , \,\rangle$ as in definition \ref{toricsystemdef}, $A_1, \dots, A_n$ an abstract toric
system and consider the Gale duals $l_1, \dots, l_n$ in $N := \Z^n / P$ of the $A_i$. Then $N \cong \Z^2$ and
the $l_1, \dots, l_n$ generate the fan of a smooth complete toric surface $X$
with $T$-invariant irreducible divisors $D_1, \dots, D_n$ such that $D_i^2 = \langle A_i, A_i \rangle$ for
every $1 \leq i \leq n$. In particular, we can identify $P$ with $\pic(X)$ and $\langle \, , \, \rangle$
with the intersection form on $\pic(X)$.
\end{proposition}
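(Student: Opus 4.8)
The plan is to read the proposition off the Gale-dual short exact sequence
\begin{equation*}
0 \longrightarrow P \overset{A}{\longrightarrow} \Z^n \longrightarrow N \longrightarrow 0,
\end{equation*}
where $A(D) := (\langle A_1, D\rangle, \dots, \langle A_n, D\rangle)$ and $N := \Z^n/\operatorname{im}(A)$, and then to check that the images $l_1, \dots, l_n$ of the standard basis of $\Z^n$ satisfy the axioms for the primitive ray generators of a smooth complete toric surface recalled at the start of this section. The computation driving everything is that conditions (\ref{toricsystemdefi}) and (\ref{toricsystemdefii}) give $A(A_i) = e_{i-1} + \langle A_i, A_i\rangle\, e_i + e_{i+1}$; passing to $N$ this yields the relations $l_{i-1} + a_i l_i + l_{i+1} = 0$ with $a_i := \langle A_i, A_i\rangle$, which are exactly the relations demanded of ray generators and which will also force $D_i^2 = a_i$ once $X$ is built.

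First I would show the sequence is exact with $N$ free of rank two, and simultaneously that each consecutive complement $\{A_j \mid j \neq i, i+1\}$ is a $\Z$-basis of $P$. Linear independence of such a subset is a short computation: pairing a vanishing combination successively with $A_{i}, A_{i-1}, A_{i-2}, \dots$ and using that $\langle A_k, A_l\rangle = 0$ unless $k,l$ are cyclically adjacent, one peels off the coefficients one at a time. By the standard properties of Gale transforms quoted above, once a complementary set of the $A_j$ is a basis of $P$, the corresponding pair $\{l_i, l_{i+1}\}$ is a basis of $N$; in particular $N \cong \Z^2$ and every pair of cyclically consecutive $l_i$ spans $N$. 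I would then fix the orientation: since $\{l_i,l_{i+1}\}$ is a basis, $\det(l_i, l_{i+1}) = \pm 1$, and rewriting the relation as $l_{i+1} = -l_{i-1} - a_i l_i$ gives $\det(l_i, l_{i+1}) = \det(l_{i-1}, l_i)$, so the sign is constant in $i$; choosing the orientation of $N$ making it $+1$ turns every consecutive pair into a positively oriented basis.

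It then remains to show that the $l_i$ occur in genuine cyclic order and wind around the origin exactly once, so that the cones $\langle l_i, l_{i+1}\rangle$ are the maximal cones of a complete smooth fan $\Delta$ and no further $l_k$ lies inside such a cone. The natural tool is induction on $n$ along the minimal model program: one verifies that the abstract toric-system axioms are preserved by the blow-down move deleting an index $i$ with $a_i = -1$ and replacing $A_{i\pm 1}$ by $A_{i\pm 1} + A_i$ (a direct check shows the self-intersections become $a_{i\pm 1}+1$, the new neighbours satisfy $\langle A_{i-1}', A_{i+1}'\rangle = 1$, and condition (\ref{toricsystemdefiii}) is restored with $n$ replaced by $n-1$), while on the dual side this move simply removes $l_i$, so that reinserting $l_i = l_{i-1} + l_{i+1}$ is a star subdivision, i.e.\ a blow-up. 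The base cases $n = 3$ and $n = 4$ reproduce $\mathbb{P}^2$ and the Hirzebruch surfaces directly.

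The main obstacle lies entirely in this last step and is twofold. First, to run the induction one must know that a nonminimal abstract toric system actually contains an index with $a_i = -1$ (or else is one of the minimal models); this does not follow from the numerical identity (\ref{toricsystemdefiii}) alone and requires exploiting that the Gram matrix has nullity exactly two, i.e.\ that the $A_i$ genuinely span a lattice of rank $n-2$. Second, and relatedly, one must rule out the global winding number being larger than one; this is precisely where condition (\ref{toricsystemdefiii}) is consumed, as $\sum_i a_i = 12 - 3n$ is the discrete Gauss--Bonnet identity pinning the total turning to a single revolution (Proposition \ref{twelveminusn} being its necessity). Granting completeness and smoothness of $\Delta$, the surface $X = X(\Delta)$ carries invariant divisors $D_1, \dots, D_n$ with $D_i^2 = a_i = \langle A_i, A_i\rangle$, and inverting the Gale transform (reading sequence (\ref{dualstandardsequence}) backwards) identifies $P$ with $\pic(X)$ together with its intersection form; the unimodularity of that form, equivalent to the consecutive complements being $\Z$-bases rather than merely finite-index sublattices, is a byproduct of the same induction.
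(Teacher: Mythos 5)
Your first half is correct and essentially coincides with the paper's proof: the peeling computation showing that each consecutive complement $\{A_j \mid j \neq i, i+1\}$ is a basis of $P$ (note that integrality of the form makes the same peeling give saturation at once, so there is no need to defer unimodularity to a later induction), Gale duality giving $N \cong \Z^2$ with every pair $l_i, l_{i+1}$ a basis, the relations $l_{i-1} + a_i l_i + l_{i+1} = 0$ with $a_i = \langle A_i, A_i \rangle$, and the determinant trick fixing a consistent orientation. The genuine gap is the step you yourself flag and then skip: proving that the $l_i$ wind around the origin exactly once and that no $l_k$ lies inside a cone spanned by $l_i, l_{i+1}$, i.e. that the configuration is an honest complete fan. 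You propose an induction along blow-downs, observe that it needs (a) existence of an index with $a_i = -1$ in any non-minimal configuration and (b) exclusion of winding number $r > 1$, assert that condition (\ref{toricsystemdefiii}) ``is consumed'' there, and then continue under the hypothesis ``granting completeness and smoothness of $\Delta$''. That hypothesis is exactly the nontrivial content of the proposition, and neither (a) nor (b) is established. Obstacle (a) is real: the configuration $(1,0), (0,1), (-1,-1)$ traversed twice has all consecutive pairs positively oriented bases, $n = 6$, winding number $2$, and every $a_i = 1$, so your induction cannot start on it; it must be excluded by condition (\ref{toricsystemdefiii}), but how that exclusion works is precisely what is left unproved. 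For the same reason the base cases $n = 3, 4$ do not suffice: minimal configurations of higher winding number exist and would have to be classified and ruled out.

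The paper closes this gap with a direct counting argument that needs no MMP induction and no $(-1)$-index: partition the $l_i$ into windings $W_1, \dots, W_r$; insert additional rays so that each winding becomes the fan of a smooth complete toric surface (each insertion is a toric blow-up, by Proposition \ref{smoothblowdownproposition}, and changes the sum of self-intersection numbers by exactly $-3$); applying Proposition \ref{twelveminusn} to each of the $r$ resulting surfaces yields $\sum_i a_i = 12r - 3n$, and condition (\ref{toricsystemdefiii}) then forces $r = 1$. This is exactly the ``discrete Gauss--Bonnet'' identity you invoke, but proved rather than postulated, and it makes your obstacle (a) irrelevant. Until you supply an argument of this kind (or a classification of minimal multi-winding configurations to feed your induction), the proposal does not prove the proposition.
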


\begin{proof}
For $n < 3$ there is nothing to prove and for $n = 3$ the statement is easy to see. So we assume without loss
of generality that $n \geq 4$. We first show that $\{A_j \mid j \neq i, i + 1\}$ forms a basis of $\pic(X)$ for
every $i \in \on$. This implies that $N \cong \Z^2$ and, by Gale duality,
that the complementary pairs of $l_i$ are bases of $N$. Up to cyclic renumbering, it suffices to show that
$A_1, \dots, A_{n - 2}$ is a basis of $\pic(X)$. We have $\langle A_1, A_2 \rangle = 1$, $\langle A_n, A_1
\rangle =  1$ and $\langle A_n, A_2 \rangle =  0$. As $\langle \, , \, \rangle$ is integral, this implies
that $A_1, A_2$ generate a subgroup of rank two of $P$. This subgroup is saturated, i.e. every element
in $P$ which can be represented by a rational linear combination of $A_1$ and $A_2$, can also be represented
by an integral linear combination of $A_1$ and $A_2$. We proceed by induction. Assume that $i < n - 2$ and
that $A_1, \dots, A_i$
are linearly independent and span a saturated subgroup of rank $i$ of $P$. For any linear combination
$B := \sum_{j = 1}^i \alpha_j A_j$, we have $\langle B , A_{i + 2} \rangle = 0$. But $\langle A_{i + 1},
A_{i + 2} \rangle = 1$ and therefore $A_{i + 1}$ cannot
be such a linear combination and thus is linearly independent of $A_1, \dots, A_i$. From integrality of
the bilinear form it follows that $A_1, \dots, A_{i + 1}$ forms a saturated subgroup of $P$. So by induction
$A_1, \dots, A_{n - 2}$ is a basis of $P$.

By Gale duality, we thus obtain a sequence of integral vectors $l_1, \dots, l_n$ in $N \cong \Z^2$, where every
pair $l_i, l_{i + 1}$ with $i \in \on$ forms a basis of $N$. Consider the quotient $P/A_i^\bot
\cong \Z$ for any $i$. By choosing an appropriate generator of $P/A_i^\bot$, we can identify the images
of $A_{i - 1}$ and $A_{i + 1}$ with $1$ and the image of $A_i$ with $a_i$. If we consider these elements
as the Gale
duals of $l_{i - 1}, l_i, l_{i + 1}$ alone, we see that for every $i$ we have a unique relation
$l_{i - 1} + a_i l_i + l_{i + 1} = 0$ for $a_i = \langle A_i, A_i \rangle \in \Z$.

It only remains to show that for every $l_k$ there do not exist $l_i, l_{i + 1}$ and $\alpha_i,
\alpha_{i + 1} \geq 0$ such that $l_k = \alpha_i l_i + \alpha_{i + 1} l_{i + 1}$. As the $l_i, l_{i + 1}$
form bases of $N$ for every $i$, we see that the ordering (clockwise or counterclockwise) of the $l_i$ might
result in several ``windings'' until closing up with the final pair $l_n, l_1$. Assume that we partition
the $l_i$ according to such windings, i.e. we group them to $W_1 = \{l_1, \dots, l_{k_1}\}$, $W_2 = \{
l_{k_1 + 1}, \dots, l_{k_2}\}, \dots, W_r = \{l_{k_{r - 1} + 1}, \dots, l_{k_r}\}$, where $k_r = n$. For every
two windings $W_j$, $W_{j + 1}$, we get that there exist $\alpha_j, \alpha_{j + 1}$ such that
$l_1 = \alpha_j l_{k_j} + \alpha_{j + 1} l_{k_{j + 1}}$. We now add additional rays: first, we add
$l_1^j = l_1$ for every $W_j$, second we add rays between $l_{k_j}$ and $l_1^j$ and between $l_1^j$ and
$l_{k_{j + 1}}$ such that any two neighbouring rays are lattice bases of $N$. This way, we obtain a stack
of $r$ fans in $N$, each of which corresponds to a smooth toric surface. We denote $n'$ the total number of
rays after performing this process and $a_i'$ the new intersection numbers; then we get by Propositions
\ref{twelveminusn} and \ref{smoothblowdownproposition}:
\begin{equation*}
\sum_{i} a_i' = \sum_k a_k - 3 (n' - n) = 12 r - 3 n' \Rightarrow \sum_{i} a_i = 12 r - 3 n = 12 - 3 n,
\end{equation*}
hence $r = 1$.
\end{proof}

So we define:

\begin{definition}
Let $\mathcal{A} = A_1, \dots, A_n$ be an abstract toric system, then we write $Y(\mathcal{A})$ for the
associated toric surface.
\end{definition}

As we have seen, toric systems provide an alternative way to describe toric surfaces. Assume $X$ is a toric
surface, specified by lattice vectors $l_1, \dots, l_n$ in $N$ and $D_1, \dots, D_n$ the associated torus
invariant divisors, which form a toric system. Then an equivariant blow-down $X \rightarrow X'$ is described
by removing some $l_i$ with $l_i = l_{i - 1} + l_{i + 1}$. This induces an embedding of $\pic(X')$ in $\pic(X)$
as a hyperplane such that $D_i . D = 0$ for all $D \in \pic(X')$. This corresponds to removing $D_i$ and
projecting $D_1, \dots, \widehat{D_i}, \dots D_n$ to $\pic(X')$. More explicitly, for abstract toric systems
this can be formulated as:

\begin{lemma}
Let $A_1, \dots, A_n$ be an abstract toric system in $P$ and $i$ such that $\langle A_i, A_i \rangle = -1$. Then
$A_1, \dots, A_{i - 2}, A_{i - 1} + A_i, A_{i + 1} + A_i, A_{i + 2}, \dots A_n$ is a toric system
as well which is contained in the hyperplane $A_i^\bot$ with intersection product
$\langle \, , \, \rangle\vert_{A_i^\bot}$.
\end{lemma}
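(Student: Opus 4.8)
The plan is to verify directly that the modified sequence satisfies the three defining conditions of an abstract toric system, after first checking that it lives in a free module of the correct rank. Write $B := A_i$, so that $\langle B, B \rangle = -1$. Since $-1$ is a unit in $\Z$, the assignment $C \mapsto C + \langle C, B \rangle B$ defines an integral retraction $P \to B^\bot$; hence $P = B^\bot \oplus \Z B$ as $\Z$-modules and $B^\bot$ is free of rank $(n - 2) - 1 = (n - 1) - 2$, which is exactly the rank a toric system of length $n - 1$ requires. Using conditions (i) and (ii) for the original system, one checks at once that all proposed elements lie in $B^\bot$: for $j \notin \{i - 1, i, i + 1\}$ we have $\langle A_j, B \rangle = 0$, while $\langle A_{i \pm 1} + B, B \rangle = \langle A_{i \pm 1}, B \rangle + \langle B, B \rangle = 1 - 1 = 0$.

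Next I would verify conditions (i) and (ii) for the new cyclic sequence
\[
A_1, \dots, A_{i - 2}, A_{i - 1} + A_i, A_{i + 1} + A_i, A_{i + 2}, \dots, A_n,
\]
organizing the computation according to which pairs are neighbours once $A_i$ is deleted. Products among the untouched $A_j$ retain their old values, so the only ones to recompute involve a modified neighbour. The new adjacent products $\langle A_{i - 2}, A_{i - 1} + A_i \rangle$ and $\langle A_{i + 1} + A_i, A_{i + 2} \rangle$ each equal $1 + 0 = 1$, and the crucial new adjacency is
\[
\langle A_{i - 1} + A_i, A_{i + 1} + A_i \rangle = \langle A_{i - 1}, A_{i + 1} \rangle + \langle A_{i - 1}, A_i \rangle + \langle A_i, A_{i + 1} \rangle + \langle A_i, A_i \rangle = 0 + 1 + 1 - 1 = 1,
\]
so condition (i) holds. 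For condition (ii), any product of a modified element $A_{i \pm 1} + A_i$ with a non-neighbour $A_j$ expands as $\langle A_{i \pm 1}, A_j \rangle + \langle A_i, A_j \rangle$, and both terms vanish by the original condition (ii) as soon as $A_j$ is neither the old neighbour $A_{i \mp 1}$ (which has been removed) nor the new neighbour.

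For condition (iii) I would simply track self-intersections: replacing $A_{i - 1}, A_{i + 1}$ by $A_{i - 1} + A_i, A_{i + 1} + A_i$ changes each self-intersection by $2 \langle A_{i \pm 1}, A_i \rangle + \langle A_i, A_i \rangle = +1$, while deleting $A_i$ removes a self-intersection of $-1$. Thus the total self-intersection sum increases by $1 + 1 + 1 = 3$, giving $(12 - 3n) + 3 = 12 - 3(n - 1)$, exactly as required for length $n - 1$.

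The only genuine obstacle is bookkeeping of the cyclic indices: for very small $n$ the labels $i - 2, i - 1, i + 1, i + 2$ may collide or wrap around, so one must confirm that the handful of product computations above really use distinct, correctly adjacent indices. This is reassured by the geometric picture described just before the statement: identifying $P$ with $\pic\big(Y(\mathcal{A})\big)$ via Proposition \ref{dualityproposition}, the hypothesis $\langle A_i, A_i \rangle = -1$ says that $D_i$ is a $(-1)$-curve, and the modified sequence is precisely the toric system of the blow-down $Y(\mathcal{A}) \to Y'$, under the identification $B^\bot \cong \pic(Y')$. Hence the verification cannot fail for admissible lengths, and the modified sequence is indeed a toric system in $A_i^\bot$ with the restricted form.
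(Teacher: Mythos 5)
Your proof is correct, but it takes a genuinely different route from the paper. The paper's proof never touches the three axioms directly: it passes through Gale duality, viewing $A = (A_1, \dots, A_n)$ as linear forms on $P^*$ and fitting the blown-down data into a commutative diagram of short exact sequences whose right-hand maps are the fan matrices $L = (l_1, \dots, l_n)$ and $L' = (l_1, \dots, \widehat{l_i}, \dots, l_n)$; the lemma then amounts to computing the induced map $A'$ in that diagram. This has the advantage of making the geometric content explicit — the new system is literally the Gale dual of the fan with the ray $l_i$ deleted, i.e.\ the toric system of the blown-down surface — and the cyclic bookkeeping is absorbed into the matrix formalism. Your argument is more elementary and self-contained: the splitting $P = A_i^\bot \oplus \Z A_i$ (valid exactly because $\langle A_i, A_i \rangle = -1$ makes the projection $C \mapsto C + \langle C, A_i \rangle A_i$ integral) settles the rank claim, and conditions (i), (ii), (iii) follow from the bilinear expansions you give, all of which check out (in particular the key product $0 + 1 + 1 - 1 = 1$ and the count $+3$ in the self-intersection sum). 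One stylistic caveat: your last paragraph defers the small-$n$ index collisions to the geometric picture of Proposition \ref{dualityproposition}, which is legitimate (that proposition precedes the lemma) but slightly undercuts the elementary character of your approach; it would be cleaner to note that $n = 3$ is impossible under the hypothesis (a rank-one $P$ with all pairwise products $1$ forces every $\langle A_j, A_j \rangle = 1$) and to run your expansions once for $n = 4$ and $n = 5$, where they still close up correctly because the colliding pairs such as $\{i-1, i+1\}$ or $\{i+1, i-2\}$ are never cyclically adjacent in the old system.
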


\begin{proof}
Denote $L := (l_1, \dots, l_n)$ the matrix whose columns are the $l_i$, $L' := (l_1, \dots, \widehat{l_i},
\dots, l_n)$, and consider $A := (A_1, \dots, A_n)$ as $n$-tuple of linear forms on $P^*$. Then the statement is
equivalent to describing the map $A'$ with respect to in the following diagram:
\begin{equation*}
\xymatrix{
0 \ar[r] & (P')^* \ar@{^{(}->}[d] \ar^{A'}[r] & \Z^{n - 1} \ar@{^{(}->}[d] \ar^{L'}[r] & \Z^2 \ar@{=}[d]
\ar[r] & 0\\
0 \ar[r] & P^* \ar^{A}[r] & \Z^n \ar^{L}[r] & \Z^2 \ar[r] & 0,
}
\end{equation*}
which is a straightforward computation.
\end{proof}

So we denote:

\begin{definition}
Let $A_1, \dots, A_n$ be an abstract toric system and $i$ such that $\langle A_i, A_i \rangle = -1$. Then we
call $A_1, \dots, A_{i - 2}, A_{i - 1} + A_i, A_{i + 1} + A_i, A_{i + 2}, \dots, A_n$ its {\em blow-down}.
\end{definition}

For a given abstract toric system $\mathcal{A}$, the sum $\sum_i A_i$ corresponds to the anti-canonical divisor
of $Y = Y(\mathcal{A})$. A small computation shows that the Euler characteristics of the $-A_i$ vanishes:

\begin{lemma}\label{toricsystemonquadric}
Let $\mathcal{A} = \{A_1, \dots, A_n\}$ be an abstract toric system, then for all $i$:
\begin{equation*}
\chi_{Y(\mathcal{A})} (-A_i) = 1 + \frac{1}{2}(\langle A_i, A_i \rangle - \langle \sum_j A_j, A_i \rangle)
= 0.
\end{equation*}
\end{lemma}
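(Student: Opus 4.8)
The claim is that for an abstract toric system $\mathcal{A} = \{A_1, \dots, A_n\}$, the Euler characteristic of $-A_i$ on $Y(\mathcal{A})$ satisfies
$$\chi_{Y(\mathcal{A})}(-A_i) = 1 + \tfrac{1}{2}\big(\langle A_i, A_i\rangle - \langle \textstyle\sum_j A_j, A_i\rangle\big) = 0.$$
Let me think about what tools I have. By Proposition \ref{dualityproposition}, $P \cong \pic(Y)$ with $\langle\,,\,\rangle$ the intersection form, and $\sum_j A_j = -K_Y$. So I'm computing $\chi$ of a line bundle on a smooth complete surface, and Riemann–Roch is available.

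Let me set up Riemann-Roch. For a divisor $D$ on a smooth complete surface $Y$, Hirzebruch–Riemann–Roch gives
$$\chi_Y(D) = \chi(\mathcal{O}_Y) + \tfrac{1}{2}\big(D^2 - D\cdot K_Y\big) = \chi(\mathcal{O}_Y) + \tfrac{1}{2}\big(D\cdot D - D\cdot K_Y\big).$$
Since $Y$ is a smooth complete rational (toric) surface, $\chi(\mathcal{O}_Y) = 1$. Now substitute $D = -A_i$. Then $D^2 = \langle A_i, A_i\rangle$, and $-D\cdot K_Y = A_i \cdot K_Y = A_i\cdot(-\sum_j A_j) = -\langle \sum_j A_j, A_i\rangle$. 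So
$$\chi_Y(-A_i) = 1 + \tfrac{1}{2}\big(\langle A_i, A_i\rangle - \langle \textstyle\sum_j A_j, A_i\rangle\big),$$
which is exactly the middle expression. This establishes the first equality.

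**The plan, and where the real content lies.** The first equality is thus an immediate application of Riemann–Roch together with $\chi(\mathcal{O}_Y) = 1$ and $\sum_j A_j = -K_Y$ — essentially no work beyond the substitution above. The genuine content is the second equality, i.e. that this quantity vanishes. Here I use the defining relations of a toric system directly. By properties (i) and (ii) of Definition \ref{toricsystemdef}, the only $A_j$ with $\langle A_j, A_i\rangle \neq 0$ are $j = i-1$, $j = i$, and $j = i+1$, and $\langle A_{i-1}, A_i\rangle = \langle A_{i+1}, A_i\rangle = 1$. Therefore
$$\big\langle \textstyle\sum_j A_j, A_i\big\rangle = \langle A_{i-1}, A_i\rangle + \langle A_i, A_i\rangle + \langle A_{i+1}, A_i\rangle = 1 + \langle A_i, A_i\rangle + 1 = \langle A_i, A_i\rangle + 2.$$
Substituting into the middle expression gives $1 + \tfrac{1}{2}(\langle A_i, A_i\rangle - \langle A_i, A_i\rangle - 2) = 1 - 1 = 0$.

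**Remarks on obstacles.** I expect no serious obstacle; the lemma is precisely the kind of bookkeeping computation the authors flag as "a small computation." The only points requiring care are invoking Proposition \ref{dualityproposition} to legitimately pass between the abstract setting $(P, \langle\,,\,\rangle)$ and the geometric surface $Y(\mathcal{A})$ (so that Riemann–Roch applies and $\chi(\mathcal{O}_Y)=1$), and identifying $\sum_j A_j$ with $-K_Y$ so that the self-intersection term and the canonical term separate correctly. Condition (iii) of the toric system definition is the global shadow of Proposition \ref{twelveminusn} and is what makes the identification with a genuine toric surface consistent; once that identification is granted, only the purely local relations (i) and (ii) at the index $i$ are needed, and the cyclic convention on indices (so that $A_{i-1}$ and $A_{i+1}$ make sense for every $i \in \on$) guarantees the computation is uniform in $i$.
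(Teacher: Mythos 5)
Your proof is correct and follows essentially the same route as the paper: the paper's entire proof is the single observation that $\langle \sum_j A_j , A_i \rangle = \langle A_{i-1}, A_i \rangle + \langle A_i, A_i \rangle + \langle A_{i+1}, A_i \rangle = 2 + \langle A_i, A_i \rangle$ by conditions (i) and (ii) of Definition \ref{toricsystemdef}, which is exactly your second step. Your explicit spelling-out of the Riemann--Roch identity $\chi(-A_i) = 1 + \tfrac{1}{2}\big(A_i^2 + K_Y \cdot A_i\big)$ with $K_Y = -\sum_j A_j$ is simply the part the paper leaves implicit in writing the first equality of the lemma's statement.
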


\begin{proof}
We just note that $\langle \sum_j A_j , A_i \rangle = \langle A_{i - 1}, A_i \rangle + \langle A_i, A_i \rangle
+ \langle A_{i + 1}, A_i \rangle = 2 + \langle A_i, A_i \rangle$.
\end{proof}

Note that in general for two given toric systems $\mathcal{A}$ and $\mathcal{A}'$ the sums
$\sum_{i = 1}^n A_i$ and $\sum_{i = 1}^n A_i'$ do not coincide. This can most trivially be
seen in the case where $\mathcal{A}' = -\mathcal{A}$. Any integral orthogonal
transformation maps toric systems to toric systems and in general such transformations
do not leave $\sum_{i = 1}^n A_i$ invariant, as we show in the following example.

\begin{example}\label{abstracttoricsystemexample}
As in the introduction, consider $X$ to be a $t$-fold blow-up of $\mathbb{P}^2$ with
$H, R_1, \dots, R_t$ a basis of $\pic(X)$. Denote
\begin{equation*}
\mathfrak{R}^{i} = \{E \in \pic(X) \mid \chi(-E) = 0 \text{ and } -K_X . E = i\}
\end{equation*}
for every $i \in \Z$. It follows from the Riemann-Roch formula that $E^2 = i - 2$ for
every $E \in \mathfrak{R}^i$ (compare Lemma \ref{acyclicitylemma} below). Now, for
any $i, s \in \Z$ with $(i - 2) s = -2$, and any $E \in \mathfrak{R}^{i}$ we can define a
reflection $r_E$ on $\pic(X)$ by setting
\begin{equation*}
r_E(D) = s (E.D) E + D
\end{equation*}
for any $D \in \pic(X)$. Such a reflection clearly respects the intersection product.
However, by definition, such a reflection preserves the anti-canonical divisor if
and only if $E \in \mathfrak{R}^0$. If we take the abstract toric system
\begin{equation*}
R_1 - R_2, R_2 - R_3, \dots, R_{t - 1} - R_t, R_t, H - \sum_{i = 1}^t R_i, H, H - R_1
\end{equation*}
from the introduction and apply, say, $r_{R_1}$ to it, where $R_1 \in \mathfrak{R}^1$,
then we obtain
\begin{equation*}
-R_1 - R_2, R_2 - R_3, \dots, R_{t - 1} - R_t, R_t, H + R_1 - \sum_{i = 2}^t R_i, H, H + R_1.
\end{equation*}
These divisors add up to $r_{R_1}(-K_X) = 3H + R_1 - \sum_{i = 2}^t R_i = -K_X + 2R_1$.
\end{example}

For constructing and analyzing abstract toric systems, we will need a weaker version:

\begin{definition}\label{shorttoricsystemdef}
Let $P$ be a free $\Z$-module of rank $n - 2$ together with a integral symmetric bilinear form
$\langle\, , \, \rangle$. A sequence of elements $A_1, \dots, A_r$ with $r < n$ in $P$ is called
a {\em short toric system} if it satisfies the following conditions:
\begin{enumerate}[(i)]
\item $\langle A_i , A_{i + 1} \rangle = 1$ for $1 \leq i < r$ and
$\langle A_1, A_r \rangle = 1$;
\item $\langle A_i , A_j \rangle = 0$ for $i \neq j$, $\{i, j\} \neq \{1, r\}$, and $\{i, j\} \neq
\{k, k + 1\}$ for all $k \in [r - 1]$.
\end{enumerate}
\end{definition}

There are two natural ways for constructing short toric systems from abstract toric systems:

\begin{example}\label{shorttoricsystemexample1}
Let $A_1, \dots, A_n$ be an abstract toric system, $t > 1$ and $I_1, \dots, I_t \subset \on$ a partition
of $\on$ into cyclic intervals such that $I_j \cup I_{j + 1}$ ($I_1 \cup I_t$, respectively) form a cyclic
interval
for every $j$. Let $A'_j = \sum_{k \in I_j} A_k$, then $A'_1, \dots, A'_t$ is a short toric system.
\end{example}

\begin{example}\label{shorttoricsystemexample2}
Let $X$ be a smooth complete rational surface and $b: X' \rightarrow X$ a blow-up. If $A_1, \dots, A_n$
is an abstract toric system in $\pic(X)$ with respect to the intersection form, then $b^* A_1, \dots,
b^* A_n$ is a short toric system in $\pic(X')$.
\end{example}

\section{Rational surfaces and toric systems}\label{rationalsurfacesandtoricsystems}

Let $X$ be a smooth complete rational surface. From now on we fix $n := \pic(X) + 2$. Recall that
on a rational surface every invertible sheaf is exceptional. For any two divisors $D, E$ on $X$,
we have natural isomorphisms $\Ext^i_{\sh{O}_X}\big(\sh{O}(D), \sh{O}(E)\big) \cong H^i\big(X,
\sh{O}(E - D)\big)$. Let $E_1, \dots, E_n \in \pic(X)$ such that $\sh{O}(E_1), \dots, \sh{O}(E_n)$
form an exceptional sequence, then $H^k\big(X, \sh{O}(E_i - E_j)\big) = 0$ for all $i > j$
and every $k \geq 0$. If, moreover, the sequence is strongly
exceptional, we additionally get $H^k\big(X, \sh{O}(E_i - E_j)\big) = 0$ for all $i, j$ and all
$k > 0$. This leads to the following definition:

\begin{definition}
Let $D \in \pic(X)$, then $D$ is called
\begin{enumerate}[(i)]
\item {\em numerically left-orthogonal to $\sh{O}_X$} if $\chi(-D) = 0$,
\item {\em left-orthogonal to $\sh{O}_X$} if $h^i(-D) = 0$ for all $i$, and
\item {\em strongly left-orthogonal to $\sh{O}_X$} if it is left-orthogonal to $\sh{O}_X$ and
$h^i(D) = 0$ for all $i > 0$.
\end{enumerate}
\end{definition}

We will usually omit the reference to $\sh{O}_X$ and simply say that $D$ is, e.g. left-orthogonal. The
strength of above conditions is completely determined by $h^1$-vanishing:

\begin{lemma}\label{h1vanishinglemma}
Let $D \in \pic(X)$ be numerically left-orthogonal. Then $D$ is left-orthogonal iff
$h^1(-D) = 0$. If $-K_X$ is effective, then $D$ is strongly left-orthogonal iff $h^1(-D) = h^1(D) = 0$.
\end{lemma}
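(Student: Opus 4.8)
The plan is to reduce everything to the Euler characteristic identity $\chi(-D) = h^0(-D) - h^1(-D) + h^2(-D)$ on the surface $X$, combined with Serre duality and the non-negativity of the individual $h^i$.

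First I would settle the left-orthogonality statement, which needs no hypothesis on $-K_X$. Since $D$ is numerically left-orthogonal we have $\chi(-D) = 0$, and hence $h^0(-D) + h^2(-D) = h^1(-D)$. As all three quantities are non-negative, the assumption $h^1(-D) = 0$ forces $h^0(-D) = h^2(-D) = 0$ as well, which is precisely left-orthogonality of $D$; the reverse implication is immediate from the definition.

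For the second claim the forward direction is formal: if $D$ is strongly left-orthogonal then $h^1(-D) = 0$ (from left-orthogonality) and $h^1(D) = 0$ (from the vanishing $h^i(D) = 0$ for $i > 0$). For the converse I would assume $h^1(-D) = h^1(D) = 0$ together with $-K_X$ effective. By the first part, $h^1(-D) = 0$ already gives left-orthogonality, so in particular $h^0(-D) = 0$, i.e. $-D$ is not effective. It then remains only to verify $h^2(D) = 0$, for then $h^i(D) = 0$ for all $i > 0$ and $D$ is strongly left-orthogonal as required. By Serre duality $h^2(D) = h^0(K_X - D)$, so the task becomes showing that $K_X - D$ is not effective.

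This last step is where the effectivity of $-K_X$ genuinely enters, and it is the only non-formal point. If $K_X - D$ were effective, then choosing an effective representative of $-K_X$ and adding exhibits $(K_X - D) + (-K_X) \sim -D$ as a sum of two effective divisors, whence $-D$ would be effective, contradicting $h^0(-D) = 0$. Therefore $h^0(K_X - D) = 0$, so $h^2(D) = 0$ and the argument closes. Everything outside of this bound on $h^2(D)$ is a routine consequence of $\chi(-D) = 0$, Serre duality, and non-negativity of cohomology dimensions.
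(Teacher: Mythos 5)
Your proof is correct and follows essentially the same route as the paper: the first part via $\chi(-D)=0$ and non-negativity of the $h^i$, and the second part by reducing to $h^2(D)=h^0(K_X-D)=0$ via Serre duality, using effectivity of $-K_X$ to show that $K_X-D$ effective would force $-D$ to be effective, contradicting $h^0(-D)=0$. The paper phrases this last step as an inclusion $H^0(-K_X)\subset H^0(-D)$ given by multiplication with a section of $K_X-D$, which is the same argument in slightly different clothing.
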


\begin{proof}
By assumption $\chi(-D) = 0$. Then clearly $h^1(-D) = 0$ iff $h^0(-D) = h^2(-D) = 0$ iff
$D$ is left-orthogonal. It remains to show the ``strongly'' part for $h^1(D) = 0$. For this
we have to show that $h^2(D) = 0$. By Serre duality, we have $h^2(D) = h^0(K_X - D)$. If
$h^0(K_X - D) \neq 0$, we get an inclusion $h^0(-K_X) \subset h^0(-D)$, but this is impossible,
because $h^0(-D) = 0$ and $-K_X$ is effective.
\end{proof}

By Riemann-Roch we have $\chi(D) = 1 + \frac{1}{2}(D^2 - K_X . D)$ for any divisor $D$, by which we get by
symmetrization and anti-symmetrization:
\begin{align*}
\chi(D) + \chi(-D) & = 2 + D^2 \quad \text{ and}\\
\chi(D) - \chi(-D) & = -K_X . D.
\end{align*}
By numerical left-orthogonality we have $\chi(-D) = 1 + \frac{1}{2}(D^2 + K_X . D) = 0$
(compare this also to Lemma \ref{toricsystemonquadric}), which directly implies:

\begin{lemma}\label{acyclicitycorollary}\label{acyclicitylemma}
Let $D, E \in \pic(X)$ numerically left-orthogonal, then
\begin{enumerate}[(i)]
\item\label{acyclicitylemmai}
$\chi(D) = -K_X . D$;
\item\label{acyclicitycorollaryv}
$D^2 = \chi(D) - 2$; in particular, if $D$ is strongly left-orthogonal, then $D^2 = h^0(D) - 2$;
\item\label{acyclicitycorollaryii}
$D + E$ is numerically left-orthogonal iff $E . D = 1$ iff $\chi(D) + \chi(E) = \chi(D + E)$;
in particular, if $D, E, E + D$ are strongly left-orthogonal, then $h^0(D + E) = h^0(D) + h^0(E)$;
\item\label{acyclicitycorollaryiv}
$E - D$ is numerically left-orthogonal iff $D . E = \chi(D) - 1$; in particular, if $D, E, E - D$
are strongly left-orthogonal, then $h^0(D) \leq h^0(E)$ and $D . E = h^0(D) - 1$.
\end{enumerate}
\end{lemma}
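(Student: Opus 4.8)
The plan is to reduce everything to the Riemann--Roch computation $\chi(D) = 1 + \frac{1}{2}(D^2 - K_X . D)$ and the two derived identities for $\chi(D) \pm \chi(-D)$, then extract each of the four statements by straightforward algebra, using Serre duality and effectivity of $-K_X$ only where $h^0$-interpretations are claimed. The central fact I will use repeatedly is that numerical left-orthogonality of $D$ means $\chi(-D) = 0$, which by the anti-symmetrization identity immediately gives $\chi(D) = \chi(D) - \chi(-D) = -K_X . D$; this is part (i) and costs nothing. For part (ii), I would add the two identities: $\chi(D) = \frac{1}{2}\big((\chi(D)+\chi(-D)) + (\chi(D)-\chi(-D))\big) = \frac{1}{2}(2 + D^2 - K_X . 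D)$, and since $\chi(-D) = 0$ gives $D^2 = -2 - K_X . D = \chi(-D) - 2 + \chi(D)\ \text{rearranged}$; more cleanly, from $\chi(D) + \chi(-D) = 2 + D^2$ and $\chi(-D) = 0$ we read off $D^2 = \chi(D) - 2$ directly. The ``in particular'' clause of (ii) then follows because strong left-orthogonality gives $h^1(D) = h^2(D) = 0$, so $\chi(D) = h^0(D)$.

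For parts (iii) and (iv) the engine is the bilinearity of the intersection form applied inside Riemann--Roch. I would compute $\chi(D+E) - \chi(D) - \chi(E)$ using the quadratic formula; the cross term from $(D+E)^2 = D^2 + 2 D.E + E^2$ together with the linearity of $-K_X . (\,\cdot\,)$ should collapse to give $\chi(D+E) - \chi(D) - \chi(E) = D.E - 1$. Hence $D+E$ is numerically left-orthogonal (equivalently $\chi(-(D+E)) = 0$, equivalently $\chi(D+E) = -K_X.(D+E)$ by part (i) applied to each term) exactly when $D.E = 1$, and simultaneously exactly when $\chi(D) + \chi(E) = \chi(D+E)$. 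This chain of equivalences is the content of (iii). For (iv) the analogous computation for $E - D$ replaces $D.E$ by $-D.E$ in the cross term and flips the role of $D$, yielding numerical left-orthogonality of $E - D$ iff $D.E = \chi(D) - 1$; I would verify this by the same expansion, being careful with signs since $\chi(D)$ rather than $\chi(E)$ appears in the criterion, which reflects the asymmetry of subtraction.

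The ``in particular'' clauses in (iii) and (iv) are where I would invoke cohomology rather than just Euler characteristics. In (iii), if $D$, $E$, and $D+E$ are all strongly left-orthogonal, then each Euler characteristic equals the corresponding $h^0$, so the additivity $\chi(D+E) = \chi(D) + \chi(E)$ upgrades to $h^0(D+E) = h^0(D) + h^0(E)$. In (iv), strong left-orthogonality of $D$, $E$, $E-D$ gives $\chi(E) = \chi(D) + \chi(E-D)$, i.e. $h^0(E) = h^0(D) + h^0(E-D) \geq h^0(D)$ since $h^0(E-D) \geq 0$, and the identity $D.E = \chi(D) - 1 = h^0(D) - 1$ follows by substituting $\chi(D) = h^0(D)$ into the numerical criterion just established. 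I anticipate no serious obstacle here; the only place demanding genuine care is bookkeeping the signs in the expansions for (iii) versus (iv) and confirming that the $-K_X.(\,\cdot\,)$ terms are linear and hence cancel correctly against the part-(i) identity. Everything reduces to Riemann--Roch and the symmetrization identities already displayed in the excerpt, so the proof should be a short, self-contained calculation.
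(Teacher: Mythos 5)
Your proposal is correct and follows essentially the same route as the paper: the paper derives the lemma directly from the Riemann--Roch symmetrization identities $\chi(D)+\chi(-D) = 2 + D^2$ and $\chi(D)-\chi(-D) = -K_X.D$ together with $\chi(-D)=0$, which is exactly the engine you use. Your expansion of the cross terms for (iii) and (iv) and the upgrade from $\chi$ to $h^0$ under strong left-orthogonality just spell out the algebra the paper leaves implicit behind the phrase ``directly implies.''
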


Clearly, if $\sh{O}(E_1), \dots \sh{O}(E_n)$ is a full exceptional sequence, then $n = \rk K_0(X) = \rk \pic(X)
+ 2$ and all the differences
$E_j - E_i$ for $i > j$ are left-orthogonal and in particular numerically left-orthogonal. We set
$A_i := E_{i + 1} - E_i$ for
$1 \leq i < n$ and $A_n := -K_X - \sum_{i = 1}^{n - 1} A_i$. Then
by Lemma \ref{acyclicitycorollary} we get:
\begin{enumerate}
\item\label{quasiconditionsii} $A_i . A_{i + 1} = 1$ for $i \in \on$;
\item\label{quasiconditionsiii} $A_i . A_j = 0$  for $i \neq j$ and $\{i, j\} \neq \{k, k + 1\}$ for some
$k \in \on$;
\item\label{quasiconditionsi} $\sum_{i = 1}^n A_i = -K_X$.
\end{enumerate}
Therefore we get an abstract toric system from an exceptional sequence. Note that in general not every
abstract toric system can be of this form, as $\sum_{i = 1}^n A_i = -K_X$ implies $(\sum_{i = 1}^n A_i)^2 =
12 - 3n$, but not vice versa, as example \ref{abstracttoricsystemexample} shows. But with this stronger
condition, we pass from abstract toric systems to actual toric systems:

\begin{definition}
Let $X$ a smooth complete rational surface. Then a {\em toric system} (on $X$) is an abstract toric system
$A_1, \dots, A_n \in \pic(X)$ such that $\sum_{i = 1}^n A_i = -K_X$.
\end{definition}

Note that after passing from the $E_1, \dots, E_n$ to $\mathcal{A} = A_1, \dots, A_n$, the construction
of the toric surface $Y(\mathcal{A})$ is entirely canonical. In particular, we conclude
the following remarkable observation:

\begin{theorem}\label{canonicaltoricsystemtheorem}
Let $X$ be a smooth complete rational surface. Then to any full exceptional sequence of invertible sheaves
on $X$ with associated toric system $\mathcal{A}$ we can
associate in a canonical way a smooth complete toric surface $Y(\mathcal{A})$ with torus invariant
prime divisors $D_1, \dots, D_n$ such that $D_i^2 = A_i^2$ for every $i \in \on$.
\end{theorem}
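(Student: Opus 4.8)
The plan is to recognize that the theorem is the culmination of the constructions already assembled, so the proof reduces to two things: verifying that the divisors $A_1, \dots, A_n$ genuinely form an abstract toric system on $X$, and then feeding this system into Proposition \ref{dualityproposition}. First I would fix notation: given the full exceptional sequence $\sh{O}(E_1), \dots, \sh{O}(E_n)$, set $A_i := E_{i+1} - E_i$ for $1 \le i < n$ and $A_n := -K_X - \sum_{i=1}^{n-1} A_i$, so that the normalization $\sum_{i=1}^n A_i = -K_X$ holds by construction. The essential input is that for every interval $\{a, \dots, b\} \subset [1, n-1]$ the partial sum $\sum_{k=a}^b A_k = E_{b+1} - E_a$ has the property that its negative has vanishing cohomology by the exceptional sequence hypothesis; hence every such partial sum is left-orthogonal, and in particular numerically left-orthogonal.

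For the intersection numbers among $A_1, \dots, A_{n-1}$ I would argue by a telescoping computation resting on Lemma \ref{acyclicitylemma}. Fix $j \le n-1$ and consider, for each $i < j$, the three numerically left-orthogonal classes $E_j - E_i$, $A_j$, and their sum $E_{j+1} - E_i$; part (\ref{acyclicitycorollaryii}) of Lemma \ref{acyclicitylemma} then yields $A_j . (E_j - E_i) = 1$ for every $i < j$. Writing $A_i = (E_j - E_i) - (E_j - E_{i+1})$ and subtracting gives $A_j . A_i = 0$ whenever $i \le j-2$, while the case $i = j-1$ gives directly $A_j . A_{j-1} = 1$. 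This establishes conditions (i) and (ii) for every pair drawn from $A_1, \dots, A_{n-1}$.

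The one remaining and genuinely delicate point — the place where the anticanonical class enters — is the row of intersection numbers $A_n . A_j$, which I expect to be the main obstacle. Here I would expand $A_n . A_j = -K_X . A_j - \sum_{k=1}^{n-1} A_k . A_j$ and evaluate the two terms separately: Lemma \ref{acyclicitylemma}(\ref{acyclicitylemmai}) identifies $-K_X . A_j$ with $\chi(A_j)$, while Lemma \ref{acyclicitylemma}(\ref{acyclicitycorollaryv}) gives $\chi(A_j) = A_j^2 + 2$; and the sum $\sum_{k=1}^{n-1} A_k . A_j$ is read off from the structure just established, equalling $A_j^2 + 2$ for $2 \le j \le n-2$ but only $A_j^2 + 1$ at the boundary indices $j = 1$ and $j = n-1$, since there one cyclic neighbour falls outside the truncated range. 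Subtracting yields $A_n . A_j = 0$ for $2 \le j \le n-2$ and $A_n . A_1 = A_n . A_{n-1} = 1$, completing conditions (i) and (ii) cyclically. Once $A_1, \dots, A_n$ is known to be an abstract toric system with $\sum A_i = -K_X$ (the normalization $\sum A_i^2 = 12 - 3n$ following automatically from $K_X^2 = 12 - n$ on a rational surface), Proposition \ref{dualityproposition} produces the smooth complete toric surface $Y(\mathcal{A})$ together with its invariant prime divisors $D_1, \dots, D_n$ satisfying $D_i^2 = \langle A_i, A_i \rangle = A_i^2$. Since every step — passing to differences, forming $A_n$, and taking Gale duals — is determined by the sequence alone, the association is canonical, as claimed.
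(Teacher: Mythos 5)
Your proposal is correct and follows essentially the same route as the paper: pass to the differences $A_i = E_{i+1}-E_i$ with $A_n := -K_X - \sum_{i=1}^{n-1}A_i$, verify the toric-system axioms via Lemma \ref{acyclicitylemma} (the paper states this verification without writing out the telescoping and $A_n$-row computations that you supply explicitly), and then invoke Proposition \ref{dualityproposition} to produce $Y(\mathcal{A})$ with $D_i^2 = A_i^2$. Your added details, including the derivation of $\sum_i A_i^2 = 12-3n$ from $K_X^2 = 12-n$, are accurate fillings-in of steps the paper leaves implicit, not a different argument.
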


A toric system generates an infinite sequence of invertible sheaves
\begin{equation*}
\dots, \sh{O}(-A_n), \sh{O}_X, \sh{O}(A_1), \sh{O}(A_1 + A_2), \dots,
\sh{O}(\sum_{i = 1}^{n - 1}A_i), \sh{O}(-K_X), \sh{O}(-K_X + A_1), \dots
\end{equation*}
If some subsequence of length $n$ of this sequence is a strongly exceptional sequence, we will follow
the convention that the toric system is enumerated such that this sequence can be written as
$\sh{O}_X$, $\sh{O}(A_1)$, $\sh{O}(A_1 + A_2)$, $\dots$, $\sh{O}(\sum_{i = 1}^{n - 1}A_i)$. In
particular, $\sum_{i \in I} A_i$ is strongly left-orthogonal for every interval $I \subset [n - 1]$.
In general we will assume nothing about the strong left-orthogonality of $A_n$. If the toric system gives
rise to a cyclic strongly exceptional sequence, then $\sum_{i \in I} A_i$ is strongly left-orthogonal
for every {\em cyclic} interval $I \subset \on$.

\begin{definition}
We say that a toric system $A_1, \dots, A_n$ is (cyclic, strongly) {\em exceptional} if the associated
sequence of invertible sheaves $\sh{O}_X$, $\sh{O}(A_1)$, $\dots$, $\sh{O}(\sum_{i = 1}^{n - 1}A_i)$
generates a (cyclic, strongly) exceptional sequence.
\end{definition}

Note that a priori a toric system and the conditions on cohomology vanishing do not completely determine
the ordering
of the $A_i$. In particular, if $A_1, \dots, A_n$ is a cyclic (strongly) exceptional
toric system, then
so is $A_n, \dots, A_1$. If $A_1, \dots, A_n$ is a (strongly) exceptional toric system, then so is
$A_{n - 1}, \dots, A_1, A_n$.

\section{Left-orthogonal divisors on rational surfaces}\label{rationalacyclicsection}

Any smooth complete rational surface $X$ can be obtained by a sequence of blow-ups $X = X_t
\overset{b_t}{\longrightarrow} X_{t - 1} \overset{b_{t - 1}}{\longrightarrow} X_{t - 2}
\overset{b_{t - 2}}{\longrightarrow} \cdots \overset{b_1}{\longrightarrow} X_0$, where $X_0$ is
either $\mathbb{P}^2$ or some Hirzebruch surface $\mathbb{F}_a$. If we fix the sequence of
morphisms $b_t, \dots, b_1$, we obtain a natural basis of $\pic(X)$ with respect to this sequence
as follows. If $X_0 = \mathbb{P}^2$, we denote as before $H$ the hyperplane class of $\mathbb{P}^2$, and
for every $b_i$, we denote $R_i$ the class of the associated exceptional divisor in $\pic(X_i)$.
For simplicity, we identify $H$ and the $R_i$ with their pullbacks in $\pic(X)$. Every blow-up
increases the rank of the Picard group by one and the pullback yields an inclusion of $\pic(X_{i - 1})$
into $\pic(X_i)$ as a hyperplane. Then $R_i$ is additional generator, which is orthogonal to
$\pic(X_{i - 1})$ with respect to the intersection product. We have the following relations:
\begin{equation*}
H^2 = 1, \quad R_i^2 = -1, \quad H.R_i = 0 \text{ for all } i, \quad \text{ and } R_i. R_j = 0 \quad
\text{ for all } i \neq j.
\end{equation*}
In particular, we have $t = \rk \pic(X) - 1$. So, in the case where $X$ is a blow-up of $\mathbb{P}^2$,
we easily get a basis of $\pic(X)$ which diagonalizes the intersection product. In the case where
$X_0 = \mathbb{F}_a$ for some $a \geq 0$, we start with a basis $P, Q$ of $\pic(\mathbb{F}_a)$ as before,
and by the same process, we obtain a basis $P, Q, R_1, \dots, R_t$ of $\pic(X)$, where $t = \rk \pic(X)
- 2$.
Here, the most convenient choice for our purpose is $P, Q$ to be the integral generators of the nef
cone in $\pic(\mathbb{F}_a)_\Q$ such that $P^2 = 0$ and $Q^2 = a$. So we get
\begin{align*}
& P^2 = 0, \quad Q^2 = a, \quad P.Q = 1, \quad R_i^2 = -1, \\ & P.R_i = Q.R_i = 0 \text{ for all } i,
\ \text{ and } R_i. R_j = 0 \ \text{ for all } i \neq j.
\end{align*}
Often our arguments below do not depend on the choice of $X_0$, and for simplicity we will often
leave this choice implicit and assume that $t = n - 3$ or $t = n - 4$ as it fits.

\begin{definition}
Let $D \in \pic(X)$, then we denote the projection of $D$ to $\pic(X_i)$ by $(D)_i$.
\end{definition}

The projection $(D)_i$ just is `forgetting' the coordinates $R_t$, $R_{t - 1}$, $\dots, R_{i + 1}$,
i.e. if $D = \alpha P + \beta Q + \sum_{j = 1}^t \gamma_j R_j$ or $D = \beta H + \sum_{j = 1}^t
\gamma_j R_j$, respectively, then $(D)_i = \alpha P + \beta Q + \sum_{j = 1}^i \gamma_j R_j$ or
$(D)_i = \beta H + \sum_{j = 1}^i \gamma_j R_j$, respectively.

By Lemma \ref{h1vanishinglemma}, left-orthogonality is determined by numerical left-orthogonality and
$h^1$-vanishing. Our strategy to understand (strongly) left-orthogonal divisors will be to start with
$h^1$-vanishing and then to establish numerical left-orthogonality. For this, we first need a couple
of lemmas related to $h^0$- and $h^1$-vanishing.

\begin{lemma}\label{coordinatedegreelemma}
Let $E$ be an irreducible $(-1)$-divisor and $X'$ the surface obtained from blowing down $E$. If
$D$ is the pullback to $X$ of some divisor on $X'$, then for every $k \in \Z$, we have
$\deg \sh{O}(D + kE)\vert_E = -k$.
\end{lemma}

\begin{proof}
For $k \in \Z$ consider the short exact sequence 
\begin{equation*}
0 \longrightarrow \sh{O}(D + (k - 1)E) \longrightarrow \sh{O}(D + kE) \longrightarrow \sh{O}_E(D + kE)
\longrightarrow 0.
\end{equation*}
Then, for the Euler characteristics, we get $\chi\big(\sh{O}_E(D + kE)\big) = \chi(D + kE) - \chi(D +
(k - 1)E)$ $= 1 - k$, where the latter equality follows from Riemann-Roch and $D . E = 0$. Hence
$\sh{O}_E(D + kE) \cong \sh{O}_E(-k)$ and the assertion follows.
\end{proof}

We use this to investigate $h^0$- and $h^1$-vanishing. If a divisor has nonzero $h^1$, then so has its
preimage under blow-up. For $h^0$ and $h^2$, we have the opposite picture:

\begin{lemma}\label{pullbackcohomologyvanishinglemma}
Let $D$ and $E$ as in Lemma \ref{coordinatedegreelemma}.
\begin{enumerate}[(i)]
\item\label{pullbackcohomologyvanishinglemmai} If $h^0(D) = 0$, then $h^0(D + kE) = 0$ for all $k \in \Z$.
\item\label{pullbackcohomologyvanishinglemmaii} If $h^1(D) \neq 0$, then $h^1(D + kE) \neq 0$ for all $k \in
\Z$.
\item\label{pullbackcohomologyvanishinglemmaiii} If $h^2(D) = 0$, then $h^2(D + kE) = 0$ for all $k \in \Z$.
\end{enumerate}
\end{lemma}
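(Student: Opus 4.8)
The plan is to run the short exact sequence of Lemma~\ref{coordinatedegreelemma} through its long exact cohomology sequence and to read off monotonicity of the numbers $h^i(D + kE)$ as functions of $k$. Since $E$ is an irreducible $(-1)$-curve we have $E \cong \mathbb{P}^1$, and Lemma~\ref{coordinatedegreelemma} identifies the restriction sheaf as $\sh{O}_E(D + kE) \cong \sh{O}_E(-k)$. The only facts I need about this sheaf on $\mathbb{P}^1$ are that $H^0\big(\sh{O}_E(-k)\big) = 0$ for $k \geq 1$ and $H^1\big(\sh{O}_E(-k)\big) = 0$ for $k \leq 1$. Thus for every $k$ I will use
\begin{equation*}
0 \longrightarrow \sh{O}(D + (k-1)E) \longrightarrow \sh{O}(D + kE) \longrightarrow \sh{O}_E(-k) \longrightarrow 0,
\end{equation*}
whose cohomology sequence supplies an injection $H^0(D+(k-1)E) \hookrightarrow H^0(D+kE)$ and a surjection $H^2(D+(k-1)E) \twoheadrightarrow H^2(D+kE)$ for \emph{all} $k$, together with the sharper statements obtained when one of the two boundary terms above vanishes. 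Every assertion is then an induction anchored at $k = 0$.

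For part~\eqref{pullbackcohomologyvanishinglemmai}, the injection $H^0(D+(k-1)E) \hookrightarrow H^0(D+kE)$ shows that $h^0(D+kE)$ is nondecreasing in $k$, so for $k \leq 0$ we get $h^0(D+kE) \leq h^0(D) = 0$. For $k \geq 1$ the term $H^0(\sh{O}_E(-k))$ vanishes, so the same map is an isomorphism and $h^0(D+kE) = h^0(D) = 0$ propagates upward from $k=0$. This covers all integers $k$. Part~\eqref{pullbackcohomologyvanishinglemmaiii} is the Serre-dual mirror image: the surjection $H^2(D+(k-1)E) \twoheadrightarrow H^2(D+kE)$ makes $h^2(D+kE)$ nonincreasing, handling $k \geq 0$, while for $k \leq 1$ the vanishing of $H^1(\sh{O}_E(-k))$ turns it into an isomorphism, propagating $h^2 = 0$ downward from $k = 0$.

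Part~\eqref{pullbackcohomologyvanishinglemmaii} is the one that genuinely uses both ranges and is where I expect the only real care to be needed. For $k \geq 1$ the vanishing of $H^0(\sh{O}_E(-k))$ yields an injection $H^1(D+(k-1)E) \hookrightarrow H^1(D+kE)$, so $h^1$ is nondecreasing on $k \geq 0$ and hence $h^1(D+kE) \geq h^1(D) > 0$ for every $k \geq 0$. For $k \leq 1$ the vanishing of $H^1(\sh{O}_E(-k))$ yields instead a surjection $H^1(D+(k-1)E) \twoheadrightarrow H^1(D+kE)$, which gives $h^1(D+(k-1)E) \geq h^1(D+kE)$; reading this downward from $k = 0$ shows $h^1(D+kE) \geq h^1(D) > 0$ for every $k \leq 0$. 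The two monotonicities push in opposite index directions but both bound $h^1(D+kE)$ \emph{below} by $h^1(D)$, so nonvanishing at $k=0$ forces nonvanishing for all $k$. The main point to get right is simply that the useful exact-sequence map flips between an injection and a surjection precisely across $k = 1$, and that the overlap at $k = 0,1$ lets the inductions from the base case reach every integer.
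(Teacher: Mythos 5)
Your proof is correct and follows essentially the same route as the paper: the short exact sequence from Lemma \ref{coordinatedegreelemma}, the identification $\sh{O}_E(D+kE) \cong \sh{O}_E(-k)$ on $E \cong \mathbb{P}^1$, and inductions in both directions anchored at $k=0$. The only difference is organizational --- you phrase everything via monotonicity of $h^i(D+kE)$ in $k$ using a single form of the sequence, whereas the paper splits into the cases $k>0$ and $k<0$ with the sequence written accordingly; the underlying vanishing facts and exact-sequence maps are identical.
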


\begin{proof}
For $k = 0$ there is nothing to prove. If $k > 0$, we do induction on $k$. Consider the short
exact sequence
\begin{equation*}
0 \longrightarrow \sh{O}(D + (k - 1)E) \longrightarrow \sh{O}(D + k E) \longrightarrow \sh{O}_E(D + kE)
\longrightarrow 0.
\end{equation*}
By lemma \ref{coordinatedegreelemma}, we have $\deg \sh{O}(D + kE)\vert_E = -k$ and therefore
$h^0\big(\sh{O}(D + kE)\big) = 0$. So by the long exact cohomology sequence we get $h^0(D + (k - 1)E) =
h^0(D + k E)$, $h^1(D + (k~-~1)E) \leq h^1(D + k E)$, and $h^2(D + (k~-~1)E) \geq h^2(D + k E)$.
For (\ref{pullbackcohomologyvanishinglemmai}), we have by induction assumption $h^0(D + (k - 1)E) = 0$ and so
$h^0(D + k E) = 0$. For (\ref{pullbackcohomologyvanishinglemmaii}), we have by induction assumption
$h^1(D + (k - 1)E) > 0$ and so $h^1(D + k E) > 0$. For (\ref{pullbackcohomologyvanishinglemmaiii}), we
have by induction assumption $h^2(D + (k - 1)E) = 0$ and so $h^2(D + k E) = 0$.

For $k < 0$, we do induction from $k + 1$ to $k$. In this case, we consider the short exact sequence
\begin{equation*}
0 \longrightarrow \sh{O}(D + k E) \longrightarrow \sh{O}(D + (k + 1) E) \longrightarrow \sh{O}_E(D + (k + 1)E)
\longrightarrow 0.
\end{equation*}
So $\deg \sh{O}(D + (k + 1)E)\vert_E = -k - 1 \geq 0$ and therefore $h^1\big(\sh{O}(D + (k + 1)E)\big) = 0$.
Then by the long exact cohomology sequence, we get $h^0(D + kE) \leq h^0(D + (k + 1)E)$, $h^1(D + kE) \geq
h^1(D + (k + 1)E)$, and $h^2(D + kE) = h^2(D + (k + 1)E)$.
For (\ref{pullbackcohomologyvanishinglemmai}), we have by induction assumption
$h^0(D + (k + 1)E) = 0$ and so $h^0(D + k E) = 0$. For (\ref{pullbackcohomologyvanishinglemmaii}), we have
by induction assumption $h^1(D + (k + 1)E) > 0$ and so $h^1(D + k E) > 0$.
For (\ref{pullbackcohomologyvanishinglemmaiii}), we have by induction assumption
$h^2(D + (k + 1)E) = 0$ and so $h^2(D + k E) = 0$.
\end{proof}

\begin{definition}
Let $D \in \pic(X)$ with $(D)_0 \neq 0$. Then we call $D \in \pic(X)$ {\em pre-left-orthogonal} with
respect to $X_0$ iff $h^0\big((-D)_0\big) = h^1(-D) = 0$, and
{\em strongly pre-left-orthogonal} if it is pre-left-orthogonal and $h^1(D) = 0$.
\end{definition}

Note the little twist that for pre-left-orthogonality we do not just require $h^0$-vanishing, but instead
have conditions on $X_0$. This makes the following an immediate consequence of Lemma
\ref{pullbackcohomologyvanishinglemma}:

\begin{corollary}
If $D$ is pre-left-orthogonal, then so is $(D)_i$ for $i = 1, \dots, t$.
\end{corollary}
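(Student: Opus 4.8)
The plan is to verify the two defining conditions of pre-left-orthogonality for each $(D)_i$ separately, disposing of the $h^0$-condition for free and reducing the $h^1$-condition to a single blow-down step that is handled directly by Lemma \ref{pullbackcohomologyvanishinglemma}. First I would dispose of the $h^0$-part. Since the projection $\pic(X) \to \pic(X_0)$ factors through $\pic(X_i)$ for every $i$, we have $\big((D)_i\big)_0 = (D)_0$; in particular $\big((D)_i\big)_0 \neq 0$, so the standing hypothesis in the definition of pre-left-orthogonality is met, and $h^0\big((-(D)_i)_0\big) = h^0\big((-D)_0\big) = 0$. Thus the entire content of the corollary is the vanishing $h^1(-(D)_i) = 0$.

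For this I would argue by descending induction on $i$, observing that it suffices to carry out a single step, namely to show that $(D)_{t-1}$ is pre-left-orthogonal; the general case then follows by applying the same assertion to the rational surface $X_{t-1}$, which has the same minimal model $X_0$, and iterating. Writing $D = (D)_{t-1} + \gamma_t R_t$, so that $-D = -(D)_{t-1} - \gamma_t R_t$, the divisor $R_t$ is the irreducible $(-1)$-curve contracted by $b_t : X_t \to X_{t-1}$ and $-(D)_{t-1}$ is the pullback to $X = X_t$ of a divisor on $X_{t-1}$. This is precisely the configuration of Lemma \ref{coordinatedegreelemma}, so Lemma \ref{pullbackcohomologyvanishinglemma}\,(\ref{pullbackcohomologyvanishinglemmaii}) applies with $E = R_t$ and $k = -\gamma_t$. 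Reading that part contrapositively: were $h^1(-(D)_{t-1})$ nonzero, then $h^1(-(D)_{t-1} + kR_t) = h^1(-D)$ would be nonzero for this value of $k$, contradicting the pre-left-orthogonality of $D$. Hence $h^1(-(D)_{t-1}) = 0$, and together with the $h^0$-computation above, $(D)_{t-1}$ is pre-left-orthogonal.

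Since the step is a direct application of the lemma, I do not expect a genuine difficulty; the only point requiring care is bookkeeping about the ambient surface. The quantity $h^1(-(D)_{t-1})$ is a priori computed on $X = X_t$, but by the invariance of cohomology under blow-up it agrees with the corresponding value on $X_{t-1}$, which is what is needed both to feed the iteration and to match the definition of pre-left-orthogonality on the smaller surface. The reason for organizing the argument as a sequence of one-step blow-downs, rather than invoking the lemma directly on $X$, is exactly that the lemma requires an irreducible $(-1)$-curve: the top exceptional curve $R_j$ is irreducible on $X_j$, whereas its pullback to $X$ need not be. Keeping the contraction at the top of the tower at each stage is therefore the natural way to keep the hypotheses of Lemma \ref{pullbackcohomologyvanishinglemma} satisfied throughout.
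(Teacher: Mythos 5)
Your proof is correct and is exactly the argument the paper intends: the paper declares the corollary an ``immediate consequence'' of Lemma \ref{pullbackcohomologyvanishinglemma}, and what you have written out --- the observation that $\big((D)_i\big)_0 = (D)_0$ disposes of the $h^0$-condition, plus the contrapositive of part (\ref{pullbackcohomologyvanishinglemmaii}) applied one blow-down at a time with $E = R_t$ and $k = -\gamma_t$ --- is precisely that deduction made explicit. Your care about descending the tower step by step (so that the exceptional curve in play is irreducible on the surface where the lemma is invoked) and about the blow-up invariance of cohomology is well placed and matches the paper's standing conventions.
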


If $D$ is a pre-left-orthogonal divisor on $X_{t - 1}$, then in general $D + \gamma_t R_t$ will only be
pre-left-orthogonal for a few possible values of $\gamma_t$. The following lemma gives some sufficient
conditions.

\begin{lemma}\label{unwindinglemma}
Let $D \in \pic(X)$ and $k \geq l \geq 0$. If $D - kR_t$ is pre-left-orthogonal, then $D - lR_t$ is also
pre-left-orthogonal. If $D - k R_t$ is strongly pre-left-orthogonal, then so is $D - lR_t$.
\end{lemma}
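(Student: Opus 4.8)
The plan is to use the final blow-down $b_t \colon X \to X_{t-1}$, whose exceptional divisor $R_t$ is an irreducible $(-1)$-curve, and to reduce the whole statement to the behaviour of $h^1$ under adding multiples of $R_t$ to a divisor pulled back from $X_{t-1}$ — exactly the situation governed by Lemma \ref{coordinatedegreelemma} and the sequences used in the proof of Lemma \ref{pullbackcohomologyvanishinglemma}. The first observation is that the $h^0$-condition in the definition of pre-left-orthogonality is insensitive to $R_t$: since the projection to $X_0$ forgets the $R_t$-coordinate, one has $\big(-(D - mR_t)\big)_0 = (-D)_0$ for every $m$, so $h^0\big((-(D-mR_t))_0\big) = 0$ holds for all $m$ at once (and likewise $(D-mR_t)_0 = (D)_0 \neq 0$). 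Hence, taking $D$ (as in the preceding discussion) to be pulled back from $X_{t-1}$, it remains only to propagate the vanishing of $h^1(-D + mR_t)$, and for the strong version also of $h^1(D - mR_t)$, from $m = k$ down to $m = l$.

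For the ordinary statement I would run a descending induction on $m$ using the short exact sequence
\begin{equation*}
0 \to \sh{O}(-D + (m-1)R_t) \to \sh{O}(-D + mR_t) \to \sh{O}_{R_t}(-D + mR_t) \to 0.
\end{equation*}
By Lemma \ref{coordinatedegreelemma}, $\deg \sh{O}(-D + mR_t)\vert_{R_t} = -m$, so for $m \geq 1$ the restriction is $\sh{O}_{\mathbb{P}^1}(-m)$ with vanishing $h^0$; the long exact cohomology sequence then gives an injection $H^1(-D+(m-1)R_t) \hookrightarrow H^1(-D+mR_t)$, so that $h^1(-D+mR_t) = 0$ forces $h^1(-D+(m-1)R_t) = 0$. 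Iterating from $m = k$ down to $m = l+1$ (every index used is $\geq l+1 \geq 1$, which is exactly where the hypothesis $l \geq 0$ is needed) yields $h^1\big(-(D - lR_t)\big) = 0$, so $D - lR_t$ satisfies both defining conditions and is pre-left-orthogonal.

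For the strong version I would treat $h^1(D - mR_t)$ by the analogous sequence
\begin{equation*}
0 \to \sh{O}(D - mR_t) \to \sh{O}(D - (m-1)R_t) \to \sh{O}_{R_t}(D - (m-1)R_t) \to 0,
\end{equation*}
where now $\deg \sh{O}(D - (m-1)R_t)\vert_{R_t} = m-1 \geq 0$ for $m \geq 1$, so the restricted sheaf has vanishing $h^1$ and the long exact sequence produces a surjection $H^1(D - mR_t) \twoheadrightarrow H^1(D - (m-1)R_t)$; thus $h^1(D - mR_t) = 0$ forces $h^1(D - (m-1)R_t) = 0$, and the same descending induction from $k$ to $l$ closes the strong case.

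The only genuinely delicate point — the main obstacle — is getting the direction of propagation right. Both $h^1$'s decrease toward their common minimum at the pullback, i.e. at $R_t$-coefficient $0$, so vanishing propagates from a larger $R_t$-multiple toward the pullback but not across it. This is precisely why the hypothesis $k \geq l \geq 0$ is essential: it keeps every index $\geq 1$, so that in each sequence we stay on the side of the degree-$0$ wall where $\sh{O}_{R_t}$ has cohomology in the harmless degree and the relevant connecting map is injective (respectively surjective) in the direction we need. Carefully tracking the sign of $\deg(\,\cdot\,)\vert_{R_t}$ in the two sequences, and choosing which of $H^0$ or $H^1$ of the $\mathbb{P}^1$-factor vanishes, is the part of the argument that must be done with care.
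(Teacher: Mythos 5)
Your proof is correct and takes essentially the same route as the paper's: the identical pair of short exact sequences restricting to $R_t$, the identical degree computations via Lemma \ref{coordinatedegreelemma} (degree $-m$ forcing $h^0$-vanishing of the restriction in the plain case, degree $m-1 \geq 0$ forcing $h^1$-vanishing in the strong case), the same descending induction, and the same observation that the $h^0$-condition on $X_0$ is untouched since the projection forgets the $R_t$-coordinate. The only difference is expository: you make explicit the assumption, left implicit in the paper, that $D$ is pulled back from $X_{t-1}$ so that Lemma \ref{coordinatedegreelemma} applies.
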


\begin{proof}
We do both cases by induction on $l$, starting with $l = k$. For $l = k$, there is nothing to show. Also
$(D - kR_t)_0 = (D - l R_t)_0$, so there is nothing to show for $h^0$.
Assume now
that $k > l > 0$ and $D - lR_t$ is pre-left-orthogonal. We consider the short exact sequence
\begin{equation*}
0 \longrightarrow \sh{O}(-D + (l - 1)R_t) \longrightarrow \sh{O}(-D + lR_t) \longrightarrow
\sh{O}_{R_t}(-D + lR_t) \longrightarrow 0.
\end{equation*}
By lemma
\ref{coordinatedegreelemma} we have $\deg \sh{O}_E(-D + lR_t) = -l < 0$, and thus
$h^0\big(\sh{O}_{R_t}(-D + lR_t) \big) = 0$. Then by the long exact cohomology sequence $h^1(-D + (l - 1)R_t)
\leq h^1(-D + lR_t) = 0$ and the first assertion follows by induction. If $D - lE$ is strongly
pre-left-orthogonal, we consider the following short exact sequence
\begin{equation*}
0 \longrightarrow \sh{O}(D - lR_t) \longrightarrow \sh{O}(D - (l - 1)R_t) \longrightarrow \sh{O}_{R_t}(D -
(l - 1)R_t) \longrightarrow 0.
\end{equation*}
Again, by lemma \ref{coordinatedegreelemma} he have $\deg \sh{O}_{R_t}(D - (l - 1) R_t) = l - 1 \geq 0$ and
therefore $h^1\big(\sh{O}_{R_t}(D - (l - 1)R_t) \big) = 0$. Then by the long exact cohomology sequence, we
have $0 = h^1(D - lR_t) \geq h^1(D - (l - 1)R_t) \geq 0$ and the second assertion follows by induction.
\end{proof}

Now we classify (strongly) pre-left-orthogonal divisors on $\mathbb{P}^2$ and on the $\mathbb{F}_a$.
Denote $H$ the class of a line on $\mathbb{P}^2$. As the condition of $h^1$-vanishing is vacuous for
invertible sheaves on $\mathbb{P}^2$, we trivially observe:

\begin{proposition}\label{P2leftorthogonals}
A divisor on $\mathbb{P}^2$ is (pre-)left-orthogonal iff it is strongly (pre-)left-orthogonal. The
pre-left-orthogonal divisors are given by $k H$, where $k > 0$, and the left-orthogonal divisors are
$H, 2H$.
\end{proposition}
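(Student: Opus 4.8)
The plan is to reduce everything to the classical cohomology of line bundles on $\mathbb{P}^2$. The single fact that drives the entire statement is that $H^1\big(\mathbb{P}^2, \sh{O}(kH)\big) = 0$ for every $k \in \Z$, which is precisely the vacuous $h^1$-vanishing remarked on just before the proposition. Since the only extra condition distinguishing the strong versions of (pre-)left-orthogonality from the plain versions is the vanishing of $h^1(D)$, and this holds for every $D = kH$ on $\mathbb{P}^2$, the two equivalences ``pre-left-orthogonal iff strongly pre-left-orthogonal'' and ``left-orthogonal iff strongly left-orthogonal'' fall out immediately. It then remains only to identify the divisors explicitly, which amounts to controlling $h^0$ and $h^2$.

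First I would record the standard values: $h^0(kH) \neq 0$ precisely when $k \geq 0$, and, by Serre duality with $K_{\mathbb{P}^2} = -3H$, $h^2(kH) = h^0\big((-3 - k)H\big)$, which is nonzero precisely when $k \leq -3$. Because $X = X_0 = \mathbb{P}^2$ in this situation, the projection $(D)_0$ equals $D$, so the defining condition for pre-left-orthogonality of $D = kH$ collapses to $h^0(-kH) = 0$, i.e. $-k < 0$, i.e. $k > 0$. This yields the pre-left-orthogonal divisors as exactly $kH$ with $k > 0$.

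For the honestly left-orthogonal divisors I would then impose the remaining vanishing $h^2(-kH) = 0$. By the Serre-duality computation above, $h^2(-kH) = h^0\big((k - 3)H\big)$, which vanishes exactly when $k - 3 < 0$, i.e. $k \leq 2$. Intersecting with the pre-left-orthogonal range $k > 0$ leaves $k \in \{1, 2\}$, that is, $H$ and $2H$.

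I do not expect a genuine obstacle here; the content is entirely the assembly of standard facts. The only point that warrants a moment's care is the Serre-duality bookkeeping that correctly truncates the left-orthogonal range at $2H$ (and not, say, at $3H$), since it is the $h^2(-D)$ condition---not $h^0(-D)$---that supplies the upper cutoff.
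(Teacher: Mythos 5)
Your proof is correct and takes essentially the same route as the paper, which in fact gives no proof at all: the proposition there is prefaced only by the remark that $h^1$-vanishing is vacuous for invertible sheaves on $\mathbb{P}^2$, precisely the fact your argument is organized around, and the explicit ranges you derive ($kH$, $k>0$, respectively $H$, $2H$) match. One small imprecision: for the non-pre version, strong left-orthogonality requires $h^2(D)=0$ in addition to $h^1(D)=0$, so that equivalence does not follow from $h^1$-vanishing alone; it is closed immediately either by your own recorded fact that $h^2(kH)\neq 0$ only for $k \leq -3$ (and the left-orthogonal divisors are $H$, $2H$), or by Lemma \ref{h1vanishinglemma}, which applies here since $-K_{\mathbb{P}^2}=3H$ is effective and left-orthogonality implies numerical left-orthogonality.
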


For the case of a Hirzebruch surface $\mathbb{F}_a$, we choose $P, Q$ as before and the following
statements can be seen rather straightforwardly, for instance by using toric methods as in
\cite{HillePerling06}, \cite{perling07a}.

\begin{proposition}\label{Hirzebruchpreleftorthogonals}
The pre-left-orthogonal divisors on a Hirzebruch surface are:
\begin{enumerate}[(i)]
\item on $\mathbb{F}_0$: $P + kQ$, $kP + Q$ for $k \in \Z$, $k P + l Q$ for $k, l \geq 2$;
\item on $\mathbb{F}_a$, with $a > 0$: $P$, $k P + Q$ for $k \in \Z$, $k P + l Q$ for $k \geq 1 - a$ and
$l \geq 2$;
\end{enumerate}
A pre-left-orthogonal divisors is strongly pre-left-orthogonal iff it is not of the type $P + kQ$ or $k P + Q$
for $k < -1$ or of type $k P + l Q$ for $l \geq 2$ and $k < \max \{-1, 1 - a\}$.
\end{proposition}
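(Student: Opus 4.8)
The plan is to compute the cohomology of every line bundle on $\mathbb{F}_a$ directly via the ruling $\pi : \mathbb{F}_a \to \mathbb{P}^1$, for which $P$ is the fiber class and $\sh{O}(P) = \pi^*\sh{O}_{\mathbb{P}^1}(1)$, and then to read off the two vanishing conditions. Since $\mathbb{P}^1$ has cohomological dimension $1$, the Leray spectral sequence for $\pi$ degenerates into $h^0(D) = h^0(\mathbb{P}^1, \pi_*\sh{O}(D))$, $h^1(D) = h^1(\mathbb{P}^1, \pi_*\sh{O}(D)) + h^0(\mathbb{P}^1, R^1\pi_*\sh{O}(D))$ and $h^2(D) = h^1(\mathbb{P}^1, R^1\pi_*\sh{O}(D))$. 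The first step is therefore to make $\pi_*$ and $R^1\pi_*$ of $\sh{O}(mP + nQ)$ explicit. Restricting to a fiber shows the fiber degree is $n$, so the behaviour splits into three ranges. For $n \geq 0$, the projection formula together with $\pi_*\sh{O}(Q) = \sh{O}_{\mathbb{P}^1} \oplus \sh{O}_{\mathbb{P}^1}(a)$ (which one checks against Riemann--Roch, $h^0(Q) = a+2$) gives $\pi_*\sh{O}(mP + nQ) = \bigoplus_{j = 0}^{n}\sh{O}_{\mathbb{P}^1}(m + aj)$ and $R^1\pi_* = 0$; for $n = -1$ both direct images vanish; and for $n \leq -2$ one has $\pi_* = 0$, while relative Serre duality through $\omega_\pi = \sh{O}(-2Q + aP)$ yields $R^1\pi_*\sh{O}(mP + nQ) = \bigoplus_{j = 0}^{-n - 2}\sh{O}_{\mathbb{P}^1}(m - a - aj)$.

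Given these formulas everything reduces to $h^0(\mathbb{P}^1, \sh{O}_{\mathbb{P}^1}(d)) = \max(d + 1, 0)$ and $h^1(\mathbb{P}^1, \sh{O}_{\mathbb{P}^1}(d)) = \max(-d - 1, 0)$. For pre-left-orthogonality I would apply them to $-D$, i.e. to $(m,n) = (-k, -l)$ for $D = kP + lQ$, imposing $h^0(-D) = h^1(-D) = 0$ and organising by the sign of $l$. For $l \leq 0$ one lands in the range $n \geq 0$, where both conditions become explicit inequalities forcing the $P$-coefficient to equal $1$ (with $l = 0$ isolating $P$ itself); on $\mathbb{F}_0$ this produces the family $P + kQ$, and by the $P \leftrightarrow Q$ symmetry of $\mathbb{F}_0$ also $kP + Q$. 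For $l = 1$ one is in the range $n = -1$, where both direct images vanish, so every $kP + Q$ is pre-left-orthogonal. For $l \geq 2$ one is in the range $n \leq -2$, so $h^0(-D) = 0$ is automatic and the remaining condition $h^0(\mathbb{P}^1, R^1\pi_*\sh{O}(-D)) = 0$ is binding in its $j = 0$ summand, giving exactly $k \geq 1 - a$. For the strong version I would additionally impose $h^1(D) = 0$ on $D$ itself: once $l \geq 0$ the term $R^1\pi_*\sh{O}(D)$ drops out and $h^1(\mathbb{P}^1, \pi_*\sh{O}(D)) = 0$ is again binding in its $j = 0$ summand, forcing the $P$-coefficient to be $\geq -1$; intersecting with $k \geq 1-a$ yields the threshold $k \geq \max\{-1, 1-a\}$ for the families $kP + Q$ and $kP + lQ$ ($l \geq 2$). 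The remaining family $P + kQ$ on $\mathbb{F}_0$ with $k < 0$ has negative $Q$-coefficient and is handled by the symmetric computation (equivalently by the $n \leq -2$ formula for $R^1\pi_*$), which makes $h^1(D) \neq 0$ precisely when $k < -1$.

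The only genuinely delicate point is bookkeeping rather than mathematics. The proposition describes the same sheaves through overlapping parametrizations --- for instance $P + lQ$ with $l \geq 2$ is listed under ``$P + kQ$'' although it also fits the $kP + lQ$ pattern with $P$-coefficient $1$ --- and the threshold ``$k \geq -1$'' is a condition on the $P$-coefficient, so that the excluded family $P + kQ$ fails for a different reason (a very negative $Q$-coefficient, seen only through the symmetry of $\mathbb{F}_0$) than the families $kP + Q$ and $kP + lQ$. Moreover the uniform thresholds $1-a$ and $\max\{-1, 1-a\}$ have several clauses that are vacuous for small $a$, so the surfaces $\mathbb{F}_0$, $\mathbb{F}_1$ and $\mathbb{F}_{a \geq 2}$ must be reconciled against them. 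I would therefore check each listed family against the inequalities above and, conversely, that every $(k,l)$ satisfying the inequalities lies in one of the listed families, exploiting the $\mathbb{F}_0$ symmetry to halve the work. As the paper notes, the identical outcome can be obtained purely combinatorially from the toric description of $\mathbb{F}_a$ developed in Section~\ref{toriccohomologyvanishing} by locating the relevant lattice points; I regard the ruling computation as the cleaner route since it avoids forward-referencing that machinery.
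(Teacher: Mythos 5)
Your route is genuinely different from the paper's: the paper gives no written proof of this proposition at all, saying only that the statements ``can be seen rather straightforwardly, for instance by using toric methods'' as in \cite{HillePerling06}, \cite{perling07a}, i.e.\ by the lattice-point counting machinery that Section~\ref{toriccohomologyvanishing} later develops. Your computation through the ruling is self-contained, and its ingredients all check out: $\pi_*\sh{O}(mP+nQ)=\bigoplus_{j=0}^{n}\sh{O}_{\mathbb{P}^1}(m+aj)$ with $R^1\pi_*=0$ for $n\geq 0$, both images zero for $n=-1$, $\omega_\pi=\sh{O}(aP-2Q)$, and hence $R^1\pi_*\sh{O}(mP+nQ)=\bigoplus_{j=0}^{-n-2}\sh{O}_{\mathbb{P}^1}(m-a-aj)$ for $n\leq -2$ (the sign here is the delicate point; your formula passes the cross-check $h^1(-2Q)=h^1(K_X+2Q)=h^1\big((a-2)P\big)=\max\{1-a,0\}$). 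Feeding these into the degenerate Leray sequence, pre-left-orthogonality of $D=kP+lQ$ becomes: for $l<0$ the pair $k\geq 1-al$ and $k\leq 1$ (empty when $a>0$, and $k=1$ when $a=0$); for $l=0$ exactly $k=1$; for $l=1$ vacuous; for $l\geq 2$ exactly $k\geq 1-a$. That is precisely the stated list. What the two approaches buy: yours gives closed formulas for all $h^i$ on the minimal surface with no forward references, while the paper's toric method is the one that survives on the blown-up surfaces where the rest of the argument actually lives.

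One bookkeeping slip in your strong case must be corrected, and it is exactly of the kind you warned about. The clause ``intersecting with $k \geq 1-a$ yields the threshold $k \geq \max\{-1, 1-a\}$ for the families $kP + Q$ and $kP + lQ$ ($l \geq 2$)'' is false for the family $kP+Q$: there pre-left-orthogonality imposes no condition on $k$ (every $k\in\Z$ qualifies), so there is nothing to intersect with, and the criterion is $k\geq -1$, full stop. Read literally, your lumped threshold would declare $Q$ and $-P+Q$ on $\mathbb{F}_0$ (where $\max\{-1,1-a\}=1$) and $-P+Q$ on $\mathbb{F}_1$ (where it equals $0$) not strongly pre-left-orthogonal, even though $h^1\big(\mathbb{P}^1,\pi_*\sh{O}(-P+Q)\big)=h^1\big(\sh{O}(-1)\oplus\sh{O}(a-1)\big)=0$; this contradicts the proposition, which excludes $kP+Q$ only for $k<-1$. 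The $\max\{-1,1-a\}$ threshold belongs to the $l\geq 2$ family alone. Your closing paragraph does state the correct picture (the $-1$ is a condition on the $P$-coefficient and governs $kP+Q$), so the repair is one line, but as written the computation paragraph proves a statement that is wrong for $a\leq 1$.
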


\begin{proposition}\label{Hirzebruchleftorthogonals}
Let $\mathbb{F}_a$ be a Hirzebruch surface.
\begin{enumerate}[(i)]
\item If $a = 0$, then the left-orthogonal divisors are given by $P + kQ$, $kP + Q$ for $k \in \Z$.
\item If $a > 0$, then the left-orthogonal divisors are given by $P$, $k P + Q$ for $k \in \Z$,
and $(1 - a)P + 2Q$.
\item Left-orthogonal divisors of type $k P + Q$ or $P + kQ$ are strongly left-orthogonal iff $k \geq -1$.
Divisors of type $(1 - a)P + 2Q$ are strongly left-orthogonal iff $a \leq 2$.
\end{enumerate}
\end{proposition}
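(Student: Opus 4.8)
The plan is to deduce this classification directly from the list of pre-left-orthogonal divisors already obtained in Proposition \ref{Hirzebruchpreleftorthogonals}. Since here $X = \mathbb{F}_a$ is itself the minimal model $X_0$, there are no exceptional classes $R_i$, so $(-D)_0 = -D$ and pre-left-orthogonality of $D$ means exactly $h^0(-D) = h^1(-D) = 0$. By definition $D$ is left-orthogonal precisely when in addition $h^2(-D) = 0$, and by Serre duality $h^2(-D) = h^0(K_{\mathbb{F}_a} + D)$. Thus the whole problem reduces to deciding, for each pre-left-orthogonal $D$, whether $K_{\mathbb{F}_a} + D$ is effective.

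First I would record the two ingredients for this effectivity test. Writing $D = \alpha P + \beta Q$, the canonical class is $K_{\mathbb{F}_a} = -(2 - a)P - 2Q$, and the effective cone of $\mathbb{F}_a$ is generated by the fibre class $P$ and the negative section $C_0 = Q - aP$; hence $\alpha P + \beta Q$ is effective iff $\beta \geq 0$ and $\alpha + a\beta \geq 0$. Applying this to $K_{\mathbb{F}_a} + D = (\alpha - 2 + a)P + (\beta - 2)Q$ gives that $h^0(K_{\mathbb{F}_a} + D) = 0$ iff $\beta < 2$ or $\alpha + a\beta < a + 2$. Now I would run this condition through the families of Proposition \ref{Hirzebruchpreleftorthogonals}. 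For the ``line'' families $P + kQ$ and $kP + Q$ one of $\alpha, \beta$ equals $1 < 2$, so $K_{\mathbb{F}_a} + D$ is never effective and these are always left-orthogonal; this accounts for (i) and the first two entries of (ii). For the two-dimensional family $kP + lQ$ with $l \geq 2$ (and $k \geq 1 - a$ when $a > 0$), the effectivity inequalities force $l = 2$ and then pin down the single surviving class $(1 - a)P + 2Q$ when $a > 0$, while for $a = 0$ no member survives. This yields exactly the lists in (i) and (ii), the only non-obvious point being the appearance of the boundary divisor $(1 - a)P + 2Q$.

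For (iii) I would use Lemma \ref{h1vanishinglemma}. The anticanonical class $-K_{\mathbb{F}_a} = (2 - a)P + 2Q = (a + 2)P + 2C_0$ is effective, so a left-orthogonal $D$ is strongly left-orthogonal iff $h^1(D) = 0$. Since a left-orthogonal divisor is in particular pre-left-orthogonal, the extra condition $h^1(D) = 0$ is exactly the one distinguishing strongly pre-left-orthogonal divisors, so I can read the answer off Proposition \ref{Hirzebruchpreleftorthogonals}: the classes $kP + Q$ and $P + kQ$ are strongly pre-left-orthogonal iff $k \geq -1$, and $(1 - a)P + 2Q$, being of type $kP + lQ$ with $l = 2$ and $k = 1 - a$, is strongly pre-left-orthogonal iff the inequality $1 - a < \max\{-1, 1 - a\}$ fails, i.e. iff $a \leq 2$. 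This gives (iii).

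The computation is essentially bookkeeping once Proposition \ref{Hirzebruchpreleftorthogonals} is granted, so I do not expect a genuine obstacle; the only place demanding care is the two-dimensional family, where the interplay of the constraint $k \geq 1 - a$ with the effectivity inequality for $K_{\mathbb{F}_a} + D$ must be checked to leave precisely the one extra class $(1 - a)P + 2Q$. If one instead wished to argue without invoking Proposition \ref{Hirzebruchpreleftorthogonals}, the same statements follow from a direct computation of line-bundle cohomology on $\mathbb{F}_a$ (e.g. via the ruling $\mathbb{F}_a \to \mathbb{P}^1$ and the projection formula, or from the toric description), which is where the real work would then lie.
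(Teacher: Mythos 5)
Your proposal is correct. Note, though, that the paper gives no proof at all at this point: Propositions \ref{Hirzebruchpreleftorthogonals} and \ref{Hirzebruchleftorthogonals} are both simply asserted, with the remark that they ``can be seen rather straightforwardly, for instance by using toric methods'' as in \cite{HillePerling06}, \cite{perling07a}; the implied argument is a direct computation of line-bundle cohomology on $\mathbb{F}_a$, carried out separately for each proposition. You do something genuinely different: you make the present proposition a formal corollary of Proposition \ref{Hirzebruchpreleftorthogonals}. Since $\mathbb{F}_a$ is its own minimal model, pre-left-orthogonality is exactly $h^0(-D) = h^1(-D) = 0$, so left-orthogonality is pre-left-orthogonality plus $h^2(-D) = h^0(K_{\mathbb{F}_a} + D) = 0$, which you decide using the effective cone (generated by $P$ and $Q - aP$; your criterion that $\alpha P + \beta Q$ is effective iff $\beta \geq 0$ and $\alpha + a\beta \geq 0$ follows by pairing with the nef classes $P$ and $Q$). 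The only delicate step is the arithmetic in the two-dimensional family, and it checks out: non-effectivity of $K_{\mathbb{F}_a} + D$ for $l \geq 2$ reads $k + al < a + 2$, which for $a > 0$ forces $l = 2$ and then $k = 1 - a$ (for $l \geq 3$ one would need $k \leq 1 - 2a < 1 - a$, contradicting pre-left-orthogonality), while for $a = 0$ it eliminates the family entirely. For part (iii), reducing strong left-orthogonality of a left-orthogonal $D$ to the strong pre-left-orthogonality clause of Proposition \ref{Hirzebruchpreleftorthogonals} via Lemma \ref{h1vanishinglemma} is legitimate, because $-K_{\mathbb{F}_a} = (2 - a)P + 2Q$ is effective for all $a \geq 0$ and every left-orthogonal divisor is in particular numerically left-orthogonal. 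What your route buys is economy: all genuine cohomology vanishing is concentrated in Proposition \ref{Hirzebruchpreleftorthogonals}, and the present statement becomes Serre duality plus cone bookkeeping; what the direct toric computation buys is independence from that proposition. One small imprecision, which does not affect correctness: for $a > 0$ the classes $P + kQ$ with $k \geq 2$ are not among the ``line families'' of Proposition \ref{Hirzebruchpreleftorthogonals}(ii) but sit inside the two-dimensional family (with $k' = 1$, $l = k$), where your effectivity test correctly discards them.
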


In coordinates chosen with respect to a minimal model $X_0$, the anti-canonical divisor on $X$ can be
written as
\begin{align*}
-K_X & = 3H - \sum_{i = 1}^t R_i \quad \text{ or} \\
-K_X & = (2 - a) P + 2Q - \sum_{i = 1}^t R_i, \text{ respectively}.
\end{align*}
For $X_0 = \mathbb{P}^2$ and some divisor $D = \beta H + \sum_{i = 1}^t \gamma_i R_i$, we get by Riemann-Roch
the following formulas for the Euler characteristics of $D$:
\begin{align}
\chi(D) & = \binom{\beta + 2}{2} - \sum_i \binom{\gamma_i}{2} \label{eulercharp2}\\
\chi(-D) & = \binom{\beta - 1}{2} - \sum_i \binom{\gamma_i + 1}{2},\label{antieulercharp2}
\end{align}
where we write $\binom{x}{2} = \frac{1}{2} x (x - 1)$ for any $x \in \mathbb{Z}$.
For $X_0 = \mathbb{F}_a$ and $D = \alpha P + \beta Q + \sum_{i = 1}^t \gamma_i R_i$, we get:
\begin{align}
\chi(D) & = (\alpha + 1)(\beta + 1) + a \binom{\beta + 1}{2} - \sum_i \binom{\gamma_i}{2} \label{eulercharfa}\\
\chi(-D) & = (\alpha - 1)(\beta - 1) + a \binom{\beta}{2} - \sum_i \binom{\gamma_i + 1}{2}\label{antieulercharfa}
\end{align}
If $\chi(-D) = 0$, we obtain linear equations for $\chi(D) = -K_X D $ in either coordinates:
\begin{align*}
\chi(D) & = 3 \beta + \sum_i \gamma_i \\
\chi(D) & = 2\alpha + (2 + a)\beta + \sum_i \gamma_i.
\end{align*}

We now look at the case where $(D)_0 = 0$. In this case, we have to take into account the relative
configuration of $R_i$ and $R_j$.

\begin{definition}
Assume $i, j > 0$ and denote $x_j$ and $x_i$ the points on $X_{j - 1}$ and $X_{i - 1}$, respectively, which
are blown up by the maps $b_j$ and $b_i$. We define a partial order $\geq$ on the set $\{R_1, \dots, R_t\}$ by
setting $R_i \geq R_i$ for every $i$ and $R_j \geq R_i$ if $j > i$ and $b_i \circ \cdots \circ b_{j - 1}(x_j) =
x_i$.
\end{definition}

Now we get:

\begin{proposition}\label{verticalleftorthogonals}
Let $D \in \pic(X)$ such that $(D)_0 = 0$. Then $D$ is left-orthogonal if there exists $i \in [t]$ and
$S \subset [t] \setminus \{i\}$ such that $D = R_i - \sum_{j \in S} R_j$ and $R_i \ngeq R_j$ for all $j \in S$.
Moreover, $D$ is strongly left-orthogonal
iff it is of the form $R_i$ for some $i \in [t]$ or of the form $R_i - R_j$ such that $R_i$ and $R_j$ are
incomparable with respect to the partial order $\geq$.
\end{proposition}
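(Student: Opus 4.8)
The plan is to separate the defining conditions of (strong) left-orthogonality and to treat $h^0$, $h^1$, $h^2$ independently, using Lemma~\ref{h1vanishinglemma} to reduce everything to $h^0$-statements. First I would record the numerical input: since $(D)_0=0$, setting $\beta=0$ (resp.\ $\alpha=\beta=0$) in (\ref{antieulercharp2}) (resp.\ (\ref{antieulercharfa})) gives $\chi(-D)=1-\sum_i\binom{\gamma_i+1}{2}$ for $D=\sum_i\gamma_iR_i$. As $\binom{\gamma+1}{2}\geq 0$ with value $0$ exactly for $\gamma\in\{0,-1\}$ and value $1$ exactly for $\gamma\in\{1,-2\}$, numerical left-orthogonality holds iff either (A) some $\gamma_i=1$ and all other $\gamma_j\in\{0,-1\}$, i.e.\ $D=R_i-\sum_{j\in S}R_j$, or (B) some $\gamma_i=-2$ and the rest lie in $\{0,-1\}$. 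In case (B) the class $-D=2R_i+\sum_jR_j$ is manifestly effective, so $h^0(-D)\neq 0$ and $D$ is not even left-orthogonal; this disposes of case (B) in the ``only if'' part of the second statement. Next I would dispatch $h^2$ once and for all: by Serre duality $h^2(\pm D)=h^0(K_X\mp D)$, and since $(D)_0=0$ the class $K_X\mp D$ has $H$-coefficient $-3$ (resp.\ $Q$-coefficient $-2$), hence negative intersection with the nef class $H$ (resp.\ $P$); so it is non-effective and $h^2(D)=h^2(-D)=0$ for every $D$ with $(D)_0=0$.

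With $\chi(-D)=0$ and $h^2(-D)=0$ in hand, left-orthogonality is equivalent to $h^0(-D)=0$, which is the heart of the first statement. Here I would induct on $m:=\max(S\cup\{i\})$, peeling off the top index. Writing $-D=F+\gamma_mR_m$ with $F$ pulled back from $X_{m-1}$: if $m\in S$ then $\gamma_m=1$ and $F=\sum_{j\in S\setminus\{m\}}R_j-R_i$ satisfies the same hypotheses on $X_{m-1}$ with smaller maximum, so $h^0(F)=0$ by induction, whence $h^0(-D)=h^0(F+R_m)=0$ by Lemma~\ref{pullbackcohomologyvanishinglemma}(\ref{pullbackcohomologyvanishinglemmai}). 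The essential case is $m=i$, where $S\subset[m-1]$ and $-D=G-R_m$ with $G=\sum_{j\in S}R_j$. The key sub-lemma is that $G$ is rigid, $h^0(G)=1$, its unique section cutting out the sum of the total transforms of the $E_j$, $j\in S$; this holds because any effective divisor equivalent to $G$ pushes forward to $0$ on $X_0$ and is thus $b$-exceptional, and exceptional effective divisors are determined by their class (the strict transforms being linearly independent with negative definite intersection form). From $0\to\sh O(G-R_m)\to\sh O(G)\to\sh O_{R_m}(G)\to 0$ and $\deg\sh O(G)\vert_{R_m}=G.R_m=0$ (Lemma~\ref{coordinatedegreelemma}), the restriction $H^0(G)\to H^0(\sh O_{R_m})=\mathbb K$ is injective, so $h^0(G-R_m)=0$, precisely when this section does not vanish on $R_m=E_m$, i.e.\ when $E_m$ is a component of no total transform of $E_j$, $j\in S$. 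This is exactly the hypothesis $R_i\ngeq R_j$ for all $j\in S$, by the geometric meaning of the partial order.

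For the second statement, the ``if'' direction uses the first: the divisors $R_i$ (with $S=\emptyset$) and $R_i-R_j$ with $R_i,R_j$ incomparable satisfy the hypotheses and are therefore left-orthogonal, and applying the first statement to both $R_i-R_j$ and $R_j-R_i$ (both admissible by incomparability) yields $h^0(R_i-R_j)=h^0(R_j-R_i)=0$. Combining $h^2(D)=0$ with the Riemann--Roch value $\chi(D)=-K_X.D=1-|S|$ from Lemma~\ref{acyclicitylemma}, one obtains $h^1(R_i)=h^0(R_i)-1=0$ (as $h^0(R_i)=1$ by the sub-lemma) and $h^1(R_i-R_j)=h^0(R_i-R_j)=0$, so these divisors are strongly left-orthogonal. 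Conversely, a strongly left-orthogonal $D$ is left-orthogonal, hence numerically so, hence of type (A); writing $D=R_i-\sum_{j\in S}R_j$, from $h^2(D)=0$ and $\chi(D)=1-|S|$ we get $h^1(D)=h^0(D)+|S|-1$, so $h^1(D)=0$ forces $|S|\leq 1$. If $|S|=0$ then $D=R_i$; if $|S|=1$ then $D=R_i-R_j$, and $h^0(R_i-R_j)=0$ together with the left-orthogonality condition $h^0(R_j-R_i)=0$ force $R_j\ngeq R_i$ and $R_i\ngeq R_j$ via the effectivity criterion, i.e.\ incomparability.

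The main obstacle is the geometric sub-lemma underpinning both essential steps: the identification of ``$R_a-R_b$ is effective'' (equivalently ``$E_a$ is a component of the total transform of $E_b$'') with the relation $R_b\geq R_a$. I would prove this by induction along the tower $X_t\to\cdots\to X_0$, tracking how total transforms of exceptional divisors acquire a new component under $b_k$ exactly when the blown-up point $x_k$ lies on the previous total transform, which is precisely the inductive definition of $x_a$ being infinitely near $x_b$. The rigidity statement $h^0(\sum_{j\in S}R_j)=1$ is then a formal consequence of the negative-definiteness of the exceptional configuration, and everything else is Riemann--Roch bookkeeping together with the pullback lemmas.
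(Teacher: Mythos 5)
Your proof is correct, but on the two substantive points it takes a genuinely different route from the paper's. For excluding the case $\gamma_i=-2$ you simply note that $-D=2R_i+\sum_{j\in S}R_j$ is effective, so $h^0(-D)\neq 0$; the paper instead shows $h^1\bigl(\sh{O}(2R_i+\sum_{j\in S}R_j)\bigr)\neq 0$ by restricting to the non-reduced curve $2R_i$ — your observation is shorter and suffices, since left-orthogonality already demands $h^0(-D)=0$. For the heart of the matter, the vanishing $h^0\bigl(\sum_{j\in S}R_j-R_i\bigr)=0$ under the hypothesis $R_i\ngeq R_j$, the paper argues by cases on how $R_i$ compares with the elements of $S$ (some $R_k$ incomparable, or $R_j\geq R_i$ for all $j$), invoking Lemma \ref{pullbackcohomologyvanishinglemma} and an iteration that rewrites the class through strict transforms $E_i\sim R_i-\sum_{j\in T_i}R_j$; you instead induct on the maximal index $m$ of $S\cup\{i\}$, peeling off indices in $S$ via Lemma \ref{pullbackcohomologyvanishinglemma}(\ref{pullbackcohomologyvanishinglemmai}) until $m=i$, and settle that essential case by a rigidity statement — $h^0\bigl(\sum_{j\in S}R_j\bigr)=1$, with unique effective representative the sum of total transforms, proved by pushing forward to $X_0$ and using linear independence of the classes of the strict transforms — combined with the restriction sequence to $E_m$ and Lemma \ref{coordinatedegreelemma}. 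This buys you a clean exact criterion (an iff) in the essential case and avoids the paper's delicate case-by-case iteration; the price is that you must establish the uniqueness-of-effective-representative fact explicitly, which the paper uses only implicitly (it asserts $h^0(\sum_{j\in S}R_j)=1$ without comment). Your $h^2$-vanishing (negative intersection of $K_X\mp D$ with a nef pullback class) also replaces the paper's projection argument, while your treatment of the second statement is essentially the paper's. One small slip: in your closing paragraph the parenthetical gloss should read ``$E_b$ is a component of the total transform of $E_a$'' (not $E_a$ of $E_b$) as the geometric translation of ``$R_a-R_b$ is effective'', i.e.\ of $R_b\geq R_a$; in the body of the argument you apply the equivalence with the correct orientation, so this is only a typo, not a gap.
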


\begin{proof}
Note that for $(D)_0 = 0$, by Lemma \ref{pullbackcohomologyvanishinglemma}
(\ref{pullbackcohomologyvanishinglemmaiii}), we can always assume that $h^2(D) = h^2(-D) = 0$.
Let $D = \sum_i \gamma_i R_i$, then $\chi(-D) = 0$ by formula (\ref{antieulercharp2}) or
(\ref{antieulercharfa}) yields:
\begin{equation*}
\sum_j \binom{\gamma_j + 1}{2} = 1.
\end{equation*}
But then there is precisely one $i \in [t]$ with $\gamma_i \in \{1, -2\}$ and $\gamma_j \in \{0, -1\}$ for
all other $j$. If $\gamma_i = -2$, we consider $R_i$ as irreducible divisor on $X_i$ and we consider
the following part of a long exact cohomology sequence:
\begin{equation*}
H^1\big(X_i, \sh{O}_{X_i}(\sum_{j \in S}R_j)\big) \longrightarrow H^1\big(X_i, \sh{O}_{X_i}(2R_i +
\sum_{j \in S}R_j)\big) \longrightarrow H^1\big(X_i, \sh{O}_{2R_i}(2R_i + \sum_{j \in S}R_j)\big)
\longrightarrow 0
\end{equation*}
for some $S \subset [i]$.
As $\chi(\sum_{j \in S}R_j) = 1 = h^0(\sum_{j \in S}R_j)$, we get $h^1\big(\sh{O}_{X_i}(\sum_{j \in S}R_j)
\big) = 0$ and thus $h^1\big(\sh{O}_{X_i}(2R_i + \sum_{j \in S}R_j)\big) = h^1\big(\sh{O}_{2R_i}(2R_i +
\sum_{j \in S}R_j)\big)$. By lemma \ref{pullbackcohomologyvanishinglemma} we can assume without loss of
generality that $i \geq j$ for all $j \in S$. Then we get $\sh{O}_{2R_i}(2R_i + \sum_{j \in S}R_j) \cong
\sh{O}_{2R_i}(2R_i)$ and we compute $\chi\big(\sh{O}_{2R_i}(2R_i)\big) = \chi\big(\sh{O}(2 R_i)\big) - 1 = -1$
and thus $h^1\big(\sh{O}_{X_0}(2R_i + \sum_{j \in S}R_j)\big) \neq 0$.

So we are left with divisors of the form $R_i - \sum_{j \in S} R_j$ for some $S \subset [t]$. By Serre
duality, we have
$h^2(-R_i + \sum_{j \in S} R_j) = h^0(K_X + R_i - \sum_{j \in S} R_j) \leq h^0\big((K_X + R_i -
\sum_{j \in S} R_j)_0\big) = h^0\big((K_X)_0) = 0$. If there exists $k \in S$ such that $R_i
\geq R_k$, then $R_k - R_i$ is effective, and $-R_i + \sum_{j \in S} R_j$ is a sum of effective
divisors and therefore $h^0(-R_i + \sum_{j \in S} R_j) \neq 0$. If there exists $k \in S$ such that
$R_i$ and $R_k$ are incomparable, then we may assume that this $k$ is minimal with respect
to $\geq$. Then $h^0(R_k - R_i) = 0$, and by lemma \ref{pullbackcohomologyvanishinglemma}
we can conclude that $h^0(-R_i + \sum_{j \in S} R_j) = 0$.
The remaining possibility is that $R_j \geq R_i$ for all $j \in S$. In that case, denote $E_i$ the
strict transform on $X$ of the exceptional divisor of the blow-up $b_i$. Then there exists $T_i
\subset [t]$ such that $E_i$ is rationally equivalent to $R_i - \sum_{j \in T_i} R_j$. Then
$-R_i + \sum_{j \in S} R_j$ is rationally equivalent to $\sum_{j \in S \setminus T_i} R_j -
\sum_{j \in T_i \setminus S} R_j - E_i$. If any of $S \setminus T_i$ or $T_i \setminus S$ are
empty, we have $h^0(-R_i + \sum_{j \in S} R_j) = 0$. Otherwise, if any $R_k$, $R_l$ wit
$k \in S \setminus T_i$ and $l \in T_i \setminus S$ are incomparable, then $h^0(R_k - R_l) =
h^0(-R_i + \sum_{j \in S} R_j) = 0$. If not, we choose $k \in T_i \setminus S$. Then there
exists $T_k \subset [t]$ such that $E_k = R_k - \sum_{l \in T_k} R_l$ and we iterate
our previous argument until we get the difference of two incomparable $R_j$ or we can
write $-R_i + \sum_{j \in S} R_j$ as the inverse of an effective divisor.

So, unless there exists $j \in S$ with $R_i \geq R_j$, we can now conclude together with
$\chi(-R_i + \sum_{j \in S} R_j) = 0$ that $h^i(-R_i + \sum_{j \in S} R_j) = 0$ for all $i$. This
shows the first assertion. For strong
left-orthogonality, we necessarily need $\chi(R_i - \sum_{j \in S} R_j) \geq 0$, which is the case iff
$S$ is empty or $S = \{j\}$ for some $j \neq i$. A divisor $R_i$ always is strongly left-orthogonal.
For $R_i - R_j$ we have $\chi(R_i - R_j) = 0$, and $h^1(R_i - R_j) = 0$ is equivalent to $h^0(R_i - R_j)
= 0$. But this is in turn is equivalent to incomparability of $R_i$ and $R_j$.
\end{proof}

For $(D)_0 \neq 0$, we have the following statement:

\begin{proposition}\label{negativegammas}
If $D = (D)_0 + \sum_i \gamma_i R_i$ is left-orthogonal and $(D)_0$ is pre-left-orthogonal, then
$\gamma_i \leq 0$ for all $i$.
\end{proposition}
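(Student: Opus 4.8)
The plan is to prove, by induction on the number $t$ of blow-ups in $X = X_t \to \cdots \to X_0$, the following slightly stronger statement: if $D \in \pic(X)$ with $(D)_0 \neq 0$ is \emph{pre}-left-orthogonal, then $\gamma_i \leq 0$ for all $i$. This implies the proposition, because if $D$ is left-orthogonal and $(D)_0$ is pre-left-orthogonal, then $h^1(-D) = 0$ and $h^0\big((-D)_0\big) = h^0\big(-(D)_0\big) = 0$, so $D$ itself is pre-left-orthogonal. Passing to this weaker hypothesis is the crucial move: the naive attempt to show that the projection $(D)_{t-1}$ is again left-orthogonal on $X_{t-1}$ fails, since $h^2\big(-(D)_{t-1}\big)$ need not vanish (for instance when $(D)_0 = 3H$ on a blow-up of $\mathbb{P}^2$), whereas pre-left-orthogonality is precisely the notion stable under projection.

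Before the induction I would record the propagation of $h^0$-vanishing. Since $h^0\big((-D)_0\big) = 0$, iterating Lemma \ref{pullbackcohomologyvanishinglemma}(i) one blow-up at a time (on each $X_i$, where $R_i$ is the irreducible exceptional curve of $b_i$ and $-(D)_{i-1}$ is a pullback) gives $h^0\big(-(D)_i\big) = 0$ for every $i$. In particular $h^0(-D) = 0$ and $h^0(-\bar D) = 0$, where I write $\bar D := (D)_{t-1}$ and $E := R_t$, the irreducible $(-1)$-curve contracted by $b_t$.

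The heart of the matter is the inductive step $\gamma_t \leq 0$. Suppose $\gamma_t > 0$. From $-D + E = -\bar D - (\gamma_t - 1)E$ and Lemma \ref{coordinatedegreelemma}, the restriction $\sh{O}_E(-D + E)$ has degree $\gamma_t - 1 \geq 0$, so $h^1\big(\sh{O}_E(-D+E)\big) = 0$ and $h^0\big(\sh{O}_E(-D+E)\big) = \gamma_t$. Feeding this into the long exact cohomology sequence of
\begin{equation*}
0 \longrightarrow \sh{O}(-D) \longrightarrow \sh{O}(-D + E) \longrightarrow \sh{O}_E(-D + E) \longrightarrow 0,
\end{equation*}
together with $h^0(-D) = 0$ and the pre-left-orthogonality input $h^1(-D) = 0$, yields $h^0(-D + E) = \gamma_t > 0$. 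But $-D + E = -\bar D - (\gamma_t - 1)E$ is obtained from the pullback $-\bar D$ by subtracting a multiple of $E$, so Lemma \ref{pullbackcohomologyvanishinglemma}(i) forces $h^0(-\bar D) > 0$, contradicting $h^0(-\bar D) = 0$. Hence $\gamma_t \leq 0$.

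It then remains to descend to $X_{t-1}$. The projection $\bar D$ satisfies $(\bar D)_0 = (D)_0 \neq 0$ and $h^0\big((-\bar D)_0\big) = h^0\big(-(D)_0\big) = 0$; moreover $h^1(-\bar D) = 0$ follows from $h^1(-D) = 0$ by the contrapositive of Lemma \ref{pullbackcohomologyvanishinglemma}(ii), applied to the pullback $-\bar D$ and the curve $E$. Thus $\bar D$ is pre-left-orthogonal on $X_{t-1}$, and the induction hypothesis gives $\gamma_i \leq 0$ for $1 \leq i \leq t-1$. I expect the main obstacle to be exactly the recognition that one must weaken ``left-orthogonal'' to ``pre-left-orthogonal'' so the hypothesis survives projection; once this is in place the blow-up lemmas do all the work, the only mild care being that the restriction lemmas apply directly only to the top exceptional class $E = R_t$, which is what forces the argument to proceed one blow-up at a time.
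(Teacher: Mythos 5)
Your proof is correct, but it takes a genuinely different route from the paper's. The paper argues directly, with no induction: assuming $\gamma_k > 0$ for some $k$, it computes from Riemann--Roch (formulas (\ref{antieulercharp2}), (\ref{antieulercharfa})) that $\chi(-D + \gamma_k R_k) = \binom{\gamma_k + 1}{2} > 0$, obtains $h^0(-D + \gamma_k R_k) = 0$ from $h^0\big((-D)_0\big) = 0$ via Lemma \ref{pullbackcohomologyvanishinglemma} (\ref{pullbackcohomologyvanishinglemmai}) exactly as you do, concludes $h^2(-D + \gamma_k R_k) > 0$, and then transfers this back using Serre duality and the effectivity of $\gamma_k R_k$: $h^2(-D) = h^0(K_X + D) \geq h^0(K_X + D - \gamma_k R_k) = h^2(-D + \gamma_k R_k) > 0$, contradicting $h^2(-D) = 0$. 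So the paper's contradiction lives in $h^2$ and consumes the full left-orthogonality hypothesis (both $\chi(-D) = 0$ and $h^2(-D) = 0$), whereas yours lives in $h^0$ and consumes only pre-left-orthogonality ($h^0\big((-D)_0\big) = h^1(-D) = 0$), at the price of an induction on the blow-ups and the restriction sequence along the last exceptional curve. What the paper's route buys is brevity: each index $k$ is handled independently, with no descent bookkeeping. What your route buys is a formally stronger statement, and this is not an idle strengthening: at the beginning of Section \ref{toriccohomologyvanishing} the paper applies Proposition \ref{negativegammas} to divisors that are merely pre-left-orthogonal, a use that your formulation covers literally, while the proposition as stated (and as proved in the paper) requires a supplementary remark there.
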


\begin{proof}
Assume $\chi(-D) = 0$ and $\gamma_k > 0$ for some $k$, then $\chi(-D + \gamma_k R_k) =
\binom{\gamma_k + 1}{2} > 0$. As $h^0\big((-D)_0\big) = 0$, we also have $h^0(-D + \gamma_k R_k)
= 0$. Therefore we have $\chi(-D + \gamma_k R_k) = h^2(-D + \gamma_k R_k) - h^1(-D + \gamma_k R_k) > 0$,
hence $h^2(-D + \gamma_k R_k) > 0$. But by Serre duality, $h^2(-D) = h^0(K_X + D) \geq h^0(K_X + D -
\gamma_k R_k) = h^2(-D + \gamma_k R_k) > 0$, which is a contradiction to the left-orthogonality of $D$, and
the assertion follows.
\end{proof}

\begin{remark}\label{negativegammasremark}
Note that in the case where $D$ is strongly left-orthogonal but $(D)_0$ is not strongly pre-left-orthogonal,
this implies that $h^0\big((D)_0\big) = 0$ and therefore $h^0(D) = 0$. But then $-D$ is left-orthogonal, too,
and $(-D)_0$ is strongly pre-left-orthogonal.
\end{remark}

We now consider some special cases concerning proposition \ref{negativegammas}.

\begin{lemma}\label{specialorthogonals}
Let $X$ be a smooth complete rational surface, $D$ a very ample and strongly left-orthogonal divisor
on $X$. Consider a blow-up $b: \tilde{X} \rightarrow X$  in four points $x_1, x_2, x_3, x_4$, where $x_1$
and $x_2$ are on $X$ and $x_3$ and $x_4$ are infinitesimal
points lying over $x_1$ and $x_2$, respectively. Denote $R_1, \dots, R_4$ the pullbacks of the exceptional
divisors of $b$ to $\pic(\tilde{X})$, then the divisors $D - R_i$ and $D - R_i - R_j$ with $i \neq j$ are
strongly left-orthogonal on $\tilde{X}$.
\end{lemma}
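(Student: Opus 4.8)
The plan is to treat the ten divisors $D - R_i$ and $D - R_i - R_j$ one family at a time, in every case reducing the cohomology to a computation on a small blow-up of $X$ and then stripping off exceptional curves via restriction sequences. First I would check that all ten are numerically left-orthogonal. Since $D$ is strongly left-orthogonal on $X$ we have $\chi_X(-D) = 0$, hence $D^2 = \chi_X(D) - 2$ by Lemma \ref{acyclicitylemma}. Using $D \cdot R_i = 0$, $R_i^2 = -1$, $R_i \cdot R_j = 0$ for $i \neq j$ and $-K_{\tilde{X}} = -K_X - \sum_{k=1}^4 R_k$, a direct Riemann--Roch computation gives $\chi(D - R_i) = \chi_X(D) - 1$ and $\chi(D - R_i - R_j) = \chi_X(D) - 2$; feeding these into $\chi(F) + \chi(-F) = 2 + F^2$ yields $\chi(-(D - R_i)) = \chi(-(D - R_i - R_j)) = 0$. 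By Lemma \ref{h1vanishinglemma} it then remains, for each such $F$, to prove $h^1(-F) = 0$ (left-orthogonality) together with $h^1(F) = h^2(F) = 0$ (the strong part).

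The engine of the reduction is that cohomology is invariant under pullback along a blow-up and that each $R_i$ is the total transform of the $i$-th exceptional divisor. Thus whenever $D - \sum_{k \in S} R_k$ is the pullback of a class $D - \sum_k r_k$ from an intermediate blow-up, I may compute its cohomology already on that smaller surface. This collapses the ten cases into three models: blowing up $X$ at one point ($D - r$), at two distinct points ($D - r_1 - r_2$, and likewise the tips $D - r_3 - r_4$ lying over the two distinct centres), and at two infinitely near points ($D - r_1 - r_3$). Throughout I must distinguish the total transforms $R_i$ from the strict transforms; in particular the strict transforms of the first two exceptional curves are the $(-2)$-curves $R_1 - R_3$ and $R_2 - R_4$, and for the mixed pairs such as $D - R_1 - R_4$ one first peels the $(-2)$-curve $R_1 - R_3$, whose restriction has degree $-1$ by Lemma \ref{coordinatedegreelemma}, to land back in the two-distinct-points model.

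In each model I peel an exceptional curve $C$ through $0 \to \mathcal{O}(F - C) \to \mathcal{O}(F) \to \mathcal{O}_C(F) \to 0$, reading off the restriction degree from Lemma \ref{coordinatedegreelemma}. For the negative divisors $-F$ the peels can always be arranged to have degree $-1$, so $\mathcal{O}_C(-F) \cong \mathcal{O}_{\mathbb{P}^1}(-1)$ has no cohomology and $h^i(-F)$ descends to $h^i(-D) = 0$; this disposes of left-orthogonality uniformly. For the positive divisors the restriction to a genuine $(-1)$-curve $C$ with $F \cdot C = 0$ gives $\mathcal{O}_C(F) \cong \mathcal{O}_{\mathbb{P}^1}$, whence $h^1(F)$ and $h^2(F)$ vanish precisely when the restriction $H^0(F) \to H^0(\mathcal{O}_C) = \mathbb{K}$ is surjective. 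This is where very ampleness of $D$ is used: it supplies a section of $D$ nonvanishing at a prescribed point (global generation) and separating two prescribed points, which settles the single-point and the two-distinct-points models.

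The main obstacle is the infinitely near case $D - R_1 - R_3$ (and its twin $D - R_2 - R_4$). Here a naive peel of the total transform $R_1$ restricts to the $(-2)$-curve $R_1 - R_3$ in degree $-2$ and produces a spurious $h^1$. The remedy is to peel the genuine $(-1)$-curve $R_3$ first: writing $D - R_1 - R_3 = (D - R_1) - R_3$, the restriction of $\mathcal{O}(D - R_1)$ to $R_3$ has degree $0$, reducing everything to the already-settled $D - R_1$ together with surjectivity of $H^0(D - r_1) \to H^0(\mathcal{O}_{r_3}) = \mathbb{K}$. Unwinding that map shows it records the linear term at $x_1$ of a section of $D$ vanishing there, evaluated in the tangent direction singled out by $x_3$; its surjectivity is exactly the \emph{separation of tangent vectors} guaranteed by very ampleness. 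Transporting the resulting vanishings back to $\tilde{X}$ by pullback-invariance then gives strong left-orthogonality of all ten divisors.
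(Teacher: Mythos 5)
Your proof is correct, and it ultimately runs on the same fuel as the paper's: very ampleness of $D$ is what makes each of the (possibly infinitesimal) blown-up points impose an independent condition on sections. But the packaging is genuinely different. The paper's proof is a four-line dimension count: left-orthogonality is dismissed as following from the earlier pullback lemmas, the Euler characteristics $\chi(D - R_i) = \chi(D) - 1$ and $\chi(D - R_i - R_j) = \chi(D) - 2$ are read off from Lemma \ref{acyclicitylemma}, and the whole strong part is reduced to the strict inequalities $h^0(D - R_i - R_j) < h^0(D - R_i) < h^0(D)$, which are not proved but cited from \cite{Hart2}, V.4, Remark 4.0.2 (assigned base points of a very ample system, including infinitely near ones, impose independent conditions). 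You instead work through restriction sequences $0 \to \sh{O}(F - C) \to \sh{O}(F) \to \sh{O}_C(F) \to 0$ along exceptional curves of intermediate blow-ups: degree $-1$ restrictions dispose of left-orthogonality (replacing the paper's appeal to Lemma \ref{pullbackcohomologyvanishinglemma}), and degree $0$ restrictions convert the strong part into surjectivity of evaluation maps $H^0 \to \mathbb{K}$, which you verify by hand — base-point freeness for single points, separation of points for distinct pairs, separation of tangent vectors for the infinitely near pairs. Since the kernel of such an evaluation map is exactly $H^0$ of the smaller divisor, surjectivity is literally the same statement as the paper's strict $h^0$-decrease, so the two arguments are equivalent in content; what yours buys is self-containedness and an explicit display of where tangent-vector separation enters (the paper hides this inside the Hartshorne citation), at the cost of a longer case analysis over the ten divisors. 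Two cosmetic slips, neither of which damages the argument: where you discuss degree-$0$ peels, the divisor whose cohomology is being computed is $F - C$, not $F$ — the correct statement is that $h^1(F - C) = h^2(F - C) = 0$ follows from $h^1(F) = h^2(F) = 0$ together with surjectivity of $H^0(F) \to H^0(\sh{O}_C)$; and Lemma \ref{coordinatedegreelemma} concerns $(-1)$-curves and pullback divisors, so for the $(-2)$-curve $R_1 - R_3$ the degree $-1$ of $\sh{O}(D - R_3 - R_4)\vert_{R_1 - R_3}$ should simply be computed as the intersection number $(D - R_3 - R_4) . (R_1 - R_3) = -1$.
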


\begin{proof}
It follows directly from our previous discussions that the divisors $D - R_i$ and $D - R_i - R_j$ are
left-orthogonal. It remains to show that $h^1(D - R_i) = h^1(D - R_i - R_j) = 0$. By lemma
\ref{acyclicitylemma} (\ref{acyclicitycorollaryii}) we know $\chi(D - R_i) = \chi(D) - 1$ and
$\chi(D - R_i - R_j) = \chi(D) - 2$. So it suffices to show that $h^0(D - R_i - R_j)< h^0(D - R_i)
< h^0(D)$ for any $i \neq j$. But this is an immediate consequence of \cite{Hart2}, V.4, Remark
4.0.2 and preceding remarks.
\end{proof}

\section{Exceptional sequences of invertible sheaves on rational surfaces}\label{exceptionalsequencessection}

We first show that cyclicity for exceptional sequences of invertible sheaves is no additional condition:

\begin{proposition}\label{exceptionalsequencesalwayscyclic}
Let $X$ be a smooth complete rational surface. Then every exceptional sequence of invertible sheaves is cyclic.
\end{proposition}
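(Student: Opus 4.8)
The plan is to translate cyclicity into a statement about left-orthogonality of sums over cyclic intervals of the associated toric system, and then to dispatch the only genuinely new (``wrap-around'') conditions by Serre duality. First I would fix the exceptional sequence $\sh{O}(E_1), \dots, \sh{O}(E_n)$ together with its toric system $A_1, \dots, A_n$, where $A_i = E_{i+1} - E_i$ for $i < n$, $A_n = -K_X - \sum_{i<n} A_i$, and $\sum_{i=1}^n A_i = -K_X$, and extend the sequence to a bi-infinite one by setting $E_{i+n} := E_i - K_X$ (so that $A_{i+n} = A_i$). Since every invertible sheaf on a rational surface is exceptional, proving cyclicity amounts only to checking that each winding is an exceptional sequence.

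The key reformulation I would establish is that a winding $\sh{O}(E_{m+1}), \dots, \sh{O}(E_{m+n})$ is exceptional if and only if $\sum_{k \in I} A_k$ is left-orthogonal for every proper cyclic interval $I \subsetneq \on$. Indeed, using $\Ext^k\big(\sh{O}(E_i), \sh{O}(E_j)\big) \cong H^k(E_j - E_i)$ and the periodicity of the $A_i$, the vanishing required for a pair $m+1 \le j < i \le m+n$ is exactly $h^k\big(-\sum_{l \in I} A_l\big) = 0$, where $I = \{j, \dots, i-1\} \bmod n$ is a cyclic interval of length $i - j \in [1, n-1]$; as the window and the pair vary, precisely all proper cyclic intervals occur.

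I would then split into two cases according to whether $n \in I$. If $n \notin I$, then $I$ is an ordinary interval contained in $[1, n-1]$ with endpoints $a \le b$, and $\sum_{k \in I} A_k = E_{b+1} - E_a$; its left-orthogonality is immediate from the original exceptional sequence, since $b+1 > a$. If $n \in I$, then the complementary cyclic interval $I^c = \on \setminus I$ is a nonempty cyclic interval not containing $n$, so $\sum_{k \in I^c} A_k$ is left-orthogonal by the previous case. The decisive point is the relation $\sum_{k \in I} A_k = -K_X - \sum_{k \in I^c} A_k$, coming from $\sum_i A_i = -K_X$: by Serre duality
\begin{equation*}
h^i\Big(-\sum_{k \in I} A_k\Big) = h^i\Big(K_X + \sum_{k \in I^c} A_k\Big) = h^{2-i}\Big(-\sum_{k \in I^c} A_k\Big) = 0
\end{equation*}
for all $i$, so $\sum_{k \in I} A_k$ is again left-orthogonal.

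The main obstacle is not any hard vanishing theorem but the bookkeeping of the second paragraph: one must verify that the complement of a cyclic interval is again a cyclic interval and that every proper cyclic interval genuinely arises from some winding, so that the wrap-around sums $\sum_{k \in I} A_k$ with $n \in I$ are exactly the conditions beyond those guaranteed by the original sequence. Once this reduction is in place, the substance of the argument is the one-line Serre-duality identity above, which crucially uses $\sum_i A_i = -K_X$ and requires no positivity hypothesis on $-K_X$. I expect the proof to close as soon as both cases are recorded, yielding exceptionality of every winding and hence cyclicity of the sequence.
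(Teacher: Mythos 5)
Your proof is correct and takes essentially the same route as the paper: sums $A_I$ over intervals $I \subset [n-1]$ are left-orthogonal because of the original exceptional sequence, and the wrap-around cyclic intervals are disposed of by Serre duality together with $\sum_{i=1}^n A_i = -K_X$, via $h^i(-A_I) = h^i\big(K_X + A_{\on \setminus I}\big) = h^{2-i}\big(-A_{\on \setminus I}\big) = 0$. The paper's proof is simply a terser rendering of this two-case argument, leaving the winding/cyclic-interval bookkeeping implicit.
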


\begin{proof}
Let $A_1, \dots, A_n$ be an exceptional toric system. Then for every interval $I \subset [n - 1]$
we have $h^i(-A_I) = 0$ for every $i$. By Serre duality, we get $h^i(-A_I) = h^i(K_X + A_{\on
\setminus I}) = h^{2 - i}(-A_{\on \setminus
I}) = 0$ for every $i$. So $A_J$ is left-orthogonal for every cyclic interval $J$ of $\on$ and $A_1, \dots,
A_n$ corresponds to a cyclic exceptional sequence.
\end{proof}

On $\mathbb{P}^2$, there is a unique toric system which gives rise to a cyclic strongly
exceptional sequence, but, as we will see for the case of Hirzebruch surfaces, Proposition
\ref{exceptionalsequencesalwayscyclic} does not hold for {\em strongly} exceptional
sequences in general. Recall that $P, Q$ are generators of the nef cone
of the Hirzebruch surface $\mathbb{F}_a$, where $P^2 = 0$, $Q^2 = a$, and $P. Q = 1$.

\begin{proposition}\label{Hirzebruchtoricsystems}
On a Hirzebruch surface $\mathbb{F}_a$ there are the following toric systems:
\begin{enumerate}[(i)]
\item\label{Hirzebruchtoricsystemsi} $P, sP + Q, P, -(a + s)P + Q$ for $s \in \Z$ for any $a$;
\item\label{Hirzebruchtoricsystemsii} $-\frac{a}{2}P + Q, P + s (-\frac{a}{2}P + Q), -\frac{a}{2}P + Q, P
- s (-\frac{a}{2}P + Q)$ for $s \in \Z$ and $a$ even.
\end{enumerate}
Toric systems of type (\ref{Hirzebruchtoricsystemsi}) are always exceptional. They are strongly exceptional
for $s \geq -1$, where $A_4 = -(a + s)P + Q$. They are cyclic strongly exceptional iff $s \geq -1$ and
$a + s \leq 1$.

Toric systems of type (\ref{Hirzebruchtoricsystemsii}) are almost never exceptional. The exceptions are
for $a = 0$, where type (\ref{Hirzebruchtoricsystemsii}) is symmetric to type
(\ref{Hirzebruchtoricsystemsi}) by exchanging $P$ and $Q$, and for $a = 2$ and $s = 0$, which then
coincides with a toric system of type (\ref{Hirzebruchtoricsystemsi}) and is cyclic strongly exceptional.
\end{proposition}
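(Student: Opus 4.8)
The goal is to prove Proposition \ref{Hirzebruchtoricsystems}. My strategy is to first establish the \emph{classification} of toric systems on $\mathbb{F}_a$, and then separately verify the \emph{exceptionality} assertions using the left-orthogonality analysis from Propositions \ref{Hirzebruchleftorthogonals} and \ref{Hirzebruchpreleftorthogonals}.

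For the classification, I would start from the defining conditions of a toric system on $\mathbb{F}_a$: since $\rk\pic(\mathbb{F}_a)=2$, we have $n=4$, so a toric system is a quadruple $A_1,A_2,A_3,A_4$ in $\pic(\mathbb{F}_a)$ satisfying $A_i.A_{i+1}=1$ cyclically, $A_1.A_3=A_2.A_4=0$, and $\sum_i A_i=-K_X=(2-a)P+2Q$. Writing each $A_i=\alpha_i P+\beta_i Q$ in the basis $P,Q$ (with $P^2=0$, $Q^2=a$, $P.Q=1$) turns all conditions into a system of linear and quadratic Diophantine equations in the eight integers $\alpha_i,\beta_i$. The orthogonality condition $A_1.A_3=0$ together with $A_2.A_4=0$ constrains the $\beta_i$ heavily; since any element of square zero is (up to sign) a multiple of $P$ when $a>0$, and since the intersection form has signature $(1,1)$, I expect the analysis to split according to the values of $\beta_i$. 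I would solve this linear algebra directly, being careful to track the even/odd parity of $a$ (this is where type (\ref{Hirzebruchtoricsystemsii}) with its $\frac{a}{2}$ coefficient appears and forces $a$ even), and conclude that every solution is, up to the cyclic/reversal symmetries noted after the definition of exceptional toric systems, of one of the two listed forms.

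For the exceptionality claims on type (\ref{Hirzebruchtoricsystemsi}), I would compute the partial sums $A_1$, $A_1+A_2$, $A_1+A_2+A_3$ explicitly and check left-orthogonality of each interval sum against Proposition \ref{Hirzebruchleftorthogonals}. Exceptionality (always holding) reduces to verifying that $P$, $sP+Q$, and $(s+1)P+Q$ are all left-orthogonal, which they are by part (i)/(ii) of that proposition for every $s\in\Z$; this also uses Serre duality as in Proposition \ref{exceptionalsequencesalwayscyclic} to handle the complementary cyclic intervals. For \emph{strong} exceptionality I would invoke the strong-orthogonality criterion (divisors $kP+Q$ are strongly left-orthogonal iff $k\geq -1$), which pins down the bound $s\geq -1$; the cyclic strong condition additionally requires the interval containing $A_4=-(a+s)P+Q$ to be strongly left-orthogonal, giving $a+s\leq 1$. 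Type (\ref{Hirzebruchtoricsystemsii}) I would dispose of by the same partial-sum computation, showing the relevant interval sums fail left-orthogonality except in the two degenerate cases $a=0$ (symmetric to type (i) under $P\leftrightarrow Q$) and $a=2,s=0$.

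The main obstacle I anticipate is the classification step rather than the verification step: one must argue that the two families \emph{exhaust} all toric systems, which requires a genuinely exhaustive solution of the quadratic Diophantine system and careful bookkeeping of the allowed symmetries so as not to overcount or miss a case. The parity subtlety that produces the $\frac{a}{2}$ coefficient, and the need to recognize when a type-(\ref{Hirzebruchtoricsystemsii}) system coincides with a type-(\ref{Hirzebruchtoricsystemsi}) system, are the delicate points where a naive enumeration could go wrong.
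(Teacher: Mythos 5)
Your verification half (deciding exceptionality, strong exceptionality and the cyclic condition by inspecting Propositions \ref{Hirzebruchleftorthogonals} and \ref{Hirzebruchpreleftorthogonals}) is what the paper does too, but the classification half, which you yourself identify as the crux, has a genuine gap, and the organizing principle you offer for it is false. You assert that ``any element of square zero is (up to sign) a multiple of $P$ when $a>0$''; since $(\alpha P+\beta Q)^2=\beta(2\alpha+a\beta)$, the square-zero classes form \emph{two} lines, containing $-\frac{a}{2}P+Q$ for $a$ even and $-aP+2Q$ for $a$ odd, so this is simply wrong. What actually singles out $P$ (and $-\frac{a}{2}P+Q$ for even $a$) is the conjunction of $D^2=0$ with $\chi(-D)=0$; the latter condition, which every member of a toric system satisfies by Riemann--Roch (cf.\ Lemma \ref{toricsystemonquadric}), never enters your classification plan. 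Worse, you never justify why square-zero classes are relevant at all: $A_1.A_3=0$ does not imply $A_1^2=0$ or $A_3^2=0$, so your proposed case split has no starting point, and ``solve the Diophantine system directly'' in eight unknowns is precisely the step a proof must supply. The paper gets this step for free from Proposition \ref{dualityproposition}: a toric system of length four defines a smooth complete toric surface with four rays, hence a Hirzebruch surface, so the numbers $A_i^2$ are $0,b,0,-b$ in cyclic order; up to renumbering $A_1^2=A_3^2=0$, then $A_1.A_3=0$ forces $A_1=A_3\in\{P,\,-\frac{a}{2}P+Q\}$, and $A_2$, $A_4$ follow by linear algebra. (An elementary substitute, if you wish to avoid quoting that proposition: $A_1-A_3$ is orthogonal to both $A_2$ and $A_4$, so by nondegeneracy of the intersection form either $A_1=A_3$, whence $A_1^2=A_1.A_3=0$, or $A_2,A_4$ are proportional, whence $A_2=A_4$ and $A_2^2=A_2.A_4=0$; a cyclic shift then puts you exactly in the paper's situation.)

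A second step of your plan would fail as described. For type (\ref{Hirzebruchtoricsystemsii}) you claim the partial-sum computation shows the interval sums ``fail left-orthogonality'' outside the cases $a=0$ and $(a,s)=(2,0)$. Carry the inspection out for $s=0$ and an arbitrary even $a$: the interval sums are $-\frac{a}{2}P+Q$, $P$, $\bigl(1-\frac{a}{2}\bigr)P+Q$ and $(1-a)P+2Q$, and every one of these is left-orthogonal by Proposition \ref{Hirzebruchleftorthogonals}, so the cohomological criterion does \emph{not} rule these systems out; what is true is only that the $Q$-coefficients of the partial sums force $s=0$ when $a>0$. Excluding the remaining systems in the sense intended by the statement therefore needs input beyond Ext-vanishing (this is a delicate point which the paper's one-line ``by inspection'' also glosses over), so type (\ref{Hirzebruchtoricsystemsii}) cannot be ``disposed of by the same partial-sum computation'' as you propose.
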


\begin{proof}
Any toric system must represent a Hirzebruch surface. Therefore, for any toric system $A_1$, $A_2$,
$A_3$, $A_4$ we can assume that $A_1^2 = A_3^2 = 0$ and $A_2^2 = -A_4^2 = -b$ for some $b \in \Z$.
So for a general element $\alpha P + \beta Q$ with $\alpha, \beta \in \Z$, the equations $(\alpha P
+ \beta Q)^2 = 0$ and $\chi(-\alpha P - \beta Q) = 0$ have always the solution $\alpha = 1$, $\beta = 0$.
If $a$ is even, we get a second solution, $\alpha = -\frac{a}{2}$ and $\beta = 1$. The condition
$A_1 . A_3 = 0$ can only be fulfilled if $A_1 = A_3 = P$, or if $A_1 = A_3 = -\frac{a}{2}P + Q$.

In the first case, using $A_1 . A_2 = A_1 . A_4 = 1$ and $A_2 . A_4 = 0$, we get that $A_2 = sP + Q$
and $A_4 = -(a + s)P + Q$ for some $s \in \Z$ which indeed form a toric system way for every $s \in \Z$.

In the second case with $a$ even, we similarly compute that $A_2 = P + s(-\frac{a}{2}P + Q)$ and
$A_4 = P - s(-\frac{a}{2}P + Q)$ for some $s \in \Z$.

The classification of exceptional sequences (cyclic or strong) among these follows by inspection of the
classification of (strongly) left-orthogonal divisors of proposition \ref{Hirzebruchleftorthogonals}.
\end{proof}

\begin{remark}
From Proposition \ref{Hirzebruchtoricsystems} follows that for a toric system $\mathcal{A}$ on
a Hirzebruch surface $\mathbb{F}_a$, the associated Hirzebruch surface $Y(\mathcal{A})$ is isomorphic to
$\mathbb{F}_b$, where $b - a$ is even.
\end{remark}

As in the previous section, we assume that a sequence of blowups $X = X_t \longrightarrow \cdots
\longrightarrow X_0$ is fixed, where $X_0$ is $\mathbb{P}^2$ or some $\mathbb{F}_a$, together with a
corresponding basis of $\pic(X)$, either $H, R_1, \dots, R_t$ if $X_0 \cong \mathbb{P}^2$, or
$P, Q, R_1, \dots, R_t$ if $X_0 \cong \mathbb{F}_a$.
Any toric system $\mathcal{A} = A_1, \dots, A_{n - t + i}$ on some $X_i$ pulls back to a short toric
system on $X$ in the sense of Definition \ref{shorttoricsystemdef} (see Example
\ref{shorttoricsystemexample2}). Such a short toric system can
easily be extended to a toric system by using the $R_{i + 1}, \dots, R_t$ as follows. For any
$i + 1 \leq j_1 \leq t$ we denote $\mathcal{A}_1$ the sequence
\begin{equation*}
A_1, \dots, A_{s - 1}, A_s - R_{j_1}, R_{j_1}, A_{s + 1} - R_{j_1}, A_{s + 2}, \dots,  A_{n - t + i},
\end{equation*}
which augments $\mathcal{A}$ at some position $s$. Note that this augmentation is understood in the cyclic
sense, i.e. we do not exclude $s = n - t + i$. If $i = t - 1$, then this sequence is a toric system on $X$;
otherwise, it is again a short toric system. Inductively, for
$1 < k < t - i$ we can in the same way augment $\mathcal{A}_{k - 1}$ to a short toric system $\mathcal{A}_k$
by some $R_{j_k}$ for $j_k \in \{i + 1, \dots, t\} \setminus \{j_l \mid 1 \leq l < k\}$ and finally we arrive
at a toric system $\mathcal{A}_{t - i}$. Of course, $\mathcal{A}_{t - i}$ also depends on the positions at
which the $\mathcal{A}_k$ have been augmented. A toric system obtained this way in general cannot be interpreted
as successive augmentation via pullbacks from the $X_j$ with $i < j < t$ as we have not imposed any condition
on the ordering of the $j_k$. We will see below that the interesting augmentations which are obtained this way
are precisely those which are augmentations via pullbacks.

\begin{definition}\label{standardaugmentationdefinition}
We call an exceptional toric system on $\mathbb{P}^2$ or $\mathbb{F}_a$ a {\em standard toric
system}. On a smooth complete rational surface $X$, we call a toric system which is the augmentation
of a standard toric system a {\em standard augmentation}. A standard augmentation is
{\em admissible} if it contains no element of the form $R_i - \sum_{j \in S} R_j$ such that
$R_j \leq R_i$ for some $j \in S$.
\end{definition}

Note that the condition of admissibility is precisely the condition of Proposition \ref{verticalleftorthogonals}
on left-orthogonality of divisors of the form $R_i - \sum_{j \in S} R_j$. This condition implies that a standard
augmentation is admissible iff there exists a bijection $j : [t] \rightarrow [t]$, $k \mapsto j_k$ such that
$R_k \geq R_l$ iff $R_{j_k} \geq  R_{j_l}$. Then we can rearrange the ordering of the blow-ups accordingly
such that $X = X_{j_t} \rightarrow \cdots \rightarrow X_{j_1} \rightarrow X_0$ and the augmentation then can be
considered as an successive augmentation along these blow-ups. The following proposition shows that this way we
get many exceptional sequences in the form of standard augmentations.

\begin{proposition}\label{exceptionalsequencesexist}
Every standard augmentation yields a full exceptional sequence on $X$ iff it is admissible.
\end{proposition}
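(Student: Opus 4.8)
The plan is to prove both implications, the substance lying in sufficiency; throughout I identify a toric system with its sequence of interval partial sums and use that, by Lemma \ref{h1vanishinglemma}, left-orthogonality of a numerically left-orthogonal divisor reduces to a single $h^1$-vanishing. For \emph{necessity} I argue by contraposition: if the standard augmentation $\mathcal{A} = A_1, \dots, A_n$ is not admissible, then by Definition \ref{standardaugmentationdefinition} some element has the form $A_s = R_i - \sum_{j \in S} R_j$ with $R_j \leq R_i$ for some $j \in S$, and by Proposition \ref{verticalleftorthogonals} such a divisor is not left-orthogonal. If $s \in [n-1]$, the single-element interval $\{s\}$ already exhibits a non-left-orthogonal partial sum. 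If $s = n$, I pass to the interval $[n-1]$, where $A_{[1,n-1]} = -K_X - A_n$ and Serre duality gives $h^i(-A_{[1,n-1]}) = h^{2-i}(-A_n)$, so this sum fails too. Either way $\mathcal{A}$ is not exceptional, hence not a full exceptional sequence.

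For \emph{sufficiency} I assume $\mathcal{A}$ admissible and, using the discussion after Definition \ref{standardaugmentationdefinition}, reorder the blow-ups so that $X = X_{j_t} \to \cdots \to X_{j_1} \to X_0$ and $\mathcal{A}$ arises as a successive augmentation along this tower, each step inserting the class $R_{j_k}$ of the irreducible exceptional $(-1)$-curve of $X_{j_k} \to X_{j_{k-1}}$. I then induct on the number of blow-ups. The base case is the standard system on $X_0 \in \{\mathbb{P}^2, \mathbb{F}_a\}$, which is exceptional (Proposition \ref{Hirzebruchtoricsystems} and the $\mathbb{P}^2$ case) and hence, by Proposition \ref{exceptionalsequencesalwayscyclic}, cyclic exceptional. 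In the inductive step I augment a cyclic exceptional system on $X_{j_{k-1}}$ at a position $s$ by $R := R_{j_k}$, which on $X_{j_k}$ is irreducible, isomorphic to $\mathbb{P}^1$, with $R^2 = -1$ and $D.R = 0$ for every pulled-back class $D$. A short case analysis then shows that each cyclic interval sum of the new system is either a pullback $D$ of a cyclic interval sum of the old system, or such a pullback minus $R$: the $R$-contributions of the inserted $R$ and of the two modified neighbours $A_s - R$ and $A_{s+1} - R$ telescope in every case. Pullbacks are left-orthogonal by the inductive hypothesis and cohomological invariance under blow-up, so only $D - R$ remains.

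For that case I run the key computation. Since $D$ is pulled back, $D.R = 0$ yields $(D-R)^2 = D^2 - 1$ and $K_{X_{j_k}}.(D-R) = K_{X_{j_{k-1}}}.D + 1$, so $\chi(-(D-R)) = \chi_{X_{j_{k-1}}}(-D) = 0$ and $D-R$ is numerically left-orthogonal. Restricting along $0 \to \mathcal{O}(-D) \to \mathcal{O}(-D+R) \to \mathcal{O}_R(-D+R) \to 0$ and using $\deg \mathcal{O}_R(-D+R) = -1$ (Lemma \ref{coordinatedegreelemma}), the outer terms $H^i(\mathcal{O}_{\mathbb{P}^1}(-1))$ vanish, giving $h^1(-D+R) = h^1(-D) = 0$; by Lemma \ref{h1vanishinglemma}, $D-R$ is left-orthogonal. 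Thus the new system is cyclic exceptional, and transferring the vanishing to $X$ via pullback invariance closes the induction. Finally, the resulting exceptional sequence has length $n = \rk K_0(X)$, so it is full, which can also be seen directly since each augmentation step realizes Orlov's blow-up decomposition and hence preserves generation of $D^b(X)$.

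The step I expect to be the main obstacle is the bookkeeping in the inductive step: verifying that admissibility genuinely lets the reordering realize each insertion as the exceptional curve of an honest point blow-up, so that $R$ is irreducible and the restriction computation above is valid, and checking that the telescoping of $R$-contributions covers \emph{every} cyclic interval, including those wrapping around position $n$. It is precisely at this wrap-around that I must use that the old system is cyclic, not merely exceptional, which is why Proposition \ref{exceptionalsequencesalwayscyclic} enters the argument.
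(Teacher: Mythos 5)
Your proposal is correct in substance, but the sufficiency direction takes a genuinely different route from the paper's. The paper argues non-inductively: after renumbering so that $A_n$ is the horizontal element, it classifies all interval sums $A_I$, $I \subset [n-1]$, by their coordinates with respect to $X_0$ (namely $lH - \sum_{i \in T} R_i$ with $l \in \{1,2\}$, respectively $P - \sum_{i\in T} R_i$ and $nP + Q - \sum_{i\in T} R_i$ over $\mathbb{F}_a$, and the vertical sums $R_i - \sum_{j \in T} R_j$), kills $h^0$ and $h^2$ of $-A_I$ by projection to $X_0$ and Serre duality, obtains $h^1$-vanishing from numerical left-orthogonality, and lets admissibility enter only through Proposition \ref{verticalleftorthogonals} applied to the vertical sums; fullness is quoted from Orlov. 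You instead induct along the blow-up tower, using the reordering discussed after Definition \ref{standardaugmentationdefinition} to realize each insertion as an honest point blow-up, and show that a single augmentation step preserves cyclic exceptionality by a telescoping analysis plus one restriction sequence to the exceptional curve, where $\deg \sh{O}(-D+R)\vert_R = -1$ forces $h^i(-D+R) = h^i(-D)$. What your route buys: all cohomology is localized in one standard computation, the $\mathbb{P}^2$/$\mathbb{F}_a$ case distinction disappears from the inductive step, and you get the sharper intermediate statement that each admissible augmentation step preserves \emph{cyclic} exceptionality. What it costs: your argument leans essentially on the claim that admissibility lets one reorder the blow-ups so that the augmentation becomes successive along an actual tower — a claim the paper only sketches after Definition \ref{standardaugmentationdefinition} — whereas the paper's direct classification never needs it. Your necessity argument is essentially the paper's (Proposition \ref{verticalleftorthogonals} in both), with a clean Serre-duality reduction for the case where the bad element sits in position $n$.

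Two small repairs are needed. First, your enumeration of cyclic intervals misses the singleton consisting of the inserted class itself: its sum is $R$, which is neither a pullback nor a pullback minus $R$. This is harmless — $R$ is left-orthogonal, e.g. by Proposition \ref{verticalleftorthogonals}, or directly since $h^0(-R) = 0$, $h^2(-R) = h^0(K_X + R) = 0$ and $\chi(-R) = 0$ — but the case must be stated, since it is exactly the one your telescoping does not produce. Second, the inference ``the sequence has length $n = \rk K_0(X)$, so it is full'' is not valid: maximal length does not by itself imply fullness (this is not a theorem, and it fails outside the rational setting). Your parenthetical backup — fullness via Orlov's blow-up decomposition — is the correct argument and is exactly what the paper invokes; it should be the argument, not a remark.
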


\begin{proof}
Let $\mathcal{A} = A_1, \dots, A_n$ be the augmented sequence. If $X_0 = \mathbb{P}^2$, we can renumber this
sequence such that $A_n$ is of the form $H - \sum_{i \in S} R_i$ for some subset $S$ of $[t]$. We claim that
$\mathcal{A} = A_1, \dots, A_{n - 1}$ yield an exceptional sequence iff it is admissible. That is, every
$A_I := \sum_{i \in I} A_i$ for some non-cyclic interval $I \subset [n - 1]$ is left-orthogonal iff
$\mathcal{A}$ is admissible. Clearly, every such $A_I$ is numerically
left-orthogonal. We have two cases. First, $l H - \sum_{i \in T} R_i$ with $T \subset [t]$ and $l \in \{1, 2\}$.
By Serre duality we get $h^2(-lH + \sum_{i \in T} R_i) = h^0(-(3 - l)H + \sum_{i \notin T} R_i) = 0$ and thus
$l H - \sum_{i \in T} R_i$ is left-orthogonal (without any condition on admissibility).
Second, we have $A_I = R_i - \sum_{i \in T} R_i$ with $T \subset [t]$,
which is left-orthogonal by proposition \ref{verticalleftorthogonals} iff $R_j \nleq R_i$ for all $j \in T$.
In particular, all $A_I$ are of this form iff $\mathcal{A}$ is admissible. 

If $X_0 = \mathbb{F}_a$, we can renumber the sequence such that $A_n$ is of the form $Q - (a + n)P -
\sum_{i \in S} R_i$ for some subset $S$ of $[t]$. Then for $A_I$ we have three cases. First, 
$P - \sum_{i \in T} R_i$ with $T \subset [T]$. By Serre duality we get $h^2(-P + \sum_{i \in T} R_i)
= h^0(-2Q - (1 - a)P - \sum_{i \notin T} R_i) = 0$ and so $P - \sum_{i \in T} R_i$ is left-orthogonal.
Second, we have $Q + nP -  \sum_{i \in T} R_i$ with $T \subset
[T]$ and $n \in \Z$. Again, by Serre duality, we get $h^2(-Q - nP + \sum_{i \in T} R_i) = h^0(-Q - (2 - n
- a)P - \sum_{i \notin T} R_i) = 0$ and thus $Q + nP -  \sum_{i \in T} R_i$ is left-orthogonal.
Third, we have $A_I = R_i -
\sum_{i \in T} R_i$ with $T \subset [t]$, which is left-orthogonal by proposition
\ref{verticalleftorthogonals} iff $R_j \nleq R_i$ for all $j \in T$.
In particular, all $A_I$ are of this form iff $\mathcal{A}$ is admissible.

We have seen now that a standard augmentation is admissible iff all $A_I$ are left-orthogonal.
It follows directly from the results of \cite{Orlov93} that standard augmentations are full.
\end{proof}

So, by observing that we can lift any standard sequence on some $X_0$ to an admissible standard augmentation
on $X$, the following is an immediate consequence of Proposition \ref{exceptionalsequencesexist}:

\begin{theorem}\label{exceptionalexistence}
Every smooth complete rational surface has a full exceptional sequence of invertible sheaves.
\end{theorem}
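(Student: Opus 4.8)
The plan is to obtain the theorem as a direct corollary of Proposition \ref{exceptionalsequencesexist}, so the whole task reduces to exhibiting, on an arbitrary smooth complete rational surface $X$, a standard augmentation that is \emph{admissible}. By the minimal model theory for rational surfaces I may fix a tower of blow-ups $X = X_t \to X_{t-1} \to \cdots \to X_0$ with $X_0$ either $\mathbb{P}^2$ or a Hirzebruch surface $\mathbb{F}_a$, together with the distinguished basis of $\pic(X)$ constructed in Section \ref{rationalacyclicsection} (namely $H, R_1, \dots, R_t$ or $P, Q, R_1, \dots, R_t$). Admissibility is a condition purely about the combinatorics of how the exceptional classes $R_1, \dots, R_t$ enter the augmentation, so the freedom I intend to exploit is precisely the choice of positions at which the successive augmentations are performed.

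First I would record that a standard toric system always exists on the minimal model: on $\mathbb{P}^2$ the sequence $H, H, H$ is the (unique) standard toric system, and on $\mathbb{F}_a$ Proposition \ref{Hirzebruchtoricsystems} supplies the exceptional toric systems of type \ref{Hirzebruchtoricsystemsi}, for instance $P, Q, P, -aP + Q$ obtained by taking $s = 0$. Next I would lift this standard system to $X$ via the augmentation procedure described just before Definition \ref{standardaugmentationdefinition}, pulling it back and inserting the classes $R_k$ one at a time. The crucial choice is to perform the insertions \emph{in the order of the blow-ups}, inserting $R_k$ into the slot associated to the point $x_k$ that $b_k$ contracts. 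With this ordering, every augmented element of ``vertical'' type arises as $R_i$ with further classes subtracted off only at later stages, so any element of the form $R_i - \sum_{j \in S} R_j$ that appears has all subtracted indices $j$ strictly larger than $i$; since $R_j \leq R_i$ forces $j < i$, no such $R_j$ can satisfy $R_j \leq R_i$.

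The step I expect to be the main obstacle is making this last assertion precise: that augmenting compatibly with the tower never produces an element $R_i - \sum_{j \in S} R_j$ violating the admissibility condition of Definition \ref{standardaugmentationdefinition}. This is exactly the translation, through the partial order $\geq$ on $\{R_1, \dots, R_t\}$, of the statement that the augmentation respects the order of the blow-ups, as explained in the discussion following that definition; in particular it is where one must check that the position at which $R_k$ is inserted really corresponds to the configuration of the point $x_k$ relative to the earlier exceptional divisors. (The ``horizontal'' elements, of the form $lH - \sum_{i \in T} R_i$ or $P - \sum_{i \in T} R_i$ and $Q + nP - \sum_{i \in T} R_i$, impose no admissibility constraint, as already handled in the proof of Proposition \ref{exceptionalsequencesexist}.) Once admissibility is verified, Proposition \ref{exceptionalsequencesexist} immediately yields that the resulting standard augmentation is a full exceptional sequence of invertible sheaves on $X$, which completes the proof.
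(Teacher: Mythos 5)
Your proposal is correct and follows essentially the same route as the paper: the paper's proof of Theorem \ref{exceptionalexistence} consists precisely of observing that any standard toric system on $X_0$ lifts to an \emph{admissible} standard augmentation on $X$ (e.g.\ by inserting the $R_k$ in the order of the blow-ups, so that every vertical element $R_i - \sum_{j \in S} R_j$ has only indices $j > i$ in $S$, which rules out $R_j \leq R_i$), and then invoking Proposition \ref{exceptionalsequencesexist}. Your only superfluous addition is the requirement that $R_k$ be inserted ``into the slot associated to $x_k$''; admissibility depends only on the insertion \emph{order}, not on the chosen positions.
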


Let us denote $b_i: X_i \longrightarrow X_{i - 1}$ the $i$-th blow-up in the sequence $X = X_t \rightarrow
\cdots \rightarrow X_0$. We assume that $b_i$ can be partitioned into two sets $S_1 := \{b_1, \dots, b_s\}$
and $S_2 := \{b_{s + 1}, \dots b_t\}$ for $1 < s \leq t$ such that the $b_i$ within $S_l$ for $l \in \{1, 2\}$
commute. In other words,
we assume that $X$ can be obtained from $\mathbb{P}^2$ or $\mathbb{F}_a$ by two times simultaneously
blowing up (possibly) several points.

\begin{theorem}\label{p2blowupexistence}
With above assumptions on $X$ and $X_0 = \mathbb{P}^2$, the following is a full strongly exceptional
toric system:
\begin{equation*}
R_s, R_{s - 1} - R_s, \dots, R_1 - R_2, H - R_1, H - R_{s + 1}, R_{s + 1} - R_{s + 2}, \dots,
R_{t - 1} - R_t, R_t, H - \sum_{i = 1}^t R_i.
\end{equation*}
\end{theorem}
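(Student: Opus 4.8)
The plan is to check directly that the displayed sequence $\mathcal{A} = A_1, \dots, A_n$ (with $n = t+3$) is a toric system and that each of its proper interval sums is strongly left-orthogonal, deducing strong exceptionality; fullness will then follow formally. First I would verify the toric-system axioms in the diagonal basis $H, R_1, \dots, R_t$. Condition (iii) is equivalent to $\sum_{i=1}^n A_i = -K_X = 3H - \sum_{i=1}^t R_i$, which drops out by telescoping the two chains $R_s, R_{s-1}-R_s, \dots, R_1 - R_2$ and $R_{s+1}-R_{s+2}, \dots, R_{t-1}-R_t$ and collecting the leftover terms $H-R_1$, $H-R_{s+1}$, $R_t$, $H-\sum_i R_i$. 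Conditions (i) and (ii) are then a mechanical intersection computation: consecutive members pair to $1$ and all other pairs are orthogonal, the only junctions worth spelling out being $A_s\cdot A_{s+1}$, $A_{s+1}\cdot A_{s+2}$ around $H-R_1$, $H-R_{s+1}$, and the cyclic product $A_n\cdot A_1 = (H-\sum_i R_i)\cdot R_s = 1$.

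Next I would compute the partial sums $S_k := \sum_{i\le k} A_i$, since the associated sheaves are $\sh{O}(S_k)$ and every interval sum equals a difference $S_k - S_l$. Telescoping gives $S_k = R_{s-k+1}$ for $1\le k\le s$, then $S_{s+1}=H$, then $S_{s+1+m} = 2H - R_{s+m}$, and finally $S_{n-1}=2H$; that is, the sheaf sequence is $\sh{O}, \sh{O}(R_s), \dots, \sh{O}(R_1), \sh{O}(H), \sh{O}(2H-R_{s+1}), \dots, \sh{O}(2H-R_t), \sh{O}(2H)$. Forming all differences $S_k - S_l$ with $k>l$ produces a short list of divisor classes to test for strong left-orthogonality: the classes $R_i$; the differences $R_a - R_b$ with $a<b$ lying in a single block $[1,s]$ or $\{s+1,\dots,t\}$; the pullbacks $H$ and $2H$; and the ``mixed'' classes $H-R_i$, $2H-R_i$ for all $i$, together with $2H - R_i - R_j$ for $i\in S_1$, $j\in S_2$.

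I would then dispatch these in three groups. The classes $R_i$ and $R_a-R_b$ are covered by Proposition \ref{verticalleftorthogonals}: $R_i$ is always strongly left-orthogonal, and $R_a - R_b$ is strongly left-orthogonal exactly when $R_a, R_b$ are incomparable, which holds because the blow-ups inside each of $S_1$ and $S_2$ commute and hence give pairwise incomparable exceptional classes. The classes $H$ and $2H$ are pullbacks from $\mathbb{P}^2$, so their cohomology is computed there and they are strongly left-orthogonal by Proposition \ref{P2leftorthogonals}. For the mixed classes I would reduce to a minimal configuration: for each such class at most two base points of $S_1$, together with the infinitesimal points lying over them, are relevant, and the class is a pullback along the blow-down of all other (independent) towers; by the blow-up invariance of cohomology and Lemma \ref{pullbackcohomologyvanishinglemma} this leaves the relevant cohomology unchanged. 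Since $H$ and $2H$ are very ample and strongly left-orthogonal on $\mathbb{P}^2$, the resulting configuration is precisely the one treated in Lemma \ref{specialorthogonals}, which yields strong left-orthogonality of $H-R_i$, $2H-R_i$ and $2H-R_i-R_j$.

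The hard part will be this last reduction for $2H - R_i - R_j$ in the infinitesimal situation, i.e. when $x_j\in S_2$ lies over a point $x_k\in S_1$ (possibly $x_k = x_i$). Here a naive count of conditions imposed on conics is misleading: because $\sh{O}(2H-R_i)$ restricts trivially to the exceptional line over $x_k$, vanishing at the single point $x_j$ forces vanishing along that whole line, so the expected dimension $\chi(2H-R_i-R_j)=4$ is recovered and $h^1$ vanishes. This is exactly the phenomenon packaged into Lemma \ref{specialorthogonals}, and it is where the hypothesis of at most two blow-up steps is indispensable, since it bounds the infinitesimal towers to length $\le 1$ and keeps the relevant configuration within the scope of that lemma. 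Finally, for fullness I would observe that $\mathcal{A}$ is an admissible standard augmentation of the Beilinson toric system $H, H, H$ on $\mathbb{P}^2$ --- admissibility holds because the only members of the form $R_i - \sum_{j\in S} R_j$ that occur are the incomparable differences $R_a - R_b$ --- so that Proposition \ref{exceptionalsequencesexist}, which derives fullness from Orlov's blow-up decomposition \cite{Orlov93}, shows the sequence generates $D^b(X)$.
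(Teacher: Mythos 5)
Your proposal is correct and takes essentially the same route as the paper's proof: reduce to checking that all interval sums of the toric system are strongly left-orthogonal, handle $R_i$ and the incomparable differences $R_a - R_b$ by Proposition \ref{verticalleftorthogonals}, handle $H$, $2H$, $H - R_i$, $2H - R_i$, $2H - R_i - R_j$ by Proposition \ref{P2leftorthogonals} and Lemma \ref{specialorthogonals}, and obtain fullness from admissibility of the standard augmentation via Proposition \ref{exceptionalsequencesexist}. If anything, your enumeration of the interval sums is slightly more complete than the paper's (which omits the type $2H - R_i$ from its explicit list, though it is covered by the same lemma), and your discussion of the infinitesimal case makes explicit what the paper delegates wholesale to Lemma \ref{specialorthogonals}.
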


\begin{proof}
We have to check that $\sum_{i \in I} A_i$ is strongly left-orthogonal for every interval $I \subset
[n - 1]$. Here we have $A_1 = R_s$ and $A_n = H - \sum_{i = 1}^t R_i$.  There are precisely four types
of divisors which can be represented in this way, namely $R_i$, $R_i - R_j$ for $R_i, R_j$ incomparable,
$H$, $2H$, $H - R_i$ and $2H - R_i - R_j$ for $i \neq j$. The divisors $H$, $2H$ are clearly strongly
left-orthogonal. The left-orthogonality of $R_i$ and $R_i - R_j$ follows
from proposition \ref{verticalleftorthogonals}, the left-orthogonality of $H - R_i$ and $2H - R_i - R_j$
from Lemma \ref{specialorthogonals}. The toric system clearly is an admissible standard augmentation and so
from Proposition \ref{exceptionalsequencesexist} it follows that the resulting exceptional sequence is full.
\end{proof}

Analogously, we get:

\begin{theorem}\label{fablowupexistence}
With above assumptions on $X$ and $X_0 = \mathbb{F}_a$ for some $a \geq 0$ and $n \geq -1$, the following
is a full strongly exceptional toric system:
\begin{gather*}
R_s, R_{s - 1} - R_s, \dots, R_1 - R_2, P - R_1, nP + Q, P - R_{s + 1}, R_{s + 1} - R_{s + 2}, \dots,\\
R_{t - 1} - R_t, R_t, -(a + n)P + Q - \sum_{i = 1}^t R_i.
\end{gather*}
\end{theorem}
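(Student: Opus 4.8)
The plan is to follow the proof of Theorem~\ref{p2blowupexistence} in its outline. Writing the displayed sequence as $A_1,\dots,A_{t+4}$, I first check that $\sum_{i\in I}A_i$ is strongly left-orthogonal for every interval $I\subset[t+3]$ (i.e.\ every interval avoiding the index of the last term $A_{t+4}=-(a+n)P+Q-\sum_i R_i$), and then deduce fullness by observing that the sequence is an admissible standard augmentation of the standard toric system $P,\,nP+Q,\,P,\,-(a+n)P+Q$ of type (\ref{Hirzebruchtoricsystemsi}) on $\mathbb{F}_a$, so that Proposition~\ref{exceptionalsequencesexist} applies. Admissibility is immediate: the only purely exceptional entries are $R_s$, $R_t$ and the differences $R_k-R_{k+1}$, each of which lies inside a single commuting block and hence involves incomparable classes. (That the sequence is a toric system at all is automatic from being an augmentation; one checks $\sum_i A_i=(2-a)P+2Q-\sum_i R_i=-K_X$.)

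First I would compute the partial sums $B_j:=\sum_{k\leq j}A_k$, which telescope: as $j$ runs from $1$ to $t+3$ the $B_j$ pass through $R_s,R_{s-1},\dots,R_1$, then $P$, then $(n+1)P+Q$, then $(n+2)P+Q-R_{s+1},\dots,(n+2)P+Q-R_t$, and finally $(n+2)P+Q$. Every interval sum is a difference $B_j-B_{i-1}$, and inspecting these differences shows that only the following types of divisor occur: the exceptional classes $R_p$ and the within-block differences $R_p-R_q$; the pullbacks $P$ and $mP+Q$ with $m\in\{n,n+1,n+2\}$; and the mixed classes $P-R_p$ with $p\leq s$, $mP+Q-R_p$ with $m\in\{n+1,n+2\}$, and $mP+Q-R_p-R_q$ with $m=n+2$ and $p\leq s<q$.

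The easy types are dispatched exactly as in the $\mathbb{P}^2$ case. Strong left-orthogonality of $R_p$ and of $R_p-R_q$ (for incomparable $R_p,R_q$, guaranteed by the commutativity within each block) follows from Proposition~\ref{verticalleftorthogonals}. The classes $P$ and $mP+Q$ are pullbacks from $\mathbb{F}_a$, so by the invariance of cohomology under blow-up it suffices to apply Proposition~\ref{Hirzebruchleftorthogonals}: a divisor of type $kP+Q$ or $P+kQ$ is strongly left-orthogonal iff $k\geq-1$, and since $n\geq-1$ every $m\in\{n,n+1,n+2\}$ satisfies $m\geq-1$. For the mixed classes with very ample base I would invoke Lemma~\ref{specialorthogonals}: since $n\geq-1$ gives $n+2\geq1$, the divisor $(n+2)P+Q$ is ample, hence very ample and strongly left-orthogonal, and the subtracted classes are at most one $R$ from each of the two commuting blocks, i.e.\ precisely the configuration of two points with infinitely near neighbours from Lemma~\ref{specialorthogonals}. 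Because the class has zero coefficient on the remaining exceptional divisors, admissibility lets me reorder the blow-ups so that those are performed last, whence the class is a pullback and one reduces by cohomology-invariance to the four-point surface of the lemma. The same argument covers $(n+1)P+Q-R_p$ whenever $n+1\geq1$.

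The main obstacle is the handful of mixed classes whose base is only nef but not ample, where Lemma~\ref{specialorthogonals} does not apply directly. These are $P-R_p$, which occurs for every $n$ since $P$ is never ample on $\mathbb{F}_a$, and, when $n=-1$, the class $Q-R_p=(n+1)P+Q-R_p$. Here I would argue by hand: $P-R_p$ is the class of the strict transform of the ruling fibre through the blown-up point, an irreducible $(-1)$-curve $C$, and strong left-orthogonality of such a class is immediate from the structure sequence $0\to\sh{O}(-C)\to\sh{O}\to\sh{O}_C\to0$ with $C\cong\mathbb{P}^1$. The class $Q-R_p$ can be treated by the same kind of short exact sequence together with Lemma~\ref{acyclicitylemma}, using that $Q$ is globally generated so that blowing up one point drops $h^0$ by exactly one while preserving $h^1=0$. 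Once these finitely many boundary divisors are verified, strong left-orthogonality holds for all interval sums, and fullness follows from Proposition~\ref{exceptionalsequencesexist} as above.
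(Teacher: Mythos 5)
Your proposal follows the paper's own proof in its overall structure: the same enumeration of interval sums, Proposition \ref{verticalleftorthogonals} for the classes $R_p$ and $R_p - R_q$, Proposition \ref{Hirzebruchleftorthogonals} for the pullbacks $P$ and $mP+Q$, Lemma \ref{specialorthogonals} for the mixed classes with ample base, and admissibility plus Proposition \ref{exceptionalsequencesexist} for fullness; your treatment of $Q - R_p$ via global generation is exactly the paper's argument.

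The one place you deviate is the class $P - R_p$, and there your argument has a genuine gap. You assert that $P - R_p$ is ``the class of the strict transform of the ruling fibre through the blown-up point, an irreducible $(-1)$-curve.'' This is false in general: the theorem allows several points of the same block to lie on the same fibre $f$ of the ruling, and if $x_p$ and $x_{p'}$ (with $p \neq p'$) both lie on $f$, then the strict transform $\tilde{f}$ has class $P - R_p - R_{p'}$, so the unique effective divisor in the class $P - R_p$ is the reducible curve $\tilde{f} + E_{p'}$; no irreducible curve represents $P - R_p$. The same problem arises for the classes $P - R_q$ with $q$ in the second block, which moreover are missing from your enumeration of interval sums (the sums $A_{s+3} + \cdots + A_{s+2+j}$ produce $P - R_{s+j}$ for all $j$, not only classes $P - R_p$ with $p \leq s$): when $x_q$ is an infinitesimal point, every effective member of $\vert P - R_q \vert$ is again reducible. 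In these configurations there is no irreducible rational curve to which your structure sequence applies, so the step fails as written. The conclusion is nevertheless correct, and the repair is the paper's one-line argument which you already use for $Q - R_p$: since $P$ is globally generated, $h^0(P - R_i) = h^0(P) - 1$, which together with $\chi(P - R_i) = \chi(P) - 1$ (Lemma \ref{acyclicitylemma}) and $h^2$-vanishing yields $h^1(P - R_i) = 0$, while left-orthogonality of $-(P - R_i)$ is a direct check. (Alternatively, your structure-sequence computation can be salvaged by observing that any effective member of $\vert P - R_i \vert$ is a reduced connected curve of arithmetic genus zero, so irreducibility is not needed --- but that requires an argument you did not give.)
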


\begin{proof}
Here, $\sum_{i \in I} A_i$ is of the form $R_i$, $R_i - R_j$ for $R_i, R_j$ incomparable,
$P$, $nP + Q$ with $n \geq -1$, $P - R_i$, $nP + Q - R_i$ for $n \geq 0$, and $nP + Q - R_i - R_j$ for
$n \geq 1$. The divisors $P$, $nP + Q$ clearly are strongly left-orthogonal (see Lemma
\ref{Hirzebruchleftorthogonals}). The left-orthogonality of $R_i$ and $R_i - R_j$ follows
from Proposition \ref{verticalleftorthogonals}, the left-orthogonality of $nP + Q - R_i$ and $nP + Q
- R_i - R_j$ from Lemma \ref{specialorthogonals}. The cases $P - R_i$ and $Q - R_i$ are clear because
$P$ and $Q$ are globally generated.
Also, the toric system is an admissible augmentation of a
standard sequence and so from proposition \ref{exceptionalsequencesexist} it follows that the resulting
exceptional sequence is full.
\end{proof}

The following theorem is an immediate consequence of Theorem \ref{fablowupexistence}.

\begin{theorem}\label{twotimesblowupexistence}
Any smooth complete rational surface which can be obtained by blowing up 
a Hirzebruch surface two times (in possibly several points in each step) has a full strongly exceptional
sequence of invertible sheaves.
\end{theorem}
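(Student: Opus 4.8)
The plan is to deduce this statement directly from Theorem \ref{fablowupexistence} by checking that its running hypotheses are merely a reformulation of the condition ``blowing up a Hirzebruch surface two times''. First I would fix a Hirzebruch surface $\mathbb{F}_a$ with $a \geq 0$ (every Hirzebruch surface is of this form) and write the given surface as a tower of blow-ups $X = X_t \to \cdots \to X_0 = \mathbb{F}_a$. The hypothesis that $X$ is obtained in two steps, blowing up possibly several points in each step, says precisely that the blow-up morphisms can be partitioned into two consecutive blocks $S_1 = \{b_1, \dots, b_s\}$ and $S_2 = \{b_{s + 1}, \dots, b_t\}$, where the members of $S_1$ blow up distinct points of $\mathbb{F}_a$ and those of $S_2$ blow up distinct points of the intermediate surface $X_s$.

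The key observation I would record is that the blow-ups within each block commute. Indeed, blowing up a finite set of distinct closed points of a fixed smooth surface in any order yields canonically the same surface, so the $b_i$ inside $S_1$ commute among themselves, and likewise for $S_2$ (here the second-step centres may be infinitely near to the first-step ones, but they are nonetheless distinct points of the single surface $X_s$). This is exactly the commutativity assumption imposed in the paragraph preceding Theorem \ref{fablowupexistence}.

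With the hypotheses verified, I would simply apply Theorem \ref{fablowupexistence}: for any admissible value of its free integer parameter (any integer $\geq -1$), it produces an explicit toric system on $X$ which is full and strongly exceptional. By the correspondence developed in Section \ref{rationalsurfacesandtoricsystems}, such a toric system is by definition associated to a full strongly exceptional sequence of invertible sheaves $\sh{O}_X, \sh{O}(A_1), \dots, \sh{O}(\sum_{i = 1}^{n - 1} A_i)$ on $X$, which is the desired conclusion.

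There is essentially no hard step here, since all the genuine content --- the verification of strong left-orthogonality for every partial sum $\sum_{i \in I} A_i$ and the fullness via \cite{Orlov93} --- is already carried out in the proof of Theorem \ref{fablowupexistence} (resting in turn on Lemma \ref{specialorthogonals} and Proposition \ref{verticalleftorthogonals}). The only point requiring a moment's care is the reduction itself: one must check that ``two steps'' really translates into the two-block commuting structure, and in particular that infinitely near centres introduced within the second step do not spoil commutativity inside that block. Once this bookkeeping is settled, the theorem follows at once.
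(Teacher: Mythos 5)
Your proposal is correct and is essentially identical to the paper's own argument: the paper also derives Theorem \ref{twotimesblowupexistence} as an immediate consequence of Theorem \ref{fablowupexistence}, since the hypothesis ``blown up two times in possibly several points in each step'' is by definition the two-block commuting structure assumed there. Your extra remark that infinitely near second-step centres still give commuting blow-ups within the second block is a sensible (if routine) piece of bookkeeping that the paper leaves implicit.
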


\begin{remark}
Note that for the existence of strongly exceptional sequences it suffices to consider $X_0 =
\mathbb{F}_a$ for some $a \geq 0$, as every blow-up of $\mathbb{P}^2$ factorizes through a blow-up of
$\mathbb{F}_1$. Nevertheless, as we will see later on, for cyclic strongly exceptional sequences it will
be advantageous also to consider augmentations coming from $\mathbb{P}^2$.
\end{remark}

The converse of Theorem \ref{twotimesblowupexistence} is true for strongly exceptional sequences
coming from standard augmentations:

\begin{theorem}\label{augmentationboundtheorem}
Let $\mathbb{P}^2 \neq X$ be a smooth complete rational surface which admits a full strongly
exceptional standard augmentation then $X$ can be obtained by blowing up a Hirzebruch surface
two times (in possibly several points in each step).
\end{theorem}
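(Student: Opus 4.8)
The plan is to read the geometry of the blow-up sequence off the combinatorics of the augmentation, using strong left-orthogonality as the only constraint. First I would record the reductions. A full strongly exceptional sequence is in particular a full exceptional one, so by Proposition~\ref{exceptionalsequencesexist} the underlying standard augmentation $\mathcal{A} = A_1, \dots, A_n$ is admissible; by the discussion following Definition~\ref{standardaugmentationdefinition} we may then reorder the blow-ups $X = X_{j_t} \to \cdots \to X_{j_1} \to X_0$ so that $\mathcal{A}$ is obtained as a successive augmentation along this sequence, each step inserting a single exceptional class. Fixing the basis $H, R_1, \dots, R_t$ or $P, Q, R_1, \dots, R_t$ attached to $X_0$, every interval sum $A_I$ with $I \subset [n-1]$ must be strongly left-orthogonal, and I would separate these sums into the \emph{vertical} ones, with $(A_I)_0 = 0$, and the \emph{horizontal} ones, with $(A_I)_0 \neq 0$.

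The vertical sums supply all the rigidity. By Proposition~\ref{verticalleftorthogonals} a vertical strongly left-orthogonal divisor is either a single $R_i$ or a difference $R_i - R_j$ with $R_i, R_j$ incomparable; in particular no interval sum may be vertical with three or more nonzero $R$-coefficients, and whenever $R_i - R_j$ occurs as an interval sum the pair must be incomparable in the blow-up order $\geq$. The next step is to determine, for a standard augmentation, exactly which vertical differences arise as interval sums. Tracking the insertion rule, the $R_j$-coefficient of the $A_i$ equals $+1$ on a single carrier and $-1$ on exactly two elements (and $0$ elsewhere), and two classes $R_i, R_j$ occur together in a vertical interval sum precisely when these carriers and their $-1$-positions are nested or adjacent. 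The requirement that every such pair be incomparable then translates the position data of the augmentation directly into incomparabilities in the poset.

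The horizontal sums carry the base geometry. On $X_0$ they are classified by Propositions~\ref{P2leftorthogonals} and \ref{Hirzebruchleftorthogonals}; after blowing up, the strongly left-orthogonal ones are the divisors $D - R_i$ and $D - R_i - R_j$ produced by Lemma~\ref{specialorthogonals}, with $D$ a very ample base class and $R_i, R_j$ corresponding to a base point together with one infinitely near point over it. The essential feature is that Lemma~\ref{specialorthogonals} accommodates exactly \emph{one} level of infinitely near points and has no counterpart for two superimposed ones, but only once the depth is measured relative to the correct minimal model. I would therefore choose a Hirzebruch base $\mathbb{F}_b$ by contracting the $(-1)$-curves that appear as singleton entries $A_i = R_i$ (identifying the minimal model in the spirit of the remark following Proposition~\ref{Hirzebruchtoricsystems}) and measure the depth of the infinitely near structure relative to $\mathbb{F}_b$ rather than to $\mathbb{P}^2$; this is what lets, for instance, a length-three chain over $\mathbb{P}^2$ count as a length-two chain over $\mathbb{F}_1$. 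Combining the two analyses, the claim is that relative to $\mathbb{F}_b$ every chain in the blow-up poset has length at most two, so that $X$ arises from $\mathbb{F}_b$ in two rounds of (simultaneous) blow-ups.

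The main obstacle is precisely this bound. One must show that no choice of insertion positions can realize a third level over $\mathbb{F}_b$ without producing a forbidden interval sum --- either a vertical difference $R_i - R_j$ of comparable classes, excluded by Proposition~\ref{verticalleftorthogonals}, or a horizontal divisor $D$ minus a genuine two-step chain over $\mathbb{F}_b$, for which the strong left-orthogonality underlying Lemma~\ref{specialorthogonals} breaks down. The delicacy is that this bound is simply false over $\mathbb{P}^2$ and only becomes correct after the minimal model $\mathbb{F}_b$ has been selected, so one cannot reduce it to a naive ``the poset over $X_0$ has height $\le 2$''; the argument must pin down the base and then run the position-by-position case analysis that constitutes section~\ref{augmentationboundproofsection}.
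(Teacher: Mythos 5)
Your high-level skeleton -- splitting interval sums into vertical ones controlled by Proposition~\ref{verticalleftorthogonals} and horizontal ones controlled by Lemma~\ref{specialorthogonals}, and recognizing that the two-step bound only holds after re-choosing the minimal model -- matches the spirit of the paper's proof. But there are two genuine gaps. First, your mechanism for selecting the new base is wrong: you propose to contract ``the $(-1)$-curves that appear as singleton entries $A_i = R_i$,'' but those curves are exceptional divisors $E_i$ of the given tower, and contracting them merely recovers the intermediate surfaces $X_j$; it cannot shorten a chain in the blow-up poset. The paper's maneuver is to contract curves that are \emph{not} among the $E_i$, namely the strict transform of a fiber $f$ of the ruling (which acquires $f^2 = -1$ after one blow-up on it) or the zero section $b$ of $\mathbb{F}_1$, together with an explicit change of basis of $\pic$ (e.g.\ $P' = P$, $R_1' = P - R_1$, $Q' = Q + \delta P - R_1$ in Proposition~\ref{standardaugmentationbound}, and similarly in Proposition~\ref{helixaugmentationgauge}). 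This is exactly what turns a length-three chain into something contractible in two rounds, and no choice among the $E_i$ alone can achieve it.

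Second, the combinatorial engine that makes the case analysis finite is missing. The paper organizes everything around which of the four slots carries $(A_n)_0$ (the one entry exempt from strong left-orthogonality), and then uses base-locus and multiplicity arguments -- Lemma~\ref{schlunzlemma} (when $P - R_i - R_j$ can be strongly left-orthogonal for comparable $R_i \leq R_j$, and what it forces about the multiplicity of $R_i$ in the total transform of $f$), Lemma~\ref{gewenzlemma}, and Lemma~\ref{fiberlemma} (the two normal shapes of the subsequence filling the slots around a $P$) -- to constrain which chains can occur and to certify that the alternative blow-down via $f$ or $b$ is actually available (e.g.\ that $R_2$ in a chain $R_1 \leq R_2 \leq R_3$ does not lie over $f \cap E_1$, so that $f$ and $E_2$ can be contracted simultaneously). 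Your proposal gestures at a ``position-by-position case analysis'' but defers precisely this content, and without the $(A_n)_0$ case split and the base-locus lemmas the claimed bound ``no third level over $\mathbb{F}_b$'' is not established; indeed the correct statement is not that the poset has height at most two over the re-chosen base, but that \emph{some} two-step contraction exists, possibly using curves outside the original exceptional configuration.
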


We prove this theorem in section \ref{augmentationboundproofsection}.

\begin{remark}
We will see in Theorem \ref{toricclassification} that in the toric case every full strongly exceptional
sequence of invertible sheaves is equivalent to a strongly exceptional standard augmentation which
implies (Theorem \ref{toricbound}) that a toric surface different from $\mathbb{P}^2$ admits such
a sequence iff it can be obtained by blowing up a Hirzebruch surface at most two times. So, in a sense,
the existence of a full strongly exceptional sequence of invertible sheaves can be considered as a
geometric characterization of a surface. Presumably, Theorem \ref{toricclassification} should
generalize to all rational surfaces, but at present it is not clear to us whether the procedure
of sections \ref{toriccohomologyvanishing} to \ref{toricproofs} can be generalized in an effective way.
\end{remark}

The following theorem gives a strong constraint on the existence of cyclic strongly exceptional sequences
of invertible sheaves on rational surfaces in general:

\begin{theorem}\label{helixpicbound}
Let $X$ be a smooth complete rational surface on which a full cyclic strongly exceptional sequence of
invertible sheaves exists. Then $\rk \pic(X) \leq 7$.
\end{theorem}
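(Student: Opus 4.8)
The plan is to translate the cyclic strongly exceptional hypothesis into a nefness statement on the associated toric surface, and then to invoke the finite classification of toric surfaces with nef anticanonical divisor.

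First I would pass to the associated toric system. By Theorem \ref{canonicaltoricsystemtheorem}, the full cyclic strongly exceptional sequence determines a toric system $\mathcal{A} = A_1, \dots, A_n$ with $n = \rk \pic(X) + 2$, together with a smooth complete toric surface $Y(\mathcal{A})$ whose torus invariant prime divisors $D_1, \dots, D_n$ satisfy $D_i^2 = A_i^2$ for every $i$, under an identification of $\pic(X)$ with $\pic\big(Y(\mathcal{A})\big)$ respecting the intersection forms. In particular $\rk \pic(X) = \rk \pic\big(Y(\mathcal{A})\big) = n - 2$, so it suffices to bound $n$.

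The decisive point is that the cyclic condition controls \emph{every} summand, including $A_n$, which in the merely strongly exceptional case remains uncontrolled. Since each singleton $\{i\}$ is a cyclic interval, every $A_i$ is strongly left-orthogonal. Hence $h^j(A_i) = 0$ for $j > 0$, so $\chi(A_i) = h^0(A_i) \geq 0$, and Lemma \ref{acyclicitylemma}(\ref{acyclicitycorollaryv}) gives $A_i^2 = h^0(A_i) - 2 \geq -2$ for all $i \in \on$. Transporting this to $Y(\mathcal{A})$, the self-intersection numbers $a_i = D_i^2 = A_i^2$ all satisfy $a_i \geq -2$, which by Proposition \ref{canonicalnefproposition} says precisely that $-K_{Y(\mathcal{A})}$ is nef. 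The smooth complete toric surfaces with nef anticanonical divisor are exactly the sixteen listed in Table \ref{weakdelpezzofigure}, the largest of which has $n = 9$ invariant divisors. Therefore $n \leq 9$, whence $\rk \pic(X) = n - 2 \leq 7$.

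I expect no serious technical obstacle here: the mathematical content is the observation that cyclicity upgrades the self-intersection bound from ``all but $A_n$'' to ``all $A_i$,'' which is exactly the nefness criterion for $-K$ on the toric side. The only points requiring care are the clean identification of self-intersections across the canonical correspondence of Theorem \ref{canonicaltoricsystemtheorem} and the appeal to the finite list of Table \ref{weakdelpezzofigure}, both of which are already established above.
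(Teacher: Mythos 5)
Your proposal is correct and follows essentially the same route as the paper: cyclicity makes every $A_i$ (including $A_n$) strongly left-orthogonal, hence $\chi(A_i) \geq 0$, which via $A_i^2 = \chi(A_i) - 2$ and Proposition \ref{canonicalnefproposition} forces $-K_{Y(\mathcal{A})}$ to be nef, and the classification in Table \ref{weakdelpezzofigure} gives the bound. The only difference is that you spell out the intermediate steps (the $h^0$ argument and the appeal to Lemma \ref{acyclicitylemma}) that the paper leaves implicit.
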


\begin{proof}
Let $\mathcal{A} = A_1, \dots, A_n$ be the associated toric system. As every $A_i$ is strongly left-orthogonal,
it follows that $\chi(A_i) \geq 0$ for every $i$. Therefore by Proposition \ref{canonicalnefproposition} the
anti-canonical bundle of the associated toric surface $Y(\mathcal{A})$ must be nef. From the classification of
such toric surfaces (see table \ref{weakdelpezzofigure}) it follows that
$\rk \pic(X) = \rk \pic\big(Y(\mathcal{A})\big) \leq 7$.
\end{proof}

In particular, Theorem \ref{helixpicbound} implies that not even every del Pezzo surface has a cyclic
strongly exceptional sequence of invertible sheaves. However, if $\rk \pic(X) \leq 7$, we have the
following positive result:

\begin{theorem}\label{delPezzocyclicexistence}
Let $X$ be a del Pezzo surface with $\rk \pic(X) \leq 7$, then there exists a full cyclic
strongly exceptional sequence of invertible sheaves on $X$.
\end{theorem}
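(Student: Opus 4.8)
The plan is to prove the statement constructively: for each del Pezzo surface $X$ with $\rk\pic(X) \le 7$ I will exhibit an explicit cyclic strongly exceptional toric system. Up to deformation these surfaces form a short finite list — $\mathbb{P}^2$, $\mathbb{P}^1\times\mathbb{P}^1$, and the blow-ups of $\mathbb{P}^2$ in $t \le 6$ points in general position — and in every case $-K_X$ is ample, so $K_X^2 \ge 3 > 0$, a positivity I will use repeatedly. The guiding observation is that a cyclic strongly exceptional toric system $\mathcal{A} = A_1,\dots,A_n$ must have $\chi(A_i) = A_i^2 + 2 \ge 0$ for every $i$, so that, via the identification of $\pic(X)$ with $\pic\big(Y(\mathcal{A})\big)$ from Theorem \ref{canonicaltoricsystemtheorem} together with Proposition \ref{canonicalnefproposition}, the associated toric surface $Y(\mathcal{A})$ must be one of the weak del Pezzo surfaces of Table \ref{weakdelpezzofigure}. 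I therefore first want to produce, on each $X$, an admissible standard augmentation all of whose self-intersections satisfy $A_i^2 \ge -2$, so that $Y(\mathcal{A})$ realizes the toric weak del Pezzo model of matching Picard rank. It is precisely here that, as remarked after Theorem \ref{fablowupexistence}, it is advantageous to also allow augmentations based at $\mathbb{P}^2$ rather than only at $\mathbb{F}_a$, since the augmentations of Theorems \ref{p2blowupexistence} and \ref{fablowupexistence} have a final element of self-intersection $1-t$, which violates $A_i^2 \ge -2$ once $t \ge 4$.

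Granting such a system, which is full exceptional by Proposition \ref{exceptionalsequencesexist}, the required vanishing organizes cleanly. For every cyclic interval $I \subset \on$ the complement $I^c$ is again a cyclic interval, so left-orthogonality of $A_I$ is automatic from the Serre-duality identity $h^i(-A_I) = h^{2-i}(-A_{I^c})$ used in Proposition \ref{exceptionalsequencesalwayscyclic}, reducing it to left-orthogonality of $A_{I^c}$. Since $-K_X$ is effective, Lemma \ref{h1vanishinglemma} then makes strong left-orthogonality of $A_I$ equivalent to the single condition $h^1(A_I) = 0$. Moreover, iterating Lemma \ref{acyclicitylemma}(\ref{acyclicitycorollaryii}) along the interval gives $\chi(A_I) = \sum_{i\in I}(A_i^2 + 2) \ge 0$, precisely because $Y(\mathcal{A})$ is weak del Pezzo. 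This already forces $h^2(A_I) = h^0(K_X - A_I) = 0$: one computes $(-K_X)\cdot(K_X - A_I) = -K_X^2 - \chi(A_I) < 0$ using $\chi(A_I) = -K_X\cdot A_I$ (Lemma \ref{acyclicitylemma}(\ref{acyclicitylemmai})) and $K_X^2 > 0$, so $K_X - A_I$ pairs negatively with the ample class $-K_X$ and cannot be effective.

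The remaining and genuinely hard step is $h^1(A_I) = 0$ for every cyclic interval $I$. My plan is to handle all intervals uniformly by writing $A_I = K_X + (A_I - K_X)$ and applying Kawamata–Viehweg vanishing. Since $A_I$ is numerically left-orthogonal, $A_I^2 = \chi(A_I) - 2$ by Lemma \ref{acyclicitylemma}(\ref{acyclicitycorollaryv}), whence $(A_I - K_X)^2 = 3\chi(A_I) - 2 + K_X^2 \ge 1 > 0$, so $A_I - K_X$ is big as soon as it is nef. On a del Pezzo surface nefness is tested against the generators of the Mori cone, which for $\rk\pic(X)\ge 3$ are exactly the $(-1)$-curves $C$ (the few remaining surfaces being checked directly); there $(A_I - K_X)\cdot C = A_I\cdot C + 1$, so everything comes down to the inequality $A_I\cdot C \ge -1$ for every $(-1)$-curve $C$. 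I expect this bound to be the main obstacle: it is not a lattice-theoretic fact and must be extracted from the explicit toric system, where $A_I$ equals the sum $D_I = \sum_{i\in I} D_i$ of consecutive invariant divisors on the weak del Pezzo model $Y(\mathcal{A})$, whose boundary intersection numbers never drop below $-1$, combined with the general position of the blown-up points (as exploited in Proposition \ref{verticalleftorthogonals} and Lemma \ref{specialorthogonals}), which guarantees that every $(-1)$-class on $X$ is represented by an irreducible curve meeting each $A_I$ in the expected way. Since the paper works over an arbitrary algebraically closed field, in the write-up I would replace the appeal to Kawamata–Viehweg by the char-free exact-sequence computations of Section \ref{rationalacyclicsection} (Lemmas \ref{coordinatedegreelemma}, \ref{pullbackcohomologyvanishinglemma}, and \ref{specialorthogonals}); carrying out this verification case by case over the finitely many surfaces on the list completes the construction.
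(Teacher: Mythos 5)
Your skeleton matches the paper's proof: reduce to a blow-up of $\mathbb{P}^2$ in at most six points (with $\mathbb{P}^1\times\mathbb{P}^1$ handled by Proposition \ref{Hirzebruchtoricsystems}), construct an explicit toric system whose associated toric surface is the weak del Pezzo of matching rank, note that left-orthogonality of every cyclic interval sum $A_I$ is automatic by Serre duality (Proposition \ref{exceptionalsequencesalwayscyclic}), and reduce strong left-orthogonality to $h^1(A_I)=0$ via Lemma \ref{h1vanishinglemma}. These reductions, your $h^2$-argument, and your Kawamata--Viehweg mechanism --- nefness and bigness of $A_I-K_X$, which on a del Pezzo of rank $\ge 3$ comes down to $A_I\cdot C\ge -1$ for the $(-1)$-curves generating the Mori cone --- are all sound, and in characteristic zero that mechanism would be a genuinely cleaner, purely numerical substitute for the paper's geometric arguments.

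The genuine gap is that the two steps carrying the actual content of the theorem are announced but never executed. First, you do not exhibit any toric system: stating that you ``want to produce'' an admissible standard augmentation with all $A_i^2\ge -2$ is a specification, not a construction, and finding one is the creative core of the proof. The paper's choice on the six-point blow-up, namely $H-R_1-R_2-R_5,\ R_2,\ R_1-R_2,\ H-R_1-R_3-R_4,\ R_4,\ R_3-R_4,\ H-R_3-R_5-R_6,\ R_6,\ R_5-R_6$, is deliberately interlocked so that every cyclic interval sum lands in a short list of manageable classes ($R_i$, $R_i-R_j$, $lH-\sum_{i\in S}R_i$ with $l\le 3$); without a concrete system, your verification step has no object to verify. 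Second, the inequality $A_I\cdot C\ge -1$ is exactly where you say you ``expect the main obstacle'' and defer to an unperformed case-by-case check; moreover, since the paper works over an arbitrary algebraically closed field, your acknowledged fallback for positive characteristic --- replacing Kawamata--Viehweg by ``char-free exact-sequence computations'' --- is precisely the paper's own method (base-point-freeness of the systems $\vert H-R_i\vert$, $\vert H-R_i-R_j\vert$, $\vert 2H-\sum_{i\in S}R_i\vert$, $\vert 3H-\sum_{k\ne j}R_k\vert$ via \cite{Hart2}, V.4, exploiting that no three of the blown-up points are collinear), again left undone. So what you have is a correct plan whose structure parallels the paper, enriched by a nice characteristic-zero shortcut, but not a proof: both the construction and the cohomological verification remain to be supplied.
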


\begin{proof}
Recall that a del Pezzo surface is either $\mathbb{P}^1 \times \mathbb{P}^1$ or a blow-up of
$\mathbb{P}^2$ in at most 8 points (see \cite{Demazure80}). The case $\mathbb{P}^1 \times
\mathbb{P}^1$ is clear from Proposition \ref{Hirzebruchtoricsystems}. For the other cases, by
our assumptions it suffices to assume that $X$ is a blow-up of $\mathbb{P}^2$ in
at most 6 points $x_1, \dots, x_6$. Moreover, it suffices to only consider the maximal case,
i.e. $\rk \pic(X) = 7$ and  the cases of smaller rank will follow immediately. We first give an
example for a cyclic exceptional toric system and then show that it is cyclic strongly exceptional.
We fix a blow-down $X \rightarrow \mathbb{P}^2$ and denote $R_1, \dots, R_6$ the exceptional
divisors and $H$ the class of a line on $\mathbb{P}^2$. Then by Proposition
\ref{exceptionalsequencesexist} the following is a full cyclic exceptional sequence:
\begin{gather*}
H - R_1 - R_2 - R_5, R_2, R_1 - R_2, H - R_1 - R_3 - R_4,\\
R_4, R_3 - R_4, H - R_3 - R_5 - R_6, R_6, R_5 - R_6.
\end{gather*}
To show that a toric system $A_1, \dots, A_6$ is cyclic strongly exceptional, we have to show that
for every
cyclic interval $I \subset [6]$ the sum $A_I := \sum_{i \in I} A_i$ is strongly left-orthogonal. There
are several possible cases what $A_I$ can be. First, if $A_I = R_i$ for some $i \in [6]$ or $A_I
= R_i - R_j$ for some $i \neq j \in [6]$, strong left-orthogonality follows from Proposition
\ref{verticalleftorthogonals}. The next cases are of the form $H - R_i$, $H - R_i - R_j$ and
$H - R_i - R_j - R_k$, respectively, where $i, j, k$ pairwise distinct. Analogous to the arguments
in the proof of \ref{specialorthogonals}, we have to discuss the existence of base points.
As $H$ is very ample, its associated complete linear system does not have base points.
So $h^0(H - R_i) < h^0(H)$, and we
conclude as in the proof of \ref{specialorthogonals} that $H - R_i$ is strongly left-orthogonal.
For any two $x_i, x_j$, we can find  a line on $\mathbb{P}^2$ which does pass through $x_i$
but not through $x_j$. So, the linear system $\vert H - R_i \vert$ is base point free and 
$H - R_i - R_j$ is strongly left-orthogonal for any $i \neq j$. The divisor $H - R_i - R_j$ is not
base point free. Its base points lie on the line connecting $x_i$ and $x_j$. But as $X$ is del
Pezzo, none of the other $x_k$ lie on this line. So we have $h^0(H - R_i - R_j - R_k) <
h^0(H - R_i - R_j)$ and thus $H - R_i - R_j - R_k$ is strongly left-orthogonal.
Similarly, using \cite{Hart2}, V.4, Corollary 4.2, we see that $2H - \sum_{i \in S} R_i$ is
strongly left-orthogonal for any $S \subsetneq [6]$.
The next cases are of the form $3H - \sum_{i \in S} R_i$, where $S \subseteq [6]$ and
$\vert S \vert \geq 4$. As $\vert S \vert < 7$, it follows from \cite{Hart2}, V.4, Proposition
4.3, that these are strongly left-orthogonal, too.
The remaining cases are of the form $3H - 2 R_i - \sum_{k \neq i, j} R_k$ with $i \neq j
\in [6]$. By \cite{Hart2}, V.4, Proposition 4.3, $3H - \sum_{k \neq j} R_k$ has no base points,
therefore $h^0(3H - 2 R_i - \sum_{k \neq i, j} R_k) < h^0(3H - \sum_{k \neq j} R_k)$ and
$3H - 2 R_i - \sum_{k \neq i, j} R_k$ is strongly left-orthogonal
\end{proof}

\begin{remark}
Note that for a del Pezzo surface $X$ with $\rk \pic(X) \leq 7$ the toric system of the type as given
in the proof of Theorem \ref{delPezzocyclicexistence} in general is not the only possibility.
It is an exercise to write down all possible admissible standard augmentations for $X_0 = \mathbb{P}^2$
and to check the conditions whether the resulting toric system is cyclic and strong. For example,
for $X$ del Pezzo, the strongly exceptional toric systems as given in Theorem \ref{p2blowupexistence}
are cyclic iff $t \leq 3$. Moreover, it follows from the proof of Theorem \ref{delPezzocyclicexistence}
that the conditions on $X$
can be weakened in general. Though the toric system given in the proof
does require that no three points are collinear, it admits a configuration of 6 points lying on
a conic and certain configurations of infinitely near points. We will see in Theorems
\ref{toriccyclicnef} and \ref{toriccyclicexistence}  that at least in the toric case the
existence of such sequences is equivalent to $-K_X$ nef.
\end{remark}

We conclude this section with some more technical properties of strongly exceptional sequences.
As before, we assume that a sequence of blow-downs to a minimal surface $X_0$ is chosen.
First we consider parts of a toric system which are ``vertical'' with respect to $X_0$:

\begin{lemma}\label{glueinglemma}
Let $A_1, \dots, A_k \in \pic(X)$ such that $A_i . A_{i + 1} = 1$ for $1 \leq i < k$ and $A_i . A_j = 0$
else such that $A_I := \sum_{i \in I} A_i$ is strongly left-orthogonal and $(A_I)_0 = 0$ for every interval
$I \subset [k]$. Then this system is, up to reversing the order of the $A_i$, of one of the following shapes:
\begin{enumerate}[(i)]
\item $A_1 = R_{i_1} - R_{i_2}, A_2 = R_{i_2} - R_{i_3}, \dots, A_k = R_{i_k} - R_{i_{k + 1}}$,
\label{shortorthogonal}
\item $A_1 = R_{i_1} - R_{i_2}, A_2 = R_{i_2} - R_{i_3}, \dots, A_{k - 1} = R_{i_{k - 1}} -
R_{i_k}, A_k = R_{i_k}$, \label{longorthogonal}
\end{enumerate}
where the $R_{i_l}$ are pairwise incomparable.
\end{lemma}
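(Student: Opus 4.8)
The plan is to analyze the structure forced by the conditions $A_i \cdot A_{i+1} = 1$, $A_i \cdot A_j = 0$ otherwise, together with strong left-orthogonality of every interval sum $A_I$ and the vanishing $(A_I)_0 = 0$. The key observation is that each individual $A_i = A_{\{i\}}$ is itself a strongly left-orthogonal divisor with $(A_i)_0 = 0$, so Proposition \ref{verticalleftorthogonals} applies directly: every $A_i$ is of the form $R_{p}$ or $R_p - R_q$ with $R_p, R_q$ incomparable. First I would record this as the basic building block, and similarly note that every interval sum $A_I$ (again strongly left-orthogonal with $(A_I)_0 = 0$) must be of one of these two shapes as well. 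This severely restricts how the summands can fit together.

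\textbf{The combinatorial core.} The heart of the argument is bookkeeping on the coefficients in the basis $R_1, \dots, R_t$. Write each $A_i$ in coordinates. Since the full sum $A_{[k]} = \sum_{i=1}^k A_i$ must again be of the form $R_p$ or $R_p - R_q$, the total coefficient vector has at most one $+1$ entry and at most one $-1$ entry, with everything else zero. On the other hand, the intersection data $A_i \cdot A_{i+1} = 1$ and $A_i \cdot A_j = 0$ for $|i-j|>1$ constrains how consecutive summands share the basis elements $R_p$: using $R_p^2 = -1$ and $R_p \cdot R_q = 0$ for $p \neq q$, the only way to get $A_i \cdot A_{i+1} = 1$ from divisors of the form $R_p - R_q$ is for them to share exactly one index with opposite sign, i.e.\ $A_i = R_{i_1} - R_{i_2}$ forces $A_{i+1}$ to contain $+R_{i_2}$. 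The plan is to run an induction along the chain: having fixed $A_1 = R_{i_1} - R_{i_2}$ (after possibly reversing the order), the condition $A_1 \cdot A_2 = 1$ pins down that $A_2$ must reintroduce $R_{i_2}$ with a $+$ sign, giving $A_2 = R_{i_2} - R_{i_3}$, and the orthogonality $A_1 \cdot A_j = 0$ for $j > 2$ forbids any later summand from touching $R_{i_1}$. Iterating produces the telescoping chain in case \ref{shortorthogonal}, with the terminal summand either again a difference (case \ref{shortorthogonal}) or a single $R_{i_k}$ (case \ref{longorthogonal}), the latter occurring exactly when the final partial sum collapses to a single $+1$ coefficient.

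\textbf{Handling the start and incomparability.} Two points need care. At the start of the chain I must rule out that $A_1$ is a bare $R_p$ sitting in the middle rather than at an end; this is where reversing the order of the $A_i$ is used, so the single-$R$ summand, if present, lands at position $k$. The cleanest way is to observe that the total sum $A_{[k]}$ has at most one positive coefficient, and to track which summand contributes the surviving $+R$; the telescoping forces all interior summands to be genuine differences, so any naked $R_{i_k}$ must be terminal. Second, I must verify the final clause that all the $R_{i_l}$ are pairwise incomparable with respect to $\geq$. This follows because each consecutive difference $R_{i_l} - R_{i_{l+1}}$ is strongly left-orthogonal, hence by Proposition \ref{verticalleftorthogonals} the adjacent pair is incomparable; for non-adjacent indices I would use that an appropriate interval sum $A_I = R_{i_l} - R_{i_{m+1}}$ is again of the allowed form and strongly left-orthogonal, forcing $R_{i_l}$ and $R_{i_{m+1}}$ to be incomparable as well.

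\textbf{Main obstacle.} I expect the genuine difficulty to lie not in the local two-term analysis but in globally excluding more exotic configurations — for instance, ruling out that some $A_i$ equals $R_p - R_q$ where the indices later ``wrap around'' or repeat in a way that still satisfies all pairwise intersection numbers. The intersection conditions alone are necessary but one must use strong left-orthogonality of \emph{all} interval sums (not just the individual $A_i$ and adjacent pairs) to eliminate branchings of the chain and guarantee that the indices $i_1, \dots, i_{k+1}$ are genuinely distinct and form a single linear chain rather than a tree or a sum with higher multiplicities. Making that exhaustiveness rigorous, via the coefficient-vector constraint on every $A_I$, is the step I would spend the most effort on.
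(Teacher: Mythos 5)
Your proposal is correct and follows essentially the same route as the paper: apply Proposition \ref{verticalleftorthogonals} to every interval sum $A_I$ to force each piece into the shape $R_i$ or $R_i - R_j$ (incomparable), then use the intersection numbers $(R_{i_p} - R_{i_q}) \cdot (R_{i_s} - R_{i_t})$ to force the telescoping chain and exclude interior singletons, closed-up or branching configurations. The details you spell out (coefficient bookkeeping, placing the bare $R_{i_k}$ at the end after reversal, deducing pairwise incomparability from the interval sums $A_l + \dots + A_{m-1} = R_{i_l} - R_{i_m}$) are exactly the content hidden in the paper's phrase ``this readily implies.''
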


\begin{proof}
By proposition \ref{verticalleftorthogonals} every $A_I$ must be of the form $R_i$ or $R_i - R_j$
for some $i, j \in [t]$ such that $R_i$ and $R_j$ are incomparable. Moreover, $(R_{i_p} -  R_{i_q})
. (R_{i_s} -  R_{i_t}) = 1$ iff either $q = s$ and
$p \neq t$ or $q \neq s$ and $p = t$. Moreover, $(R_{i_p} -  R_{i_q}) . (R_{i_s} -  R_{i_t}) = 0$ iff
$\{p, q\} \cap \{s, t\} = \emptyset$. This readily implies that the sequence $A_1, \dots, A_k$ must be
of one of the above forms.
\end{proof}

For the parts of a toric system which are not vertical to $\pic(X_0)$, we would like to have a normal form.
Let $\sh{O}(E_1)$, $\dots$, $\sh{O}(E_n)$ be a strongly exceptional sequence and $A_1, \dots, A_n$ its
associated toric system. One of the requirements is that
$\Hom\big(\sh{O}(E_i), \sh{O}(E_j)\big) = H^0\big(X, \sh{O}(-\sum_{k = j}^{i - 1} A_i)\big) = 0$ for $i > j$.
So, clearly, for any $1 \leq i < n$ with $\chi(A_i) = 0$, we can exchange $E_i$ and $E_{i + 1}$ such that
$\sh{O}(E_1)$, $\dots$, $\sh{O}(E_{i + 1})$, $\sh{O}(E_i)$, $\dots$, $\sh{O}(E_n)$ also
forms a strongly exceptional sequence. The toric system then becomes:
\begin{equation*}
A_1, \dots, A_{i - 2}, A_{i - 1} + A_i, -A_i, A_{i + 1} + A_i, A_{i + 2}, \dots, A_n.
\end{equation*}
We introduce the following notion with this operation in mind.

\begin{definition}
Let $\mathcal{A} = A_1, \dots, A_n$ be a toric system. If $\mathcal{A}$ gives rise to a (cyclic) strongly
exceptional sequence, we say that $\mathcal{A}$ is in {\em normal form} with respect to $X_0$ if
$(A_i)_0$ is either zero or strongly pre-left-orthogonal for every $1 \leq i < n$ (for every $1 \leq i
\leq n$, respectively).
\end{definition}

Assume that $\mathcal{A}$ gives rise to a strongly exceptional sequence and is not in normal form, i.e.
there exists some $A_i$ with $1 \leq i < n$ such that $(A_i)_0$ is non-trivial and not strongly
pre-left-orthogonal. This implies that $\chi(A_i) = h^0(A_i) = 0$ and there are no homomorphisms between
$\sh{O}(D_{i - 1})$ and $\sh{O}(D_i)$. In fact, there exists a maximal interval $I \subset [n - 1]$
containing $i$ such that $\Hom\big(\sh{O}(D_k), \sh{O}(D_l)\big) = 0$ for every $k, l \in I$. Clearly,
any reordering of the $D_k$ with $k \in I$ is a strongly exceptional sequence, too. We are going to
show that every strongly exceptional sequence comes, up to such reordering, from a toric system in normal
form.

\begin{proposition}\label{normalorderingproposition}
Let $X$ be a smooth complete rational surface and $X_0$ a minimal model for $X$. Then any (cyclic) strongly
exceptional sequence of invertible sheaves on $X$ can be reordered such that the associated toric system is
in normal form with respect to $X_0$.
\end{proposition}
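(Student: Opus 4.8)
I would first normalize the sequence so that it begins with $\sh{O}_X$; then $E_k = \sum_{j < k} A_j$ and each $E_k$ is strongly left-orthogonal. Since $A_i$ is strongly left-orthogonal for $1 \leq i < n$, Lemma \ref{acyclicitylemma} gives $-K_X . A_i = \chi(A_i) = h^0(A_i) \geq 0$, so the anticanonical degrees $-K_X . E_1 \leq \dots \leq -K_X . E_n$ are non-decreasing. I partition the sequence into its maximal runs of constant anticanonical degree, which I call \emph{blocks}. Inside a block every consecutive difference $A_i$ has $-K_X . A_i = 0$, hence $\chi(A_i) = h^0(A_i) = 0$, and more generally every partial sum over a subinterval of a block is a strongly left-orthogonal $(-2)$-class with vanishing $h^0$; thus all sheaves in a block are mutually $\Hom$-free and may be permuted arbitrarily while the sequence stays strongly exceptional. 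The decisive point is that the block decomposition depends only on the (permutation-invariant) anticanonical degrees, so reordering inside blocks does not alter it.

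\textbf{Only internal differences can obstruct normal form.} For a difference $A_i$ crossing from one block to the next, the degree jumps strictly, so $-K_X . A_i = \chi(A_i) = h^0(A_i) > 0$; by the contrapositive of Remark \ref{negativegammasremark} this forces $(A_i)_0$ to be zero or strongly pre-left-orthogonal. Hence every inter-block difference is automatically in normal form, no matter how the blocks are internally ordered. It therefore suffices to reorder the sheaves \emph{inside} each block so that each consecutive difference projects to zero or to a strongly pre-left-orthogonal divisor on $X_0$.

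\textbf{The within-block problem, which is the main obstacle.} This reduces everything to a single block $B$: a set of classes whose pairwise differences are $(-2)$-roots and whose projections $F_k := (E_k)_0$ lie in $\pic(X_0)$. If $X_0 = \mathbb{P}^2$, the projection is a single integer $\beta_k$ (the $H$-degree); sorting $B$ by $\beta_k$ makes every consecutive difference project to $\beta H$ with $\beta \geq 0$, which by Proposition \ref{P2leftorthogonals} is zero or pre-left-orthogonal, and normal form follows immediately. The genuine difficulty is $X_0 = \mathbb{F}_a$, where $F_k = \alpha_k P + \beta_k Q$ is two-dimensional and, by Proposition \ref{Hirzebruchpreleftorthogonals}, being ``good'' is a truly two-dimensional condition that no single linear degree detects once $a$ is large (one checks directly that root-projections such as $\alpha P + Q$ with $\alpha \leq -2$ exist for large $a$). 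My plan is to sort $B$ lexicographically by $(\beta_k, \alpha_k)$ and then combine the constant-anticanonical-degree constraint with Lemma \ref{glueinglemma}, which pins the vertical ($\beta = 0$) portions as chains $R_{i_1} - R_{i_2}, R_{i_2} - R_{i_3}, \dots$, to show that the remaining consecutive differences cannot land in the excluded ranges of Proposition \ref{Hirzebruchpreleftorthogonals}. Proving that these bad root-projections never occur as \emph{consecutive} differences of the sorted block is the crux, and it is precisely here that a careful analysis of the root configuration inside $B$ is unavoidable.

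\textbf{Termination and the cyclic case.} Termination is immediate: each adjacent pair inside a block has $\chi(A_i) = 0$, so the admissible transposition of $\sh{O}(E_i)$ and $\sh{O}(E_{i+1})$ described before the proposition is always permitted, and any target order within $B$ is reached by finitely many such transpositions (bubble sort on a finite set). So the whole content is the claim that the sorted order is in normal form. The cyclic case is handled by applying the same argument to one winding, the only extra point being the wrap-around difference $A_n$: if $-K_X . A_n > 0$ it is good by Remark \ref{negativegammasremark} exactly as in the inter-block case, and if $-K_X . A_n = 0$ then the top and bottom blocks coincide up to the shift by $-K_X$ and are sorted together as a single cyclic block.
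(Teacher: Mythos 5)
Your reduction to blocks of constant anticanonical degree is correct and quite clean: within a block all pairwise differences are strongly left-orthogonal classes with $\chi = h^0 = 0$, so arbitrary within-block permutations preserve strong exceptionality, and any consecutive difference crossing a block boundary has $h^0 > 0$, hence zero or strongly pre-left-orthogonal projection by the contrapositive of Remark \ref{negativegammasremark}. This disposes of $X_0 = \mathbb{P}^2$, since sorting a block by the $H$-degree of the projections makes every within-block difference project to $\beta H$ with $\beta \geq 0$, which is covered by Proposition \ref{P2leftorthogonals}. But for $X_0 = \mathbb{F}_a$ your argument stops exactly where the proposition still has to be proved: you \emph{propose} to sort lexicographically by $(\beta_k, \alpha_k)$ and then assert that ``a careful analysis of the root configuration inside $B$'' would show that consecutive differences avoid the excluded ranges of Proposition \ref{Hirzebruchpreleftorthogonals}. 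That analysis is never carried out, and nothing you write excludes, say, two members of a block whose sorted consecutive difference projects to $kP + Q$ with $k \leq -2$. As written, the proposal proves the statement only for blow-ups of $\mathbb{P}^2$; the Hirzebruch case — and with it the cyclic case, which you route through the same sorting — is a genuine gap, and you acknowledge as much.

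The idea you are missing is the one the paper's proof is built on, and it makes any a priori choice of target order (lexicographic or otherwise) unnecessary. By Remark \ref{negativegammasremark}, if a strongly left-orthogonal difference $A_l$ has nontrivial projection that is \emph{not} strongly pre-left-orthogonal, then automatically $\chi(A_l) = h^0(A_l) = 0$ \emph{and} $(-A_l)_0$ \emph{is} strongly pre-left-orthogonal: negating a bad difference makes it good. So the paper fixes bad positions locally: transpose $\sh{O}(E_l)$ and $\sh{O}(E_{l+1})$ (allowed since $\chi(A_l)=0$), which replaces $A_l$ by $-A_l$ at the cost of possibly spoiling the difference $A_{l-1}+A_l$ on its left; then keep moving $\sh{O}(E_{l+1})$ leftward. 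The process must stop, because the only obstruction to continuing — the accumulated difference $\sum_{i=k}^{l} A_i$ having strongly pre-left-orthogonal projection — is precisely the condition one wants, and otherwise its $h^0$ vanishes and the next swap is permitted; afterwards the minimal bad index strictly increases, so finitely many passes yield normal form, uniformly in $X_0$ and with no case analysis of the classification on $\mathbb{F}_a$. If you want to keep your block decomposition (which is a nice way of isolating where reordering is even possible), replace the lexicographic sort inside each block by this inversion-and-propagation argument; that closes the gap.
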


\begin{proof}
Let $\sh{O}(E_1)$, $\dots$, $\sh{O}(E_n)$ be a strongly exceptional sequence and $\mathcal{A} = A_1, \dots,
A_n$ its associated toric system. As remarked above, for any interval $[k, \dots, l + 1] \subset [n]$ such
that $\chi(A_i) = 0$ for every $k \leq i \leq l$, we can exchange the positions of any two $\sh{O}(E_i)$,
$\sh{O}(E_j)$ with $i, j \in I$. In particular, if we want to move $\sh{O}(E_{l + 1})$ to the leftmost
position, it is easy to see that the toric system becomes
\begin{equation*}
\dots, A_{k - 1}, \sum_{i = k}^l A_i, -\sum_{i = k + 1}^l A_i, A_{k + 1}, \dots, A_{l - 1}, A_l + A_{l + 1},
A_{l + 2}, \dots
\end{equation*}
Let $1 \leq l < n$ be minimal such that $(A_l)_0$ is non-trivial and not strongly pre-left-orthogonal. Then
exchanging $\sh{O}(E_{l + 1})$ with $\sh{O}(E_l)$ changes the toric system to $\dots, A_{i - 2},
A_{i - 1} + A_i, -A_i, A_{i + 1} + A_i, A_{i + 2}, \dots$ such that $(-A_l)_0$ is strongly
pre-left-orthogonal. Now possibly $(A_{i - 1} + A_i)_0$ is no longer strongly pre-left-orthogonal. In
this case we iterate moving $\sh{O}(E_{l + 1})$ to the left. This process eventually stops, because of one
of two reasons. First, $\sh{O}(E_{l + 1})$ ends up at the most left position and we are getting
$-\sum_{i = 1}^l A_i, A_1, \dots, A_{l - 1}, A_l + A_{l + 1}, A_{l + 2}, \dots, A_n + \sum_{i = 1}^l A_i$.
Second, $\sh{O}(E_{l + 1})$ is at $(k + 1)$-th position, but $(\sum_{i = k}^l A_i)_0$ is strongly
pre-left-orthogonal. 
Consequently, after moving $\sh{O}(E_{l + 1})$, the smallest $1 \leq l' < n$ such that $(A_{l'})_0$ is
non-trivial and not strongly pre-left-orthogonal is strictly bigger than $l$. So, by iterating this exchange
process, we end up with a toric system in normal form.

If $\sh{O}(E_1)$, $\dots$, $\sh{O}(E_n)$ is a cyclic strongly exceptional sequence, we are free to
cyclically change the enumeration of the $A_i$. In particular, from the general classification of toric
surfaces,
it follows that there cannot be a cyclic interval $I \subset \on$ of length bigger than $n - 3$ such that
$h^0(A_i) = 0$ for every $i \in I$. Moreover, if $\mathcal{A}$ is not in normal form, we can choose the
enumeration of the $A_i$ the way that, if $(A_l)_0$ is non-trivial and not strongly pre-left-orthogonal,
then $h^0(A_1) > 0$ and we choose $l < k < n$ minimal such that  $h^0(A_k) > 0$. This implies that
$(\sum_{i = 1}^p A_i)_0$ and $(\sum_{i = q}^k A_i)_0$ are strongly pre-left-orthogonal for every
$1 \leq p < k$ and every $1 < q \leq k$. So the part $A_1, \dots, A_k$ is in normal form. We iterate
this and eventually all of $\mathcal{A}$ will be in normal form.
\end{proof}

\section{Proof of Theorem \ref{augmentationboundtheorem}}\label{augmentationboundproofsection}

Assume first that $X_0 = \mathbb{P}^2$ and $\mathcal{A}_0 = H, H, H$. If we blow
up $X_1 \rightarrow X_0$, then there is, up to cyclic change of enumeration, only one possible
augmentation $\mathcal{A}_1 = H - R_1, R_1, H - R_1, H$. But $X_1$ is isomorphic to $\mathbb{F}_1$ and if
we choose the usual generators $P$, $Q$ of the nef cone of $X_1$ as a basis of $\pic(X_1)$, we get
$P = H - R_1$, $Q = H$. In these coordinates we have $\mathcal{A}_1 = P, Q - P, P, Q$, which is the
unique cyclic strongly exceptional standard toric system on $\mathbb{F}_1$. So, to prove the theorem
it suffices to consider standard toric
systems coming from Hirzebruch surfaces according to the classification of Proposition
\ref{Hirzebruchtoricsystems}.
We assume that $X$ is obtained by a sequence of blow-ups $X = X_t \rightarrow \cdots \rightarrow X_0$
of a Hirzebruch surface $X_0 \cong \mathbb{F}_a$ and denote $P, Q, R_1, \dots, R_t$ the corresponding
basis of $\pic(X)$.

For any divisor $D$ we denote $\bs(D)$ the base locus of the complete linear system $\vert D \vert$.
Note that for any effective divisor $D$ the condition $\chi(-D) = 0$ is equivalent to the arithmetic
genus of $D$ being zero. It is straightforward to check that in this case the underlying reduced divisor
$D_{\rm red}$ also has arithmetic genus zero. So, because $h^0(R_i) = 1$ for every $i$, the divisor
class $R_i$ is represented by a unique, possibly non-reduced, effective divisor of arithmetic genus zero
and $\bs(R_i)$ coincides with the support of this divisor whose arithmetic genus is also zero. The
image of $\bs(R_i)$ in $X_0$ is contained in some fiber of the ruling $\mathbb{F}_a
\rightarrow \mathbb{P}^1$ which we denote by $f_i$ and which represents the divisor class $P$.

For any $R_i$ we denote $E_i$ the strict transform on $X$ of the corresponding exceptional divisor
on $X_i$. By abuse of notation we also use $E_i$ for the strict transforms on the $X_j$ with $j \geq i$.
Any effective divisor $D$ whose support contains $E_i$ can be written $D = D' + n_i E_i$ where $D'$
is effective and does not have any component with support $E_i$. We call $n_i$ the {\em multiplicity}
of $E_i$ with respect to $D$. By abuse of notion we will also sometimes call $n_i$ the multiplicity
of $R_i$.

We recall that the $R_i$ form a partially ordered set. The maximal elements have the property that
$E_i^2 = -1$. Any maximal chain of $R_i$ contains precisely one maximal element. All maximal elements
are incomparable and can be blown down simultaneously. In the nicest cases we will see that
the maximal length of maximal chains will be at most two and that $X$ can be blown down to $X_0$
in two steps. However, the most part of our analysis in this section will be concerned with the
cases where there exist maximal chains of bigger length. In general there will be only very few
of these and, if such chains exist, we will have to look for some other way to blow down to some
minimal model $X_0'$ which might not coincide with $X_0$. For this we will need exceptional divisors
which do not coincide with one of the $E_i$. These exceptional divisors can be the strict transform
of some fiber $f_i$ or, in the case $X_0 \cong \mathbb{F}_1$, of the unique divisor on $X_0$ with
self-intersection $-1$.
Note in the sequel we will consider the case where blow-ups are only over a fixed fiber $f$.
This will be without loss of
generality, because in our conclusion at the end of this section we will make use of the fact
that $f_i \neq f_j$ implies that $R_i$ and $R_j$ are incomparable.

\begin{lemma}\label{schlunzlemma}
We use notation as before.
\begin{enumerate}[(i)]
\item\label{schlunzlemmai} For any $i$, the divisor class $P - R_i$ is strongly left-orthogonal and
$\bs(P - R_i)$ contains $\bs(R_j)$ for every $R_j$ with $f_j = f_i$ and $R_i \nleq R_j$.
\item\label{schlunzlemmaiii} If the multiplicity of $R_i$ with respect to the total transform
of $f_i$ is greater than $1$, then $\bs(P - R_i)$ contains $\bs(R_j)$ for every $R_j$ with $f_j = f_i$.
\item\label{schlunzlemmaii} For any $i \neq j$, the divisor class $P - R_i - R_j$ is strongly left-orthogonal
iff either $f_i \neq f_j$ or $R_i$ and $R_j$ are comparable (say, $R_i \leq R_j$) and
$\bs(P - R_i)$ does not contain $\bs(R_j)$.
\end{enumerate}
\end{lemma}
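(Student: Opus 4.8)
The plan is to push everything onto the ruling $\pi\colon X \to \mathbb{P}^1$ obtained by composing $b\colon X\to\mathbb{F}_a$ with the fibration of $\mathbb{F}_a$, so that $\sh{O}(P)=\pi^*\sh{O}_{\mathbb{P}^1}(1)$, and to read the geometry off the fibres. First I would set up two reductions. Blow-ups over fibres $f_k\neq f_i$ contribute only summands $\gamma_k R_k$ with $\gamma_k=0$ to the classes $P-R_i$ and $P-R_i-R_j$ (when $f_i=f_j$), so these are pulled back from the surface in which only the points over $f_i$ are blown up; by the invariance of cohomology under pullback (the fact recorded in the conventions, cf.\ Lemma~\ref{pullbackcohomologyvanishinglemma}) all relevant $h^p$ are unchanged, and since $f_i\neq f_j$ already forces $R_i,R_j$ incomparable with base loci in disjoint fibres, I may assume every blow-up lies over the single fibre $f=f_i$. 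Next, Riemann--Roch gives $(P-R_i-R_j)^2=-2$ and $K_X.(P-R_i-R_j)=0$, hence $\chi\big(\pm(P-R_i-R_j)\big)=0$; moreover $K_X\pm(P-R_i-R_j)$ has $Q$-coefficient $-2<0$, so it is non-effective and $h^2\big(\pm(P-R_i-R_j)\big)=0$ by Serre duality. Thus $h^1=0\Leftrightarrow h^0=0$ on both sides, and strong left-orthogonality of $P-R_i-R_j$ amounts to $h^0(P-R_i-R_j)=h^0\big(-(P-R_i-R_j)\big)=0$. The second vanishing is automatic: an effective divisor in the class $R_i+R_j-P$ would meet the nef class $P$ in $0$, hence be fibre-supported, but every fibre-supported effective divisor pairs non-negatively with $Q$, whereas $(R_i+R_j-P).Q=-1$. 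So all of (iii) reduces to computing $h^0(P-R_i-R_j)$.

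For the first half of (i), the same computation gives $h^2\big(\pm(P-R_i)\big)=0$, so I only need $h^1\big(\pm(P-R_i)\big)=0$. Passing (via pullback-invariance) to the surface $X_i$ on which $x_i$ is blown up last, $R_i=[E_i]$ is an irreducible $(-1)$-curve, $P$ is pulled back from $\mathbb{F}_a$, and $h^1(\pm P)=0$ by Proposition~\ref{Hirzebruchleftorthogonals}. Since $P.E_i=0$, Lemma~\ref{coordinatedegreelemma} yields
\begin{equation*}
0 \to \sh{O}(P-E_i) \to \sh{O}(P) \to \sh{O}_{E_i} \to 0, \qquad 0 \to \sh{O}(-P) \to \sh{O}(E_i-P) \to \sh{O}_{E_i}(-1) \to 0.
\end{equation*}
In the first, $H^0(P)\to H^0(\sh{O}_{E_i})=\mathbb{K}$ is non-zero (a fibre missing $x_i$ restricts to a non-zero constant on $E_i$), hence surjective, and $H^1(P)=0$, so $H^1(P-E_i)=0$; in the second $H^0(\sh{O}_{E_i}(-1))=0$ and $H^1(-P)=0$ give $H^1(E_i-P)=0$. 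This proves $P-R_i$ is strongly left-orthogonal, and since $\chi(P-R_i)=1$ we get $h^0(P-R_i)=1$, so $|P-R_i|$ has a unique member $F_i$.

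For the base loci I would identify $F_i$ explicitly. As $F_i.P=0$ with $P$ nef, every component of $F_i$ is contracted by $\pi$, so $F_i$ lies in the fibre $\pi^{-1}(\pi(x_i))=b^*f_i$; comparing classes, $F_i=b^*f_i-G_i$, where $G_i\in|R_i|$ is the total transform of the point $x_i$ and $\bs(R_i)=\operatorname{supp}G_i$. Writing $b^*f_i=\widetilde{f_i}+\sum_k\nu_k E_k$ and $G_i=\sum_k\mu_k E_k$, a fibre-component survives in $F_i$ precisely when $\nu_k>\mu_k$. If $R_j$ over $f_i$ is incomparable to $R_i$, then every component of $G_j$ lies over $x_j$, off the locus over $x_i$ carrying $G_i$, so there $\mu_k=0<\nu_k$ and these components survive; hence $\bs(R_j)\subseteq\bs(P-R_i)$, which is the content of~(i). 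For~(ii), if $\nu_i>1$ then $x_i$ is a satellite point, and the proximity (tree) structure of the $E_k$ over $f$ forces $\nu_k>\mu_k$ for every component of every $G_j$ with $f_j=f_i$, so $\bs(P-R_i)=\operatorname{supp}(b^*f_i)$ contains all such $\bs(R_j)$.

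Finally, for~(iii) it remains to evaluate $h^0(P-R_i-R_j)$, which satisfies $0\le h^0(P-R_i-R_j)\le h^0(P-R_i)=1$ and equals $1$ exactly when $F_i\ge G_j$, i.e.\ when $\bs(P-R_i)\supseteq\bs(R_j)$. When $f_i\neq f_j$, since $H^0(P)=\pi^*H^0(\mathbb{P}^1,\sh{O}(1))$ the only section vanishing along $E_i$ cuts out $b^*f_i$, which misses $E_j$, so $h^0=0$ and $P-R_i-R_j$ is strongly left-orthogonal. When $f_i=f_j$ with $R_i,R_j$ incomparable, $b^*f_i$ passes through both points and $b^*f_i-G_i-G_j$ is effective, so $h^0=1$ and the class is not strongly left-orthogonal (consistently, Paragraph~3 gives $\bs(P-R_i)\supseteq\bs(R_j)$). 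When $f_i=f_j$ with the points comparable, say $R_i\le R_j$, the equivalence $h^0(P-R_i-R_j)=0\Leftrightarrow\bs(P-R_i)\not\supseteq\bs(R_j)$ is exactly the stated criterion. The main difficulty I anticipate is the multiplicity bookkeeping of Paragraph~3 — pinning down precisely which fibre components survive in $F_i$ and matching the set-theoretic containment $\bs(P-R_i)\supseteq\bs(R_j)$ with the scheme-level inequality $F_i\ge G_j$ that governs $h^0$ — especially in the satellite case; the rest is Riemann--Roch, Serre duality, and the two exact sequences above.
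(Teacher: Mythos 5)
Your proposal follows essentially the same route as the paper's own proof: the paper likewise reduces everything to $h^0(P-R_i)=1$ (obtained in one line from base-point-freeness of $P$, $h^0(P-R_i)=h^0(P)-1$, rather than from your two exact sequences on $X_i$), identifies the unique member $F_i$ of $|P-R_i|$ as the total transform of $f_i$ minus the total transform $G_i$ of the exceptional divisor, and then argues (i)--(iii) by exactly the fibre-component multiplicity bookkeeping you describe. The set-versus-scheme issue you flag at the end --- that $h^0(P-R_i-R_j)>0$ is governed by the divisor inequality $F_i\geq G_j$, while the lemma is phrased via containment of base loci --- is glossed over in the paper as well (its proof simply declares part (iii) to be a ``case distinction'' based on (i)), so on that point your proposal is no less complete than the published argument.

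There is, however, one genuine gap, in part (i). The statement asserts $\bs(P-R_i)\supseteq\bs(R_j)$ for every $R_j$ with $f_j=f_i$ and $R_i\nleq R_j$; this hypothesis admits the comparable case $R_j<R_i$, not only incomparable pairs, and your argument (``every component of $G_j$ lies over $x_j$, off the locus over $x_i$ carrying $G_i$, so $\mu_k=0$'') covers only the incomparable case --- when $R_j<R_i$ the supports of $G_j$ and $G_i$ overlap and that reasoning collapses. Be aware, though, that in this extra case the claim as literally stated actually fails: blow up $x_1\in f$, then $x_2\in E_1$ off the strict transform of $f$; here $R_1<R_2$, so the pair $(i,j)=(2,1)$ satisfies the hypothesis, yet the unique member of $|P-R_2|$ is $b^*f-E_2=\tilde f+\tilde E_1$ (strict transforms), whose support misses $E_2\subset\bs(R_1)=\tilde E_1\cup E_2$. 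The paper's own justification --- that subtracting $R_i$ ``at most cancels the base loci of those $R_j$ with $R_i\leq R_j$'' --- only excludes \emph{complete} cancellation of some $\bs(R_j)$, not the partial cancellation occurring in this example, so the defect lies in the lemma itself rather than in something you could have repaired; your incomparable-case version is the defensible one, and it is also the only case invoked later (in Lemma \ref{fiberlemma} and Proposition \ref{standardaugmentationbound}). Still, as a proof of part (i) as stated, the case $R_j<R_i$ is missing, and you should say explicitly that you are proving (and can only prove) the restricted statement.
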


\begin{proof}
Clearly we have $\chi(-P + R_i) = 0$, $h^k(-P + R_i) = 0$ for all $k$, and $\chi(P - R_i) = 1$. To
show that $P - R_i$ is strongly left-orthogonal we need only to show that $h^0(P - R_i) = 1$. But
this follows from the fact that the divisor class $P$ is nef and therefore base-point free and hence
$h^0(P - R_i) = h^0(P) - 1 = 1$. The divisor class $P - R_i$ is nontrivial and its base locus is a
curve of arithmetic genus zero which projects to $f_i$. The total transform of $f_i$ is a representative
of $P$ in $\pic(X)$ and contains the base loci of all the $R_j$ with $f_j = f_i$. By subtracting $R_i$
from $P$, we at most (but not necessarily) cancel the base loci of those $R_j$ with $R_i \leq R_j$ and
(\ref{schlunzlemmai}) follows. If the multiplicity of $R_i$ with respect to the total transform of
$f_i$ is greater than $1$, then the multiplicities of all $E_j$ with $R_i \leq R_j$ with respect to
$P$ is strictly smaller than their multiplicities with respect to $R_i$. Therefore $\bs(P - R_i)$
also contains $\bs(R_j)$ for $R_i \leq R_j$ and (\ref{schlunzlemmaiii}) follows.
From (\ref{schlunzlemmai}) it follows that statement (\ref{schlunzlemmaii})
essentially is a case distinction for determining when $\bs(R_j)$ is not contained in the base
locus of $P - R_i$.
\end{proof}

\begin{lemma}\label{gewenzlemma}
Consider the divisor $Q$ on $X_0 \cong \mathbb{F}_1$ and some strongly left-orthogonal divisor class
$Q - R_i - R_j$ on $X$ with $R_i$, $R_j$ incomparable and $f := f_i = f_j$. Then $\bs(Q - R_i - R_j)$
contains the total transform of $f$.
\end{lemma}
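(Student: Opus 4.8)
The plan is to determine the linear system $|Q - R_i - R_j|$ exactly and then to force the components of the total transform of $f$ into its (necessarily unique) member by a negative-degree argument on the strict transform $\tilde f$ and on the exceptional curves lying over $f$.

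First I would compute the self-intersection. On $\mathbb{F}_1$ we have $Q^2 = 1$, and in the fixed basis the $R_k$ are pairwise orthogonal with $R_k^2 = -1$ and $Q . R_k = 0$, so
\[
(Q - R_i - R_j)^2 = Q^2 + R_i^2 + R_j^2 = 1 - 1 - 1 = -1.
\]
Since $Q - R_i - R_j$ is strongly left-orthogonal, Lemma~\ref{acyclicitylemma}(\ref{acyclicitycorollaryv}) gives $h^0(Q - R_i - R_j) = (Q - R_i - R_j)^2 + 2 = 1$. Thus $|Q - R_i - R_j|$ has a single member $D_0$, and $\bs(Q - R_i - R_j)$ is precisely the support of $D_0$. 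The whole statement therefore reduces to showing that every component of the total transform of $f$ occurs in $D_0$.

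The heart of the argument is a degree computation. Write $\tilde f$ for the strict transform of $f$ in $X$; since $R_i$ and $R_j$ lie over $f$, their centers lie on $f$, so we may write $\tilde f = P - \sum_{k \in S} m_k R_k$ with all $m_k \geq 1$ and $i, j \in S$, where $S$ indexes the blow-ups supported on $f$. Using $P . Q = 1$, $P . R_k = Q . R_k = 0$ and $R_k^2 = -1$, I obtain
\[
\tilde f . (Q - R_i - R_j) = 1 - m_i - m_j \leq -1 < 0.
\]
As $\tilde f \cong \mathbb{P}^1$, the restriction $\sh{O}(Q - R_i - R_j)\vert_{\tilde f}$ has negative degree, so the unique section of $\sh{O}(Q - R_i - R_j)$ restricts to $0$ on $\tilde f$. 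Hence $\tilde f$ is a fixed component of $|Q - R_i - R_j|$, that is, $\tilde f \subseteq \bs(Q - R_i - R_j)$.

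To pass from $\tilde f$ to the full total transform of $f$ (for the morphism $\pi : X \to \mathbb{F}_1$) I would run the same test on each exceptional curve $E_k$ appearing in $\pi^* f$, now taking its multiplicity in $\pi^* f$ into account. Where $\sh{O}(Q - R_i - R_j)\vert_{E_k}$ has negative degree the section is again forced to vanish; where it does not, one must use the precise multiplicity of $E_k$ in $\pi^* f$ together with the base-locus bookkeeping of Lemma~\ref{schlunzlemma} and the incomparability of $R_i$ and $R_j$ — which controls how the centers sit over $f$ — to conclude that the fixed part of $D_0$ still absorbs $E_k$. This infinitely-near multiplicity analysis, matching the exceptional components of $\pi^* f$ against the fixed components of the single divisor $D_0$, is the step I expect to be the main obstacle; everything else is the one-line negative-degree computation and the linear algebra of the intersection form.
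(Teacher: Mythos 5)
Your opening moves are sound: $(Q-R_i-R_j)^2=-1$ together with Lemma \ref{acyclicitylemma} (\ref{acyclicitycorollaryv}) does give $h^0(Q-R_i-R_j)=1$, and a curve on which the line bundle has negative degree is indeed a fixed component. But there are two gaps, one fixable and one not. The fixable one: your inequality $\tilde f.(Q-R_i-R_j)=1-m_i-m_j\leq -1$ assumes $m_i,m_j\geq 1$, i.e.\ that the strict transform of $f$ passes through the (possibly infinitely near) centers of $R_i$ and $R_j$. ``The centers lie over $f$'' does not give this: a center can lie over a point of $f$ without lying on the strict transform of $f$ (for instance, a point of an exceptional curve off $\tilde f$). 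To get $m_i,m_j\geq 1$ you must use the hypotheses the way the paper does: $Q$ is the pullback of the line class under $X\to\mathbb{F}_1\to\mathbb{P}^2$, so $h^0=1$ says there is exactly one line through the two infinitely near points; if their images on $f$ are distinct, that line must be the image $\ell$ of $f$, which forces both centers onto the successive strict transforms of $f$; if the images coincide, the paper shows that strong left-orthogonality forces $R_i$ to be minimal, hence $R_i\leq R_j$, contradicting incomparability. This case distinction --- the second case being exactly where incomparability is used --- is absent from your proposal.

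The unfixable gap is the step you defer to the end, which is not bookkeeping but the actual content, and it fails: the exceptional curves over the centers of $R_i$ and $R_j$ are in general \emph{not} contained in $\bs(Q-R_i-R_j)$, so no multiplicity analysis will force them in. Concretely, let $X$ be the blow-up of $\mathbb{F}_1$ in two distinct points of $f$ away from the $(-1)$-section $b$, with exceptional curves $E_i$, $E_j$; all hypotheses of the lemma hold, and writing $\pi:X\to\mathbb{P}^2$ for the composition, the unique member of $|Q-R_i-R_j|$ is $\pi^*\ell-E_i-E_j=\tilde f+b$, whose support contains neither $E_i$ nor $E_j$. Consistently, your degree test gives $E_i.(Q-R_i-R_j)=+1>0$, so nothing forces vanishing there. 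What the paper's argument actually establishes --- by identifying $|Q-R_i-R_j|$ with the lines through the two points, so that the unique member is $\pi^*\ell$ minus the two exceptional classes --- is that the base locus contains the strict transform of $f$ together with every component of the preimage of $f$ not lying over those two centers. This weaker containment is also exactly what the application in Proposition \ref{f1augmentationbound} needs, since a further blow-up point over $f$ that is incomparable with $R_i$ and $R_j$ lies on precisely that part of the preimage. So you should prove that statement, via the explicit unique member as the paper does, rather than the literal total transform, which your negative-degree method rightly cannot reach.
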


\begin{proof}
The class $Q$ is the pullback of the class of lines in $\mathbb{P}^2$.
Denote $p$ the image of $R_i$ in $X_0$, then we can identify the linear system
$\vert Q - R_i \vert$ with the set of lines passing through the image of $p$ in
$\mathbb{P}^2$. If $R_j$ lies over some other point of $f$ than $p$, then
$\bs(Q - R_i - R_j)$ fixes two points on $f$ and thus contains $f$.
If $R_j$ also lies over $P$, then we first observe that $\bs(Q - R_i)$ contains
$\bs(R_k)$ for all $k \neq i$ and $R_k \leq R_i$. So, the condition that $Q - R_i - R_j$
is strongly left-orthogonal implies that $R_i$ is minimal and hence $R_i \leq R_j$,
which is a contradiction.
\end{proof}

\begin{lemma}\label{fiberlemma}
Let $A_1, \dots, A_n$ a strongly exceptional toric system on $X$ which is a standard augmentation
of $P, sP + Q, P, -(a + s)P + Q$ with $s \geq -1$ for some choice of $X_0 \cong \mathbb{F}_a$
such that $f_i = f_j$ for all $i, j$.
Assume that $A_k, A_{k + 1}, \dots A_l$ for some $1 \leq k < l < n$ is a subsequence of the
toric system which contains the two slots around one of the $P$, i.e. $(A_{k - 1})_0, (A_{l + 1})_0
\notin \{0, P\}$, $(A_p)_0 = P$ for one $k \leq p \leq l$, and $(A_q)_0 = 0$ for all $k \leq q \leq l$
with $q \neq p$. Then $A_k, \dots, A_l$ is, up to possible order inversions, of one of these
forms:
\begin{enumerate}[(i)]
\item\label{fiberlemmai} $R_{i_{l - k}}, R_{i_{l - k - 1}} - R_{i_{k - l}}, \dots, R_{i_2} - R_{i_3},
R_{i_1} - R_{i_2}, P - R_{i_1}$, where the $R_{i_j}$ are pairwise incomparable;
\item\label{fiberlemmaii} $R_{i_1}, P - R_{i_1} - R_{i_2}, R_{i_2} - R_{i_3}, \dots, R_{i_{l - k - 1}}
- R_{i_{k - l}}, R_{i_{k - l}}$, where $R_{i_1} \leq R_{i_j}$ and the $R_{i_j}$ are pairwise incomparable
for $j > 1$.
\end{enumerate}
\end{lemma}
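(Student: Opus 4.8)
The plan is to combine the local classification of left-orthogonal divisors with the telescoping structure of vertical chains from Lemma~\ref{glueinglemma}, and then to analyse the junction at the unique slot $A_p$ with $(A_p)_0 = P$. First I would record the possible shapes of the pieces. Since $l < n$, every sum $\sum_{i \in I} A_i$ with $I \subseteq [k,l]$ is a sum over an interval in $[n-1]$ and is therefore strongly left-orthogonal, so Proposition~\ref{verticalleftorthogonals} shows that each $A_q$ with $q \neq p$, as well as every interval sum avoiding $p$, is of the form $R_i$ or $R_i - R_j$ with $R_i, R_j$ incomparable. For the mixed sums I would use Proposition~\ref{negativegammas} (all $R$-coefficients are $\le 0$) together with Lemma~\ref{acyclicitylemma}(\ref{acyclicitycorollaryv}) (which gives $D^2 = h^0(D) - 2 \ge -2$ for strongly left-orthogonal $D$): writing such a sum as $P - \sum_j \gamma_j R_j$ with $\gamma_j \ge 0$ forces $\sum_j \gamma_j^2 \le 2$, so every interval sum containing $p$ is one of $P$, $P - R_i$, or $P - R_i - R_j$. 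Because $k < l$, the slot $A_p$ has a vertical neighbour meeting it with intersection $1$, which rules out $A_p = P$; hence $A_p = P - R_c$ or $A_p = P - R_{c_1} - R_{c_2}$.

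Next I would apply Lemma~\ref{glueinglemma} to the two vertical runs $A_k, \dots, A_{p-1}$ and $A_{p+1}, \dots, A_l$ on either side of $A_p$: each is, up to reversal, a telescoping chain of differences, possibly capped by a single bare $R$. The heart of the argument is the junction analysis, driven by $A_{p-1} . A_p = A_{p+1} . A_p = 1$ and $A_{p-1} . A_{p+1} = 0$. In the case $A_p = P - R_c$, both neighbours must have $R_c$ as their positive part, but any two such elements (bare $R_c$, or differences $R_c - R_\bullet$) meet with intersection $-1$ or $-2$, never $0$; hence one of the two vertical runs is empty and $A_p$ sits at an end of the subsequence. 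The remaining run then telescopes into $R_c$, and reading off the interval sums (all of the form $R_\bullet$ from the bare end, then $P - R_\bullet$ once $A_p$ is crossed) gives exactly shape~(\ref{fiberlemmai}), the incomparability of the $R_{i_j}$ coming from Lemma~\ref{glueinglemma}.

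For $A_p = P - R_{c_1} - R_{c_2}$ I would first invoke Lemma~\ref{schlunzlemma}(\ref{schlunzlemmaii}): since $f_{c_1} = f_{c_2}$ by hypothesis, strong left-orthogonality of $A_p$ forces $R_{c_1}$ and $R_{c_2}$ to be comparable, say $R_{c_1} < R_{c_2}$. The key structural input is the forest property of the partial order: by definition of $\ge$ the set of $R_j$ lying below a fixed $R_k$ is a chain, so $R_{c_1} < R_{c_2}$ and $R_b < R_{c_2}$ cannot both hold for incomparable $R_{c_1}, R_b$. Combining this with the comparability that Lemma~\ref{schlunzlemma}(\ref{schlunzlemmaii}) demands of each crossing sum $P - R_{c_1} - R_\bullet$, I would show that the neighbour of $A_p$ using the minimal generator $R_{c_1}$ cannot be a difference (such a difference would force an incomparable pair to be comparable) and is therefore the bare $R_{c_1}$, while the opposite neighbour telescopes through $R_{c_2}$; the same-positive-part obstruction of the previous paragraph prevents both neighbours from using $R_{c_2}$. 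This produces shape~(\ref{fiberlemmaii}), with $R_{i_1} = R_{c_1}$ minimal, the conditions $R_{i_1} \le R_{i_j}$ read off from the crossing sums, and the pairwise incomparability of the $R_{i_j}$ for $j > 1$ from Lemma~\ref{glueinglemma}.

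I expect the main obstacle to be the endpoint bookkeeping: showing that the telescoping chains terminate in bare generators rather than dangling differences, and excluding the spurious gluings (for instance a chain of pure differences abutting $A_p$) that pass every intersection and comparability test internally. To close these cases I would bring in the $Q$-type neighbours $A_{k-1}, A_{l+1}$, which exist precisely because $(A_{k-1})_0, (A_{l+1})_0 \notin \{0, P\}$: the interval sums $A_{k-1} + A_k$ and $A_l + A_{l+1}$ are again strongly left-orthogonal, now of $Q$-type, and their base loci --- controlled by Lemma~\ref{gewenzlemma} and the base-locus estimates of Lemma~\ref{schlunzlemma} --- force the boundary vertical elements to be bare generators. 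Finally I would check the short subsequences (one empty run, or length $\le 3$) directly, where the forms degenerate to the small instances of~(\ref{fiberlemmai}) and~(\ref{fiberlemmaii}).
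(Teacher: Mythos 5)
Your intersection-theoretic skeleton is sound: the classification of interval sums avoiding $p$ via Proposition \ref{verticalleftorthogonals}, the bound $\sum_j \gamma_j^2 \leq 2$ (from Proposition \ref{negativegammas} and Lemma \ref{acyclicitylemma}) forcing sums through $p$ to be $P$, $P - R_i$ or $P - R_i - R_j$, the exclusion of $A_p = P$, the positive-part computation at the junction, and the forest-property argument in the case $A_p = P - R_{c_1} - R_{c_2}$ are all correct. But there is a genuine gap exactly where you flag one, and the fix you sketch does not close it. The problem is the configurations in which a vertical chain abuts the flanking $Q$-type element with a \emph{dangling difference} instead of a bare generator: for instance the subsequence $R_c - R_b,\ P - R_c$ (with $R_b$, $R_c$ incomparable), or $R_{c_1},\ P - R_{c_1} - R_{c_2},\ R_{c_2} - R_{i_3}, \dots, R_{i_{m-1}} - R_{i_m}$ with no terminal bare $R_{i_m}$. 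Every interval sum inside such a subsequence has an allowed shape (in the first example the three sums are $R_c - R_b$, $P - R_c$, $P - R_b$), so none of your internal tests --- Proposition \ref{negativegammas}, Lemma \ref{schlunzlemma}, incomparability, intersection numbers --- detects them, yet they are not of form (\ref{fiberlemmai}) or (\ref{fiberlemmaii}). Nor do the $Q$-type flanks rescue you: the constraints $A_{k-1}.(R_c - R_b) = 1$, $A_{l+1}.(R_c - R_b) = 0$, nonpositivity of the $R$-coefficients of $A_{k-1} + A_k$, and $\sum_i A_i = -K_X$ are all numerically consistent with the spurious configuration (they merely force $A_{k-1}$ to carry suitable $-R_\bullet$ terms), and Lemma \ref{gewenzlemma} is a statement about $Q$-type divisors on $\mathbb{F}_1$ with no purchase on this situation. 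So the ``endpoint bookkeeping'' is not bookkeeping; it is the actual content of the lemma.

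What rules these configurations out is the hypothesis your argument never uses: that $\mathcal{A}$ is a \emph{standard augmentation}. The paper's proof is an induction over the augmentation process itself: the first insertion touching the slot produces $R_{i_1}, P - R_{i_1}$, and every later insertion places a \emph{bare} generator $R_j$ into the run while subtracting $R_j$ from its two new neighbours. Consequently a difference $R_a - R_b$ can only arise with the bare $R_b$ sitting beside it inside the run, and the outer ends of the run are automatically bare generators or the $P$-type element --- dangling differences cannot be produced at all. Given this invariant, the paper only has to decide, at each augmentation step, whether the insertion goes in the middle (excluded by the base-locus argument, since $\bs(R_{i_1}) \cup \bs(P - R_{i_1})$ is the total transform of the fiber), to the left (forcing incomparability and yielding form (\ref{fiberlemmai})), or to the right (forcing comparability via Lemma \ref{schlunzlemma}(\ref{schlunzlemmaii}) and yielding form (\ref{fiberlemmaii})). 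If you want to keep your static classification, you must first extract that invariant from the augmentation hypothesis --- at which point you are essentially reproducing the paper's induction.
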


\begin{proof}
After the first augmentation we get $R_{i_1}, P - R_{i_1}$. In the second step, we can extend this
sequence in the middle, or to the left, or to the right. By extending in the middle, we get
$R_{i_1} - R_{i_2}, R_{i_2}, P - R_{i_1} - R_{i_2}$ which implies that $\bs(R_{i_2}) \notin
\bs(R_{i_1}) \cup \bs(P - R_{i_1})$, where the right hand side coincides with the total transform
of a fiber $f_i$ on $X$, which is not possible.
By extending to the left, we get $R_{i_2}, R_{i_1} - R_{i_2},
P - R_{i_1}$ with the necessary condition that $\bs(R_{i_1}) \cap \bs(R_{i_2}) = \emptyset$ and therefore
$R_{i_1}, \dots, R_{i_2}$ are incomparable. By iterating to the left, we obtain that the $R_{i_j}$ are
pairwise incomparable and therefore we arrive at the form (\ref{fiberlemmai}). If we extend to the
right instead, we get $R_{i_1}, P - R_{i_1} - R_{i_2}, R_{i_2}$ and by Lemma \ref{schlunzlemma}
(\ref{schlunzlemmaii}) $R_{i_1}$, $R_{i_2}$ must be comparable. In the next step, we extend without
loss of generality to the right and get $R_{i_1}, P - R_{i_1} - R_{i_2}, R_{i_2} - R_{i_3}, R_{i_3}$
where $R_{i_1}$, $R_{i_3}$ are comparable and $R_{i_2}$, $R_{i_3}$ are incomparable. If we extend
to the left in the next step, this implies that the pairs $R_{i_1}$, $R_{i_4}$ and $R_{i_2}$, $R_{i_3}$
are incomparable, but $R_{i_2}$ and $R_{i_3}$ are comparable to $R_{i_1}$ and $R_{i_4}$ respectively,
which is not possible. So, we can continue extending only to the right and we inductively obtain that
the $R_{i_j}$ are pairwise incomparable for $j > 1$ and $R_{i_1}$ is comparable with every $R_{i_j}$.
If $l - k > 2$, this implies that $R_{i_1} \leq R_{i_j}$ for every $j > 1$.
\end{proof}

Now we consider standard augmentations starting from a standard sequence $P, s P + Q, P, -(s + a)P + Q$
with $s \geq -1$ on $X_0$. For this, we have four ``slots'', in which we can insert the $R_i$ successively.
The augmented toric system is of the form $A_1, \dots, A_n$, where possibly $A_n$ is only left-orthogonal
but not strongly left-orthogonal. For $(A_n)_0$, there are four possibilities, namely $(A_n)_0 = 0$,
$(A_n)_0 = P$, $(A_n)_0 = sP + Q$ and $(A_n)_0 = -(s + a)P + Q$. We will first consider
the last case.

\begin{proposition}\label{standardaugmentationbound}
Let $\mathcal{A} = A_1, \dots, A_n$ be a strongly exceptional toric system which is a standard
augmentation of the toric system $P, sP + Q, P, -(s + a)P + Q$ with $s \geq -1$ on
$X_0 \cong \mathbb{F}_a$ such that $(A_n)_0 = -(s + a)P + Q$ and $f_i = f_j$ for all $i, j$.
Then $X$ can be obtained
from blowing up a Hirzebruch surface two times (in possibly several points in each step).
\end{proposition}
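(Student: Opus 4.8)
The plan is to read the infinitely-near structure of the blow-up $X\to X_0\cong\mathbb{F}_a$ directly off the toric system, using Lemma \ref{fiberlemma} to normalize the two ``$P$-clusters'' and the base-point analysis of Lemmas \ref{schlunzlemma} and \ref{gewenzlemma} to locate the blown-up points relative to the fiber $f$. Since $(A_n)_0=-(s+a)P+Q$, the cyclic list of the non-vertical members of $\mathcal{A}$ is $P,\ sP+Q,\ P,\ -(s+a)P+Q=A_n$, so after stripping the vertical runs the pattern is $P,Q,P,Q$; consequently $\mathcal{A}$ breaks canonically into two maximal runs, each surrounding exactly one copy of $P$ and bounded on both sides by a $Q$-type slot. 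First I would apply Lemma \ref{fiberlemma} to each of these two runs: up to order inversion each is of type (\ref{fiberlemmai}) or type (\ref{fiberlemmaii}).

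The key step is to translate the two forms into statements about the partial order $\geq$ on $\{R_1,\dots,R_t\}$. In a type-(\ref{fiberlemmai}) run the members $R_{i_1},\dots,R_{i_m}$ are pairwise incomparable, hence correspond to distinct points lying on $f$ at the first infinitesimal level, i.e.\ minimal elements of the poset. In a type-(\ref{fiberlemmaii}) run the bottom $R_{i_1}$ satisfies $R_{i_1}\leq R_{i_j}$ for $j>1$ while the $R_{i_j}$ with $j>1$ are mutually incomparable; here I would invoke Lemma \ref{schlunzlemma}, whose part (\ref{schlunzlemmaiii}) shows that strong left-orthogonality of the $P$-slot $P-R_{i_1}-R_{i_2}$ forces the multiplicity of $R_{i_1}$ in the total transform of $f$ to equal $1$ and $\bs(R_{i_2})\not\subseteq\bs(P-R_{i_1})$. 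The first statement pins $R_{i_1}$ to $f$ as a minimal point, and the second says each top $R_{i_j}$ ($j>1$) is infinitely near $R_{i_1}$ but does \emph{not} lie on the strict transform of $f$. Thus in either form the members meeting $f$ occur only as minimal, multiplicity-one points, while every remaining member lies one level higher, over a unique minimal point and off the strict transform of $f$.

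With this dictionary the conclusion reduces to excluding any chain $R_a<R_b<R_c$. Such a chain would force $R_b$ to be at once a top of the cluster carrying the pair $R_a<R_b$ (hence off the strict transform of $f$) and a minimal multiplicity-one point on $f$ in the cluster responsible for $R_b<R_c$, which the previous paragraph forbids. Hence the poset has height at most two: its minimal elements are distinct points of $f\subset\mathbb{F}_a$, and the remaining elements are pairwise incomparable and each lie over a single minimal point. I would then exhibit the factorization $X\to X'\to\mathbb{F}_a$ in which $X'\to\mathbb{F}_a$ blows up the minimal points (distinct on $f$) and $X\to X'$ blows up the remaining points (distinct on $X'$), so that $X$ arises from $\mathbb{F}_a$ by two simultaneous blow-ups; in this case no change of minimal model is needed.

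The main obstacle I anticipate is precisely the base-point bookkeeping that attaches each cluster member to, or separates it from, the fiber: one must argue uniformly that no point of $f$ is ever stacked over another point of $f$ (equivalently, that $\bs(P-R_i)$ swallows $\bs(R_j)$ exactly when the total-transform multiplicity exceeds $1$), which is the content supplied by Lemma \ref{schlunzlemma}, with Lemma \ref{gewenzlemma} covering the $\mathbb{F}_1$-specific configurations. Carrying this out across both clusters, including the degenerate case where a run reduces to a single $P$ and both order-inverted versions of (\ref{fiberlemmai}) and (\ref{fiberlemmaii}), is where the real work lies; once the height-two structure is in hand, the two-step description of $X$ is immediate.
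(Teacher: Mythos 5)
Your reduction via Lemma \ref{fiberlemma} and the base-point analysis of Lemma \ref{schlunzlemma} follows the paper's strategy, but your key step --- excluding every chain $R_a < R_b < R_c$ --- has a genuine gap. Lemma \ref{fiberlemma} constrains comparabilities only \emph{within} each cluster: type (\ref{fiberlemmai}) makes that cluster's members pairwise incomparable, and type (\ref{fiberlemmaii}) makes its bottom element dominated by its tops. It says nothing about cross-cluster relations, so ``pairwise incomparable within $S_1$'' does not make the members of $S_1$ minimal in the whole poset, and your assertion that any relation $R_a < R_b$ must be ``carried by'' a cluster with $R_b$ a top of it is unfounded. Likewise, the multiplicity-one conclusion for the bottom $R_{i_1}$ of a type-(\ref{fiberlemmaii}) cluster (which you correctly extract from Lemma \ref{schlunzlemma} (\ref{schlunzlemmaii}) and (\ref{schlunzlemmaiii})) does \emph{not} pin $R_{i_1}$ to $f$ as a minimal point: the blown-up point $x_{i_1}$ may lie on the exceptional divisor over another blown-up point of $f$, away from the strict transform of $f$, and its multiplicity in the total transform of $f$ is still one.

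This is exactly the configuration the paper has to handle: $R_1 \in S_1$, with $R_2 \leq R_3$ in the type-(\ref{fiberlemmaii}) cluster $S_2$ and $R_1 \leq R_2$, a chain of length three which your argument wrongly declares impossible. In that situation your conclusion that ``no change of minimal model is needed'' fails; one cannot contract back to $X_0 \cong \mathbb{F}_a$ in two steps. The paper's proof instead uses the multiplicity-one information to conclude that $R_2$ avoids the strict transform of $f$, so that after simultaneously contracting all $E_i$ with $E_i^2 = -1$ (including $E_3$) the strict transform of $f$ and $E_2$ are disjoint $(-1)$-curves, and contracting these two lands on a possibly \emph{different} Hirzebruch surface $X_0'$. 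The same device (contracting $f$ instead of $E_1$, with the accompanying change of basis of $\pic$) is also needed in the case where both clusters are of type (\ref{fiberlemmaii}) and only one bottom is minimal. This switch of minimal model is the heart of the paper's argument; without it your proof covers only the height-two case, and the exclusion of height three is not established.
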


\begin{proof}
We denote $f$ the distinguished fiber such that $f = f_i$ for all $i \in \ot$.
Because $(A_n)_0 = -(s + a)P + Q$ the toric system has two subsequences which are of the form
as stated in Lemma \ref{fiberlemma}. This implies that there is a partition of the set
$\{R_1, \dots, R_t\}$ into two subsets $S_1 := \{R_{i_1}, \dots, R_{i_r}\}$, $S_2 :=
\{R_{j_1}, \dots, R_{j_s}\}$ such that, if nonempty, the elements in each of these subsets
either are (\ref{fiberlemmai}) incomparable or (\ref{fiberlemmaii}) $R_{i_1} \leq R_{i_k}$
and the $R_{i_k}$ incomparable for all $k > 1$ ($R_{j_1} \leq R_{j_k}$ and the $R_{j_k}$
incomparable for all $k > 1$, respectively). If both $S_1$ and $S_2$ are empty, there is
nothing to prove. If one of $S_1$, $S_2$ is empty, then the length of a maximal chain of
comparable elements among the $R_i$ is at most two and the proposition follows. So we assume
that $S_1$ and $S_2$ both are nonempty. If $S_1$ and $S_2$ both satisfy property (\ref{fiberlemmai}),
then again the length of a maximal chain of comparable elements among the $R_i$ is at most
two and the proposition follows. If both satisfy property (\ref{fiberlemmaii}), then we
have two cases. The first is that $R_{i_1}$, $R_{j_1}$ both are minimal. Then again the
length of a maximal chain of comparable elements among the $R_i$ is at most two. The
second case is that only one of these, say $R_{i_1}$, is minimal and $R_{i_1} = R_1$ without
loss of generality.
On $X_1$ we have $f^2 = -1$ and we can choose to either blow down $R_1$ or $f$. If we choose
$f$, then we obtain another of basis for $\pic(X_1)$ given by $P', Q', R_1'$, where $P' = P$,
$R_1' = P - R_1$ and $Q' = Q + \delta P - R_1$, where $\delta = 1$ if $R_1$ corresponds to
a blow-up of a point on the zero section of the fibration $\mathbb{F}_a \rightarrow \mathbb{P}^1$,
and $\delta = 0$ otherwise. If we complete this basis to a basis of $\pic(X)$ by using
the $R_i$ with $i > 1$, the sequence $R_{i_1}, P - R_{i_1} - R_{i_2}, R_{i_2} - R_{i_3},
\dots, R_{i_{r - 1}} - R_{i_r}, R_{i_r}$ becomes $P' - R_1', R_1' - R_{i_2},
R_{i_2} - R_{i_3}, \dots, R_{i_{r - 1}} - R_{i_r}, R_{i_r}$ with $R_1', R_{i_2}, \dots R_{i_r}$
pairwise incomparable. So we have reduced to the case that $S_1$ satisfies property
(\ref{fiberlemmai}) and $S_2$ satisfies property (\ref{fiberlemmaii}). Moreover, we
can assume that $R_{j_1}$ is not minimal as otherwise we can choose another basis
as we did before and reduce to the case that both $S_1$ and $S_2$ satisfy property
(\ref{fiberlemmai}).

In the remaining case, the length of a maximal chain of comparable $R_i$ is either
two or three. If it is two, the proposition follows. In the case where it is three,
we assume without loss of generality that $\mathcal{A}$ is an augmentation of a strongly
exceptional toric system on $X_3$ with $R_1 \leq R_2 \leq R_3$ such that $R_1 \in S_1$ and
$R_2, R_3 \in S_2$. Then the divisor $P - R_2 - R_3$ is strongly left-orthogonal and by
Lemma \ref{schlunzlemma} (\ref{schlunzlemmaiii}) it follows that the multiplicity of
$R_2$ with respect to the total transform of $f$ is one. In particular, $R_2$ does
not come from a blow-up of the intersection of $f$ with $E_1$ on $X_1$. If we now go
back to $X$, then the $R_{i_k}$ are incomparable with $R_1$ and hence with $R_2$,
because $R_1 \leq R_2$. Thus the $R_{i_k}$ are minimal. So, by blowing down simultaneously
all $E_i$ with $E_i^2 = -1$
(which includes $E_3$) on $X$, we arrive at the surface $X_2$. Here, we have
$f^2 = -1$ and $E_2^2 = -1$. So, we can blow-down these two divisors simultaneously
and arrive at some Hirzebruch surface $X_0'$.
\end{proof}

If $s + a > 1$, it follows by Lemma \ref{pullbackcohomologyvanishinglemma} that necessarily
$(A_n)_0 = -(s + a)P + Q$. If $s + a \leq 1$, then possibly $(A_n)_0 \in \{0, P\}$ and the
standard toric system $P, s P + Q, P, -(s + a)P + Q$ must be cyclic strongly exceptional on
$\mathbb{F}_a$ for which, by Proposition \ref{Hirzebruchtoricsystems}, there are only four
possibilities. Our first step will be to reduce these to one.

\begin{proposition}\label{helixaugmentationgauge}
Let $\mathcal{A} = A_1, \dots, A_n$ a toric system on $X$ with $(A_n)_0 \neq -(s + a)P + Q$. Then
there exists a sequence of blow-downs $X = X_t' \rightarrow \cdots X_0'$ such that $X_0' \cong
\mathbb{F}_1$ and $\mathcal{A}$ is an augmentation of the toric system $P', Q', P', Q' - P'$ on $X_0'$.
\end{proposition}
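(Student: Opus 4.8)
The plan is to pin down the admissible base systems and then re-route the blow-down by an elementary transformation, so as to arrive at $\mathbb{F}_1$ carrying the standard system $P', Q', P', Q'-P'$. First I would record what the hypothesis buys us: by the observation preceding the proposition, $(A_n)_0 \neq -(s+a)P+Q$ forces (via Lemma \ref{pullbackcohomologyvanishinglemma}) that $s+a \leq 1$, and together with $s \geq -1$ this means, by Proposition \ref{Hirzebruchtoricsystems}, that the underlying system $P, sP+Q, P, -(s+a)P+Q$ is cyclic strongly exceptional and that $a \in \{0,1,2\}$. Moreover, if $X = X_0$ then $A_n = (A_n)_0 = -(s+a)P+Q$, contrary to hypothesis, so at least one blow-up occurs.

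The case $a=1$ needs no re-routing. Here $s \in \{-1,0\}$, and the reversal $s \mapsto -a-s$ interchanges these two values; since reversing and cyclically shifting the enumeration preserve the class of (cyclic) strongly exceptional toric systems, the base system equals $P, Q, P, Q-P$ up to such a relabelling. Thus one takes $X_0' = X_0 \cong \mathbb{F}_1$ together with the given blow-down sequence.

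For $a \in \{0,2\}$ I would perform an elementary transformation at the first blow-up $b_1 \colon X_1 \to X_0$. Writing $R_1$ for its exceptional class and $P$ for the class of the fiber through the blown-up point $x_1$, the strict transform $P - R_1$ is a $(-1)$-curve on $X_1$; contracting it in place of $R_1$ maps $X_1$ onto a Hirzebruch surface, and the remaining blow-downs then give a new sequence $X = X_t' \to \cdots \to X_0'$. Following Proposition \ref{standardaugmentationbound} I set $P' = P$, $R_1' = P - R_1$ and $Q' = Q + \delta P - R_1$, where $\delta = 1$ precisely when $x_1$ lies on the negative section $C_0 = Q - aP$; then $Q'^2 = a - 1 + 2\delta$, so $X_0' \cong \mathbb{F}_{a-1+2\delta}$, and substituting these relations into $\mathcal{A}$ exhibits it as a standard augmentation of $P', Q', P', Q'-P'$. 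When $a = 0$ one lands on $\mathbb{F}_1$ for either value of $\delta$, so that case is immediate.

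The real difficulty is $a = 2$, where the transformation reaches $\mathbb{F}_1$ only for $\delta = 0$, i.e. when $x_1 \notin C_0$ (a point on $C_0$ would instead produce $\mathbb{F}_3$). The plan is to show that a first blow-up off $C_0$ can always be chosen. Since $-P+Q = C_0 + P$ has $C_0$ in its base locus, blowing up a point of $C_0$ is incompatible with the strong left-orthogonality of the divisors $P - R_i$ and $-P+Q - R_i$ forced by the windings of $\mathcal{A}$; these are exactly the configurations governed by Lemmas \ref{schlunzlemma}, \ref{gewenzlemma} and \ref{fiberlemma}, which I would use to locate a blow-up avoiding $C_0$. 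If a single transformation does not already produce $\mathbb{F}_1$, I would iterate, each step strictly lowering $a$ while keeping $\mathcal{A}$ a standard augmentation, until $\mathbb{F}_1$ is reached. This last direction-control step for $\mathbb{F}_2$ is where I expect the main work to lie.
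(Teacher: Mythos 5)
Your strategy coincides with the paper's: use Lemma \ref{pullbackcohomologyvanishinglemma} to force $s+a \leq 1$, so that the base is one of the cyclic strongly exceptional systems on $\mathbb{F}_a$ with $0 \leq a \leq 2$ (Proposition \ref{Hirzebruchtoricsystems}); settle $a=1$ by relabelling; and for $a \in \{0,2\}$ contract the strict transform $P - R_1$ of the fiber in place of $R_1$, with the coordinate change $P' = P$, $R_1' = P - R_1$, $Q' = Q + \delta P - R_1$, the base locus of $Q-P$ on $\mathbb{F}_2$ ruling out blow-ups on the negative section. This is the paper's proof in outline, but two of your steps would fail as written.

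First, the case $a = 0$ is \emph{not} immediate. The assertion is not merely that one lands on $\mathbb{F}_1$ as a surface; it is that after substitution $\mathcal{A}$ becomes an augmentation of the \emph{particular} system $P', Q', P', Q'-P'$. For the admissible base $P, P+Q, P, -P+Q$ (i.e.\ $s=1$, $a=0$, which does satisfy $s+a \leq 1$), this is false if the first augmentation uses a slot adjacent to $-P+Q$: the system $P,\ P+Q,\ P-R_1,\ R_1,\ -P+Q-R_1$ transforms into $P',\ P'+Q'-R_1',\ R_1',\ P'-R_1',\ Q'-2P'$, whose entries project to $P'+Q'$ and $Q'-2P'$, neither of which occurs in any augmentation of $P', Q', P', Q'-P'$. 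Such slots must be excluded, and the exclusion needs the hypothesis: $\chi(-P+Q-R_1) = -1$, so $-P+Q-R_1$ can never be strongly left-orthogonal; hence the interval sum of $\mathcal{A}$ lying over it must contain $A_n$, forcing $(A_n)_0 = -P+Q$, contrary to assumption. The paper carries out exactly this uniqueness-of-the-first-augmentation argument in both $a=0$ subcases before substituting, and your proof needs it too. Second, your justification for $a=2$ appeals to strong left-orthogonality ``forced by the windings of $\mathcal{A}$''; but $\mathcal{A}$ is only assumed strongly exceptional, not cyclic, so interval sums passing through position $n$ carry no such constraint. The correct mechanism is the same one just described: the failure of strong left-orthogonality of $Q-P-R_1$ forces its interval to contain $A_n$, whence $(A_n)_0 = Q-P$, contradicting the hypothesis. (Also, the closing iteration is superfluous: once the blow-up is off the negative section, a single transformation already yields $\mathbb{F}_1$, as in Proposition \ref{standardaugmentationbound}.)
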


\begin{proof}
As argued before, $\mathcal{A}$ necessarily is an augmentation of a cyclic strongly exceptional
standard sequence. In particular, $X_0 \cong \mathbb{F}_a$ with $0 \leq a \leq 2$. If $a = 1$,
there is nothing to prove. If $a = 0$, there are, up to symmetry by exchanging $P$ and $Q$, two
such toric systems, $P, Q, P, Q$ and $P, P + Q, P, -P + Q$. If we consider the blow-up $X_1
\rightarrow X_0$, then in the first case, there exists, up to cyclic reordering and order
inversion, only one
possible augmentation which is given by $P - R_1, R_1, Q - R_1, P, Q$ which is a cyclic strongly
exceptional toric system on $X_1$. If we consider some
projection $X_0 \rightarrow \mathbb{P}^1$ such that $P$ represents a general fiber, then
the divisor $P - R_1$ is rationally equivalent to the strict transform under the blow-up and
has self-intersection $(-1)$. If we blow down this divisor, we obtain $X_1 \rightarrow X_0'
\cong \mathbb{F}_1$. If we denote $P', Q'$ the corresponding divisors in $\pic(\mathbb{F}_1)$,
then we get a change of coordinates in $\pic(X_1)$ via $P = P'$, $Q = Q' - R_1'$, and
$R_1 = P' - R_1'$. In this basis the toric system is given as $R_1', P' - R_1', Q' - P', P',
Q' - R_1'$ and the assertion follows in this case. We proceed similarly in the second case.
As $h^0(P - Q) = 0$ and $(A_n)_0 \neq P - Q$ by assumption, the only possible augmentation
(up to cyclic reordering and order inversion) on $X_1$ is given by $P - R_1, R_1, P + Q - R_1,
P, -P + Q$. By the
same change of coordinates as before we get $R_1', P' - R_1', Q', P', Q' - P' - R_1'$ and
the assertion follows for this case.

Now assume that $a = 2$. Then the only cyclic strongly exceptional toric system is given
by $P, Q - P, P, Q - P$ and the only possible augmentation on $X_1$ is given by
$P - R_1, R_1, Q - P - R_1, P, Q - P$. The base locus of the complete linear system of
the divisor $Q - P$ consists of one fixed component which is the zero section of the fibration
$\mathbb{F}_2 \rightarrow \mathbb{P}^1$. Therefore, if $X_1$
is a blow-up on the zero section, we have $h^0(Q - P - R_1) = h^0(Q - P) = 2$ and $Q - P - R_1$ is
not strongly left-orthogonal and thus necessarily $(A_n)_0 = Q - P$ which is a contradiction
to our assumptions. So we can assume without loss of generality that $X_1$ is a blow-up of $X_0$
at some point which is not on the zero section. In this case we can conclude as before that
there exists a blow-down to $X_0' \cong \mathbb{F}_1$ and a corresponding change of
coordinates $P = P' - R_1'$, $Q = Q' + P' - R_1'$, and $R_1 = P' - R_1'$ such that the
toric system is represented as $R_1', P' - R_1', Q' - P', P', Q' - R_1'$ which is of
the required form.
\end{proof}

\begin{proposition}\label{f1augmentationbound}
Let $\mathcal{A} = A_1, \dots, A_n$ be a strongly exceptional toric system which is a standard
augmentation of the toric system $P, Q, P, -P + Q$ on $X_0 \cong \mathbb{F}_1$. Then $X$ can be obtained
by blowing up a Hirzebruch surface two times (in possibly several points in each step).
\end{proposition}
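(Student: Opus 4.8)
The plan is to follow the strategy of Proposition \ref{standardaugmentationbound} as closely as possible, with the boundary slot $-(s+a)P+Q$ there replaced by the slot $Q$ here, and with Lemma \ref{gewenzlemma} taking over the role played by the base-point estimates around that slot. As in that proof I would first reduce to the case in which all blow-ups lie over a single fiber $f$, i.e. $f_i=f_j$ for all $i,j$: whenever $f_i\neq f_j$ the classes $R_i$ and $R_j$ are incomparable, so distinct fibers contribute mutually incomparable, independent chains and may be analyzed separately. By the reduction of Proposition \ref{helixaugmentationgauge} we may also assume that the one class not required to be strongly left-orthogonal, $A_n$, occupies the $Q$-slot. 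The standard system $P,Q,P,Q-P$ has exactly two slots with $(\,\cdot\,)_0=P$, and augmentation preserves the number of such slots, so cyclically the toric system arranges itself as a block around the first $P$, then the $Q$-slot, then a block around the second $P$, then the $(Q-P)$-slot. Applying Lemma \ref{fiberlemma} to each of the two $P$-blocks (both lie in $[1,n-1]$, hence all their interval sums are strongly left-orthogonal) shows that $\{R_1,\dots,R_t\}$ partitions into two families $S_1,S_2$, each either of the incomparable type \ref{fiberlemma}(\ref{fiberlemmai}) or of the type \ref{fiberlemma}(\ref{fiberlemmaii}) in which a unique minimal element lies below a collection of pairwise incomparable elements.

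The heart of the argument is the comparability analysis. Within each family the longest chain of comparable $R_i$ has length at most two, so the remaining task is to control chains that cross between $S_1$ and $S_2$, and this is where the $Q$-slot enters. Any interval sum that begins inside one family, runs through the $Q$-slot, and ends inside the other produces a strongly left-orthogonal divisor of the form $Q-R_i-R_j$ (or more generally $Q-\sum R$), to which Lemma \ref{gewenzlemma} applies: for $R_i,R_j$ incomparable over the common fiber $f$ the base locus contains the total transform of $f$, which is incompatible with strong left-orthogonality, so such spanning sums force the two families to interlock only in a restricted way. Combining this with Lemma \ref{schlunzlemma}(\ref{schlunzlemmaii}), which governs the spanning sums of the form $P-R_i-R_j$ across the $P$-slots, I expect to obtain, exactly as in Proposition \ref{standardaugmentationbound}, that a maximal chain among all the $R_i$ has length at most three, and that length three can occur only in the analogue of the final case there, namely one family of type \ref{fiberlemma}(\ref{fiberlemmai}) and the other of type \ref{fiberlemma}(\ref{fiberlemmaii}) with non-minimal first element.

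If every maximal chain has length at most two, then the maximal $E_i$ (those with $E_i^2=-1$) are pairwise incomparable and blow down simultaneously; after this the remaining classes again form incomparable $(-1)$-curves that blow down to $X_0\cong\mathbb{F}_1$, so $X$ is a two-step blow-up and we are done. The one genuine difficulty, which I expect to be the main obstacle, is a chain $R_1\leq R_2\leq R_3$ of length three. As in Proposition \ref{standardaugmentationbound} I would resolve it by changing the minimal model: on the appropriate intermediate surface the strict transform of $f$ has self-intersection $-1$, and blowing \emph{it} down in place of an $E_i$ yields a new basis $P',Q',\dots$ in which the length-three chain is shortened to length two, at the cost of replacing $X_0$ by another Hirzebruch surface $X_0'$. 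Here Lemma \ref{schlunzlemma}(\ref{schlunzlemmaiii}) is precisely what guarantees that the relevant multiplicity of the offending $R$ with respect to the total transform of $f$ is one, so that the fiber is genuinely available to be contracted. Once this substitution is made, all maximal chains have length at most two and the previous paragraph's argument shows that $X$ is obtained from a Hirzebruch surface by two blow-ups. The most delicate bookkeeping will be to track which surface $X_0'$ one lands on and to verify that the fiber contraction is legitimate exactly when a length-three chain is present; all the cohomological input for this is supplied by Lemmas \ref{schlunzlemma} and \ref{gewenzlemma}.
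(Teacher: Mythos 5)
Your proposal breaks down at its very first reduction: Proposition \ref{helixaugmentationgauge} does not allow you to assume that $A_n$ sits at the $Q$-position. That proposition only normalizes the minimal model and the standard system --- it supplies a blow-down $X \rightarrow X_0' \cong \mathbb{F}_1$ and coordinates in which $\mathcal{A}$ is an augmentation of $P', Q', P', Q'-P'$ --- but it does not move $A_n$. For a strongly exceptional (non-cyclic) toric system the only admissible reordering fixing $A_n$ is $A_{n-1}, \dots, A_1, A_n$; cyclic shifts are not available. Hence $(A_n)_0 \in \{-P+Q,\ Q,\ P,\ 0\}$ is invariant case data. If $(A_n)_0 = -P+Q$ or $Q$, the statement does follow from Proposition \ref{standardaugmentationbound} (for $Q$: reverse and read the standard system as $P, -P+Q, P, Q$, i.e.\ $s = -1$), and your two-family argument via Lemma \ref{fiberlemma} essentially reproduces that proof. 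But the entire content of Proposition \ref{f1augmentationbound} lies in the two cases you have assumed away: $(A_n)_0 = P$, where the slots around the ``bad'' $P$ are governed by Lemma \ref{glueinglemma}~(\ref{longorthogonal}) rather than Lemma \ref{fiberlemma} and the $R_i$ split into \emph{three} families; and $(A_n)_0 = 0$, where $A_n$ sits inside a slot, the relevant configurations are (\ref{emptyaugmentation1}) and (\ref{emptyaugmentation2}), and chains of comparable $R_i$ of length up to four can occur.

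A second, related omission is the zero section $b$ of $\mathbb{F}_1 \rightarrow \mathbb{P}^1$ (with $b^2 = -1$), which appears nowhere in your proposal but is indispensable in exactly those cases: since $h^0(-P+Q) = 1$, no divisor $-P+Q-R_i-R_j$ can be strongly left-orthogonal, so at most one of the two slots adjacent to $-P+Q$ can be used, and because the base locus of $-P+Q$ is the total transform of $b$, certain $R_i$ are forbidden from lying over $b$; moreover, in several subcases the long chain of comparable $R_i$ is resolved not by contracting the strict transform of the fiber $f$ (the mechanism you import from Proposition \ref{standardaugmentationbound}) but by contracting $b$ instead, to reach a different minimal model $X_0'$. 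Finally, a smaller point: you misstate Lemma \ref{gewenzlemma}. It does not say that $Q - R_i - R_j$ with $R_i, R_j$ incomparable over $f$ fails to be strongly left-orthogonal; it says that if such a class \emph{is} strongly left-orthogonal then its base locus contains the total transform of $f$, which only forbids \emph{further} augmentation at that slot --- this is how the paper bounds the number of augmentations adjacent to $Q$ by two, not a contradiction that rules them out.
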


\begin{proof}
We will only consider the case $(A_n)_0 \in \{0, P\}$. Otherwise, the result follows from
Proposition \ref{standardaugmentationbound}. We will denote $b$ the zero section (respectively
its strict transform) of the $\mathbb{P}^1$-fibration $X_0 \rightarrow \mathbb{P}^1$ with
$b^2 = -1$ on $X_0$. Note that in some steps below we will have to blow-down $b$ to arrive
at some convenient minimal model $X_0'$. Strictly speaking, this would require us not only
to consider blow-ups of a fixed fiber $f$ but rather the general case. However, in these
few cases this would only increase the number of case distinctions without changing the
arguments. So we will keep the assumption that all blow-ups lie above one distinguished
fiber $f$.

First note that $h^0(-P + Q) = 1$ and therefore any divisor of the form $-P + Q - R_i - R_j$
cannot be strongly left-orthogonal. This together with the condition $(A_n)_0 \neq -P + Q$
implies that we can use at most one of the two slots around $-P + Q$ in the toric system $P, Q, P,
-P + Q$ for augmentations. Moreover, for any $(A_n)_0$, we can assume that the augmentations
in the two slots surrounding one of the $P$'s are strongly left-orthogonal and therefore we
get there a subsequence of $\mathcal{A}$ which corresponds to one of the two shapes given in
Lemma \ref{fiberlemma}. The slot between $P$ and $Q$ can be augmented at most three times
because $h^0(Q) = 3$. Because of our general assumption that all blow-ups lie over the same
fiber, we can even conclude by Lemma \ref{gewenzlemma} that this slot even can be extended
at most two times. For the same reason, if this slot has been extended two times, then the
other slot neighbouring $Q$ cannot be extended any more. Denote $S_1$
the subset of the $R_i$ used for augmenting the two slots around $P$. We have seen that $S_1$
can consist of at most three elements. In the maximal case, we have $S_1 = \{R_{i_1}, R_{i_2},
R_{i_3}\}$ such that $R_{i_1} \leq R_{i_2}, R_{i_3}$ and $R_{i_2}, R_{i_3}$ incomparable.
In this case, the remaining two slots cannot be augmented without violating our condition
on $(A_n)_0$ and thus the assertion follows. So we assume from now that $S_1$ consists of
at most two elements, which may be comparable or not. Also note that the base locus of
$P - Q$ coincides with the support of the total transform of $b$ on $X$. Therefore,
in the cases where either $R_{i_1}$ and $R_{i_2}$ are comparable, or $S_1 = \{R_{i_1}\}$
and $R_{i_1}$ is used for augmentation in the slot between $-P + Q$ and $P$, these
divisors cannot come from blowing up points on or above $b$.

If $(A_n)_0 = P$, then the content of the two slots neighbouring this ``bad'' $P$ must be
of the form as given in Lemma \ref{glueinglemma} (\ref{longorthogonal}). That is, we
have a partition of the set of the $R_i$ into three sets, $S_1$, $S_2$, $S_3$, where the
latter two each consist of pairwise incomparable elements. If both $S_2$, $S_3$ are empty,
the assertion follows. If $S_1$ consists of two elements,
then only one of $S_2$, $S_3$ can be nonempty, say $S_2$. If the two elements in $S_1$ are
incomparable, we have thus a partition into two subsets of incomparable elements and the
assertion follows. If the two elements in $S_1$ are comparable, i.e. $R_{i_1} \leq R_{i_2}$,
then we have (up to order inversion) the subsequence $R_{i_1}, P - R_{i_1} - R_{i_2}, R_{i_2}$
in $\mathcal{A}$. By Lemma \ref{schlunzlemma} the divisor $R_{i_1}$ must have multiplicity
$1$ with respect to $P$ and $R_{i_2}$ cannot come from blowing up a point on the fiber $f$.
By this, after blowing down all $R_i$ with $E_i^2 = -1$ we are left with at most one chain of
length $2$, containing at least one $E_i$ with $E_i^2 = -1$ and we have $f^2 = -1$. So, by
simultaneously blowing down these two divisors we arrive at some minimal surface $X_0$
and the assertion follows. If $S_1$ consists of only one element, then we have two possibilities.
If $R_{i_1}$ is used for augmentation in the slot between $-P + Q$ and $P$, then the other
slot neighbouring $-P + Q$ is blocked for further augmentation and only one more slot is
free for augmentation by incomparable $R_i$. So we can blow down $X$ to $X_0$ in at
most two steps. If $R_{i_1}$ is used for augmentation in the slot between $P$ and $Q$,
then we can have two nonempty sets $S_2$, $S_3$. Let us assume that the elements
in $S_2$ are used for augmentation between $Q$ and $P$, and the elements in $S_3$
for augmentation between $-P + Q$ and $P$. As the base locus of $-P + Q$ contains the support
of the total
transform of $b$ on $X$, none of the $R_i \in S_3$ are lying over any point
of $b$. So, if $R_{i_0}$ lies over some point in $b$, then it can be part of a chain
of comparable $R_i$ of length two. Hence, the maximal such length is at most two
for all $R_i$. Hence the assertion follows. If $R_{i_0}$ does not lie over some point
of $b$, then the maximal length of a chain of comparable $R_i$ which lie over some
point of $b$ is one, and after simultaneously blowing down the $E_i$ with $E_i^2 = -1$,
there is no such $R_i$ left. But then $R_{i_0}$ might still be part of a chain of length
$2$, which will be the only such chain and another simultaneous blow-down will leave
one of the components of this chain. However, now we can additionally blow down
$b$ instead and we will arrive at some other $X_0'$ within two steps and the assertion
follows.

If $(A_n)_0 = 0$, then $A_n$ is located in one of the slots and the subsequence of
$\mathcal{A}$ in this slot can be of one of the following forms:
\begin{equation}\label{emptyaugmentation1}
Q - P - R_{j_1}, R_{j_1} - \sum_{k = 2}^r R_{j_k}, R_{j_r}, R_{j_{r - 1}} - R_{j_r}, \dots,
R_{j_2} - R_{j_3}, P - R_{j_1} - R_{j_2} - F,
\end{equation}
\begin{multline}\label{emptyaugmentation2}
Q - R_{j_1} - R_{j_2} - G, R_{j_2} - R_{j_3}, \dots, R_{j_{r - 1}} - R_{j_r}, R_{j_r},
R_{j_1} - \sum_{i = 2}^r R_{j_i} - \sum_{i = 1}^s R_{k_i}, \\ R_{k_s}, R_{k_{s - 1}} - R_{k_s},
\dots, R_{k_1} - R_{k_2}, P - R_{j_1} - R_{k_1} - H,
\end{multline}
where $F, G, H$ denote some possible additional summands coming from augmentations in the
neighbouring slots. We denote $S_2 := \{R_{j_2}, \dots, R_{j_r}\}$ and $S_2 := \{R_{k_1},
\dots, R_{k_s}\}$. In the case \ref{emptyaugmentation1}, we have $R_{j_1} \leq R_{j_i}$
and the $R_{j_i}$ incomparable for all $i > 1$. In the case \ref{emptyaugmentation1}, we
have $R_{k_1} \leq R_{j_i}$ and the $R_{j_i}$ incomparable for all $i$.
If both $S_2$ and $S_3$ are empty, then $\mathcal{A}$ is an augmentation
by the elements of $S_1$ and by $R_{j_1}$ and one possible augmentation by some $R_i$ in the
remaining slot. Then $R_i$ and $R_{j_1}$ must be comparable. These can form a chain of length
at most three which cannot lie over $b$. Therefore we can conclude as before that we can
blow-down the surface $X$ to a surface $X_0'$ in at most two steps.
If $S_2$ consists of two incomparable elements, then the other neighbouring slot of $Q$
is blocked for augmentations and the remaining augmentation must be of the form
(\ref{emptyaugmentation1}) with $R_{j_1}$ (and thus all the $R_{j_i}$) not lying over
$b$. So, if there exists a
chain of length three, this chain cannot lie over $b$ and again we can blow-down in
two steps to some $X_0'$.
If $S_1$ consists of two comparable elements then the remaining augmentation
must be of the form (\ref{emptyaugmentation1}) where $S_2 = \emptyset$, as $G \neq 0$.
Then we possibly have a maximal chain of length four, where at least one of the
elements in $S_1$ and one of $R_{j_1}$ and $R_{k_i}$ involved have multiplicity one,
and all the $R_{k_i}$ incomparable. With similar arguments as before, we can always
blowing down $X$ to some $X_0'$ in two steps by possibly contracting the fiber $f$.

In the remaining cases we have to consider $S_1$ consisting of one or two elements.
The arguments are completely analogous to the previous arguments and we leave these
to the reader.
\end{proof}

We conclude that Theorem \ref{augmentationboundtheorem} follows from Propositions
\ref{helixaugmentationgauge} and \ref{f1augmentationbound} in the case $(A_n)_0 \neq
-(a + s)P + Q$. For the case $(A_n)_0 = -(a + s)P + Q$ we note that if $f_i \neq f_j$
then $R_i$ and $R_j$ are incomparable. Moreover, from the proof of Proposition
\ref{standardaugmentationbound} we see that in order to blow-down to some $X_0'$
we may have to blow-down the strict transform of some fiber. But any such choices
can be made simultaneously. This proves Theorem \ref{augmentationboundtheorem}.

\section{Divisorial cohomology vanishing on toric surfaces}\label{toriccohomologyvanishing}\label{toricleftorthogonals}

Let $X$ be a smooth complete toric surface whose associated fan is generated by lattice vectors $l_1,
\dots, l_n$ and recall that $\pic(X)$ is generated by the $T$-invariant divisors $D_1, \dots,
D_n$. Recall from section \ref{surfacegeometry} that, besides the coordinates associated to a minimal
model $X_0$, we have two further coordinatizations for $\pic(X)$. The first is given by choosing for a
given divisor $D$ a $T$-invariant representative $D \sim \sum_{i = 0}^n c_i D_i$ such that we can identify
this representative with a tuple $(c_1, \dots, c_n)$ in $\Z^n$. The second coordinatization is given
by tuples $(d_1, \dots, d_n) \in \Z$ such that $\sum_{i \in \on} d_i l_i = 0$. The $d_i$ are uniquely
determined by the $c_i$ by the relations $d_i = c_{i - 1} + a_i c_i + c_{i + 1}$ for $i \in \on$. The
$c_i$ are determined by the $d_i$ up to a character $m \in M$, that is, $\sum_{i = 0}^n c_i D_i \sim
\sum_{i = 0}^n c'_i D_i$ iff there exists some $m \in M$ such that $c'_i = c_i + l_i(m)$ for all
$i \in \on$. In what follows, we will use all of these coordinatizations for the classification of
strongly left-orthogonal divisors on $X$.

Now assume that for a given divisor $D$, a $T$-invariant representative $D \sim \sum_{i = 0}^n c_i D_i$
is chosen. Then we can associate to $D$ a hyperplane arrangement $\{H_i\}_{i \in \on}$ in $M_\Q$ which
is given by hyperplanes
\begin{equation*}
H_i := \{m \in M_\Q \mid l_i(m) = -c_i\}.
\end{equation*}
The twist $c_i \mapsto c_i + l_i(m)$ for some $m \in M$ then corresponds to a translation of the hyperplane
arrangement by the lattice vector $-m$. The action of $T$ induces an eigenspace decomposition of the space
of global sections of
$\sh{O}(D)$:
\begin{equation*}
H^0\big(X, \sh{O}(D)\big) \cong \bigoplus_{m \in M} H^0\big(X, \sh{O}(D)\big)_m.
\end{equation*}
The nontrivial isotypical components $H^0\big(X, \sh{O}(D)\big)_m$ are one-dimensional and we have
\begin{equation*}
H^0\big(X, \sh{O}(D)\big)_m \neq 0 \quad \text{ iff } \quad l_i(m) \geq -c_i \ \text{ for all } i \in \on
\end{equation*}
for $m \in M$, i.e. the non-vanishing isotypical components correspond to the characters which are
contained in a distinguished chamber of the hyperplane arrangement.

\begin{definition}
Let $D = \sum_{i = 1}^n c_i D_i$ be a torus invariant divisor, then we denote $G_D := \{m \in M \mid
H^0(X, \sh{O}(D))_m \neq 0\} = \{m \in G_D \mid l_i(m) \geq -c_i \text{ for all } i \in \on\}$ and
$G_D^\circ := \{m \in G_D \mid l_i(m) > -c_i \text{ for all } i \in \on\}$.
\end{definition}

As the set $G_D$ counts the global sections of a $T$-invariant divisor $D$, by Serre duality, the set
$G_D^\circ$ can naturally be associated with a $T$-eigenbasis of $H^2\big(X, \sh{O}(-D)\big)$. Namely,
the canonical divisor on $X$ is given by $K_X = -\sum_{i = 1}^n D_i$ and $h^2(-D) = h^0(K_X + D) =
\vert G_D^\circ \vert$. We want to interpret strong (pre-)left-orthogonality as a problem of counting
lattice points,
starting from $G_D$ for some strongly pre-left-orthogonal divisor $D$ on $\mathbb{P}^2$ or
$\mathbb{F}_a$ as classified in propositions \ref{P2leftorthogonals} and \ref{Hirzebruchleftorthogonals}.
In general, the region containing $G_D$ is not quite a lattice polytope, but rather close to being one,
as we will see in Proposition \ref{degreelowerbound}. This is illustrated in the following example.

\begin{example}
Figure \ref{p2f2preleftorthogonalexamples} shows examples for strongly pre-left-orthogonal divisors on
$\mathbb{P}^2$ and $\mathbb{F}_a$. The dots indicate the set $G_D$, the white dots the subset $G_D^\circ$.
\begin{figure}[htbp]
\begin{center}
\includegraphics[height=10cm]{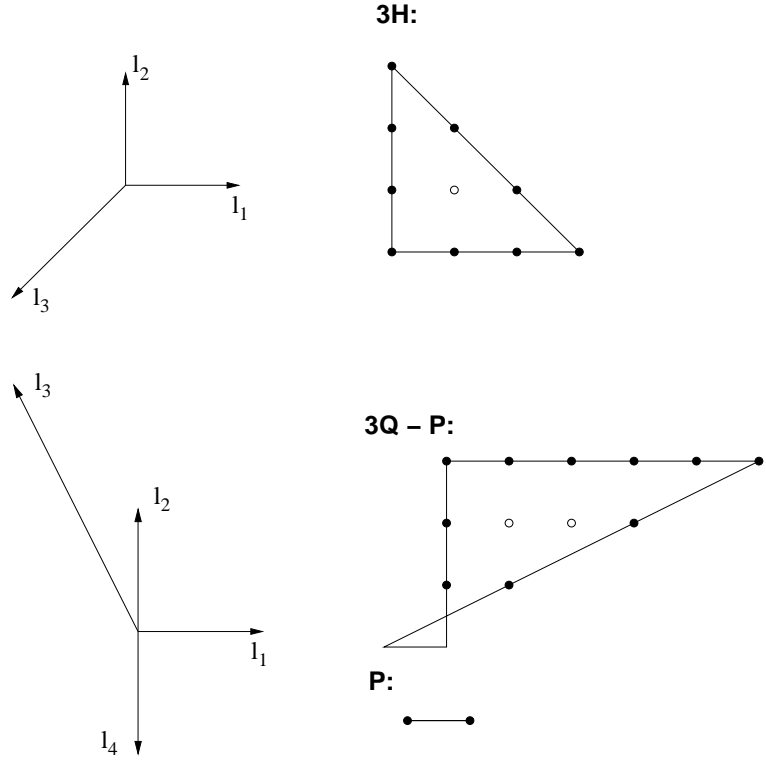}
\end{center}
\caption{The fans for $\mathbb{P}^2$ and $\mathbb{F}_2$ and the regions in $M$ containing $G_D$,
for the cases $D = 3H$ on $\mathbb{P}^2$ and $D = P$, $D = 3Q - P$ on $\mathbb{F}_2$, respectively.}
\label{p2f2preleftorthogonalexamples}
\end{figure}
\end{example}

Consider any pre-left-orthogonal divisor $\beta H$, where $\beta > 0$, on $\mathbb{P}^2$. Then it is easy
to see that formulas (\ref{eulercharp2}) and (\ref{antieulercharp2}),
\begin{equation*}
\chi(\beta H) = \binom{\beta + 2}{2}, \qquad \chi(-\beta H) = \binom{\beta - 1}{2},
\end{equation*}
count $G_{\beta H}$ and $G_{\beta H}^0$, respectively. Similarly, formulas (\ref{eulercharfa}) and
(\ref{antieulercharfa}),
\begin{equation*}
\chi(\alpha P + \beta Q) = (\alpha + 1) (\beta + 1) + a \binom{\beta + 1}{2}, \qquad
\chi(-\alpha P - \beta Q) = (\alpha - 1) (\beta - 1) + a \binom{\beta}{2},
\end{equation*}
count $G_{\alpha P + \beta Q}$ and $G_{\alpha P + \beta Q}^\circ$, respectively.

For the $\gamma_i$, there is a similar interpretation. Assume we have fixed a sequence of blow-ups
$b_1, \dots, b_t$ as in the previous section, where every $b_k$ is toric. For some $k \in [t]$,
there are $p, q, r \in \on$ such that $l_p$ and $l_q$ span a cone in the fan of $X_{k - 1}$ and
$l_r = l_p + l_q$ represents the toric blow-up $b_k$. We have:

\begin{lemma}\label{pullbackexplicitlemma}
Let $p, q \in S \subset \on$ such that the $l_j$ with $j \in S$ span the fan of $X_{k - 1}$
and denote $D = \sum_{i \in S} c_i D_i$ a $T$-invariant divisor. Then $b_k^* D \sim
\sum_{i \in S} c_i D_i + (c_p + c_q) D_r$ and $\gamma_i R_i \sim c_r D_r$ on $X_k$.
\end{lemma}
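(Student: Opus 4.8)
The plan is to treat the two assertions separately, in both cases passing freely between $T$-invariant divisors and the combinatorics of the fan as recalled in section~\ref{surfacegeometry} (see also \cite{Oda}, \cite{Fulton}).

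For the pullback formula I would argue geometrically. The blow-up $b_k$ is the star subdivision inserting the ray $l_r = l_p + l_q$ into the cone $\langle l_p, l_q\rangle$, and its exceptional curve $E_k$ is the orbit closure $D_r$. Since $x_k = D_p \cap D_q$ is a smooth point lying with multiplicity one on each of the boundary divisors $D_p, D_q$ through it, the total transforms are $b_k^* D_p = D_p + D_r$ and $b_k^* D_q = D_q + D_r$ (identifying each strict transform with the boundary divisor of the same ray), whereas $b_k^* D_i = D_i$ for $i \in S \setminus \{p,q\}$. Summing gives $b_k^* D = \sum_{i \in S} c_i D_i + (c_p + c_q) D_r$ at once. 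As a check, the same coefficient drops out of intersection theory: the neighbours of $l_r$ on $X_k$ are $l_p$ and $l_q$, so $D_p . D_r = D_q . D_r = 1$, and $l_p + a_r l_r + l_q = 0$ together with $l_r = l_p + l_q$ forces $a_r = D_r^2 = -1$; writing $b_k^* D = \sum_{i \in S} c_i D_i + m D_r$ and using $b_k^* D . D_r = D . (b_k)_* D_r = 0$ yields $c_p + c_q - m = 0$.

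For the second assertion I would first record that $R_k$, the class of the exceptional divisor of $b_k$, equals $D_r$ in $\pic(X_k)$: the two divisors coincide as curves, and the computation $D_r^2 = -1$ confirms that $D_r$ is the $(-1)$-curve contracted by $b_k$. To turn the equality of classes into the equality of coefficients $\gamma_k R_k \sim c_r D_r$, I would exploit the freedom in the $T$-invariant representative: replacing $\sum_i c_i D_i$ by a linearly equivalent one amounts to a translation $c_i \mapsto c_i + l_i(m)$ with $m \in M$, and since $l_p, l_q$ are a lattice basis of $N$ one can choose $m$ so that $c_p = c_q = 0$. With this normalization the pullback part contributes $c_p + c_q = 0$ to the coefficient of $D_r$, so on $X_k$ the entire $D_r$-component of a divisor splits off as its vertical part $\gamma_k R_k$, forcing $c_r = \gamma_k$.

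The intersection and total-transform computations are routine. The only point that needs care is the bookkeeping between the two coordinate systems for $\pic(X_k)$, and in particular the normalization $c_p = c_q = 0$ under which the identification $R_k = D_r$ upgrades from an equality of divisor classes to the stated equality of coefficients; this is the step I would set out most explicitly.
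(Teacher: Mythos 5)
Your proof is correct, but it reaches the pullback formula by a genuinely different route than the paper. The paper stays inside its Gale-duality framework: it places the two presentations
$0 \to M \xrightarrow{L} \Z^{\vert S \vert} \to \pic(X_{k-1}) \to 0$ and
$0 \to M \xrightarrow{L'} \Z^{\vert S \vert + 1} \to \pic(X_k) \to 0$
(where $L'$ is $L$ with the extra row $l_r = l_p + l_q$ inserted) into a commutative diagram with the identity on $M$; commutativity forces the middle vertical map to send $(c_i)_{i \in S}$ to the tuple with additional entry $c_p + c_q$, and the induced map on cokernels, which is $b_k^*$, is then given by exactly the stated formula. You instead use the classical geometry of blow-ups: the centre $D_p \cap D_q$ is a smooth point of multiplicity one on $D_p$ and $D_q$, so $b_k^* D_p = D_p + D_r$, $b_k^* D_q = D_q + D_r$, and $b_k^* D_i = D_i$ otherwise, and summing gives the formula at once. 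Note that your intersection-theoretic computation ($D_r^2 = -1$ together with the projection formula) should be regarded only as the consistency check you call it, since a $T$-invariant representative $\sum_i c_i D_i + m D_r$ is not unique in $\pic(X_k)$; but the total-transform argument is already a complete proof by itself. The tradeoff is that your argument is more elementary and self-contained, while the paper's diagram exhibits, in one stroke, the inclusion $\pic(X_{k-1}) \hookrightarrow \pic(X_k)$ compatibly with both coordinate presentations, which is the form in which the lemma gets applied. For the second assertion the paper simply says no proof is needed, the point being that the exceptional divisor of the toric blow-up is literally the orbit closure $D_r$, so $R_k = D_r$; your extra step of normalizing $c_p = c_q = 0$ so that the coefficient identity $\gamma_k = c_r$ becomes literal is precisely the reduction the paper carries out in the paragraph following the lemma, so including it in the proof is harmless and, if anything, slightly more careful about what the displayed equivalence means.
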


\begin{proof}
Only the first assertion needs a proof. Let $L$ the matrix whose rows are the $l_i$ with $i \in S$ and
$L'$ the matrix consisting of the same rows as $L$ but with the additional row $l_p + l_q$ added between
$l_p$ and $l_q$. The assertion follows form the commutativity of the following diagram:
\begin{equation*}
\xymatrix{
0 \ar[r] & M \ar@{=}[d] \ar^{L}[r] & \Z^{\vert S \vert} \ar@{^{(}->}[d] \ar[r] & \pic(X_{i - 1})\ar@{^{(}->}[d]
\ar[r] & 0\\
0 \ar[r] & M \ar^{L'}[r] & \Z^{\vert S \vert + 1} \ar[r] & \pic(X_i) \ar[r] & 0.
}
\end{equation*}\end{proof}

For given $\gamma_k \leq 0$, we consider the lattice triangle which is inscribed by the lines
$H_p$, $H_q$, $H_r$ and whose lattice points we can count:

\begin{definition}
Let $l_p$, $l_q$, $l_r$ be as before and $\gamma_k \leq 0$, then we denote
\begin{enumerate}[(i)]
\item $T_{\gamma_k} := \{m \in M \mid l_p(m) \geq -c_p, l_q(m) \geq -c_q, l_r(m) \leq -c_r \}$,
\item $T^-_{\gamma_k} := \{m \in M \mid l_p(m) \geq -c_p, l_q(m) \geq -c_q, l_r(m) < -c_r \}$,
\item $T^+_{\gamma_k} := \{m \in M \mid l_p(m) > -c_p, l_q(m) > -c_q, l_r(m) \leq -c_r \}$.
\end{enumerate}
\end{definition}

As $l_p$ and $l_q$ form a basis of $N$, by translation by some $m \in M$ we can assume without loss
of generality that $c_p = c_q = 0$. Then, using Lemma \ref{pullbackexplicitlemma}, we can directly see
that the lattice points of $T_{\gamma_k}$, $T^+_{\gamma_k}$, $T^-_{\gamma_k}$ are counted by binomial
coefficients. We have $\vert T_{\gamma_k} \vert = \binom{\gamma_k - 1}{2}$, $\vert T_{\gamma_k}^- \vert =
\binom{\gamma_k}{2}$, and $\vert T_{\gamma_k}^+ \vert = \binom{\gamma_k + 1}{2}$.
This is illustrated in the following example.

\begin{example}\label{gammaarrangementexample}
With notation as before, figure \ref{gammaarrangement} shows the local configuration of $l_p$,
$l_q$, $l_r$ and the relative positions of $H_p, H_q, H_r$ for $\gamma_k = -3$.
\begin{figure}[htbp]
\begin{center}
\includegraphics[height=5cm]{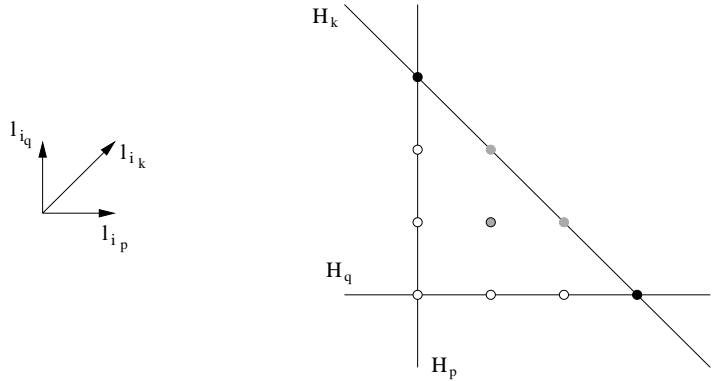}
\end{center}
\caption{Three primitive vectors $l_p$, $l_q$, $l_r$ which pairwise generate $N$ and
the corresponding orthogonal hyperplane arrangement for $\gamma_k = - 3$.}\label{gammaarrangement}
\end{figure}
The dots indicate the $\binom{-3 - 1}{2} = 10$ lattice points in $T_{\gamma_k}$, the gray dots the
$\binom{-3 + 1}{2} = 3$ lattice points in $T_{\gamma_k}^+$ and the circled dots the $\binom{-3}{2} = 6$ lattice
points in $T_{\gamma_k}^-$, with one lattice point in the intersection $T_{\gamma_k}^+ \cap T_i^-$.
\end{example}

By Proposition \ref{negativegammas}, a pre-left-orthogonal divisor $D$ is of the form $(D)_0 +
\sum_{i = 1}^t \gamma_i R_i$ with $(D)_0$ pre-left-orthogonal on $X_0$ and $\gamma_i \leq 0$
for every $i$. The following proposition shows that strong pre-left-orthogonality is equivalent
to that the $T_{\gamma_i}$ cut out the lattice points of $G_{(D)_0}$ in a well-behaved manner.

\begin{proposition}\label{cutoutproposition}
Let $k > 0$ and consider a blow-up $b_k : X_k \longrightarrow X_{k - 1}$ with notation as before. Let
$D$ be a pre-left-orthogonal divisor on $X_{k - 1}$ and $\gamma_k \leq 0$. Then $b_k^* D + \gamma_k R_k$ is
pre-left-orthogonal iff $T_{\gamma_k}^+ \subset G_D^\circ$. If $D$ is strongly pre-left-orthogonal,
then $b_k^* D + \gamma_k R_k$ is strongly pre-left-orthogonal iff $T_{\gamma_k}^+ \subset G_D^\circ$
and $T_{\gamma_k}^- \subset G_D$.
\end{proposition}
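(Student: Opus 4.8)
The plan is to reduce both equivalences to counting lattice points, via two short telescoping computations along the exceptional divisor $R_k$, and then to match the resulting cohomology dimensions with the triangles $T^-_{\gamma_k}$ and $T^+_{\gamma_k}$. Write $D' := b_k^* D + \gamma_k R_k$. First I would record the bookkeeping. Since $(D')_0 = (D)_0 \neq 0$, the condition $h^0\big((-D')_0\big)=0$ is inherited from $D$, so unwinding the definitions, pre-left-orthogonality of $D'$ is equivalent to $h^1(-D') = 0$, and strong pre-left-orthogonality adds precisely the one extra condition $h^1(D') = 0$. By Lemma \ref{pullbackexplicitlemma}, a $T$-invariant representative of $D'$ keeps the coefficients $c_i$ ($i \in S$) of $D$ and has $c_r = c_p + c_q + \gamma_k$ on the new ray $l_r = l_p + l_q$. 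Hence $G_{D'} = G_D \cap \{m : l_r(m) \geq -c_r\}$ and $G_{D'}^\circ = G_D^\circ \cap \{m : l_r(m) > -c_r\}$. Because every $m \in G_D$ (resp. $G_D^\circ$) already satisfies $l_p(m)\geq -c_p,\ l_q(m)\geq -c_q$ (resp. the strict inequalities), the lattice points removed in passing from $G_D$ to $G_{D'}$ are exactly $G_D \cap T^-_{\gamma_k}$, and those removed from $G_D^\circ$ to $G_{D'}^\circ$ are exactly $G_D^\circ \cap T^+_{\gamma_k}$, so that
\[
|G_{D'}| = |G_D| - |G_D \cap T^-_{\gamma_k}|, \qquad |G_{D'}^\circ| = |G_D^\circ| - |G_D^\circ \cap T^+_{\gamma_k}|.
\]

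For pre-left-orthogonality I would run the telescope $F_j := b_k^*(-D) + j R_k$ for $0 \le j \le -\gamma_k$, with $F_0 = b_k^*(-D)$ and $F_{-\gamma_k} = -D'$. The sequences $0 \to \sh{O}(F_{j-1}) \to \sh{O}(F_j) \to \sh{O}_{R_k}(F_j) \to 0$, together with $\deg \sh{O}_{R_k}(F_j) = -j$ from Lemma \ref{coordinatedegreelemma} (so $\sh{O}_{R_k}(F_j) \cong \sh{O}_{\mathbb{P}^1}(-j)$), show that $h^0$ stays constant along the tower, whence $h^0(-D') = h^0(-D) = 0$ using that $D$ is pre-left-orthogonal and Lemma \ref{pullbackcohomologyvanishinglemma}; meanwhile $\chi$ telescopes to $\chi(-D') = \chi(-D) - \binom{-\gamma_k}{2}$. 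Since $h^0(-D) = h^1(-D) = 0$ gives $\chi(-D) = h^2(-D)$, I obtain
\[
h^1(-D') = h^2(-D') - \chi(-D') = h^2(-D') - h^2(-D) + \binom{-\gamma_k}{2}.
\]
By Serre duality $h^2(-D) = |G_D^\circ|$ and $h^2(-D') = |G_{D'}^\circ|$, and $\binom{-\gamma_k}{2} = \binom{\gamma_k+1}{2} = |T^+_{\gamma_k}|$; combining with the count above gives $h^1(-D') = |T^+_{\gamma_k}| - |G_D^\circ \cap T^+_{\gamma_k}| = |T^+_{\gamma_k} \setminus G_D^\circ|$, which vanishes precisely when $T^+_{\gamma_k} \subset G_D^\circ$. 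This is the first assertion.

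For the strong part I would run the dual telescope $G_j := b_k^* D - j R_k$ for $0 \le j \le -\gamma_k$, with $G_0 = b_k^* D$ and $G_{-\gamma_k} = D'$. Now $\deg \sh{O}_{R_k}(G_{j-1}) = j-1 \ge 0$, so $\sh{O}_{R_k}(G_{j-1}) \cong \sh{O}_{\mathbb{P}^1}(j-1)$ has vanishing $h^1$; the long exact sequence then forces $h^2$ to be constant along the tower, i.e. $h^2(D') = h^2(D)$, while $\chi$ telescopes to $\chi(D') = \chi(D) - \binom{-\gamma_k+1}{2}$. Using the hypothesis $h^1(D) = 0$ to write $h^2(D) - \chi(D) = -h^0(D)$, I get
\[
h^1(D') = h^0(D') - \chi(D') + h^2(D') = h^0(D') - h^0(D) + \binom{-\gamma_k+1}{2}.
\]
Since $h^0(D') = |G_{D'}|$, $h^0(D) = |G_D|$ and $\binom{-\gamma_k+1}{2} = \binom{\gamma_k}{2} = |T^-_{\gamma_k}|$, the count above yields $h^1(D') = |T^-_{\gamma_k}| - |G_D \cap T^-_{\gamma_k}| = |T^-_{\gamma_k} \setminus G_D|$, which is $0$ exactly when $T^-_{\gamma_k} \subset G_D$. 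Together with the first part this proves the equivalence for strong pre-left-orthogonality.

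The two Euler-characteristic telescopes and the identification of the binomial coefficients with $|T^\pm_{\gamma_k}|$ (both already supplied in the text) are routine. The step that needs the most care — and where I expect the only genuine subtlety — is the set-theoretic matching in the first paragraph: verifying that the lattice points lost when intersecting $G_D$ (resp. $G_D^\circ$) with the half-space $\{l_r \geq -c_r\}$ (resp. its strict version) are precisely $G_D \cap T^-_{\gamma_k}$ (resp. $G_D^\circ \cap T^+_{\gamma_k}$). This hinges on the facts that $G_D$ and $G_D^\circ$ automatically lie in the correct $l_p,l_q$ half-spaces and that the boundary cases of the three defining inequalities of $T_{\gamma_k}$, $T^-_{\gamma_k}$ and $T^+_{\gamma_k}$ line up correctly with the strict versus non-strict conditions defining $G_{D'}$ and $G_{D'}^\circ$.
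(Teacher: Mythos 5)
Your proposal is correct and follows essentially the same route as the paper: both reduce (strong) pre-left-orthogonality to $h^1$-vanishing of $\mp(b_k^*D + \gamma_k R_k)$ and then compare the Euler characteristic drops $\binom{\gamma_k+1}{2}$ and $\binom{\gamma_k}{2}$ with the number of lattice points that the triangles $T^+_{\gamma_k}$, $T^-_{\gamma_k}$ cut out of $G_D^\circ$ and $G_D$. The only difference is presentational: the paper gets the $\chi$-formulas in one line from Riemann--Roch and leaves the $h^0$/$h^2$ identifications and the set-theoretic matching implicit, whereas you derive them via telescoping exact sequences along $R_k$ and spell out the matching explicitly.
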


\begin{proof}
By Riemann-Roch we get $\chi(\gamma_k R_k) = 1 - \binom{\gamma_k}{2}$ and $\chi(-\gamma_k R_k) = 1 -
\binom{\gamma_k + 1}{2}$. Moreover, we get $\chi(b_k^* D + \gamma_k R_k) = \chi(D) + \chi(\gamma_k R_k)
- 1 = \chi(D) - \binom{\gamma_k}{2}$ and $\chi(-b_k^* D - \gamma_k R_k) = \chi(-D) + \chi(-\gamma_k R_k)
- 1 = \chi(-D) - \binom{\gamma_k + 1}{2}$. So we see that $h^1(-b_k^* D - \gamma_k R_k) = 0$ iff
$T^+_{\gamma_k}$ precisely cuts $\binom{\gamma_k + 1}{2}$ lattice points out of $G_D^\circ$ and
$h^1(b_k^* D + \gamma_k R_k) = 0$ iff $T^+_{\gamma_k}$ precisely cuts $\binom{\gamma_k}{2}$ lattice
points out of $G_D$ and the assertion follows.
\end{proof}

Consequently, we get:

\begin{corollary}\label{tilingcorollary}
Let $D = (D)_0 + \sum_i \gamma_i R_i$ pre-left-orthogonal. Then $D$ is left-orthogonal iff
$G_{(D)_0} \setminus G_D = \coprod_{k = 1}^t T_{\gamma_k}^-$. Moreover, $D$ is strongly left-orthogonal
iff  $G_{(D)_0} \setminus G_D = \coprod_{k = 1}^t T_{\gamma_k}^-$ and $G_{(D)_0}^\circ =
\coprod_{k = 1}^t T_{\gamma_k}^+$.
\end{corollary}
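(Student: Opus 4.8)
The plan is to run Proposition~\ref{cutoutproposition} inductively along the fixed tower $X = X_t \to \cdots \to X_0$ and to read the defining cohomological conditions of (strong) left-orthogonality off the lattice-point counts. Write $D^{(k)} := (D)_k = (D)_0 + \sum_{i=1}^k \gamma_i R_i$, so that $D^{(0)} = (D)_0$, $D^{(t)} = D$, and $D^{(k)} = b_k^* D^{(k-1)} + \gamma_k R_k$. Since $D$ is pre-left-orthogonal and projections of pre-left-orthogonal divisors are again pre-left-orthogonal, every $D^{(k)}$ is pre-left-orthogonal. From $h^0\big((-D)_0\big)=0$ and Lemma~\ref{pullbackcohomologyvanishinglemma}(\ref{pullbackcohomologyvanishinglemmai}) we get $h^0(-D)=0$; and $h^2(D)=h^0(K_X-D)=0$ because $-K_X=\sum_i D_i$ is effective and $h^0(-D)=0$ (as in the proof of Lemma~\ref{h1vanishinglemma}). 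Hence, under the pre-left-orthogonality hypothesis, $D$ is left-orthogonal iff $h^2(-D)=0$ and strongly left-orthogonal iff in addition $h^1(D)=0$. By Serre duality $h^2(-D)=h^0(K_X+D)=|G_D^\circ|$, so left-orthogonality is precisely the emptiness $G_D^\circ=\emptyset$.

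Next I would make precise how a single blow-up slices the two chambers. Using Lemma~\ref{pullbackexplicitlemma} and a translation by a character making $c_{p_k}=c_{q_k}=0$, the divisor $D^{(k)}$ differs from $b_k^* D^{(k-1)}$ only by the new facet $H_{r_k}=\{l_{r_k}(m)=-c_{r_k}\}$ with $c_{r_k}=\gamma_k$; since $l_{r_k}=l_{p_k}+l_{q_k}$, this constraint is vacuous on $G_{D^{(k-1)}}$ and on $G^\circ_{D^{(k-1)}}$ before the twist. Comparing the closed section polytopes gives $G_{D^{(k-1)}}\setminus G_{D^{(k)}}=G_{D^{(k-1)}}\cap T^-_{\gamma_k}\subseteq T^-_{\gamma_k}$, with equality iff $T^-_{\gamma_k}\subseteq G_{D^{(k-1)}}$; comparing interiors gives $G^\circ_{D^{(k-1)}}\setminus G^\circ_{D^{(k)}}=G^\circ_{D^{(k-1)}}\cap T^+_{\gamma_k}\subseteq T^+_{\gamma_k}$, with equality iff $T^+_{\gamma_k}\subseteq G^\circ_{D^{(k-1)}}$. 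These two inclusions are exactly the conditions of Proposition~\ref{cutoutproposition}: the $T^+$-inclusion records $h^1(-D^{(k)})=0$ (the pre-left-orthogonality of $D^{(k)}$, hence automatic here), while the $T^-$-inclusion records $h^1(D^{(k)})=0$.

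Then I would pass to the whole tower by telescoping. Because the removals are nested, the sets $R_k:=G_{D^{(k-1)}}\setminus G_{D^{(k)}}$ are pairwise disjoint with $\coprod_k R_k=G_{(D)_0}\setminus G_D$, and likewise $\coprod_k R^\circ_k=G^\circ_{(D)_0}\setminus G^\circ_D$ for the interior removals $R^\circ_k$. Since $R_k\subseteq T^-_{\gamma_k}$ and $R^\circ_k\subseteq T^+_{\gamma_k}$, a cardinality count shows $G_{(D)_0}\setminus G_D=\coprod_k T^-_{\gamma_k}$ iff $R_k=T^-_{\gamma_k}$ for all $k$ iff $T^-_{\gamma_k}\subseteq G_{D^{(k-1)}}$ for all $k$, equivalently $h^1(D^{(k)})=h^1(D^{(k-1)})$ for all $k$, which by Lemma~\ref{pullbackcohomologyvanishinglemma}(\ref{pullbackcohomologyvanishinglemmaii}) and $h^1\big((D)_0\big)=0$ (forced once $h^1(D)=0$) means $h^1(D)=0$; and $G^\circ_{(D)_0}\setminus G^\circ_D=\coprod_k T^+_{\gamma_k}$ iff $T^+_{\gamma_k}\subseteq G^\circ_{D^{(k-1)}}$ for all $k$, which is automatic from pre-left-orthogonality. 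As this $T^+$-tiling of the removed interior is automatic, the full identity $G^\circ_{(D)_0}=\coprod_k T^+_{\gamma_k}$ holds iff in addition $G^\circ_D=\emptyset$, i.e.\ iff $h^2(-D)=0$, i.e.\ iff $D$ is left-orthogonal; combining with the $T^-$-tiling, which supplies the remaining vanishing $h^1(D)=0$, gives strong left-orthogonality. Assembling these yields the statement.

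The step I expect to be the main obstacle is the per-blow-up bookkeeping in the second paragraph: one must check that after the translation the new facet slices off \emph{exactly} the half-open triangles, the open/closed conventions on the three edges being what distinguish $T^-_{\gamma_k}$ from $T^+_{\gamma_k}$ and what match the counts $|T^-_{\gamma_k}|=\binom{\gamma_k}{2}$ and $|T^+_{\gamma_k}|=\binom{\gamma_k+1}{2}$, and that the identifications of $G_{D^{(k-1)}}$ with their pullbacks are compatible across all $t$ steps so that the disjointness of the $R_k$ (resp.\ $R^\circ_k$) inside the fixed polytopes $G_{(D)_0}$ (resp.\ $G^\circ_{(D)_0}$) is genuine. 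Once this is in place the result follows formally from Proposition~\ref{cutoutproposition} and Serre duality. I would stress the one point on which the whole statement turns: since $h^2(-D)=|G_D^\circ|$, it is the \emph{interior} chamber that governs left-orthogonality, so that left-orthogonality is characterized by $G^\circ_{(D)_0}=\coprod_k T^+_{\gamma_k}$, while the \emph{section} tiling $G_{(D)_0}\setminus G_D=\coprod_k T^-_{\gamma_k}$ encodes the supplementary vanishing $h^1(D)=0$ needed for strong left-orthogonality; one must attach each tiling to the correct cohomology group.
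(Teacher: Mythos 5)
Your derivation is exactly the one the paper intends: Corollary~\ref{tilingcorollary} is stated there with no separate proof, as an immediate consequence of Proposition~\ref{cutoutproposition}, and its content is precisely your induction along the tower of blow-ups plus the telescoping of the nested removals, with Serre duality giving $h^2(-D)=\vert G^\circ_D\vert$. But note that what you prove attaches the two tilings to the two cohomological conditions in the \emph{opposite} way from the printed statement, and your assignment is the correct one; the first clause of the corollary as printed is a misprint. Indeed, since every $(D)_k$ is pre-left-orthogonal, Proposition~\ref{cutoutproposition} makes the inclusions $T^+_{\gamma_k}\subseteq G^\circ_{(D)_{k-1}}$ automatic, so the $T^+_{\gamma_k}$ always tile $G^\circ_{(D)_0}\setminus G^\circ_D$; hence $G^\circ_{(D)_0}=\coprod_k T^+_{\gamma_k}$ says exactly $G^\circ_D=\emptyset$, i.e.\ $h^2(-D)=0$, i.e.\ left-orthogonality, whereas the $T^-$-tiling computes the difference $h^1(D)-h^1\big((D)_0\big)$. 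Both directions of the printed first clause fail: on the one-point blow-up of $\mathbb{P}^2$, $D=3H-R_1$ is pre-left-orthogonal (indeed strongly so) and satisfies $G_{3H}\setminus G_D=T^-_{-1}$ (the cut-off corner), yet $h^2(-D)=h^0(K_X+D)=h^0(\sh{O}_X)=1$, so $D$ is not left-orthogonal; conversely, $D=H-R_1-R_2-R_3-R_4$ on a four-fold blow-up is left-orthogonal (here $K_X+D=-2H$, so all $h^i(-D)=0$), but $h^0(D)=h^2(D)=0$ and $\chi(D)=-1$ give $h^1(D)=1$, so its $T^-$-tiling fails. Since the ``strongly'' clause is the symmetric conjunction of both tilings, your conclusion and the printed one agree there, and that conjunction is the only form the paper actually uses afterwards (Proposition~\ref{degreelowerbound} and the straightening section).

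The one genuine gap in your write-up concerns the base of the induction for the $T^-$-part, and it is shared by the printed statement. The equivalence ``$T^-$-tiling $\Leftrightarrow h^1(D)=0$'' needs $h^1\big((D)_0\big)=0$, i.e.\ $(D)_0$ \emph{strongly} pre-left-orthogonal; pre-left-orthogonality of $D$ does not provide this, and your parenthetical ``forced once $h^1(D)=0$'' establishes it only in the direction (strongly left-orthogonal $\Rightarrow$ tilings) --- used for the converse it is circular. Concretely, let $D$ be the pullback of $-2P+Q$ from $X_0=\mathbb{F}_0$ to a one-point blow-up, so $\gamma_1=0$: then $D$ is pre-left-orthogonal, both tilings hold vacuously, and $D$ is left-orthogonal, but by Proposition~\ref{Hirzebruchleftorthogonals} it is not strongly left-orthogonal since $h^1(-2P+Q)\neq 0$. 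So the biconditional for strong left-orthogonality is false without an extra hypothesis. The repair is to assume $h^1\big((D)_0\big)=0$ (equivalently, $(D)_0$ strongly pre-left-orthogonal), which is how the corollary is in fact invoked in the paper (cf.\ the reduction at the start of the proof of Proposition~\ref{degreelowerbound}), or to state the $T^-$-tiling as equivalent to $h^1(D)=h^1\big((D)_0\big)$; with that amendment your argument is complete.
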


In terms of lattice figures in $M$, strong left-orthogonality can be understood by proposition
\ref{cutoutproposition} and corollary \ref{tilingcorollary} as follows. We start with an almost lattice
polytope associated to a strongly pre-left-orthogonal divisor $(D)_0$ on $X_0$ and successively cut out lattice
points of $G_{(D)_0}$ and $G_{(D)_0}^\circ$ by moving in hyperplanes $H_r$ until $G_{(D)_0 + \sum_k \gamma_k
R_k}^\circ$ is empty and the sets $\{T_{\gamma_k}^+ \mid k \in [t]\}$ and $\{T_{\gamma_k}^- \mid k \in [t]\}$
form a ``tiling'' of $G_{(D)_0} \setminus G_{(D)_0 + \sum_k \gamma_k R_k}$ and $G_{(D)_0}^\circ$, respectively.
We illustrate this in the following example.

\begin{example}\label{tilingexample}
Figure \ref{tilingexamplefigure} shows on the left the fan of $\mathbb{F}_2$ from figure
\ref{p2f2preleftorthogonalexamples} blown up three times by successively adding the primitive vectors
$l_1$, $l_3$, and $l_2$. Note that the numbering of the $R_j$ does not match with the numbering of the $l_i$, but
rather the order in which the $l_i$ were added to the fan.
The right side shows the hyperplane arrangements for five examples of
divisors $D$ all of which have
$(D)_0 = 3Q - P$, with $G_{3Q - P}$ and $G_{3Q - P}^\circ$ shown in figure \ref{p2f2preleftorthogonalexamples}.
In a) the hyperplanes $H_1$, $H_2$, $H_3$ are indicated.
\begin{figure}[htbp]
\begin{center}
\includegraphics[height=11cm]{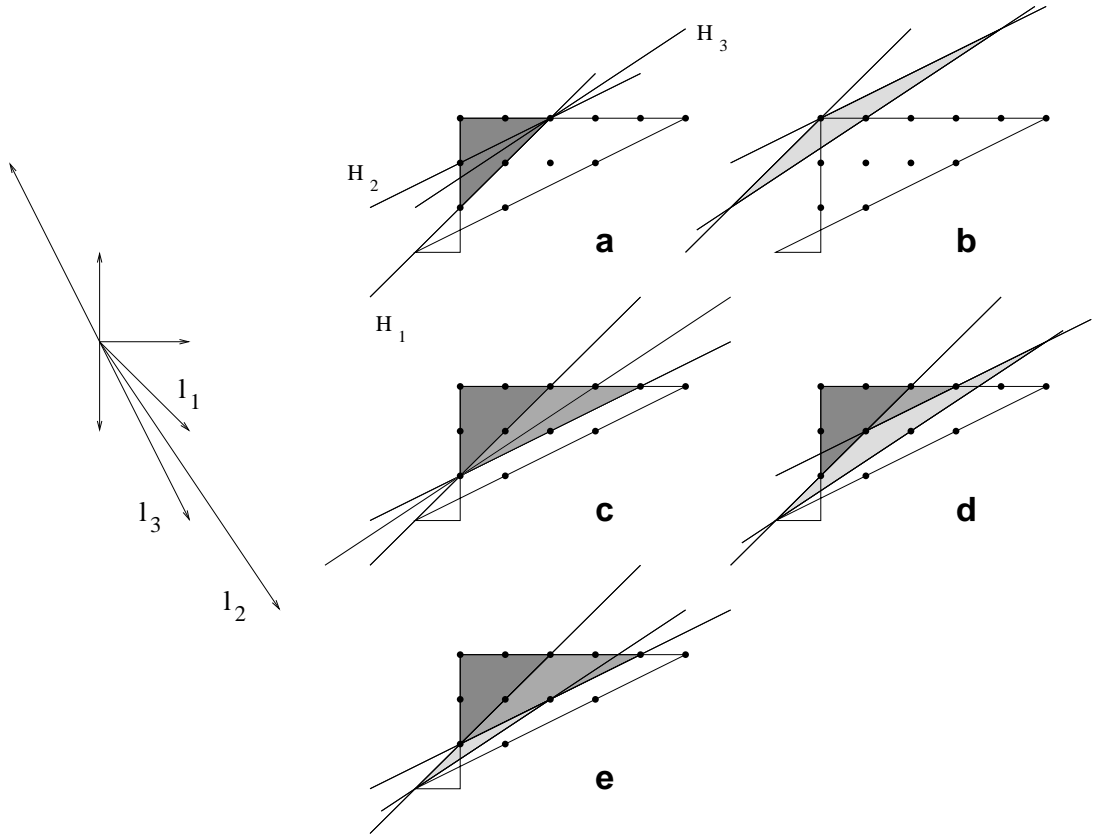}
\end{center}
\caption{The fan of $\mathbb{F}_2$ blown up three times and the hyperplane arrangements corresponding to the
divisors a) $3Q - P - 2 R_1$, b) $3Q - P - 2 R_3$,
c) $3Q - P - 2 R_1 - 2 R_2$, d) $3Q - P - 2 R_1 - R_2 - 2 R_3$, e) $3Q - P - 2 R_1 - 2 R_2 - R_3$.}
\label{tilingexamplefigure}
\end{figure}
The dark gray area indicates $T_{\gamma_1}$, the medium gray indicates $T_{\gamma_2}$, and the light
gray $T_{\gamma_3}$. In a) we have $D = 3Q - P - 2 R_1$; here $T^-_{\gamma_1}$ cuts out three elements
of $G_{3Q - P}$ and $T^+_{\gamma_1}$ cuts out one of $G_{3Q - P}^\circ$. Therefore $D$ is pre-left-orthogonal
in this case. In b) we have $D = 3Q - P - 2 R_1$ and $T^-_{\gamma_3}$ cuts out only one of $G_{3Q - P}$ and
$T^+_{\gamma_1}$ none of $G_{3Q - P}^\circ$. Therefore $D$ is not strongly pre-left-orthogonal. Note that $R_1$
and $R_3$ behave differently because $l_{i_1}$ does form a basis of $N$ with either of the two primitive
vectors which belong to the fan of $\mathbb{F}_2$ and in whose positive span $l_{i_1}$ is contained, whereas
$l_{i_3}$ does not. In the cases
c), d), e), all $T^-_{\gamma_i}$ and $T^+_{\gamma_i}$ cut out the correct number of lattice points of
$G_{3Q - P}$ and $G_{3Q - P}^\circ$, respectively, such that precisely the two elements in $G_{3Q - P}^\circ$
are cut out. So in all these cases $D$ is strongly left-orthogonal.
\end{example}

We will also need to know how we can pass from the coordinates associated to a minimal model $X_0$ to
the $d_i$-coordinates. For this we first illustrate the correspondence between divisors of the form
$\alpha P + \beta Q + \sum_{i = 1}^t \gamma_i R_i$ and polygonal lines of the form $\sum_{i \in \on} d_i l_i
= 0$ in the following example.

\begin{example}\label{f2polylinesexample}
It is convenient to interpret the relation $\sum_{i \in \on} d_i l_i = 0$ as closed polygonal lines. If we
successively place the vectors $d_i l_i$ end to end in $N_\Q$, we obtain a figure which can be viewed as
a polygonal line complex embedded in the arrangement $\{H_i\}_{i \in \on}$, rotated by $90$ degrees.
Figure \ref{f2polylinesfigure} shows examples of divisors on the surface shown in figure
\ref{tilingexamplefigure}.
\begin{figure}[htbp]
\begin{center}
\includegraphics[width=13cm]{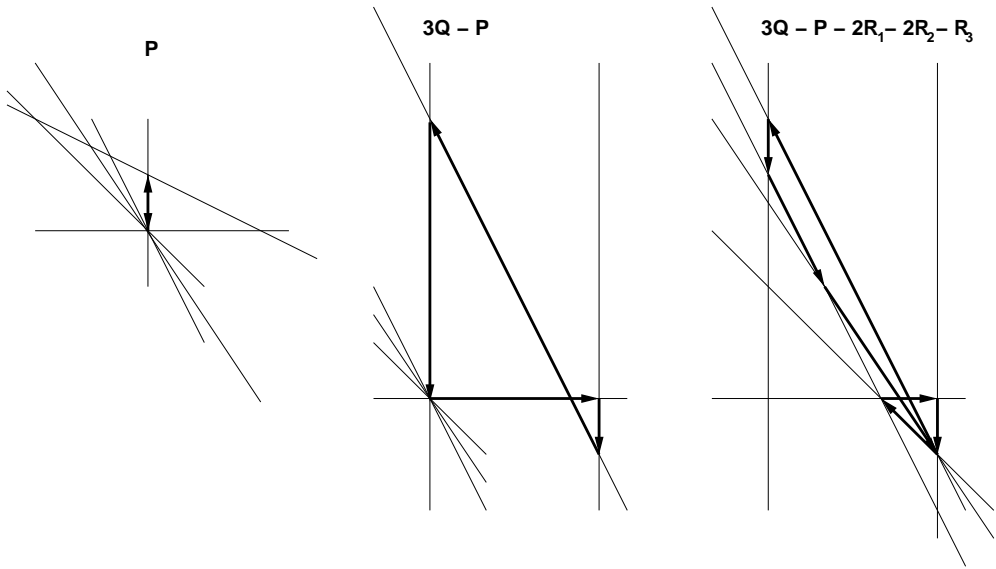}
\end{center}
\caption{The fan of figure \ref{tilingexamplefigure} and the polygonal lines associated to the divisors $P$,
$3Q - P$, and $3Q - P - 2 R_1 - 2 R_2 - R_3$. The picture shows the hyperplane arrangements associated to these
divisors rotated by $90$ degrees and the polygonal lines embedded into them.}\label{f2polylinesfigure}
\end{figure}
Note that the order in which the $d_i l_i$ are placed end to end is not canonical, but there are the two
obvious choices (clockwise or counterclockwise) by which the line complex can be interpreted as being embedded
in the corresponding hyperplane arrangement.
\end{example}

To change from coordinates associated to $X_0$ to $d_i$-coordinates, by linearity it suffices to consider
$\big(d_1(D), \dots, d_n(D)\big)$, where
$D$ is one of $P$, $Q$, $H$, $R_i$, $i \in \ot$. For the following lemma we assume that the fan of
$X_0$ is generated by $l_b, l_c, l_d, l_e$ if $X_0 \cong \mathbb{F}_a$ or by $l_b, l_c, l_d$ if
$X_0 \cong \mathbb{P}^2$. In the first case we assume that $l_b + l_d = a l_c$. With respect to
$R_k$, we choose $l_p, l_q, l_r$ as above. The following lemma is just an observation:

\begin{lemma}\label{dibasislemma}
\begin{enumerate}[(i)]
\item If $X_0 \cong \mathbb{P}^2$, then $d_i(H) = 1$ if $i \in \{b, c, d\}$ and $d_i(H) = 0$ otherwise.
\item If $X_0 \cong \mathbb{F}_a$, then $d_i(P) = 1$ if $i \in \{c, e\}$ and $d_i(P) = 0$ otherwise.
\item If $X_0 \cong \mathbb{F}_a$, then $d_c(Q) = a$, $d_i(Q) = 1$ if $i \in \{b, c\}$, and $d_i(Q) = 0$ otherwise.
\item Without assumptions on $X_0$ we have $d_p(R_k) = d_q(R_k) = 1$, $d_r(R_k) = -1$, and $d_i(R_k) = 0$ otherwise.
\end{enumerate}
\end{lemma}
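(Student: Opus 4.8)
The plan is to use that the assignment $D \mapsto \big(d_1(D),\dots,d_n(D)\big)$ is $\Z$-linear and that, by Proposition \ref{dinefproposition}(i), each coordinate is simply an intersection number, $d_i(D) = D . D_i$. Hence the whole lemma reduces to evaluating the intersection pairing of each of the four generators $P,Q,H,R_k$ against the torus-invariant prime divisors $D_i$, and for this the only input needed is the combinatorial intersection table recorded in Section \ref{surfacegeometry}: $D_i . D_j = a_i$ if $i=j$, $=1$ if $j = i\pm 1$, and $=0$ otherwise. Equivalently, after fixing a $T$-invariant representative $D \sim \sum_j c_j D_j$ one evaluates $d_i = c_{i-1} + a_i c_i + c_{i+1}$. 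Either formulation makes every coordinate a one-line computation, so the task is really just to choose good representatives and read off adjacencies.

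First I would pin down an effective $T$-invariant representative for each generator. On $\mathbb{P}^2$ the class $H$ is represented by any one of the three invariant lines, say $D_b$; since the single triangle of rays makes $D_b$ adjacent to both $D_c$ and $D_d$ and $D_b^2 = 1$, one reads off $d_i(H)=1$ exactly on $\{b,c,d\}$, giving (i). On $\mathbb{F}_a$ the hypothesis $l_b + l_d = a l_c$ must first be translated into the fibre/section picture: the relation $l_b + a_c l_c + l_d = 0$ forces $a_c = D_c^2 = -a$, so $D_c$ is the negative section, $D_e$ is the positive section with $D_e^2=a$, and $D_b,D_d$ are the two fibres. Taking $P=[D_b]$ and $Q=[D_e]$ and using that a fibre meets exactly the two sections while a section meets exactly the two fibres (the two sections being disjoint, $D_c.D_e=0$), one reads off the nonzero $d_i$ together with the self-intersection entry on the diagonal; this is exactly the content of (ii) and (iii).

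For (iv) the cleanest route is through the closing relation $\sum_{i} d_i(D)\, l_i = 0$ in $N$. By Lemma \ref{pullbackexplicitlemma} the class $R_k$ is the exceptional divisor $D_r$ of $b_k$, whose new primitive vector satisfies $l_r = l_p + l_q$. Rewriting this as $1\cdot l_p + 1\cdot l_q + (-1)\cdot l_r = 0$ exhibits the tuple with $d_p = d_q = 1$, $d_r = -1$, and all other entries zero as an element of $N_1(X) = \ker(L^T)$. Since $\pic(X)\to N_1(X)$ is injective and this tuple realises $R_k.D_p = R_k.D_q = 1$, $R_k^2 = -1$, and $R_k.D_i = 0$ otherwise, it must equal $\big(d_i(R_k)\big)$; this establishes (iv) with no assumption on $X_0$.

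There is no genuine obstacle here, as the statement really is an observation; the only point demanding care is the bookkeeping. Concretely, one must correctly convert $l_b + l_d = a l_c$ into the identification of fibres versus sections so that the index sets and the placement of the entry equal to $a$ come out consistently, and in (iv) one must make sure the vectors $l_p,l_q,l_r$ are those attached to the blow-up $b_k$ defining $R_k$ rather than to a later blow-up. Once the representatives are fixed, each asserted value is immediate from $d_i = D.D_i$.
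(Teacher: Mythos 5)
Your opening reduction via Proposition \ref{dinefproposition} --- that $d_i(D) = D \cdot D_i$, so the lemma is just an evaluation of intersection numbers --- is the right starting point, but your actual evaluations take place on $X_0$ (resp.\ on $X_k$), whereas the lemma is a statement about the full $n$-tuple $(d_1,\dots,d_n)$ on $X$, where $H$, $P$, $Q$, $R_k$ denote \emph{pullbacks}. On $X$ the facts you invoke are generally false: there is no ``single triangle of rays'' and $D_b$ need not be adjacent to $D_c$ or $D_d$ (later blow-ups insert rays between them), $D_b^2$ is no longer $1$, a fibre need not meet the sections in the configuration you describe, and the invariant representative of the pullback class is the \emph{total} transform, which acquires exceptional components with multiplicities (e.g.\ after blowing up $D_b\cap D_c$ and then $D_b\cap D_r$ one has $H \sim D_b + D_r + 2D_s$), not the prime divisor $D_b$. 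In particular the assertion $d_i = 0$ at all exceptional rays --- the part of the lemma that carries real content and is what gets used later, e.g.\ in the proof of Proposition \ref{degreelowerbound} --- is never addressed by a computation on $X_0$, where ``otherwise'' is essentially vacuous. The missing bridge is short: by the projection formula, $b^*D_0 \cdot D_i = D_0 \cdot b_*D_i$, which reproduces the $X_0$-value when $l_i$ is a ray of $X_0$ (the pushforward of the strict transform) and gives $0$ when $D_i$ is contracted. Equivalently, induct on blow-ups using Lemma \ref{pullbackexplicitlemma}: inserting $l_{\mathrm{new}} = l_u + l_v$ replaces $(c_i)$ by the pullback coefficients with $c_{\mathrm{new}} = c_u + c_v$, and then $d_{\mathrm{new}} = c_u - (c_u+c_v) + c_v = 0$ while $d_u$, $d_v$ are unchanged because $a_u$, $a_v$ each drop by exactly $1$. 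With either bridge in place, your base computations do finish the proof.

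Part (iv) as written is circular. Exhibiting $e_p + e_q - e_r$ as an element of $\ker(L^T) = N_1(X)$ identifies nothing: that kernel has rank $n-2$ and contains every tuple $(d_i(D))$. The step that would single out this particular element --- ``this tuple realises $R_k\cdot D_p = R_k\cdot D_q = 1$, $R_k^2 = -1$, $R_k\cdot D_i = 0$ otherwise'' --- is precisely the assertion of (iv), and it requires proof on $X$, where $R_k$ is the total transform and $l_p, l_r, l_q$ need not be consecutive rays. The same projection-formula argument from $X_k$ (where $R_k = D_r$, the rays $l_p, l_r, l_q$ are consecutive, and $D_r^2 = -1$) closes this gap. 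Finally, your (correct) computation for (iii) yields $d_e(Q) = a$, $d_b(Q) = d_d(Q) = 1$, $d_c(Q) = 0$ under the convention $l_b + l_d = a\,l_c$ (which forces $D_c^2 = -a$, so $Q = [D_e]$); this is \emph{not} the literal statement of the lemma, which asserts both $d_c(Q) = a$ and $d_c(Q) = 1$ and is internally inconsistent --- evidently a typo in the paper. You should flag that discrepancy rather than claim your computation ``is exactly the content of (ii) and (iii).''
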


If we compare figure \ref{f2polylinesfigure} with figure \ref{tilingexamplefigure}, we see that in these
examples, for strongly left-orthogonal $D$, the associated polygonal line contains $G_D$. More generally,
we get:

\begin{lemma}
Let $D = (d_1, \dots, d_n) \in N_1(X)$ be a $T$-invariant curve on a smooth complete toric surface $X$. If,
as a divisor, $D$ is numerically left-orthogonal, then $\chi(D) = \sum_i d_i$.
\end{lemma}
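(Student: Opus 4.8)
The plan is to deduce this from two facts already established, so that the proof reduces to a one-line bookkeeping computation. The first ingredient is Lemma~\ref{acyclicitylemma}~(\ref{acyclicitylemmai}): because $D$ is numerically left-orthogonal as a divisor, we have $\chi(D) = -K_X . D$. This is exactly the point where Riemann--Roch and the hypothesis $\chi(-D) = 0$ enter, converting the Euler characteristic into an intersection number against the anticanonical class. The second ingredient is purely toric, namely the formula $K_X = -\sum_{i = 1}^n D_i$ for the canonical divisor (Proposition~\ref{canonicalnefproposition}), together with the defining relation $d_i = D . D_i$ for the coordinates of $D$ regarded as an element of $N_1(X)$.

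Combining these, I would simply write out the chain
\begin{equation*}
\chi(D) = -K_X . D = \Big(\sum_{i = 1}^n D_i\Big) . D = \sum_{i = 1}^n (D_i . D) = \sum_{i = 1}^n d_i,
\end{equation*}
where the first equality is Lemma~\ref{acyclicitylemma}~(\ref{acyclicitylemmai}), the second is the anticanonical formula of Proposition~\ref{canonicalnefproposition}, the third is bilinearity of the intersection pairing, and the last is the definition of the $d_i$. This gives the claim directly.

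There is no genuine obstacle here: once the two cited results are in place, the statement is immediate. The only point that warrants a word of care is the identification underlying the final equality --- that the entries $d_i$ of the tuple $(d_1, \dots, d_n)$ representing $D$ in $N_1(X)$ really are the intersection numbers $D . D_i$ (equivalently $\deg \sh{O}(D)\vert_{D_i}$ by Proposition~\ref{dinefproposition}), which is how these coordinates were introduced in Section~\ref{surfacegeometry} and which is independent of the chosen $T$-invariant representative of $D$.
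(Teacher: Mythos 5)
Your proof is correct and follows essentially the same route as the paper: the paper also combines Lemma~\ref{acyclicitylemma}~(\ref{acyclicitylemmai}) with the identity $D . E = \sum_i d_i c'_i$ for $E = \sum_i c'_i D_i$ (which is just the relation $d_i = D . D_i$ plus bilinearity), specialized to $E = -K_X = \sum_i D_i$. Nothing is missing.
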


\begin{proof}
Let $E = \sum_{i \in \on} c'_i D_i$, then it follows from the discussion in section \ref{surfacegeometry}
that $D . E = \sum_{i \in \on} d_i c_i'$. We apply this to $E = -K_X = \sum_{i \in \on} D_i$ and use
Lemma \ref{acyclicitylemma} (\ref{acyclicitylemmai}).
\end{proof}

Of course, if $D$ is strongly left-orthogonal, then it follows that $h^0(D) = \sum_i d_i$. If moreover,
$(D)_0$ is strongly pre-left-orthogonal, it follows by induction, starting from the
classification of propositions \ref{P2leftorthogonals} and \ref{Hirzebruchpreleftorthogonals}, that
$G_D \subset \bigcup_{d_{i_k} \geq 0} H_k$, i.e. the positive $d_i$ attribute to the global sections
not only numerically, but the associated line segments bounding $G_D$ actually contain $G_D$.

By Proposition \ref{dinefproposition} a divisor $D$ is nef iff $d_i \geq 0$ for every $i$. Then the
associated polygonal line complex is the boundary of a lattice polytope in $M_\Q$. The figures of
example \ref{f2polylinesexample} show that these strongly left-orthogonal divisors are almost nef,
as in every case $d_i \geq -1$ for every $i \in \on$. This also holds in general:

\begin{proposition}\label{degreelowerbound}
Let $D$ be a strongly left-orthogonal divisor on $X$. Then $\sum_{i \in I} d_i(D) \geq -1$ for every cyclic
interval $I \subset \on$.
\end{proposition}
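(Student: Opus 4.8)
The plan is to reduce the inequality to the statement that a certain line bundle on the chain of invariant curves indexed by $I$ has non-negative Euler characteristic, which in turn follows formally from strong left-orthogonality, without invoking the lattice-point geometry developed above.

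First I would fix a cyclic interval $I \subsetneq \on$ and set $C := \sum_{j \in I} D_j$. Since $I$ is a \emph{proper} cyclic interval, the curves $D_j$ with $j \in I$ form a chain of projective lines meeting transversally in a path (the cycle is never closed up), so $C$ has arithmetic genus zero and $\chi(\sh{O}_C) = 1$. By Riemann--Roch for line bundles on the reducible curve $C$, together with Proposition \ref{dinefproposition}(i) which gives $d_j = \deg \sh{O}(D)\vert_{D_j}$, one obtains $\chi\big(\sh{O}_C(D)\big) = \sum_{j \in I} d_j + 1$. Thus the claim $\sum_{j \in I} d_j \geq -1$ is equivalent to $\chi\big(\sh{O}_C(D)\big) \geq 0$, and it suffices to prove $h^1\big(\sh{O}_C(D)\big) = 0$, since then $\chi\big(\sh{O}_C(D)\big) = h^0\big(\sh{O}_C(D)\big) \geq 0$.

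Next I would feed $C$ into the restriction sequence $0 \to \sh{O}_X(D - C) \to \sh{O}_X(D) \to \sh{O}_C(D) \to 0$ and pass to the long exact cohomology sequence. By strong left-orthogonality we have $h^1(D) = h^2(D) = 0$, so the connecting maps yield an isomorphism $H^1\big(\sh{O}_C(D)\big) \cong H^2\big(\sh{O}_X(D - C)\big)$. Serre duality rewrites the right-hand side as $h^0(K_X - D + C)$, and since $K_X = -\sum_j D_j$ we have $K_X + C = -\sum_{j \notin I} D_j$, whence $K_X - D + C = -D - \sum_{j \notin I} D_j$.

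The key observation is then that $\sum_{j \notin I} D_j$ is a nonzero effective divisor (nonzero because $I \subsetneq \on$), so multiplication by its canonical section embeds $H^0\big(\sh{O}_X(-D - \sum_{j \notin I} D_j)\big)$ into $H^0\big(\sh{O}_X(-D)\big)$. Because $D$ is left-orthogonal we have $h^0(-D) = 0$, and therefore $h^1\big(\sh{O}_C(D)\big) = h^0\big(-D - \sum_{j \notin I} D_j\big) = 0$, which closes the argument. The only genuine step is recognizing that restricting $\sh{O}_X(D)$ to the chain $C$ converts the partial-sum bound into the vanishing $h^1\big(\sh{O}_C(D)\big) = 0$; once this is set up, everything is a direct consequence of the defining cohomology vanishings of a strongly left-orthogonal divisor, and the routine verifications (arithmetic genus of the chain, additivity of degree over components) present no obstacle.
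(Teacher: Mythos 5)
Your proof is correct, and it takes a genuinely different route from the paper's. You convert the interval sum into an Euler characteristic on the chain $C = \sum_{j \in I} D_j$ --- of arithmetic genus zero precisely because a cyclic interval is by definition a proper subset $I \subsetneq \on$, so the chain never closes up into the full anticanonical cycle --- and then deduce $h^1\big(\sh{O}_C(D)\big) = 0$ from the restriction sequence: $H^1\big(\sh{O}_C(D)\big)$ injects into $H^2\big(\sh{O}_X(D - C)\big)$ since $h^1(D) = 0$, and Serre duality turns the latter into $h^0\big(-D - \sum_{j \notin I} D_j\big)$, which vanishes because $\sum_{j \notin I} D_j$ is a nonzero effective divisor (again properness of $I$) and $h^0(-D) = 0$. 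The paper instead argues by induction along a chosen sequence of equivariant blow-downs to a minimal model: it first disposes of the case $(D)_0 = 0$ via Proposition \ref{verticalleftorthogonals}, reduces to $(D)_0$ strongly pre-left-orthogonal, anchors the induction in the classifications of Propositions \ref{P2leftorthogonals} and \ref{Hirzebruchpreleftorthogonals}, and propagates the bound through each blow-up using the lattice-point criterion of Proposition \ref{cutoutproposition}. Your argument is shorter, needs none of that machinery, treats all cyclic intervals uniformly rather than essentially one index at a time, and in fact uses strictly less than strong left-orthogonality --- only $h^1(D) = 0$ and $h^0(-D) = 0$ enter (neither $h^2(D) = 0$ nor $\chi(-D) = 0$ is needed) --- so it proves a slightly stronger statement. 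What the paper's route buys is a byproduct that the purely cohomological argument does not give: along the induction it establishes that $d_i$ equals the number of lattice points of $G_D$ lying on the bounding hyperplane $H_i$, a refined combinatorial description of the global sections which the discussion preceding the proposition and the straightening analysis of section \ref{straighteningsection} rely on. So your proof is a clean replacement for the proposition as stated, but not for everything the paper extracts from its own proof.
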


\begin{proof}
We choose some sequence of equivariant blow-downs to some minimal model $X_0$. Assume first that $(D)_0
= 0$. Then by Lemma \ref{verticalleftorthogonals} $D = R_k$ for some $k$ or $D = R_k - R_l$
for $k \neq l \in \ot$ and $R_k$, $R_l$ incomparable. For $p, q, r$ as above, we have by Lemma
\ref{dibasislemma} that $d_i(R_k) = -1$ for $i = r$, $d_i(R_k) = -1$ for $i \in \{p, q\}$ and
$d_i(R_k) = 0$ else.
So the assertion follows immediately for $D = R_k$. For $D = R_k - R_l$ we have just to take
into account that the $R_k$ and $R_l$ are incomparable. If $(D)_0 \neq 0$ we can assume without
loss of generality that $(D)_0$ is strongly pre-left-orthogonal. Otherwise, we have necessarily
$h^0(D) = h^0\big((D)_0) = 0$ and $-(D)_0$ is strongly pre-left-orthogonal. Then if the statement is
true for the case $-(D)_0$ strongly pre-left-orthogonal, we have $d_i \leq 1$ for every $i$ and therefore
by above discussion that $-d_i \geq -1$ for every $i$.

We show by induction on $(D)_k$, $k = 0, \dots, t$ that the assertion is true for a strongly
pre-left-orthogonal divisor $D$.
For $k = 0$, the assertion is true by inspection of the classification of strongly pre-left-orthogonal
divisors on $\mathbb{P}^2$ (proposition \ref{P2leftorthogonals}) and $\mathbb{F}_a$ (proposition
\ref{Hirzebruchpreleftorthogonals}). It also follows that $d_j = \vert G_{(D)_0} \cap H_i \vert$
if $l_i$ belongs to fan associated to $X_0$, i.e. the $d_i$ count the lattice length plus one of the
bounding faces of the polygonal line inscribing $G_{(D)_0}$. In the induction step we will show that this
is still true for all triples $p, q, r$ and all $k > 0$. For $k > 0$, let $(D)_k - (D)_{k - 1} =
\gamma_k R_k$. Consider the triple $l_p$, $l_q$, $l_r$ as before, by Proposition
\ref{cutoutproposition} it is a necessary condition that $H_p$ and $H_q$ intersect in some
$m \in G_{(D)_k} \setminus G_{(D)_k}^\circ$. Moreover, necessarily $d_p, d_q \geq -\gamma_k - 1$
and the result follows from above characterization of $d_i(R_k)$.
\end{proof}

\begin{remark}
If $a_i = -1$ for some $i$, then we can find a basis of $\pic(X)$ with respect to some minimal model
$X_0$ such that $R_t = D_i$. For any strongly pre-left-orthogonal divisor $D$ it follows that $D =
(D)_{t - 1} + \gamma_t R_t$ for some $\gamma_t \leq 0$. Therefore, we have $d_i \geq 0$. If $a_i \geq 0$,
the divisor $D_i$ necessarily is the strict transform of some torus invariant divisor on $X_0$. So
by the classification \ref{P2leftorthogonals} and \ref{Hirzebruchpreleftorthogonals}, the only cases
with $a_i \geq 0$ and $d_i(D) = -1$ is where $X_0 \cong \mathbb{P}^1 \times \mathbb{P}^1$ and $D$ is the
pullback of $P - Q$ or $Q - P$. Otherwise, if $a_i > 0$, then $d_i \geq 0$.
\end{remark}

\section{Strongly exceptional sequences of invertible sheaves on toric surfaces}\label{torictheorems}

The following results give a full classification of strongly exceptional sequences of invertible sheaves
on smooth complete toric surfaces.

\begin{theorem}\label{toricclassification}
Let $X$ be a smooth complete toric surface, then for every strongly exceptional toric system $\mathcal{A}$
there exists a sequence of blow-downs $X = X_t \rightarrow \cdots \rightarrow X_0$, where $X_0 =
\mathbb{P}^2$ or $X_0 = \mathbb{F}_a$ for some $a \geq 0$ such that the normal form of $\mathcal{A}$ is a
standard augmentation from $X_0$.
\end{theorem}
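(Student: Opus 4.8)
The plan is to prove the statement by putting an arbitrary strongly exceptional toric system into its normal form, analyzing that normal form through the lattice-point combinatorics of Section \ref{toriccohomologyvanishing}, and then recognizing it—after a suitable reindexing of the blow-ups—as a standard augmentation. First I would invoke Proposition \ref{normalorderingproposition} to reorder the sequence so that the associated toric system $\mathcal{A} = A_1, \dots, A_n$ is in normal form with respect to some chosen minimal model $X_0$, i.e. each $(A_i)_0$ with $1 \le i < n$ is either $0$ or strongly pre-left-orthogonal. Because $X$ is toric, every partial sum $A_I = \sum_{i \in I} A_i$ over an interval $I \subset [n-1]$ is strongly left-orthogonal, and can be studied through its $d_i$-coordinates and associated polygonal line in $N_\Q$. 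The crucial global constraint here is Proposition \ref{degreelowerbound}, which forces $\sum_{i \in I} d_i \ge -1$ on every cyclic interval and hence makes the bounding polygonal line of $G_{A_I}$ almost nef, severely restricting the admissible shapes of the individual $A_i$.

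Next I would separate the $A_i$ into the \emph{vertical} ones, with $(A_i)_0 = 0$, and the \emph{horizontal} ones, with $(A_i)_0 \ne 0$. By Proposition \ref{verticalleftorthogonals} each vertical $A_i$ is of the form $R_k$ or $R_k - R_l$ with $R_k, R_l$ incomparable, and by Lemma \ref{glueinglemma} a maximal block of consecutive vertical entries is a chain $R_{i_1} - R_{i_2}, \dots, R_{i_m} - R_{i_{m+1}}$, possibly terminating in a single $R_{i_m}$, with the $R_{i_j}$ pairwise incomparable. These blocks are exactly the data of an augmentation. The heart of the argument is to show that, after removing all vertical contributions, the horizontal entries project under $(\,\cdot\,)_0$ to a \emph{standard} (exceptional) toric system on $X_0$. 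For this I would use the lattice-point description of strong left-orthogonality from Proposition \ref{cutoutproposition} and Corollary \ref{tilingcorollary}: the triangles $T^{+}_{\gamma_i}$ and $T^{-}_{\gamma_i}$ attached to the vertical blow-ups must tile $G^{\circ}_{(D)_0}$ and $G_{(D)_0} \setminus G_{D}$ respectively. Matching this tiling against the classification of pre-left-orthogonal divisors in Propositions \ref{P2leftorthogonals} and \ref{Hirzebruchpreleftorthogonals}, and of minimal-model toric systems in Proposition \ref{Hirzebruchtoricsystems}, pins the projected horizontal system down to a standard one and simultaneously shows that the insertion slots of the vertical chains are admissible.

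Assembling the pieces, the projected standard system on $X_0$ together with the admissible vertical chains exhibits $\mathcal{A}$ as a standard augmentation. One then reindexes the blow-ups as $X = X_{j_t} \to \cdots \to X_{j_1} \to X_0$ compatibly with the partial order $\ge$ on the $R_i$, so that each chain becomes a genuine successive augmentation along pullbacks, exactly as in the discussion following Definition \ref{standardaugmentationdefinition}. The whole procedure has a natural inductive flavor: a maximal vertical entry of the form $R_j$ (hence with $A_j^2 = -1$) can be blown down, returning a shorter strongly exceptional toric system to which one applies the inductive hypothesis; the base cases $n = 3$ and $n = 4$ are $\mathbb{P}^2$ and the Hirzebruch surfaces, which are standard by definition and by Proposition \ref{Hirzebruchtoricsystems}.

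The hard part will be the straightening in the middle stage. Strong left-orthogonality can be realized in a \emph{cyclic} fashion in which the normalized system does not visibly resemble a standard augmentation, and in these cases one must first recognize that the sequence extends to a cyclic strongly exceptional sequence before the ambiguity in enumerating the $A_i$ around the anticanonical cycle can be exploited to straighten it. Controlling precisely which lattice points the $T^{\pm}_{\gamma_i}$ remove, and ruling out the spurious configurations that are permitted by the bound of Proposition \ref{degreelowerbound} yet incompatible with an actual tiling of $G^{\circ}_{(D)_0}$, is the technical core and is exactly what forces the detailed toric cohomology analysis of Section \ref{toriccohomologyvanishing}.
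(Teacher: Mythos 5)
Your first stage is sound and is essentially the paper's own Proposition \ref{straighteningextensionlemma}: after passing to normal form, a vertical block can be analyzed with Lemma \ref{glueinglemma}, an entry equal to $R_t$ can be produced by reordering, and the system descends to the blow-down. But this induction does \emph{not} run all the way to a minimal model, and here is the genuine gap. The peeling-off step terminates as soon as the divisor $A = \sum_{i=1}^{n-1} A_i$ becomes \emph{straightened} (in the paper's terminology: no torus-invariant $(-1)$-curve $D_i$ has $d_i(A) \in \{0,1\}$ any more), and this can happen on a surface that is still far from minimal. Your claim that at this point the tiling conditions of Proposition \ref{cutoutproposition} and Corollary \ref{tilingcorollary} ``pin the projected horizontal system down to a standard one'' on the chosen $X_0$ is false as stated: on the surfaces 6d, 8a, 8c and 9 of Table \ref{weakdelpezzofigure} there exist strongly exceptional toric systems whose sum straightens to one of the exotic divisors of Table \ref{straightenedlist} (e.g.\ $4H - 2(R_1+R_2+R_3) - R_4 - R_5$ on 8a), and with respect to the blow-down sequence you started from, the normal form of such a system is \emph{not} an augmentation of any standard system on that $X_0$. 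No amount of matching against Propositions \ref{P2leftorthogonals} and \ref{Hirzebruchpreleftorthogonals} resolves this, because the correct minimal model is a different one ($\mathbb{P}^2$, reached only after re-enumerating the system cyclically and inverting $A_n$), and you cannot perform that re-enumeration until you know the system is cyclic.

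What your proposal is missing, concretely, is the paper's straightening machinery of Sections \ref{straighteningsection} and \ref{toricproofs}: (a) Proposition \ref{straighteningproposition}, which shows that a straightened divisor can only live on a surface with $-K_X$ nef or on $\mathbb{F}_a$, $a \geq 3$, so that the terminal surfaces $X_s$ of the induction form the finite list of Proposition \ref{straightenedclassification}; (b) Lemma \ref{nonaugmentablelemma}, which shows that in the exotic cases 8a, 8c, 9 no further blow-up is possible, i.e.\ $X = X_s$; and (c) Propositions \ref{8astraightening}--\ref{9straightening}, which establish cyclicity in those cases by exhaustively tabulating \emph{all} strongly left-orthogonal divisors of Euler characteristic at most $\chi(A)$ and excluding, by intersection-number computations against $A_n$, every divisor lacking a strongly left-orthogonal partner. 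You correctly sense that ``one must first recognize that the sequence extends to a cyclic strongly exceptional sequence,'' but that recognition is exactly the hard theorem to be proved, and nothing in your proposal --- Proposition \ref{degreelowerbound}, the tiling conditions, or the vertical/horizontal decomposition --- supplies it. As written, your induction would either stall at a non-minimal $X_s$ or silently assert the standard-augmentation property in cases where it only holds after a change of minimal model that your argument cannot justify.
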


As a corollary of Theorems \ref{augmentationboundtheorem} and \ref{toricclassification} we thus
obtain:

\begin{theorem}\label{toricbound}
Let $X \neq \mathbb{P}^2$ be a smooth complete toric surface. Then there exists a full strongly
exceptional sequence of invertible sheaves on $X$ if and only if $X$ can be obtained by equivariantly
blowing up a Hirzebruch surface two times (in possibly several points in each step).
\end{theorem}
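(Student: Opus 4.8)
The plan is to prove the two implications separately, treating the statement as a formal consequence of Theorems \ref{augmentationboundtheorem} and \ref{toricclassification} together with the existence result \ref{twotimesblowupexistence}.

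For the implication ``$\Leftarrow$'' I would argue that there is essentially nothing to do beyond a citation. If $X$ is obtained from a Hirzebruch surface by two successive equivariant blow-ups of (possibly several) torus fixed points, then $X$ is in particular a smooth complete rational surface satisfying the hypothesis of Theorem \ref{twotimesblowupexistence}, so that theorem produces a full strongly exceptional sequence. Since the construction proceeds through a toric system, the associated sequence $\sh{O}_X, \sh{O}(A_1), \dots, \sh{O}(\sum_{i = 1}^{n - 1} A_i)$ consists of invertible sheaves, which is exactly what is required. (Concretely, one may invoke Theorem \ref{p2blowupexistence} or \ref{fablowupexistence}, depending on the minimal model, to write the sequence down explicitly.)

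For the implication ``$\Rightarrow$'' I would start from a full strongly exceptional sequence $\sh{O}(E_1), \dots, \sh{O}(E_n)$ of invertible sheaves and pass to its associated toric system $\mathcal{A} = A_1, \dots, A_n$ via the construction of Section \ref{rationalsurfacesandtoricsystems} (Theorem \ref{canonicaltoricsystemtheorem}); by construction $\mathcal{A}$ is strongly exceptional. Next I would apply Proposition \ref{normalorderingproposition} to reorder the sheaves so that $\mathcal{A}$ is in normal form with respect to a chosen minimal model, observing that the reordering used there only permutes sheaves between which all $\Hom$-groups vanish, hence preserves strong exceptionality and fullness. At this point Theorem \ref{toricclassification} supplies a sequence of equivariant blow-downs $X = X_t \rightarrow \cdots \rightarrow X_0$ for which the normal form of $\mathcal{A}$ is a standard augmentation from $X_0$. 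Since $X \neq \mathbb{P}^2$, Theorem \ref{augmentationboundtheorem} now applies to this full strongly exceptional standard augmentation and yields that $X$ arises from a Hirzebruch surface by at most two blow-up steps, each blowing up possibly several points.

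The one point requiring genuine care — and thus the main obstacle — is that Theorem \ref{augmentationboundtheorem} is stated for arbitrary rational surfaces, whereas the conclusion of Theorem \ref{toricbound} demands \emph{equivariant} blow-ups of torus fixed points. I would therefore verify that in the toric setting every contraction occurring in the proof of Theorem \ref{augmentationboundtheorem} (Section \ref{augmentationboundproofsection}) is toric: the blow-downs furnished by Theorem \ref{toricclassification} are equivariant by construction, while the auxiliary contractions of strict transforms of fibers, and of the $(-1)$-section of $\mathbb{F}_1$, invoked in Propositions \ref{standardaugmentationbound}, \ref{helixaugmentationgauge}, and \ref{f1augmentationbound}, are contractions of torus invariant $(-1)$-curves and hence again equivariant. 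Consequently the two regrouped blow-up steps consist entirely of blow-ups of torus fixed points, which upgrades the conclusion to the equivariant form and completes the proof.
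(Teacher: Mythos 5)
Your proposal is correct and follows essentially the same route as the paper, which obtains Theorem \ref{toricbound} precisely as a corollary of Theorem \ref{toricclassification} combined with Theorem \ref{augmentationboundtheorem} (for necessity) and of Theorem \ref{twotimesblowupexistence} (for sufficiency). Your extra care about equivariance is legitimate (the paper passes over it silently), but it can be handled more cleanly than by re-inspecting Section \ref{augmentationboundproofsection}: on a smooth complete toric surface every irreducible curve of negative self-intersection is torus invariant, since a non-invariant irreducible curve meets the torus and is algebraically equivalent to its distinct generic translates, forcing non-negative self-intersection; hence any sequence of blow-downs of $(-1)$-curves is automatically equivariant and the blown-up points are torus fixed points.
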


We will prove Theorem \ref{toricclassification} in the remaining sections. In this section we
will state and prove some of its direct consequences.

\begin{corollary}
Let $X$ be a smooth complete toric surface. If there exists a strongly exceptional sequence of invertible
sheaves on $X$, then $\rk \pic(X) \leq 14$.
\end{corollary}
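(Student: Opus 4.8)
The plan is to derive the corollary from Theorem~\ref{toricbound} by a purely combinatorial count of the rays of the fan. First I would dispose of the trivial case $X = \mathbb{P}^2$, where $\rk\pic(X) = 1$. So assume $X \neq \mathbb{P}^2$ admits a (full) strongly exceptional sequence of invertible sheaves. By Theorem~\ref{toricbound}, $X$ is then obtained from some Hirzebruch surface $\mathbb{F}_a$ by equivariantly blowing up torus fixed points in at most two steps, where within each step the blow-ups occur at distinct fixed points of the surface present at the beginning of that step; this is precisely what it means for the blow-ups performed in one step to commute. Recall also the running convention $n = \rk\pic(X) + 2$, so that bounding $\rk\pic(X)$ is the same as bounding the number $n$ of rays of $X$.

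The key point is that each such step can increase the number of rays by at most a factor of two. Indeed, a smooth complete toric surface whose fan has $m$ rays has exactly $m$ maximal cones, hence exactly $m$ torus fixed points, and an equivariant blow-up at a fixed point inserts exactly one new ray, bisecting the corresponding two-dimensional cone. Since the blow-ups carried out in a single step lie over distinct, already existing fixed points — no corner can receive two new rays in the same step, because the second such point is created only by performing the first blow-up — a single step inserts at most $m$ new rays. Writing $n_0 = 4$ for the number of rays of $\mathbb{F}_a$, after the first step we have at most $n_1 \leq n_0 + n_0 = 8$ rays, and after the second at most $n_2 \leq n_1 + n_1 \leq 16$ rays.

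Since $X$ has $n = n_2 \leq 16$ rays, we conclude $\rk\pic(X) = n - 2 \leq 14$, as claimed (and the bound is sharp, being attained when both steps blow up every available fixed point). The only genuine subtlety — and the step I would be most careful to justify — is the claim that one step of commuting equivariant blow-ups inserts at most one ray per existing fixed point, equivalently that it at most doubles the ray count; this rests on the elementary correspondence between torus fixed points and maximal cones of a complete fan and on the observation that centers created by blow-ups within the step cannot themselves be blown up in the same step. Everything else is a direct appeal to Theorem~\ref{toricbound}, and no further cohomological input is required.
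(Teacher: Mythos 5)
Your proof is correct and follows essentially the same route as the paper: both invoke Theorem~\ref{toricbound} and then count $4 \to 8 \to 16$ rays (equivalently, torus fixed points) through the two blow-up steps, giving $\rk\pic(X) \leq 16 - 2 = 14$. Your version merely spells out more carefully the correspondence between maximal cones and fixed points and why one step at most doubles the ray count, which the paper leaves implicit.
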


\begin{proof}
A Hirzebruch surface $\mathbb{F}_a$ has four torus fixed points. So, after blowing up some of these
points, the resulting toric surface has up to $8$ fixed points. After blowing up these, we get a toric
surface $X$ whose fan is generated by at most $16$ lattice vectors and thus $\rk \pic(X) \leq 14$,
and the statement follows from Theorem \ref{toricbound}.
\end{proof}

\begin{example}\label{counterexampleblowup}
Consider the toric surface which is given by the sequence of self-intersection numbers
$-2, -2, -1, -3, -2, 0, 1$. It is easy to see that there is no way to blow-down this surface to any
Hirzebruch surface in only two steps. So by Theorem \ref{toricbound} there does not exist a strongly
exceptional sequence of invertible sheaves on this surface. This is the counterexample which has been
verified by explicit computations in \cite{HillePerling06}. Now consider the blow-up of this surface
given by $-2, -2, -1, -3, -2, -1, -1, 0$. This surface can be blown-down to a $\mathbb{F}_1$ in two steps by
simultaneously blowing down two divisors in each step. Therefore by Theorem \ref{twotimesblowupexistence}
there exist strongly exceptional sequences of invertible sheaves on this surface. More concretely, if
the $\mathbb{F}_1$ is spanned by lattice vectors $l_1, l_2, l_3, l_6$ with $l_3 = l_2 + l_6$, we
subsequently add $l_7 = l_1 + l_6$, $l_8 = l_1 + l_7$, $l_4 = l_3 + l_6$ and $l_5 = l_4 + l_6$. Then, for
example, we get a family of strongly exceptional toric systems by
\begin{equation*}
R_1, R_3 - R_1, P - R_3, sP + Q, P - R_2, R_2 - R_4, R_4, -(s + 1)P + Q - R_1 - R_2 - R_3 - R_4
\end{equation*}
for $s \geq -1$.
\end{example}

For a cyclic strongly exceptional toric system $\mathcal{A}$ on $X$ the associated toric surface
$Y(\mathcal{A})$ has a nef anti-canonical divisor. It turns out that this even is a necessary condition
for $X$ if $X$ itself is a toric surface:

\begin{theorem}\label{toriccyclicnef}
Let $X$ be a smooth complete toric surface. If there exists a cyclic strongly exceptional sequence of
invertible sheaves on $X$, then its anti-canonical divisor is nef.
\end{theorem}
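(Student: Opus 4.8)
The plan is to translate everything into the language of toric systems and then to prove that the self-intersection numbers $a_i = D_i^2$ of the torus invariant curves of $X$ all satisfy $a_i \ge -2$; by Propositions \ref{dinefproposition} and \ref{canonicalnefproposition} this is exactly the assertion that $-K_X$ is nef. So let $\mathcal{A} = A_1, \dots, A_n$ be the cyclic strongly exceptional toric system associated to the given sequence, where $n = \rk \pic(X) + 2$. By cyclicity, $\sum_{i \in I} A_i$ is strongly left-orthogonal for \emph{every} cyclic interval $I \subsetneq \on$. The key observation is that for each fixed $j$ the complement $\on \setminus \{j\}$ is itself a cyclic interval, so that $-K_X - A_j = \sum_{i \ne j} A_i$ is strongly left-orthogonal; together with $A_j$ this produces, for every index $j$, two strongly left-orthogonal divisors summing to $-K_X$.

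First I would record the two facts I need about a strongly left-orthogonal divisor $D$: by Proposition \ref{degreelowerbound}, $d_k(D) = D \cdot D_k \ge -1$ for every $k$, and, since $D$ is numerically left-orthogonal, $\sum_k d_k(D) = \chi(D) = h^0(D) \ge 0$ (the preceding lemma and Lemma \ref{acyclicitylemma}). Now fix $k$ and apply the lower bound of Proposition \ref{degreelowerbound} with the cyclic interval $\{k\}$ to both $A_j$ and $-K_X - A_j$. This gives $A_j \cdot D_k \ge -1$ and $(-K_X - A_j) \cdot D_k \ge -1$, that is $-1 \le A_j \cdot D_k \le a_k + 3$, using $-K_X \cdot D_k = a_k + 2$. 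If $a_k \le -5$ this range is empty, a contradiction; if $a_k = -4$ it forces $A_j \cdot D_k = -1$ for all $j$, whence $-n = \sum_j A_j \cdot D_k = a_k + 2 = -2$, impossible since $n \ge 3$. Thus $a_k \ge -3$, and in the remaining bad case $a_k = -3$ we must have $A_j \cdot D_k \in \{-1, 0\}$ with exactly one index $j_0$ attaining $-1$ and all others $0$.

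It remains to rule out $a_k = -3$, and this is where I expect the real difficulty to lie: the crude intersection-number bookkeeping above is fully consistent in this case, so any contradiction must come from the finer lattice-point geometry of this section. Concretely, after choosing a minimal model $X_0$ and putting the relevant divisor in normal form (so that $(A_{j_0})_0$, or its negative, is strongly pre-left-orthogonal), I would analyze $A_{j_0}$ through its hyperplane arrangement: by Proposition \ref{cutoutproposition} and Corollary \ref{tilingcorollary} strong left-orthogonality forces the regions $T^{+}_{\gamma_i}$ and $T^{-}_{\gamma_i}$ to tile $G_{(A_{j_0})_0}^\circ$ and $G_{(A_{j_0})_0} \setminus G_{A_{j_0}}$ exactly, while Proposition \ref{degreelowerbound} shows that the bounding polygonal line inscribes $G_{A_{j_0}}$ with a dent of depth exactly one along the edge corresponding to $l_k$. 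The plan is then to show that such a depth-one dent is incompatible with the sharp corner relation $l_{k-1} + l_{k+1} = 3 l_k$ dictated by $a_k = -3$: the two edges adjacent to $l_k$ would be forced to carry more lattice length than the global balance $\sum_i d_i(A_{j_0}) = \chi(A_{j_0})$ allows, contradicting that $G_{A_{j_0}}$ is cut out cleanly by the $T^{\pm}_{\gamma_i}$. The main obstacle is thus a purely local, but delicate, lattice-counting step at the $(-3)$-curve; once it yields a contradiction, no invariant curve of $X$ can have $a_k \le -3$, and hence $-K_X$ is nef.
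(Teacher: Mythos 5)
Your reduction to $a_k \ge -3$ is correct, and in fact slightly more efficient than the paper's: applying Proposition \ref{degreelowerbound} to $A_j$ and to the complementary cyclic-interval sum $-K_X - A_j = \sum_{i \ne j} A_i$ pins $A_j \cdot D_k$ into the range $[-1, a_k+3]$, and your elimination of $a_k \le -4$ is sound. (The paper instead gets $a_i \ge -3$ by grouping the toric system into a length-$3$ short toric system around one index with $d_i = -1$.) The genuine gap is the case $a_k = -3$: you do not prove it. You only announce a plan --- a ``delicate lattice-counting step at the $(-3)$-curve'' invoking Proposition \ref{cutoutproposition}, Corollary \ref{tilingcorollary} and the tiling picture --- and never carry it out; nothing in your sketch establishes that the claimed incompatibility between a depth-one dent and the relation $l_{k-1} + l_{k+1} = 3l_k$ actually occurs. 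So as written the proof is incomplete exactly at the point you yourself flag as the main obstacle.

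The missing idea is much more elementary, and it is the one the paper uses. Your own intermediate conclusion for $a_k = -3$ --- exactly one index $j_0$ with $A_{j_0} \cdot D_k = -1$ and $A_j \cdot D_k = 0$ for every $j \ne j_0$ --- is already self-contradictory: it would put the $n-1$ divisors $A_j$, $j \ne j_0$, inside the hyperplane $D_k^{\perp} \subset \pic(X)$, whereas the proof of Proposition \ref{dualityproposition} shows that any $n-2$ cyclically consecutive members of a toric system, for instance $\{A_j \mid j \ne j_0, j_0+1\}$, form a basis of $\pic(X)$; a basis cannot lie in a proper hyperplane, since $D_k \ne 0$ in $N_1(X)$. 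This linear-algebra fact is exactly how the paper handles $a_i = -3$: it shows there must be at least \emph{two} (necessarily non-adjacent) indices with $d_i = -1$, then groups the system into a length-$4$ short toric system around them and derives $a_i \ge -2$ from strong left-orthogonality of the two triple sums. Either route closes the case with pure intersection-number bookkeeping; the hyperplane-arrangement analysis you propose is both unnecessary and, in your write-up, unproven.
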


\begin{proof}
By Proposition \ref{canonicalnefproposition} we have to show that $a_i \geq -2$ for every $i$. Assume
that $\mathcal{A} = A_1, \dots, A_n$ is a cyclic strongly exceptional toric system and
assume that $a_i < -2$ for some $i$. We denote $d_i^j := d_i(A_j)$ for every $j \in \on$. Then
$\sum_{j \in \on} d_i^j = a_i + 2 < 0$ by Proposition \ref{canonicalnefproposition}.
Because $\mathcal{A}$ is cyclic and strong,
every sum $\sum_{j \in I} A_j$ is strongly left-orthogonal for every proper cyclic interval
$I \subset \on$. In particular, $\sum_{j \in I} d_i^j \geq -1$ for every such $I$ by Proposition
\ref{degreelowerbound}. Now assume that
there exists $j \in \on$ such that $d_i^j = -1$. Without loss of generality, we can assume that $j = 1$.
Then by choosing a decomposition $\on \setminus \{1\} = I_1 \coprod I_2$, where $I_1, I_2$ are intervals,
we can consider $A_1, A_1', A_2'$, a short toric system of length $3$ as in example
\ref{shorttoricsystemexample1}. Then
$d^1_i + d_i(A_1') \geq -1$ and $d^1_i + d_i(A_2') \geq -1$, hence $d_i(A_1') \geq 0$ and $d_i(A_2')
\geq 0$, and we get $a_i \geq -3$. Now assume that $a_i = -3$. Then there exist at least two $j$ such
that $d_i^j = -1$; because otherwise, if there was only one $j$ with $d_i^j = -1$, the condition that
$\sum_{j = 1}^n d_i^j = -1$ would imply that $d_i^k = 0$ for all $k \neq j$ and thus all the $A_k$
with $k \neq j$ are contained in a hyperplane in $\pic(X)$, which is not possible. Let $j, k$ such that
$d_i^k, d_i^j = -1$. Then $\vert k - j \vert > 1$, as $A_l + A_{l + 1}$ must be strongly left-orthogonal for
every $l \in \on$. So we can consider a short toric system to periodicity $4$:
$A'_1, A'_2, A'_3, A'_4$ with $d_i(A'_1) = d_i(A'_3) = -1$. As $A'_1 + A'_2 + A'_3$ and $A'_2 + A'_3 + A'_4$
must be strongly left-orthogonal, this implies that $d_i(A'_2), d_i(A'_4) \geq 1$ and so $a_i \geq -2$, a
contradiction.
\end{proof}

The converse is also true in the toric case:

\begin{theorem}\label{toriccyclicexistence}
If $X$ is a smooth complete toric surface with nef anti-canonical divisor, then there exists a full cyclic
strongly exceptional sequence of invertible sheaves on $X$.
\end{theorem}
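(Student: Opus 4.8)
The plan is to reduce the problem to a finite, explicit list of surfaces and then, for each, to produce a toric system all of whose cyclic interval sums are strongly left-orthogonal. First I would invoke Proposition \ref{canonicalnefproposition}: $-K_X$ is nef if and only if $a_i \geq -2$ for every $i$, and by the minimal model program for toric surfaces (Proposition \ref{twelveminusn} together with the blow-up/blow-down description of Section \ref{surfacegeometry}) the surfaces satisfying this are exactly the sixteen listed in Table \ref{weakdelpezzofigure}. Among these, the five with $-K_X$ ample are toric del Pezzo surfaces of Picard rank at most $4$, so Theorem \ref{delPezzocyclicexistence} already supplies a full cyclic strongly exceptional sequence. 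Hence only the eleven weak del Pezzo surfaces, those carrying at least one $(-2)$-curve, remain.

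Next I would reduce the verification to a single cohomological vanishing. For \emph{any} exceptional toric system $\mathcal{A} = A_1, \dots, A_n$ on $X$, Proposition \ref{exceptionalsequencesalwayscyclic} shows (via Serre duality) that $A_I := \sum_{i \in I} A_i$ is left-orthogonal for every proper cyclic interval $I \subsetneq \on$, so $\mathcal{A}$ is automatically cyclic exceptional. Since $-K_X = \sum_i D_i$ is effective, Lemma \ref{h1vanishinglemma} shows that strong left-orthogonality of $A_I$ is equivalent to the single condition $h^1(A_I) = 0$. Thus the theorem reduces to exhibiting, on each of the eleven surfaces, an exceptional toric system for which $h^1(A_I) = 0$ holds for every proper cyclic interval $I$.

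To build such a system I would induct on $\rk \pic(X)$ along the blow-up structure. Contracting a $(-1)$-curve of a surface from the list raises the two neighbouring self-intersection numbers and so again lands in the list; conversely every surface is obtained from one of the minimal models $\mathbb{P}^2, \mathbb{F}_0, \mathbb{F}_1, \mathbb{F}_2$ by blowing up torus-fixed points while keeping all $a_i \geq -2$. The four minimal models carry cyclic strongly exceptional toric systems by Proposition \ref{Hirzebruchtoricsystems} (and the unique system $H, H, H$ on $\mathbb{P}^2$); note that on $\mathbb{F}_2$ the system $P, Q - P, P, Q - P$ keeps the $(-2)$-curve strictly inside its elements and has $Y(\mathcal{A}) \cong \mathbb{P}^1 \times \mathbb{P}^1$. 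For the inductive step, given $X = \mathrm{Bl}_x X'$ with $x$ a torus-fixed point at which $-K$ stays nef, and a cyclic strongly exceptional system $\mathcal{A}'$ on $X'$, I would augment $\mathcal{A}'$ at a suitable slot, inserting the exceptional class $R$ to obtain $\dots, A_i' - R, R, A_{i+1}' - R, \dots$. Since a blow-up preserves all cohomology groups, every proper cyclic interval sum of the augmented system is either the pullback $b^* A_{J'}$ of a cyclic interval sum of $\mathcal{A}'$, or the class $R$ itself (strongly left-orthogonal by Proposition \ref{verticalleftorthogonals}), or a strict transform $b^* A_{J'} - R$; only the last type requires a new argument.

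The hard part is precisely the strong left-orthogonality of the classes $b^* A_{J'} - R$. A direct computation with Lemma \ref{acyclicitylemma} gives $\chi(b^* A_{J'} - R) = \chi(A_{J'}) - 1$, so the issue reduces to showing $h^0(b^* A_{J'} - R) = h^0(A_{J'}) - 1$ with $h^1$ still vanishing, i.e. that the blown-up point $x$ is not a base point of $|A_{J'}|$. In the lattice-point language of Section \ref{toriccohomologyvanishing}, $x$ corresponds to a corner of the region $G_{A_{J'}}$, and base-point-freeness there is the condition that this corner is an honest lattice vertex, a property read off the degree tuple $(d_j)$ and controlled through the inequality $\sum_{j \in J} d_j \geq -1$ of Proposition \ref{degreelowerbound} together with the requirement that $a_i \geq -1$ be kept at each blow-up. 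The genuine obstacle is to arrange this base-point condition \emph{simultaneously} for all relevant cyclic interval sums $A_{J'}$ — equivalently, to choose at each stage both the base system and the order of the blow-ups so that every $(-2)$-curve remains interior to the system's elements, as in the $\mathbb{F}_2$ case. Because the list of surfaces is finite and explicit, and all blown-up points and base loci are torus-invariant (hence detected combinatorially by Corollary \ref{tilingcorollary}), this reduces to a finite, if intricate, case analysis, which I would organize by minimal model and by the configuration of $(-2)$-curves, exhibiting the explicit systems and checking the tiling condition of Corollary \ref{tilingcorollary} in each case.
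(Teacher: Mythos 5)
Your reductions are all sound and coincide with the paper's: the identification of the sixteen surfaces of Table \ref{weakdelpezzofigure} via Proposition \ref{canonicalnefproposition}, the treatment of $\mathbb{P}^2$ and the Hirzebruch surfaces by Proposition \ref{Hirzebruchtoricsystems}, the two remaining toric del Pezzo surfaces by Theorem \ref{delPezzocyclicexistence}, and the reduction of cyclic strong exceptionality to the single vanishing $h^1(A_I)=0$ for all cyclic intervals (Proposition \ref{exceptionalsequencesalwayscyclic} plus Lemma \ref{h1vanishinglemma}, using that $-K_X$ is effective on a toric surface). The gap is that your proof stops exactly where the content of the theorem begins: for the ten remaining surfaces (5b, 6b, 6c, 6d, 7a, 7b, 8a, 8b, 8c, 9) you never produce a toric system and never verify one; you only announce ``a finite, if intricate, case analysis'' that you ``would organize by minimal model.'' That case analysis \emph{is} the proof. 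The paper resolves it by exhibiting one explicit toric system per surface (Table \ref{cyclictable}) and checking strong left-orthogonality of every cyclic interval sum by the lattice-point criteria of Proposition \ref{cutoutproposition} and Corollary \ref{tilingcorollary}; nothing of this kind appears in your text, so existence is asserted rather than established.

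A second, more structural worry concerns the induction you propose in place of the explicit list. The inductive hypothesis ``$X'$ carries \emph{some} full cyclic strongly exceptional system'' is too weak to run the step: augmenting at a slot produces classes $b^*A_{J'}-R$ whose strong left-orthogonality depends on the position of the blown-up fixed point relative to the base loci of \emph{all} the interval sums $A_{J'}$, so one must choose the system on $X'$ (and indeed the choices made at every earlier stage) compatibly with the entire tower of blow-ups. The paper's own data shows this is a real phenomenon, not a technicality: its systems for 8a and 8c are augmentations from $\mathbb{P}^1\times\mathbb{P}^1$, and for 8c only one of the two possible assignments of $P$ and $Q$ yields a cyclic strongly exceptional system. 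A ``local'' induction that at each stage merely finds some admissible slot can therefore strand you at a later stage; making the induction hypothesis strong enough to avoid this is essentially equivalent to carrying out the explicit case-by-case construction you deferred.
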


\begin{proof}
The case of $\mathbb{P}^2$ is clear, and Hirzebruch surfaces are covered by \ref{Hirzebruchtoricsystems}.
For the remaining two del Pezzo surfaces the existence follows from Theorem \ref{delPezzocyclicexistence}.
For the other cases, we give in table \ref{cyclictable} a list of examples, one for each surface.
\begin{table}[htbp]
\centering
\begin{tabular}{|c|c|c|}
\hline
5b & \underline{-1}, \underline{-2}, 0, 1, -1 & $H - R_1, R_1, H - R_1 - R_2, R_2, H - R_2$ \\ \hline
6b & \underline{-1}, \underline{-2}, -1, \underline{-1}, 1, -1 & $H - R_1 - R_3, R_1, H - R_1 - R_2, R_2,
H - R_2 - R_3, R_3$ \\ \hline
6c & \underline{-1}, \underline{-2}, 0, 0, \underline{-1}, -2 & $H - R_1 - R_3, R_1, H - R_1 - R_2, R_2,
H - R_2 - R_3, R_3$ \\ \hline
6d & \underline{-1}, \underline{-2}, -2, 0, 1, -2 & $P - R_1, R_1, Q - R_1 - R_2, R_2, P - R_2, Q - P$ \\ \hline
7a & \underline{-1}, -1, -1, \underline{-1}, \underline{-2}, -1, \underline{-2} & $H - R_1 - R_2, R_2,
R_1 - R_2, H - R_1 - R_3 - R_4$,\\
& &  $R_4, R_3 - R_4, H - R_3$ \\ \hline
7b & \underline{-1}, \underline{-2}, 0, -1, \underline{-1}, \underline{-2}, -2 & $H - R_1 - R_3, R_3,
R_1 - R_3, H - R_1 - R_2 - R_4$,\\
& &  $R_4, R_2 - R_4, H - R_2$ \\ \hline
8a & \underline{-1}, -2, \underline{-1}, -2, \underline{-1}, -2, \underline{-1}, -2 & $P - R_1 - R_4,
R_1, Q - R_1 - R_2, R_2, P - R_2 - R_3$, \\
& &  $R_3, Q - R_3 - R_4, R_4$ \\ \hline
8b & \underline{-1}, -2, \underline{-1}, -1, \underline{-2}, \underline{-1}, -2, \underline{-2} &
$H - R_1 - R_2 - R_4, R_4, R_2 - R_4, R_1 - R_2, H - R_1 - R_3$, \\
& & $R_3 - R_5, R_5, H - R_3 - R_5$ \\ \hline
8c & \underline{-1}, \underline{-2}, -2, \underline{-2}, \underline{-1}, -2, 0, -2 & $P - R_1 - R_4,
R_4, R_1 - R_4, P + Q - R_1 - R_3, R_3 - R_2$, \\
& & $R_2, P - R_2 - R_3, - P + Q$\\ \hline
9 & \underline{-1}, \underline{-2}, -2, \underline{-1}, \underline{-2}, -2, \underline{-1}, \underline{-2},
-2 & $H - R_1 - R_4 - R_5, R_4, R_1 - R_4, H - R_1 - R_3 - R_6$, \\
& &  $R_6, R_3 - R_6, H - R_2 - R_3 - R_5, R_2, R_5 - R_2$ \\ \hline
\end{tabular}
\caption{Cyclic strongly exceptional toric systems on toric surfaces with nef anti-canonical divisor.}
\label{cyclictable}
\end{table}
By construction, these toric systems are exceptional and to check that these are indeed cyclic strongly
exceptional is a direct application of Proposition \ref{cutoutproposition} and Corollary
\ref{tilingcorollary}. Note that for 8a and 8c we have given examples which are augmentations
of cyclic strongly toric systems on $\mathbb{P}^1 \times \mathbb{P}^1$ and there is an ambiguity
of assigning $P$ and $Q$. For 8a, both cases are cyclic strongly exceptional. For 8c, we choose $Q$
to be the class of the unique torus invariant prime divisor with self-intersection zero on 8c.
\end{proof}

\section{Straightening of strongly left-orthogonal toric divisors}\label{straighteningsection}

In order to proof Theorem \ref{toricclassification} we classify strongly
left-orthogonal divisors on a given toric surface $X$. For this, we introduce in this section a
procedure for simplifying a given strongly left-orthogonal divisor. We call this procedure a
{\em straightening}. We will classify strongly left-orthogonal divisors up to straightening.

\begin{lemma}\label{wellreductionlemma}
Let $D$ be a $T$-invariant strongly left-orthogonal divisor on $X$ and $i \in \on$ such that $D_i^2 = -1$.
If $d_i(D) < 0$, then either $h^0(D) = 0$ or $D = D_i$.
\end{lemma}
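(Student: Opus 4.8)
The plan is to contract the curve $D_i$ and read off the conclusion from how cohomology behaves under this contraction. Since $a_i = D_i^2 = -1$, section \ref{surfacegeometry} allows me to equivariantly blow down $D_i$ to a smooth complete (hence rational) toric surface $b \colon X \to X'$, and to write $D = b^*\bar D + \gamma D_i$ for a unique $\bar D \in \pic(X')$ and $\gamma \in \Z$. By Proposition \ref{dinefproposition}(i) we have $d_i(D) = D . D_i = \gamma D_i^2 = -\gamma$, so the hypothesis $d_i(D) < 0$ is exactly $\gamma \geq 1$. I may assume $h^0(D) \neq 0$, since otherwise there is nothing to prove.

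First I would control $h^0$ under the contraction. Using Lemma \ref{coordinatedegreelemma}, each restriction $\sh{O}(b^*\bar D + kD_i)\vert_{D_i}$ has degree $-k$, which is negative for $k \geq 1$; feeding this into the short exact sequences exactly as in the proof of Lemma \ref{pullbackcohomologyvanishinglemma} and descending from $k = \gamma$ to $k = 0$ gives $h^0(D) = h^0\big(b^*\bar D\big) = h^0_{X'}(\bar D)$. Hence $\bar D$ is effective on $X'$. The same descent applied to $K_X + D = b^*(K_{X'} + \bar D) + (\gamma + 1)D_i$ — whose restriction to $D_i$ also has negative degree, as $\gamma + 1 \geq 2$ — yields $h^0(K_X + D) = h^0_{X'}(K_{X'} + \bar D)$. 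By Serre duality the left-hand side equals $h^2(-D)$, which vanishes because $D$ is left-orthogonal. Thus $h^2_{X'}(-\bar D) = h^0_{X'}(K_{X'} + \bar D) = 0$.

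Next I would bring in the numerical condition. The Riemann--Roch computation for a blow-up (precisely as in the proof of Proposition \ref{cutoutproposition}) gives $\chi(-D) = \chi_{X'}(-\bar D) - \binom{\gamma + 1}{2}$, using $\binom{-\gamma}{2} = \binom{\gamma+1}{2}$. Since $D$ is numerically left-orthogonal, $\chi(-D) = 0$, so $\chi_{X'}(-\bar D) = \binom{\gamma + 1}{2} \geq 1$. Now suppose $\bar D \neq 0$; as $\bar D$ is effective, $-\bar D$ cannot be effective, so $h^0_{X'}(-\bar D) = 0$. Combining this with $h^2_{X'}(-\bar D) = 0$ from the previous step gives $\chi_{X'}(-\bar D) = -h^1_{X'}(-\bar D) \leq 0$, contradicting $\chi_{X'}(-\bar D) \geq 1$. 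Therefore $\bar D = 0$, and then $\chi_{X'}(-\bar D) = \chi(\sh{O}_{X'}) = 1$ forces $\binom{\gamma + 1}{2} = 1$, i.e. $\gamma = 1$. Hence $D = b^*0 + D_i = D_i$, as claimed.

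The main obstacle is bookkeeping rather than conceptual: one must track $h^0$, $h^2$, and $\chi$ separately through the contraction and recognize that the two halves of left-orthogonality play distinct roles — the vanishing $h^2(-D) = 0$ kills $h^0_{X'}(K_{X'} + \bar D)$, while the numerical identity $\chi(-D) = 0$ forces $\chi_{X'}(-\bar D)$ to be strictly positive. It is precisely the clash of these two facts (when $\bar D \neq 0$) that pins down $\bar D = 0$ and $\gamma = 1$. Getting the signs right in $\binom{-\gamma}{2} = \binom{\gamma+1}{2}$ and in the adjunction relation $K_X = b^* K_{X'} + D_i$ is where care is needed; note also that only left-orthogonality of $D$, not the full strong left-orthogonality, is actually used.
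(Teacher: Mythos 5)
Your proof is correct, but it takes a genuinely different route from the paper's. The paper reduces the lemma to its classification machinery relative to a minimal model $X_0$: it writes $D = (D)_{t-1} + \gamma_t D_i$, invokes Proposition \ref{negativegammas} and Remark \ref{negativegammasremark} to conclude that $(D)_0$ fails to be pre-left-orthogonal (forcing $h^0(D) = 0$ unless $(D)_0 = 0$), and then in the case $(D)_0 = 0$ quotes the classification of ``vertical'' divisors in Proposition \ref{verticalleftorthogonals} together with Lemma \ref{dibasislemma} to pin down $D = D_i$. You instead contract $D_i$ once, write $D = b^*\bar D + \gamma D_i$, and track $h^0$, $h^2$ and $\chi$ through the contraction: the descent via Lemma \ref{coordinatedegreelemma} (as in Lemma \ref{pullbackcohomologyvanishinglemma}) gives $h^0(D) = h^0_{X'}(\bar D)$ and $h^2(-D) = h^2_{X'}(-\bar D)$, while Riemann--Roch (as in Proposition \ref{cutoutproposition}) gives $\chi_{X'}(-\bar D) = \binom{\gamma+1}{2} \geq 1$; the clash between $\chi_{X'}(-\bar D) \geq 1$ and $h^0_{X'}(-\bar D) = h^2_{X'}(-\bar D) = 0$ forces $\bar D = 0$, and then $\binom{\gamma+1}{2} = 1$ forces $\gamma = 1$, i.e. $D = D_i$. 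Your computation of $\chi(-D)$ and the sign bookkeeping ($d_i = -\gamma$, $K_X = b^*K_{X'} + D_i$) check out. What your approach buys: it is self-contained (no appeal to the classification results or to any choice of minimal model), it determines $\gamma = 1$ numerically rather than by classification, and — as you correctly observe — it only uses left-orthogonality of $D$, so it proves a slightly stronger statement than the one the paper's proof establishes, since the paper's argument leans on the ``strongly'' hypothesis through Remark \ref{negativegammasremark} and the strong part of Proposition \ref{verticalleftorthogonals}. What the paper's approach buys is brevity given machinery already in place, and consistency with the reduction-to-$X_0$ viewpoint used throughout sections \ref{rationalacyclicsection}--\ref{toricproofs}.
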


\begin{proof}
We write $D = \gamma_t D_i + (D)_{t - 1}$, where $X_{t - 1}$ is the blow-down of $X$ along $D_i$.
Then $d_i = -\gamma_t$ by Proposition \ref{dinefproposition} and Lemma \ref{coordinatedegreelemma}.
By Proposition \ref{negativegammas} and Remark \ref{negativegammasremark} this implies that $(D)_0$
is not pre-left-orthogonal with respect to the choice of any minimal model $X_0$ for $X$ which factorizes
through $X_{t - 1}$. But then we either have $h^0(D) = 0$ or $(D)_0 = 0$ or both. If $(D)_0 = 0$, then
by Proposition \ref{verticalleftorthogonals} we have $D = R_p - R_q$ for some $p, q \in \ot$ or $D = R_p$
for $p \in \ot$. In the first case, we also get $h^0(D) = 0$, in the second, we necessarily have
$R_p = D_i$ by Lemma \ref{dibasislemma}.
\end{proof}

So for any strongly left-orthogonal divisor $D$ which is not a prime divisor $D_j$, we will assume
without loss of generality that $d_i \geq 0$ for any $i \in \on$ such that $D_i^2 = -1$. Otherwise,
we will just take $-D$ instead of $D$. Let us write $D = \gamma_t D_i + (D)_{t - 1}$ for $X
\rightarrow X_{t - 1}$ the blow-down of $D_i$. If $-1 \leq \gamma_t \leq 0$, then $T_{\gamma_t}^+ =
\emptyset$ and it follows from Lemma \ref{unwindinglemma}, Proposition \ref{cutoutproposition}, and
Corollary \ref{tilingcorollary} that $(D)_{t - 1}$ is strongly left-orthogonal on $X_{t - 1}$.
By iterating, we obtain a sequence of blow-downs $X = X_t \rightarrow \cdots \rightarrow
X_s$, where $s \geq 0$ and $X_s$ lies over some (not necessarily completely specified yet) minimal model
$X_0$. We can write $D = (D)_s + \sum_{i = s + 1}^t \epsilon_i R_i$, where $\epsilon_i \in \{0, - 1\}$
for every $i$ and $R_i$ is the total transform on $X$ of the exceptional
divisor of the blow-up $X_i \rightarrow X_{i - 1}$. The divisor $(D)_s$ now has the property that
either $(D)_s$ coincides with a prime divisor $D_i$ on $X_s$ with $D_i^2 = -1$ or
$d_i\big((D)_s\big) \geq 2$ with respect to every $T$-invariant prime divisor $D_i$ on $X_s$ with
$D_i^2 = -1$. It follows from Corollary \ref{acyclicitylemma} (\ref{acyclicitycorollaryiv}) that
$h^0(D) = h^0\big((D)_s\big) + s - t$.

\begin{definition}
Assume $(D)_s$ is constructed as above and does not coincide with a prime divisor $D_i$ on $X_s$. Then
we call $(D)_s$ a {\em straightening} of $D$. A divisor $D$ is {\em straightened} if
$D = (D)_s$ (and consequently $X = X_s$).
\end{definition}

In the sequel we will keep the index `$s$' to denote that $X_s$ has been chosen with respect to the
straightening of some strongly left-orthogonal divisor. In general, $s \neq 0$ and a straightening $(D)_s$
is not unique. However, we will show that the existence of a straightened divisor imposes a strong condition
on the geometry of $X$.

\begin{proposition}\label{straighteningproposition}
Let $X$ be a smooth complete toric surface and $D$ a straightened divisor on $X$.
Then either $-K_X$ is nef or $X \cong \mathbb{F}_a$ with $a \geq 3$.
\end{proposition}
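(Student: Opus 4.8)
The plan is to distinguish cases according to whether the fan of $X$ has a ray $l_{i_0}$ with $a_{i_0}=D_{i_0}^2=-1$, i.e.\ whether $X$ admits an equivariant blow-down. If no $a_i$ equals $-1$, then by the toric minimal model program recalled in Section~\ref{surfacegeometry} the surface $X$ is itself a minimal model, so $X=\mathbb{P}^2$ or $X=\mathbb{F}_a$ with $a\neq 1$. Their self-intersection sequences are $1,1,1$ and $0,a,0,-a$, so Proposition~\ref{canonicalnefproposition} gives that $-K_X$ is nef for $\mathbb{P}^2,\mathbb{F}_0,\mathbb{F}_2$ (all $a_i\ge -2$), while for $\mathbb{F}_a$ with $a\ge 3$ we are already in the second alternative of the statement. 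In this case the divisor $D$ plays no role.

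So assume from now on that $a_{i_0}=-1$ for some $i_0$, so that $X$ is non-minimal, and suppose for contradiction that $a_j\le -3$ for some $j$; it then suffices to reach a contradiction (note that in this situation $X$ is neither $\mathbb{P}^2$ nor $\mathbb{F}_a$ for $a\neq 1$, these being minimal, nor $\mathbb{F}_1$, since $a_j\le -3$, so $X$ is no Hirzebruch surface). First I extract the constraints on the tuple $d_i:=d_i(D)$. By Proposition~\ref{degreelowerbound}, $\sum_{i\in I}d_i\ge -1$ for every cyclic interval $I$; in particular $d_i\ge -1$ for all $i$ and $d_i+d_{i+1}\ge -1$, so no two cyclically adjacent $d_i$ equal $-1$. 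By the definition of a straightening the sign of $D$ is fixed so that $d_i\ge 0$ on each $(-1)$-curve and every $(-1)$-curve with $d_i\in\{0,1\}$ has been contracted; hence $d_{i_0}\ge 2$. Using the remark following Proposition~\ref{degreelowerbound} I choose the blow-down sequence $X=X_t\to\cdots\to X_0$ so that $R_t=D_{i_0}$; then the coefficient of $R_t$ in $D$ is $\gamma_t=-d_{i_0}\le -2<0$. This has two consequences. First, $(D)_0\neq 0$: one checks from Proposition~\ref{verticalleftorthogonals} and Lemma~\ref{dibasislemma} that a divisor with $(D)_0=0$ would satisfy $d_{i_0}\le 1$ at the chosen curve $R_t=D_{i_0}$. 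Second, it is $(D)_0$ itself, and not $(-D)_0$, that is strongly pre-left-orthogonal: by Remark~\ref{negativegammasremark}, if $(D)_0$ were not strongly pre-left-orthogonal then $-D$ would be left-orthogonal with $(-D)_0$ strongly pre-left-orthogonal, whence Proposition~\ref{negativegammas} would force the coefficient $-\gamma_t=d_{i_0}\ge 2$ of $R_t$ in $-D$ to be $\le 0$, a contradiction. Thus $(D)_0$ is strongly pre-left-orthogonal and the blow-up $X_t\to X_{t-1}$ producing $R_t=D_{i_0}$ has $\gamma_t\le -2$.

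Now the contradiction is to be read off from the lattice-point picture of Section~\ref{toriccohomologyvanishing}. Since $D$ is strongly left-orthogonal and $(D)_0$ is strongly pre-left-orthogonal, Corollary~\ref{tilingcorollary} asserts that the interior lattice points $G_{(D)_0}^\circ$ are exactly tiled by the triangles $T^+_{\gamma_k}$, and that $G_{(D)_0}\setminus G_D$ is tiled by the $T^-_{\gamma_k}$, one pair for each blow-up from $X_0$ to $X$; by the previous paragraph the blow-up producing $D_{i_0}$ contributes a genuinely nonempty triangle $T^+_{\gamma_t}$, as $\gamma_t\le -2$. On the other hand, the relation $l_{j-1}+a_jl_j+l_{j+1}=0$ with $a_j\le -3$ forces, after normalizing $l_j=(1,0)$, $l_{j+1}=(0,1)$, $l_{j-1}=(|a_j|,-1)$, a correspondingly deep concavity of the polygonal boundary of $G_{(D)_0}$ in the direction $l_j$ (recall from the remark closing Section~\ref{toriccohomologyvanishing} that a value $d_i=-1$ can occur only at a ray with $a_i\le -2$). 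The heart of the argument — and the step I expect to be the main obstacle — is to show that these two features cannot coexist: by tracking how the self-intersection numbers and the triangles $T^\pm_{\gamma_k}$ evolve under the blow-ups $X_0\to X$, and how the hyperplanes $H_p,H_q,H_r$ of Proposition~\ref{cutoutproposition} lie near $l_j$, one must verify that a corner triangle of weight $\ge 2$ together with a ray of self-intersection $\le -3$ over-determines the tiling of $G_{(D)_0}^\circ$, either leaving an interior lattice point uncovered or forcing some $T^+_{\gamma_k}$ to protrude beyond $G_{(D)_0}^\circ$, in contradiction with Corollary~\ref{tilingcorollary}. This combinatorial bookkeeping on the fan around $l_j$ is the technical core of the proof.
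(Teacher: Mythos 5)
Your reduction is sound: a surface carrying both a ray with $a_{i_0}=-1$ and a ray with $a_j\le -3$ is neither $\mathbb{P}^2$ nor a Hirzebruch surface, so the proposition is indeed equivalent to showing that a non-minimal toric surface with some $a_j\le -3$ admits no straightened divisor. Your preliminary normalizations are also consistent with the paper's setup: the sign convention forces $d_{i_0}\ge 2$ at every $(-1)$-ray of a straightened divisor, hence $\gamma_t\le -2$, and your argument via Proposition \ref{negativegammas} and Remark \ref{negativegammasremark} that $(D)_0$ itself (not $(-D)_0$) is strongly pre-left-orthogonal is fine. But the proof stops exactly where the real work begins. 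The entire content of the proposition is the claim you label ``the technical core'': that the nonempty triangle $T^+_{\gamma_t}$ and a ray with $a_j\le -3$ cannot coexist with the tiling of Corollary \ref{tilingcorollary}. You do not prove this; you only describe what a contradiction ought to look like (``leaving an interior lattice point uncovered or forcing some $T^+_{\gamma_k}$ to protrude''). The paper devotes a separate statement (Lemma \ref{pivotlemma}) and a full page of case analysis to precisely this point, and the structure of that argument shows why your sketch, as stated, would not go through directly.

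Concretely, the paper's argument is localized at the ray $l_f$ with $a_f\le -3$, not at the $(-1)$-curve: Lemma \ref{pivotlemma} produces a lattice point $m$ on the line $Z_f$ (one step inside $H_f$) lying strictly inside $H_i$ for all rays $l_i$ contained in the two cones spanned by $l_f$ with its nearest basis-partners $l_{e_1},l_{g_1}$, by comparing $\vert Z_f'\vert$ with $\vert Z_f''\vert$. One must then split into cases according to whether $-l_f$ is a ray of the fan and according to the coefficient $b$ in $l_{e_1}+b\,l_f+l_{g_1}=0$. When $-l_f$ is a ray, the conclusion is not a tiling contradiction at all: one gets $(D)_0=kP+Q$, whose section polygon satisfies $G^\circ_{(D)_0}=\emptyset$, so a straightened $D$ forces $\gamma_i=0$ for all $i$ and hence $X=X_0\cong\mathbb{F}_{\vert b\vert}$, which contradicts non-minimality in your framing. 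When $-l_f$ is not a ray (the cases $b\in\{0,-1\}$), the contradiction comes from an index computation: the subgroup of $N$ generated by $l_f$ and any remaining ray $l_i$ has index at least $2$ (resp.\ $3$), which forces the relevant triangles $T^+$ to miss $Z_f'$ entirely, whence $Z_f'=\emptyset$ and $a_f\ge -2$. None of this is visible from, or implied by, the interaction you propose between the triangle at $D_{i_0}$ and the concavity at $l_j$; in particular $D_{i_0}$ and $l_j$ can be far apart in the fan, so the ``bookkeeping around $l_j$'' must in fact be an argument about all rays forming a basis with $l_j$ and about the possible minimal models below $X$ — and that argument is entirely missing. (A small additional slip: by the remark after Proposition \ref{degreelowerbound}, $d_i=-1$ can also occur at a ray with $a_i=0$, namely when $X_0\cong\mathbb{P}^1\times\mathbb{P}^1$ and $D$ is the pullback of $\pm(P-Q)$, so it is not true that $d_i=-1$ only happens where $a_i\le -2$.)
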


To prove Proposition \ref{straighteningproposition} we first show an auxiliary statement.
Let $f \in \on$ and denote $e_1, \dots, e_r$, $g_1, \dots, g_u \in \on$ all indices $i$ such that
$l_f$ and $l_i$ form a basis of $N$, where the enumeration is as follows. Consider the line generated by
$l_f$ in $N_\Q$, Then all the $e_i$ are contained in one half plane bounded by this line and
all the $g_j$ in the other. Moreover, we require that for any $i < j$, the vector $l_{e_j}$ is contained in
the cone generated by $l_f$ and $l_{e_i}$, and $l_{g_j}$ is contained in the cone generated by $l_f$ and
$l_{g_i}$, respectively. We denote $S \subset \on$ all $i$ such that $l_i$ is contained in one of the cones
$\sigma_1$, $\sigma_2$, where $\sigma_1$ is generated by $l_{e_1}$ and $l_f$, and $\sigma_2$ is generated
by $l_{g_1}$ and $l_f$. Let $D = \sum_{i \in \on} c_i D_i$ be a $T$-invariant divisor. We denote
\begin{align*}
Z_f & := \{m \in M \mid l_i(m) = -c_f + 1\},\\
Z_f' & := \big\{m \in Z \mid l_i(m) > -c_i \ {\rm for\ all } \ i \in \{f, e_1, \dots, e_j, g_1, \dots, g_k\}\big\},\\
Z_f'' & := \big\{m \in M \mid l_f(m) = 0 \ {\rm and } \ l_i(m) \geq 0 \ {\rm for } \ i \in \{e_1, \dots, e_r, g_1, \dots, g_u\}\big\}.
\end{align*}

\begin{lemma}\label{pivotlemma}
If $a_f \leq -3$, then there exists $m \in Z_f$ such that $l_i(m) > -c_i$ for all $i \in S$.
\end{lemma}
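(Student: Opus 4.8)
The plan is to choose coordinates on $N$ adapted to $l_f$ and reduce the lemma to an elementary statement about integer points on the single line $Z_f$. Since $l_{f-1},l_f$ and $l_f,l_{f+1}$ are bases of $N$, I may assume $l_f=(0,1)$ and $l_{f+1}=(-1,0)$; the relation $l_{f-1}+a_fl_f+l_{f+1}=0$ then forces $l_{f-1}=(1,-a_f)$. Writing $m=(m_1,m_2)\in M$ we have $l_f(m)=m_2$, so $Z_f$ is the horizontal line $m_2=-c_f+1$. In these coordinates a ray forms a basis with $l_f$ exactly when its first coordinate is $\pm1$, so the $e$-rays are the $(1,y)$ lying in $\sigma_1$ and the $g$-rays the $(-1,y)$ lying in $\sigma_2$, and the immediate neighbours are $f-1=e_r$ and $f+1=g_u$. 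Translating $D$ by a character $m_0$ changes each $c_i$ by $l_i(m_0)$ and moves $Z_f$ and the desired point accordingly, so this normalization is harmless; note that $d_f=c_{f-1}+a_fc_f+c_{f+1}$ is translation invariant, and that $D$ is strongly left-orthogonal in the situation where this lemma is applied.

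Next I would restrict each inequality $l_i(m)>-c_i$ with $i\in S$ to the line $Z_f$. Writing a point of $Z_f$ as $(t,-c_f+1)$, a ray $l_i=(x_i,y_i)\in\sigma_1$ has $x_i>0$ and yields a lower bound $t>\lambda_i$, a ray in $\sigma_2$ has $x_i<0$ and yields an upper bound $t<\mu_i$, and the constraint for $i=f$ is automatic since $l_f(m)=-c_f+1>-c_f$. Thus the lemma reduces to finding an integer in the open interval $(\Lambda,U)$, where $\Lambda:=\max_{i\in S\cap\sigma_1}\lambda_i$ and $U:=\min_{i\in S\cap\sigma_2}\mu_i$. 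Using $y_{f-1}=-a_f$ and $l_{f+1}=(-1,0)$, a direct computation gives $\lambda_{f-1}=-c_{f-1}-a_fc_f+a_f$ and $\mu_{f+1}=c_{f+1}$, so the sub-interval cut out by the two immediate neighbours has length $\mu_{f+1}-\lambda_{f-1}=d_f-a_f$. By Proposition \ref{degreelowerbound} (applied to the cyclic interval $\{f\}$) we have $d_f\geq-1$, and $a_f\leq-3$ then forces this length to be at least $2$; an open interval of length at least $2$ always contains an integer.

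The heart of the argument, and the step I expect to be the main obstacle, is to show that the two immediate neighbours are the \emph{binding} inequalities, i.e.\ that $\Lambda=\lambda_{f-1}$ and $U=\mu_{f+1}$; once this holds, every lower bound satisfies $\lambda_i\leq\lambda_{f-1}<t$ and every upper bound $\mu_i\geq\mu_{f+1}>t$, so the chosen $t$ verifies \emph{all} of the $S$-constraints at once. Geometrically the claim says that just above the line $H_f$ the right and left boundaries of the convex chamber $\overline{G_D}$ are supported by $H_{f-1}$ and $H_{f+1}$. This is immediate whenever the corresponding facets are non-degenerate: convexity of $\overline{G_D}$ puts every other right-hand line $H_i$ weakly to the left of $H_{f-1}$ at height $-c_f+1$, giving $\lambda_i\leq\lambda_{f-1}$, and symmetrically on the left. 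The difficulty is that a neighbouring facet can collapse, which by Lemma \ref{dibasislemma} and Proposition \ref{dinefproposition} happens exactly when $d_{f-1}\leq0$ or $d_{f+1}\leq0$; then the binding inequality passes to a farther ray of $\sigma_1$ (resp.\ $\sigma_2$). I would resolve this by tracking the binding constraint outward along the rays, which lie in convex position so that their intersections with $H_f$ occur in monotone order, and by again invoking Proposition \ref{degreelowerbound} to bound the cumulative slack $d_{e_j},d_{g_j}\geq-1$ lost along the collapsed facets. The bookkeeping shows that even in the worst degenerate case the left endpoint $\Lambda$ is shifted by an amount compensated by the gain $-a_f\geq3$, so $(\Lambda,U)$ still contains an integer, completing the proof.
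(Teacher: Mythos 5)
Your coordinate normalization and the computation for the two adjacent rays are correct, and in fact reproduce the paper's own counting step: the constraints from $l_{f-1}$ and $l_{f+1}$ cut out of $Z_f$ an open interval of length $\mu_{f+1}-\lambda_{f-1}=d_f-a_f$, and with $d_f\geq -1$ (Proposition \ref{degreelowerbound}) and $a_f\leq -3$ this interval contains $d_f-a_f-1\geq 1$ lattice points; the paper obtains exactly this count from the relation $k_{e_r}+k_{g_u}=d_f-a_f$.

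However, the lemma asserts strict feasibility for \emph{all} $i\in S$, and that is where your proposal stops being a proof: you explicitly defer the decisive step (``the bookkeeping shows\dots''), and the mechanism you sketch cannot work as stated, because it uses only convexity and the degree bounds $d_i\geq -1$, whereas the lemma genuinely needs the strong left-orthogonality of $D$. Concretely, in your coordinates suppose $a_{f-1}=-1$, so that $l_{f-2}=(1,-a_f-1)$ is the next basis-former; a direct computation gives $\lambda_{f-2}-\lambda_{f-1}=1-d_{f-1}$. Thus a single collapsed facet with $d_{f-1}=0$ (perfectly consistent with Proposition \ref{degreelowerbound}) already shifts $\Lambda$ by $1$, and if $d_f=-1$ and $a_f=-3$ the surviving open interval has integer endpoints and length $1$, hence contains no lattice point; each further collapsed facet costs at least $1$ more, while the ``compensation by $-a_f\geq 3$'' you invoke has already been spent in producing the initial length $d_f-a_f\geq 2$. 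Ruling out such configurations is precisely the content of the lemma, and the paper does it with tools absent from your outline: it blows down to an intermediate surface $X_p$ on which $l_{e_1},l_f,l_{g_1}$ are consecutive rays, and then imposes the cut-out/tiling conditions of Proposition \ref{cutoutproposition} and Corollary \ref{tilingcorollary} on the blow-up triangles $T^{\pm}_{\gamma}$ (this is where strong left-orthogonality enters with full force), together with a separate lattice-point comparison $\vert Z_f'\vert>\vert Z_f''\vert$ to dispose of the rays of $S$ that do \emph{not} form a basis with $l_f$. Your outline lumps those non-basis rays in with the basis-formers, but your slack estimate $d_{e_j},d_{g_j}\geq -1$ is formulated only for the latter, and the relation between $d_i$ and the position of $H_i$ on $Z_f$ is different for the former. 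So the proposal has a genuine gap exactly at what you yourself call the heart of the argument.
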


\begin{proof}
It follows from Proposition \ref{smoothblowdownproposition} that there exists
a sequence of blow-downs $X = X_t \rightarrow \cdots \rightarrow X_p$ such that the cones generated by
$l_f$, $l_{e_1}$ and $l_{g_1}$ do not contain any lattice vector which belongs to the fan associated to
$X_p$. Correspondingly, we have injective maps $\phi: [r] \rightarrow [t]$, $\psi: [s] \rightarrow [t]$
such that $R_{\phi_i}$, $R_{\psi_j}$ are the total transforms the exceptional divisors associated to
the primitive vectors $l_{e_i}$ and $l_{g_j}$, respectively. Then for $i < j$, we have $R_{\phi_i} <
R_{\phi_j}$ and $R_{\psi_i} < R_{\psi_j}$, respectively, and $R_{\phi_i}$, $R_{\psi_j}$ incomparable
for all $i, j$. Note that we have the relations $0 = a_f l_f + l_{e_j} + l_{g_k}$, where $a_f = D_f^2$, and
$0 = l_{e_1} + b l_f + l_{g_1}$ for some $b \geq a_f$.
We write $D =(D)_p + \sum_{i = p + 1}^t \gamma_i R_i$. Then the $T_{\gamma_{\phi_i}}^+$,
$T_{\gamma_{\phi_i}}^-$ and $T_{\gamma_{\psi_i}}^+$, $T_{\gamma_{\psi_i}}^-$ have to fulfill the conditions
of Proposition \ref{cutoutproposition} and Lemma \ref{tilingcorollary}.
In particular, we have $d_f = d_f(D) = c_{e_1} + b c_f + c_{g_1} + \sum_{i = 1}^j \gamma_{\phi_i} +
\sum_{i = 1}^k \gamma_{\psi_i}$ with $d_f \geq -1$ by Proposition
\ref{degreelowerbound}.
Let $l_{e_r}(m) = -c_{e_r} + k_{e_r}$ and $l_{g_u}(m) = -c_{g_u} + k_{g_u}$ for some $m \in Z_f$ and $k_{e_r},
k_{g_u} \in \Z$. Then we have $k_{e_r} + k_{g_u} = c_{e_r} + a_f c_f + c_{k_u} - a_f = d_f - a_f$. The number
of solutions such that $k_{e_r}, k_{g_u} > 0$ is given by $\max \{0, d_f - a_f - 1 \geq 1\}$, which is always
nonzero for $a_f \leq -3$. We denote . We
claim that if $a_f \leq -3$ then there exists $m \in Z_f'$ such that $l_i(m) > -c_i$ for all $i \in S$.
Assume there exists $i \in S \setminus \{f, e_1, \dots, e_r, g_1, \dots, g_u\}$ such that
$l_i(m) \leq -c_i$ for some $m \in Z_f'$. Without loss of generality, we assume that $l_i$ is contained in
$\sigma_1$. As $l_i$ and $l_f$ do not form a basis of $N$, then the fact that the hyperplane $H_i$ cuts
out lattice points in $T'$ implies that $H_i$ also cuts out at least the same number of lattice points
$m$ of $Z_f''$. But because $a_f \leq -3$, we have $\vert Z_f' \vert > \vert Z_f'' \vert$ and the
claim follows.
\end{proof}

\begin{proof}[Proof of Proposition \ref{straighteningproposition}]
If there does not exist $f \in \on$ such that $a_f < -2$, then $-K_X$ is nef by Proposition
\ref{canonicalnefproposition}. So if there exists such an $f$ we show that $X_s \cong \mathbb{F}_a$
for $a \geq 3$. With above notation there exists $m \in M$ such that $l_i(m) > -c_i$ for all $i \in S$
by Lemma \ref{pivotlemma}. Assume first that there exists $u \in \on$ such that $l_u = -l_f$.
In this case there do not exist $l_v$ which are contained in one of the cones generated by $l_u$ and $l_{e_1}$
or $l_u$ and $l_{g_1}$, respectively, because any blow-up of one of these cones would require a lattice
vector $l_i$ which forms a basis of $N$ together with $l_u$ and therefore with $l_f$. This lattice vector
then would be one of the $l_{e_i}$ or $l_{g_j}$, which is excluded by assumption.
But then the hyperplane $H_u$ must pass through $Z_f$, as otherwise $h^2(-D) \neq 0$, and $(D)_0
= kP + Q$, where $k \geq -1$, with respect to the minimal model $X_0$ associated to the fan generated by
$l_{e_1}$, $l_{g_1}$, $l_f$, $l_u$. But $G^\circ_{nP + Q} = \emptyset$ and thus $\gamma_i \in \{0, -1\}$ for
all $p < i \leq t$ and in fact $\gamma_i = 0$, as $D$ is straightened. This implies $X = X_0 \cong
\mathbb{F}_{\vert b \vert}$, where $l_{e_1} + b l_f + l_{g_1} = 0$. Such an $l_u$ necessarily exists in
the following cases. If $a > 1$,
then by the classification of toric surfaces $l_f$ must belong to any minimal model
for $X$ which can be obtained by blowing down $X_p$, and there necessarily exists $l_u = -l_f$. If $a = 1$,
then $l_{e_1}$ and $l_{g_1}$ form a basis of $N$ and the blow-up of the cone generated by these two just
yields $l_u$. So either $X_0 = \mathbb{F}_1$ or $X_0 = \mathbb{P}^2$. If $a < -1$, then none of $l_{e_1}$,
$l_{g_1}$, $l_f$ can be blown-down and thus together with $-l_f$ must span the fan of a minimal model
$\mathbb{F}_{\vert b \vert}$.

It remain to consider the cases $b \in \{0, -1\}$ and there is no $u \in \on$ with $l_u = -l_f$.
If $a = 0$, then
$l_{e_1} = -l_{g_1}$ and $l_{e_1}$, $l_{g_1}$, $l_f$ must be part of a fan of any minimal model $X_0$
which is a blow-down of $X_p$. Moreover, there exists $l_{v_1}$ such that $l_f + b l_{v_1} + l_{e_1} = 0$,
where without loss of generality $b > 0$ (and therefore $b > 1$), and all $l_i$ in the fan associated to
$X_p$ for $i$ different from $e_1$, $g_1$, $f$, $v_1$, are contained in the cone generated by $l_{v_1}$ and
$l_{g_1}$. Then we have $(D)_0 = kP + lQ$ with respect to the coordinates in $\pic(X_0)$, where
the fan of $X_0$ is generated by $l_{e_1}$, $l_{g_1}$, $l_f$, $l_{v_1}$. The divisor $(D)_0$ is strongly
pre-left-orthogonal and for any $i \notin \{e_1, g_1, f, v_1\}$, the index of the subgroup of $N$ generated
by $l_f$ and $l_i$ is at least $3$. Let $v_1, \dots, v_w \subset \on$ denote all elements such that
$l_{v_i}$ forms a basis of $N$ together with $l_{g_1}$ and denote $D = (D)_0 + \sum_{i = 2}^w \gamma_{v_i}
R_i +$ rest. Then $\sum_{i = 2}^w \gamma_{v_i} \leq k + 1$ and because the index of the subgroup of $N$
generated by $l_f$ and one of the $l_{v_i}$ with $i > 1$ is at least $3$ and we have $\coprod_{i = 2}^w
T^+_{\gamma_{\eta_i}} \cap Z_f' = \emptyset$, where $\eta: \{2, \dots, w\} \rightarrow \on$ is the injective
map which associates the $R_i$ to the elements $v_2, \dots, v_w$. Hence $Z_f'$ must be empty and therefore
$a_f \geq -2$.

In the last case, $a = -1$, for every $i \notin \{e_1, g_1, f\}$ with $l_i$ part of the fan associated to
$X_p$, by our assumptions the index of the subgroup of $N$ generated by $l_f$ and $l_i$ is at least two
and, similarly as in the previous case, we have $\coprod_{i \in K} T^+_{\gamma_{\eta_i}} \cap Z_f' =
\emptyset$, where $K \subset \on$ denotes those $i$ such that $l_i$ in the complement of $\sigma_1$ and
$\sigma_2$. Hence we have $a_f \geq -2$.
\end{proof}

Using Corollary \ref{tilingcorollary} and Proposition \ref{straighteningproposition} it is a rather
straightforward exercise to go through table \ref{weakdelpezzofigure} and to find all possible straightened
divisors.

\begin{proposition}\label{straightenedclassification}
Table \ref{straightenedlist} shows a complete list of straightened divisors and their associated toric
surfaces.
\end{proposition}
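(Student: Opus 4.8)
The plan is to build directly on the structural reduction already achieved in Proposition \ref{straighteningproposition} and then to carry out a finite, surface-by-surface enumeration that is controlled entirely by the degree sequence $(d_1, \dots, d_n)$. By Proposition \ref{straighteningproposition}, any toric surface $X$ admitting a straightened divisor falls into exactly one of two groups: either $-K_X$ is nef, so that $X$ is one of the sixteen surfaces of Table \ref{weakdelpezzofigure}, or $X \cong \mathbb{F}_a$ with $a \geq 3$. Thus the classification splits into these two groups, and in each group only finitely many surfaces (respectively one explicit infinite family of Hirzebruch surfaces) need to be inspected.

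First I would isolate the two numerical constraints that a straightened divisor $D$ must satisfy. By the very construction of the straightening, and possibly after replacing $D$ by $-D$ using Lemma \ref{wellreductionlemma}, we have $d_i(D) \geq 2$ for every $T$-invariant prime divisor $D_i$ with $D_i^2 = -1$; indeed, whenever some such $d_i(D) \in \{0,1\}$ one may blow down $D_i$ via Lemma \ref{unwindinglemma}, Proposition \ref{cutoutproposition}, and Corollary \ref{tilingcorollary}, contradicting straightenedness. On the other hand, since $D$ is strongly left-orthogonal it is in particular numerically left-orthogonal, so Proposition \ref{degreelowerbound} gives $\sum_{i \in I} d_i(D) \geq -1$ for every cyclic interval $I \subset \on$, while $\chi(D) = \sum_i d_i(D) = h^0(D)$ and $\sum_i d_i(D)\, l_i = 0$. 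These three facts, together with the classification of strongly pre-left-orthogonal divisors on the minimal models, bound the candidate classes very tightly.

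For the Hirzebruch surfaces $\mathbb{F}_a$ with $a \geq 3$ the self-intersection sequence is $0,a,0,-a$, so there are \emph{no} $(-1)$-curves and the degree condition $d_i \geq 2$ is vacuous; hence every strongly left-orthogonal divisor is automatically straightened. This part of the table is therefore read off directly from Proposition \ref{Hirzebruchleftorthogonals}, giving the classes $P$ and the family $kP + Q$ with $k \geq -1$. The same remark disposes of $\mathbb{P}^2$ via Proposition \ref{P2leftorthogonals}. For the non-minimal surfaces in Table \ref{weakdelpezzofigure} I would instead enumerate the admissible degree sequences $(d_1, \dots, d_n)$ subject to $d_i \geq 2$ on every $(-1)$-curve, the interval bound $\sum_{i \in I} d_i \geq -1$, and the closing relation $\sum_i d_i l_i = 0$; because these fans are small this is a finite computation producing a short list of candidate classes. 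For each candidate the tiling criterion of Corollary \ref{tilingcorollary} (with Proposition \ref{cutoutproposition}) then decides whether it is genuinely strongly left-orthogonal, and the degree condition certifies it is straightened rather than further reducible. Collecting the survivors reproduces Table \ref{straightenedlist}.

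The hard part will be completeness and bookkeeping on the surfaces carrying several $(-1)$-curves, such as 8a, 8b, 8c and 9, where the several degree-$\geq 2$ conditions must be reconciled with the lattice-point arrangements of Section \ref{toriccohomologyvanishing} simultaneously. There one must take care both that no strongly left-orthogonal class is overlooked and that classes which merely differ by the ambiguity in the $d_i$-description are not double-counted. The verification in each individual fan is routine once the hyperplane arrangement $\{H_i\}$ is drawn, but organizing these sixteen checks uniformly, and keeping the translation between the $X_0$-coordinates and the $d_i$-coordinates straight, is the bulk of the work.
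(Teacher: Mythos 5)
Your overall strategy coincides with the paper's, which itself records nothing beyond the assertion that one combines Proposition \ref{straighteningproposition} (reducing the possible surfaces to the sixteen of Table \ref{weakdelpezzofigure} together with $\mathbb{F}_a$, $a \geq 3$) with Corollary \ref{tilingcorollary} and works through the finite list of surfaces; your treatment of the minimal models via Propositions \ref{P2leftorthogonals} and \ref{Hirzebruchleftorthogonals}, and your use of Lemma \ref{wellreductionlemma} and the condition $d_i \geq 2$ on $(-1)$-curves, are exactly the intended ingredients. However, the way you propose to execute the non-minimal cases has a genuine gap.

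You claim that the constraints $d_i \geq 2$ on every $(-1)$-curve, $\sum_{i \in I} d_i \geq -1$ for all cyclic intervals $I$, and $\sum_i d_i l_i = 0$ cut out a finite list of candidates ``because these fans are small.'' They do not: all three constraints are linear, and their solution set is stable under adding the degree vector of any nef divisor, in particular that of $-K_X$, which on every surface of Table \ref{weakdelpezzofigure} satisfies $d_i(-K_X) = a_i + 2 \geq 0$, with value $1$ on each $(-1)$-curve. So as soon as one candidate exists, infinitely many do. The table itself makes this vivid: the Hirzebruch rows contain infinite families $Q + sP$, so no purely linear count can terminate. Even adding the quadratic condition $\chi(-D) = 0$ (which you state but never feed into the enumeration) does not suffice, since families $D + tE$ along a ruling class $E$ with $E^2 = 0$, $-K_X\cdot E = 2$ and $D \cdot E = 1$ preserve every numerical condition --- this is precisely why the minimal surfaces carry infinite families. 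What actually forces finiteness on the blown-up surfaces, and emptiness on 5a, 5b, 6a--6c, 7a, 7b, 8b, is the cohomological criterion you relegate to a post-hoc check: writing $D = (D)_0 + \sum_k \gamma_k R_k$, each exceptional class whose strict transform is still a toric $(-1)$-curve on $X$ must have $\gamma_k \leq -2$, hence a nonempty triangle $T^+_{\gamma_k}$, and Corollary \ref{tilingcorollary} demands that these triangles disjointly and \emph{exactly} tile $G^\circ_{(D)_0}$ while the $T^-_{\gamma_k}$ tile $G_{(D)_0}\setminus G_D$. It is this rigidity, played against the straightening inequalities, that bounds $(D)_0$ and kills the ruling families; without it your procedure never produces a finite list to verify. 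A smaller but real point: $\mathbb{F}_1$ carries a toric $(-1)$-curve yet is also one of the paper's minimal models, and a naive application of your $d_i \geq 2$ rule there yields only $Q + sP$ with $s \geq 2$, which is not the row of Table \ref{straightenedlist}; your proof must fix a convention for whether $\mathbb{F}_1$ is blown down further, and as written it does not reproduce the table in this case.
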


\begin{table}[htbp]
\centering
\begin{tabular}{|c|c|c|} \hline
$\mathbb{P}^2$ & 1 1 1 & H, 2H \\ \hline
$\mathbb{P}^1 \times \mathbb{P}^1$ & 0 0 0 0 & $P + sQ, Q + sP$, where $s \geq -1$ \\ \hline
$\mathbb{F}_1$ & 0 -1 0 1 & $P$, $Q + sP$, where $s \geq 1$ \\ \hline
$\mathbb{F}_2$ & 0 -2 0 2 & $P, 2Q - P, Q + sP$, where $s \geq -1$ \\ \hline
$\mathbb{F}_a$, $a \geq 3$ & 0 -a 0 a & $P$, $Q + sP$, where $s \geq -1$ \\ \hline
6d & \underline{-1} \underline{-2} \underline{-2} 0 1 -2 & $3H - 2 R_1 - R_2 - R_3$
\\ \hline
8a & \underline{-1} -2 \underline{-1} -2 \underline{-1} \underline{-2} -1 \underline{-2} &
$4H - 2(R_1 + R_2 + R_3) - R_4 - R_5$ \\ \hline
8c & \underline{-1} \underline{-2} \underline{-2} -2 \underline{-1} \underline{-2} 0 -2 &
$4H - 2(R_1 + R_2 + R_4) - R_3 - R_5$ \\ \hline
9 & \underline{-1} \underline{-2} -2 \underline{-1} \underline{-2} -2 \underline{-1} \underline{-2} -2 &
$4H - 2(R_1 + R_3 + R_5) - R_2 - R_4 - R_6$ \\ \hline
\end{tabular}
\caption{Classification of straightened divisors. The first column of the table shows the name of the surface
as given in table \ref{weakdelpezzofigure}, the second column shows the self-intersection numbers of the
toric divisors, and the third columns lists the straightened divisors on the surface. The underlined
intersection numbers indicate which divisors are blown-down to obtain a minimal model and the numbering of
the $R_i$ is just the left-to-right order of the underlined divisors.
}\label{straightenedlist}
\end{table}
It turns out that there exist only four straightened divisors which are realized on toric surfaces different
from $\mathbb{P}^2$ or $\mathbb{F}_a$. Their associated hyperplane arrangements and polygonal lines are
shown in figure \ref{straightenedfigure}.

\begin{figure}[htbp]
\begin{center}
\includegraphics[width=13cm]{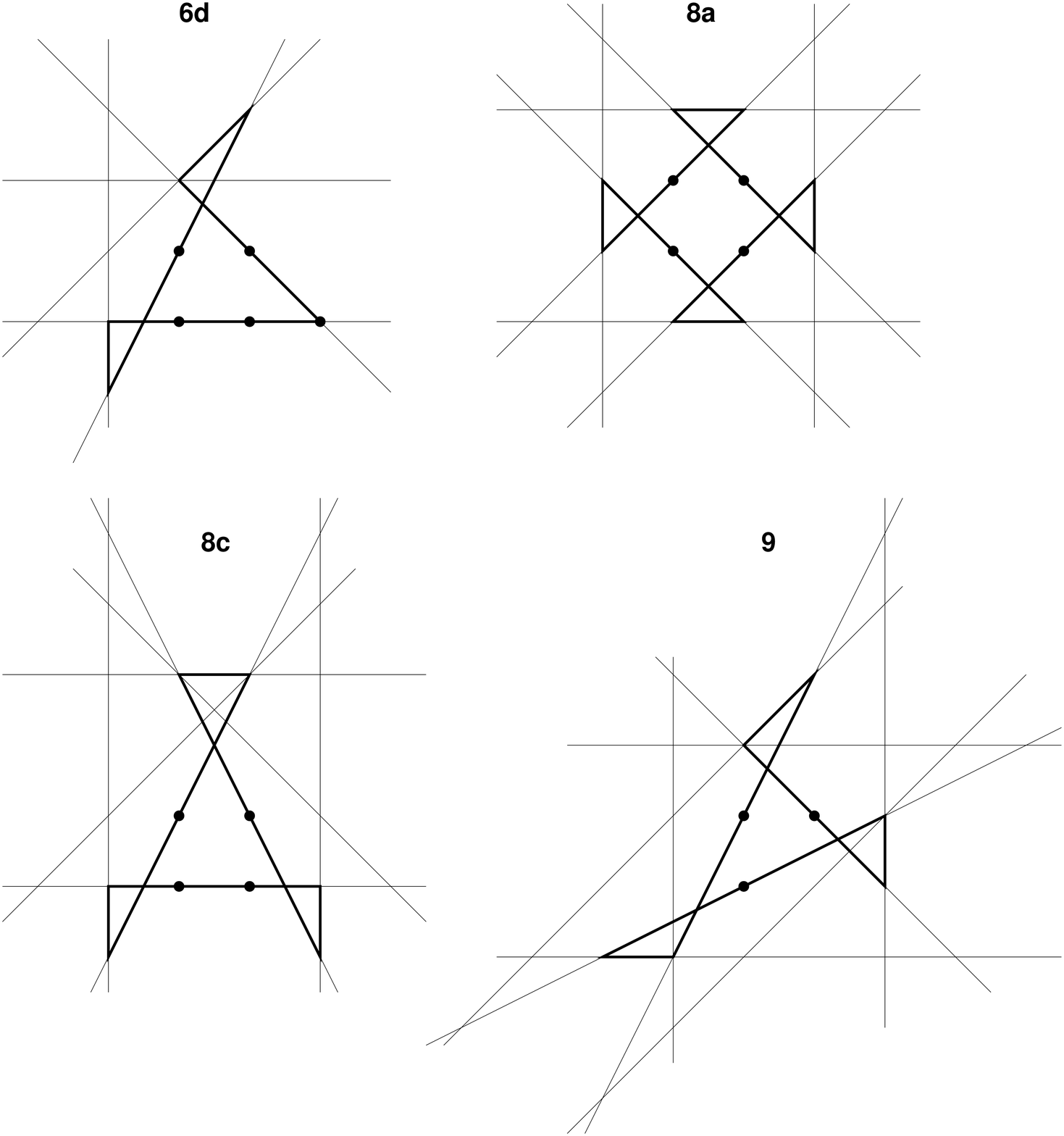}
\end{center}
\caption{The hyperplane arrangements and the polygonal lines associated to the four straightened divisors
which are not realized on $\mathbb{P}^2$ or a Hirzebruch surface. The dots indicate the global sections.}\label{straightenedfigure}
\end{figure}

\section{Proof of Theorem \ref{toricclassification}}\label{toricproofs}

Let $\mathcal{A} = A_1, \dots, A_n$ be a strongly exceptional toric system on $X$. The first step for
proving Theorem \ref{toricclassification} is to consider the straightening of $A := \sum_{i = 1}^{n - 1} A_i$
and to find a preferred coordinate system for $\pic(X)$ with respect to $\mathcal{A}$. The idea here is that
by Proposition \ref{straightenedclassification} there are only the few possibilities for $X_s$ listed in
table \ref{straightenedlist}, which are already close to a minimal model $X_0$. It follows from
Proposition \ref{straighteningextensionlemma} that every strongly exceptional sequence on $X$ is an
augmentation of a sequence on $X_s$. In the case where $X_s$ is the projective plane or a Hirzebruch
surface, we have $X_s = X_0$ and so by definition every augmentation of a strongly exceptional toric
system on $X_s$ is a standard augmentation. If If $X_s$ is isomorphic to 6d, then the assertion of
the theorem follows from Proposition \ref{6dstraightening}. In remaining cases, i.e. $X_s$ is one of
8a, 8c, 9, we show in Proposition \ref{nonaugmentablelemma} that $X = X_s$. These three cases are
analyzed in Propositions \ref{8astraightening}, \ref{8cstraightening}, and \ref{9straightening}, which
show that in every case $\mathcal{A}$ is a standard augmentation on $X_s$. This completes the proof
of Theorem \ref{toricclassification}.

Moreover, we draw the
following corollary from Propositions \ref{8astraightening}, \ref{8cstraightening}, and \ref{9straightening}:

\begin{corollary}
If $X_s$ is one of 8a, 8c, 9, then $X = X_s$ and $\mathcal{A}$ is cyclic.
\end{corollary}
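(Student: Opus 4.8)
The plan is to assemble the Corollary from the three case-propositions that immediately precede it, adding only the genuinely new cyclicity bookkeeping. For the equality $X = X_s$ there is nothing further to prove: this is exactly the conclusion of Proposition \ref{nonaugmentablelemma}, which shows that once the straightening of $A = \sum_{i = 1}^{n - 1} A_i$ lands on one of the surfaces 8a, 8c, 9, the tiling constraints of Corollary \ref{tilingcorollary} leave no room for a further blow-up, so $X$ must already coincide with $X_s$. I would state this first and then work on a fixed surface $X$ equal to one of 8a, 8c, 9, where by Propositions \ref{8astraightening}, \ref{8cstraightening}, and \ref{9straightening} the system $\mathcal{A}$ is, up to the symmetries of a toric system (cyclic renumbering, order reversal, and the $P \leftrightarrow Q$ ambiguity on the $\mathbb{P}^1 \times \mathbb{P}^1$-type pieces), one of the explicit standard augmentations exhibited there.

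It then remains to prove that $\mathcal{A}$ is cyclic, i.e. that $A_I := \sum_{i \in I} A_i$ is strongly left-orthogonal for every cyclic interval $I \subset \on$, and not merely for the intervals $I \subset [n - 1]$ already furnished by strong exceptionality. The reduction I would use is as follows. Each of 8a, 8c, 9 appears in Table \ref{weakdelpezzofigure}, so $-K_X$ is nef and in particular effective; hence by Lemma \ref{h1vanishinglemma}, and since a short computation with the toric-system axioms gives $\chi(-A_I) = 0$ for every proper cyclic interval, $A_I$ is strongly left-orthogonal if and only if $h^1(-A_I) = h^1(A_I) = 0$. For the wrapping cyclic intervals, those containing both $1$ and $n$, the complement $J = \on \setminus I$ is an honest interval contained in $\{2, \dots, n - 1\} \subset [n - 1]$, and Serre duality together with $A_I + A_J = -K_X$ yields $h^1(-A_I) = h^1(K_X + A_J) = h^1(-A_J)$, which vanishes because $A_J$ is left-orthogonal by strong exceptionality of $\mathcal{A}$.

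This leaves the single new vanishing $h^1(A_I) = h^1(-K_X - A_J) = 0$ for wrapping $I$, which is not implied by strong exceptionality and is the crux of the matter. Here I would fall back on the explicit description of $\mathcal{A}$ obtained in Propositions \ref{8astraightening}, \ref{8cstraightening}, and \ref{9straightening}: having pinned $\mathcal{A}$ down to the systems recorded in Table \ref{cyclictable}, the required $h^1$-vanishing becomes a finite check carried out exactly as in the proof of Theorem \ref{toriccyclicexistence}, via the lattice-point tiling criterion of Corollary \ref{tilingcorollary}. Combining the two cases shows that every cyclic interval sum is strongly left-orthogonal, so $\mathcal{A}$ is cyclic. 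The main obstacle is precisely this last step: the vanishing $h^1(-K_X - A_J) = 0$ is the one condition genuinely beyond strong exceptionality, and it is what forces a reliance on the explicit normal forms rather than on a purely formal Serre-duality argument.
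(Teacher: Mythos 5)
Your treatment of $X = X_s$ is fine and matches the paper: Lemma \ref{nonaugmentablelemma}, combined with Proposition \ref{straighteningextensionlemma}, is exactly what the paper invokes there. The trouble is the cyclicity half. First, a misreading: cyclicity is not ``genuinely new bookkeeping'' to be added on top of Propositions \ref{8astraightening}, \ref{8cstraightening}, and \ref{9straightening} --- their statements already conclude that $\mathcal{A}$ is \emph{cyclic} strongly exceptional, so the paper's corollary needs nothing beyond citing them. That alone would merely make your argument redundant; the genuine gap lies in how you propose to re-derive cyclicity. Your Serre-duality reduction is correct (it reproduces Proposition \ref{exceptionalsequencesalwayscyclic} and correctly isolates $h^1(-K_X - A_J) = 0$, $J \subset [n-1]$ an interval, as the one condition beyond strong exceptionality), but your final step rests on the claim that the three propositions pin $\mathcal{A}$ down, up to symmetries, to the systems recorded in Table \ref{cyclictable}. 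No such classification exists in the paper: Table \ref{cyclictable} exhibits a single example per surface (it belongs to the existence proof, Theorem \ref{toriccyclicexistence}), while the propositions assert only that the \emph{normal form} of $\mathcal{A}$ is an extension of the standard toric system $H, H, H$ on $\mathbb{P}^2$. Moreover, in the paper that normal-form statement is \emph{deduced from} cyclicity ($A_8$, resp.\ $A_9$, is inverted only after cyclicity is established), and normalization changes the toric system, so transporting cyclicity back to $\mathcal{A}$ would itself require an argument. Using the classification as input to prove cyclicity is therefore circular relative to the paper's logic, and as a free-standing claim it is unproved.

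What actually closes this gap in the paper is a different finite check: on each of 8a, 8c, 9 one enumerates \emph{all} strongly left-orthogonal divisors of Euler characteristic at most $4$ (resp.\ at most $3$ for the surface 9) --- Tables \ref{totallyacyclic8a}, \ref{totallyacyclic8c}, \ref{totallyacyclic9} --- observes that the list is almost closed under the pairing $D \mapsto -K_X - D$, so that strong left-orthogonality of $A_J$ forces strong left-orthogonality of $A_{\on \setminus J} = -K_X - A_J$, and then rules out the finitely many unpaired entries as possible interval sums by intersection-number computations against $A_n$ and the complementary pieces (e.g.\ $A_8 \cdot (C_1 + C_2) \in \{0, 1, 2\}$). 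Your proposal never engages with this enumeration, and without it the crucial vanishing $h^1(-K_X - A_J) = 0$ remains unestablished.
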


Now we prove the statements mentioned above.

\begin{proposition}\label{straighteningextensionlemma}
Every strongly exceptional toric system has a normal form which is an augmentation of a strongly exceptional
toric system on $X_s$.
\end{proposition}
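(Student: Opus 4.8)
The plan is to argue by induction on the number $t-s$ of blow-downs performed in the straightening of $A:=\sum_{i=1}^{n-1}A_i$, matching each straightening contraction on $X$ with a blow-down of the toric system $\mathcal{A}$ itself. First I would replace $\mathcal{A}$ by a normal form (Proposition~\ref{normalorderingproposition}) and straighten $A$, producing a chain $X=X_t\to\cdots\to X_s$ together with a decomposition $A=(A)_s+\sum_{i=s+1}^t\epsilon_iR_i$ where $\epsilon_i\in\{0,-1\}$. If $t=s$ then $\mathcal{A}$ is already a toric system on $X_s$ and is its own (empty) augmentation, so there is nothing to prove. Otherwise it suffices to perform one compatible blow-down reducing $t-s$, and to check that the resulting system is again strongly exceptional, in normal form, and straightens to the same $X_s$; the proposition then follows by reinserting the contracted class, since augmentations compose.

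The engine of the reduction is a rank count in $K_0$. Fix a straightening contraction $b\colon X\to X'$ of a torus invariant $(-1)$-curve $D_p$ with $[D_p]=R$, $R^2=-1$. Writing the sequence as $\sh{O}_X=\sh{O}(E_1),\dots,\sh{O}(E_n)$, the projections $(E_j)'$ to $\pic(X')=R^\perp$ give $n$ line bundles on $X'$; by the invariance $H^i(X',b^*\,\cdot\,)\cong H^i(X,\,\cdot\,)$ from the conventions, together with Proposition~\ref{cutoutproposition} and Corollary~\ref{tilingcorollary} (where $-1\le\epsilon\le 0$ forces the relevant $T^+$ to be empty), every difference $\bigl(\sum_{i\in I}A_i\bigr)'$ over an interval stays strongly left-orthogonal on $X'$. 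Hence the $(E_j)'$ form an exceptional sequence of length $n$ on $X'$; since their numerical classes would have to be linearly independent and $\rk K_0(X')=n-1<n$, two of them coincide. As a rational surface has no nontrivial numerically trivial line bundles, some partial sum $\sum_{j\le i<j'}A_i$ lies in $\langle R\rangle$, and being strongly left-orthogonal it must equal $R$ (note that $-R$ is not left-orthogonal).

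It remains to convert this collision into an actual toric blow-down. By the normal form the interval $[j,j'-1]$ is vertical, so by Lemma~\ref{glueinglemma} it is a chain of differences of pairwise incomparable exceptional classes; a chain summing to the single class $R$ is necessarily of the second shape, headed by a difference and tailed by a bare class $R_{i_m}$ with $R_{i_m}^2=-1$. Blowing down the toric system at this bare entry (the operation defined after the blow-down lemma, legitimate since its self-intersection is $-1$) contracts a $(-1)$-prime-divisor of $X$ and lands in the hyperplane $R_{i_m}^\perp$. Exploiting the non-uniqueness of the straightening, I would reorder the contractions so that this divisor is contracted first; the degree bound of Proposition~\ref{degreelowerbound} together with the tiling description shows that its coefficient in $A$ lies in $\{0,-1\}$, so the contraction is a genuine straightening step and the projected system $\mathcal{A}'$ on $X'$ is strongly exceptional, in normal form, with the sum of its leading entries equal to $(A)'$, which straightens to $X_s$ in one fewer step. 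The inductive hypothesis then applies.

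The hard part will be precisely this matching step: guaranteeing that a bare $(-1)$-entry exposed by the chain structure can be contracted \emph{compatibly} with the straightening of $A$, and that no obstruction arises when the relevant slot is $A_n$ or when several chains meet over a common point. I expect this to demand a careful bookkeeping of the $R$-coefficient sequence $c_j=\sum_{i\le j}\gamma_i$, which satisfies $c_j\le 1$ by Proposition~\ref{degreelowerbound} and $c_n=-1$ since $-K_X=\sum_iA_i$, combined with the classification of vertical strongly left-orthogonal divisors (Proposition~\ref{verticalleftorthogonals}) and the sign normalization recorded in Remark~\ref{negativegammasremark}. Once a compatible contraction has been produced, the descent of both strong exceptionality and the normal form is routine through the pullback–cohomology isomorphism and Corollary~\ref{tilingcorollary}, and reinsertion of the contracted class exhibits $\mathcal{A}$ as the desired augmentation from $X_s$.
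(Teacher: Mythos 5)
Your overall frame (induction along the straightening, passing to normal form, blowing the toric system down, and invoking Lemma \ref{glueinglemma} for the vertical chain) is the same as the paper's, but the engine you use to locate the blow-down --- the $K_0$ rank count --- does not deliver what you need, and the claim feeding into it is false. The assertion that every projected interval sum $\bigl(\sum_{i\in I}A_i\bigr)'$ stays strongly left-orthogonal cannot be extracted from Proposition \ref{cutoutproposition} and Corollary \ref{tilingcorollary}: the bound $-1\le\epsilon\le 0$ coming from the straightening applies only to the single divisor $A=\sum_{i=1}^{n-1}A_i$, not to its sub-sums. Indeed, sub-sums with $R$-coefficient $+1$ \emph{must} occur (the coefficients of $A_1,\dots,A_n$ at $R$ sum to $-1$; normal form and Proposition \ref{negativegammas} force every non-vertical entry and $A_n$ to have coefficient $\le 0$; and not all entries can lie in $R^\bot$ by the rank argument behind Proposition \ref{dualityproposition} --- so some vertical entry equals $R$ or $R-R_k$), and for such a sum the projection is $0$ or $-R_k$, neither of which is left-orthogonal. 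So your claim that the projections form an exceptional sequence is incompatible with the conclusion you draw from it two sentences later.

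The real gap is the deduction itself: from $\rk K_0(X')=n-1$ you may conclude only that the projected bundles do \emph{not} form an exceptional sequence, not that two of them coincide. The failure can equally well be caused by a nonzero projected difference that is not left-orthogonal, e.g.\ $\pi(R-R_k)=-R_k$ when $R$ occurs only inside a difference of a chain, or a chain of shape (\ref{shortorthogonal}) in Lemma \ref{glueinglemma}, in which no partial sum equals $R$ at all. Proving that, after a reordering as in Proposition \ref{normalorderingproposition}, some entry of the toric system is literally $R$ --- whence orthogonality $A_j\cdot R\in\{0,1\}$ forces its two neighbours to have coefficient $-1$ and every other entry to lie in $R^\bot$, so that the projection is exactly the blow-down of the toric system --- is precisely the hard content of the paper's proof: it needs the exchange operation to turn a difference $R-R_k$ into a bare entry $R$, and a separate contradiction (via $\delta_t\le 0$ and maximality of the vertical interval) to exclude the all-differences shape (\ref{shortorthogonal}). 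None of this is recoverable from the rank count, so the ``two of them coincide'' step, and everything after it, is unsupported. A smaller point: you contract the tail $R_{i_m}$ of the chain rather than $R$ itself, which creates the re-matching problem with the straightening that you defer to the end; the paper avoids this entirely by making $R=R_t$ (the last straightening step) the bare entry, so that compatibility is automatic.
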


\begin{proof}
Let $\mathcal{A} = A_1, \dots, A_n$ be a strongly exceptional toric system and $A := \sum_{i = 1}^{n - 1} A_i$
and $(A)_s$ the straightening of $A$. We assume that $X \neq X_s$ and denote $R_t, \dots, R_{s - 1}$ the
total transforms of the exceptional divisors of the blow-ups $b_1, \dots, b_{s - 1}$ and complete these to a
basis of $\pic(X)$ with respect to some $X_0$ which is a blow-down of $X_s$. We may now assume that
$\mathcal{A}$ is in normal form. The divisor $R_t$ represents a torus invariant
prime divisor of self-intersection $-1$ on $X$. Then $A = (A)_{t - 1} + \gamma_t R_t$, where $\gamma_t \in
\{0, -1\}$, and $A_n = (A_n)_{t - 1} + \delta_t R_t$, where $\gamma_t + \delta_t = -1$.
There must be at least two of the $A_i$ which are not contained in the hyperplane $R_t^\bot$, as otherwise
the projection $(A_1)_{t - 1}, \dots, (A_n)_{t - 1}$ would also satisfy properties (\ref{toricsystemdefi})
and (\ref{toricsystemdefii}) of Definition \ref{toricsystemdef}. But it is clear from the proof of
Proposition \ref{dualityproposition} that this is not possible.

So, as $\mathcal{A}$ is in normal form, there must be some $A_i$ such that $A_i = (A_i)_{t - 1} + R_t$
and $(A_i)_0 = 0$. Let $i \in I = [i_1, i_2] \subset [n - 1]$ be the maximal interval such that $(A_j)_0 = 0$
for every $j \in I$. Then the sequence $\mathcal{A}_I = A_{i_1}, \dots, A_{i_2}$ must be of one of the forms 
(\ref{shortorthogonal}) or (\ref{longorthogonal}) of Lemma \ref{glueinglemma}. Moreover, there cannot be
any other $j \in \on \setminus I$ such that $(A_j)_0 = 0$ and $A_j = (A_j)_{t - 1} + R_t$ as this would
necessarily contradict property (\ref{toricsystemdefii}) of Definition \ref{toricsystemdef}.
If $\mathcal{A}_I$ is of the form of Lemma \ref{glueinglemma} (\ref{longorthogonal}), we have two
possibilities.

First, $A_i = R_t$, which implies $A_{i - 1} = (A_{i - 1})_{t - 1} - R_t$ (respectively
$A_n = (A_n)_{t - 1} - R_t$ if $i = 1$) and $A_{i + 1} = (A_{i + 1})_{t - 1} - R_t$ and $(A_j)_{t - 1} = 0$
for every other $j \in \on$. Therefore we can consider the projection $(A_1)_{t - 1}, \dots,
(A_{i - 1})_{t - 1}, (A_{i + 1})_{t - 1}, \dots, (A_n)_{t - 1}$ which is a strongly exceptional toric system
in $\pic(X_{t - 1})$.

Second, $A_i = R_t - R_k$ for some $k < t$ and thus $\chi(A_i) = 0$, then, as in proposition
\ref{normalorderingproposition}, we can reorder the toric system by replacing $A_i$ by $-A_i$,
$A_{i - 1}$ by $A_{i - 1} + A_i$ and $A_{i + 1}$ by $A_{i + 1} + A_i$, respectively, such that it remains
strongly exceptional. In particular, we can reorder it such that $A_j$ becomes $R_t$ for some $j \in I$
and apply the same argument as before.

If $\mathcal{A}_I$ is of the form of Lemma \ref{glueinglemma} (\ref{shortorthogonal}), we can consider the
divisors $A_{i_1 - 1}$ and $A_{i_2 + 1}$, where we identify $i_1 - 1$ with $n$ if $i_1 = 1$. Note that
$i_2 - i_1 < t$, so that $i_1 - 1 \neq i_2 + 1$. Now again by reordering, we can change $\mathcal{A}$
such that either $A_{i_1} = (A_{i_1})_{t - 1} - R_t$ and $A_{i_1 - 1} = (A_{i_1 - 1})_{t - 1} + R_t$,
or $A_{i_2} = (A_{i_2})_{t - 1} - R_t$ and $A_{i_2 + 1} = (A_{i_2 + 1})_{t - 1} + R_t$. But then by our
assumption on $I$ and $\mathcal{A}$ being of normal form, one of $i_1 - 1$, $i_2 + 1$ must be equal to $n$.
But above we have seen that $\delta_t \leq 0$, which is a contradiction, and $\mathcal{A}_I$ cannot be of
the form of Lemma \ref{glueinglemma} (\ref{shortorthogonal}).

Altogether we have seen now that $\mathcal{A}$ is an extension of a strongly exact toric system on
$X_{t - 1}$ and the proposition follows by induction.
\end{proof}

\begin{proposition}\label{6dstraightening}
Let $X$ be a toric surface isomorphic to $6d$ and $\mathcal{A} = A_1, \dots, A_6$ a strongly exceptional toric
system on $X$ such that $A = (A)_s = \sum_{i = 1}^5 A_i = 3H - 2 R_1 - R_2 - R_3$ in the coordinates indicated
in table \ref{straightenedlist}. Then $\mathcal{A}$ is the augmentation of a standard sequence
on $X_2$.
\end{proposition}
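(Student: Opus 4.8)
The plan is to reconstruct $\mathcal{A}$ completely from the single datum $A = \sum_{i=1}^5 A_i = 3H - 2R_1 - R_2 - R_3$, and then to re-read the resulting divisors in Hirzebruch coordinates so that the standard augmentation structure becomes visible.

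First I would compute $A_6$. Since by definition $\sum_{i=1}^6 A_i = -K_X$ and, in the chosen basis, $-K_X = 3H - R_1 - R_2 - R_3$, we get $A_6 = -K_X - A = R_1$, which is strongly left-orthogonal by Proposition~\ref{verticalleftorthogonals}. The toric-system relations $A_6 . A_1 = A_5 . A_6 = 1$ and $A_6 . A_j = 0$ for $j = 2,3,4$ then translate, via $A_6 = R_1$, into conditions on the $R_1$-coefficients of the $A_i$: those of $A_1$ and $A_5$ equal $-1$, and those of $A_2, A_3, A_4$ vanish.

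Next I would determine the remaining $A_i$. By Proposition~\ref{normalorderingproposition} we may assume $\mathcal{A}$ is in normal form, so that $(A_i)_0$ is either $0$ or strongly pre-left-orthogonal for $1 \le i < 6$; on $\mathbb{P}^2$ the latter forces $(A_i)_0 = k_i H$ with $k_i \ge 1$, and these sum to $3$. Since every interval sum $\sum_{i \in I} A_i$ with $I \subset [5]$ is strongly left-orthogonal, Lemma~\ref{acyclicitylemma} gives $\sum_{i=1}^5 h^0(A_i) = h^0(A) = 5$ together with $A_i^2 = h^0(A_i) - 2$ for each $i$. The heart of the argument is to show that in fact $h^0(A_i) = 1$ (equivalently $A_i^2 = -1$) for every $i$; combined with the classification of strongly left-orthogonal divisors (Proposition~\ref{verticalleftorthogonals}) and the tiling criterion of Corollary~\ref{tilingcorollary} and Proposition~\ref{cutoutproposition} applied to the explicit lattice-point figure of $A$ (figure~\ref{straightenedfigure}), this pins the $A_i$ down, up to reversal, to
\begin{equation*}
A_1 = H - R_1 - R_2,\ A_2 = R_2,\ A_3 = H - R_2 - R_3,\ A_4 = R_3,\ A_5 = H - R_1 - R_3,\ A_6 = R_1 .
\end{equation*}
Finally I would pass to the minimal model $X_0 \cong \mathbb{F}_1$ by setting $P := H - R_1$ and $Q := H$, exhibiting $X$ as the two-fold blow-up $X = X_2$ of $\mathbb{F}_1$ in the points corresponding to $R_2, R_3$. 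In these coordinates the list above becomes $P - R_2,\ R_2,\ Q - R_2 - R_3,\ R_3,\ P - R_3,\ Q - P$, which is exactly the result of augmenting the standard (cyclic strongly exceptional) toric system $P, Q, P, Q - P$ of Proposition~\ref{Hirzebruchtoricsystems} first at the slot of the first $P$ and then at the slot of $Q$. Hence $\mathcal{A}$ is a standard augmentation on $X_2$, as claimed.

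The step I expect to be the main obstacle is forcing $h^0(A_i) = 1$ for all $i$ in the reconstruction. A priori a decomposition of $A$ with some $\chi(A_i) = 0$ (hence some $h^0(A_i) = 0$ and a compensating $h^0(A_j) \ge 2$) would also satisfy the purely numerical toric-system conditions (i)--(ii) of Definition~\ref{toricsystemdef} and the normal-form constraints; ruling such decompositions out is precisely where one must invoke the full lattice-point tiling of $G_A$ and $G_A^\circ$ rather than intersection numbers alone. Once this rigidity is established the ordering carries no further freedom (there is no $\chi(A_i) = 0$ slot available to permute), and the coordinate change to $\mathbb{F}_1$ identifying $\mathcal{A}$ as a standard augmentation is routine.
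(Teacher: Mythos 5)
Your first two steps ($A_6 = -K_X - A = R_1$, and the intersection rules forcing the $R_1$-coefficients of $A_1, A_5$ to be $-1$ and those of $A_2, A_3, A_4$ to vanish) coincide with the paper's, but after that your route diverges and has a genuine gap: the rigidity claim that $h^0(A_i) = 1$ for all $i$, from which your explicit list of the $A_i$ is derived, is precisely the step you defer (``the main obstacle''), so as written there is no proof. The paper never needs this reconstruction: since every interval sum $A_I$, $I \subset [4]$, is either the pullback of its projection $(A_I)_2$ to the surface $X_2$ obtained by contracting the $(-1)$-curve $D_1$ (whose class is $R_1$), or is that pullback minus $R_1$, the pullback lemmas (Lemma \ref{pullbackcohomologyvanishinglemma}, Lemma \ref{unwindinglemma}) show that $(A_1)_2, \dots, (A_5)_2$ is a strongly exceptional toric system on $X_2$, of which $\mathcal{A}$ is by construction an augmentation; note also that the $X_2$ of the statement is this Picard-rank-$3$ blow-down of $X$, not $X$ itself. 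Incidentally, your rigidity claim is true and needs far less than the lattice-point tiling you invoke: on 6d the only classes with $\chi(D) = \chi(-D) = 0$ are $R_i - R_j$ and $\pm(H - R_1 - R_2 - R_3)$, and none is strongly left-orthogonal (the $R_i$ are pairwise comparable, forming a chain of infinitely near points, so Proposition \ref{verticalleftorthogonals} excludes the differences, while $H - R_1 - R_2 - R_3$ is effective). Hence $\chi(A_i) \geq 1$ for $i \leq 5$, and since these five values sum to $h^0(A) = 5$, each equals $1$; the rest of your reconstruction then follows from Propositions \ref{verticalleftorthogonals} and \ref{negativegammas} and the intersection rules. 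But none of this is in your text.

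Your final step is also incorrect as stated. In the coordinates of Table \ref{straightenedlist} the chain is $R_1 \geq R_2 \geq R_3$: $R_1$ is the class of the unique irreducible $(-1)$-curve $D_1$ on $X$ (on a toric surface every irreducible curve of negative self-intersection is torus-invariant, and $D_1$ is the only invariant $(-1)$-curve here), so every birational morphism from $X$ contracts $D_1$ first. Consequently there is no blow-down of $X$ to $\mathbb{F}_1$ with exceptional classes $R_2, R_3$, and no point of $\mathbb{P}^2$ ``corresponding to $R_1$''; your substitution $P := H - R_1$, $Q := H$ is a formal change of basis, not an augmentation structure in the sense of Definition \ref{standardaugmentationdefinition}, which is pinned to an actual sequence of blow-ups (indeed $\vert H - R_1 \vert$ has $D_2 + D_3$ as fixed part, so it cannot be a fiber class). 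The correct identification takes $P = H - R_3$, $Q = H$, with exceptional classes $R_2$ and then $R_1$; your list then reads $Q - R_2 - R_1,\ R_2,\ P - R_2,\ Q - P,\ P - R_1,\ R_1$, an admissible standard augmentation of the standard system $P,\, Q - P,\, P,\, Q$ of Proposition \ref{Hirzebruchtoricsystems}, in agreement with the paper's Table \ref{cyclictable}. So your intended conclusion is right, but both the central classification step and the final blow-down structure must be repaired.
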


\begin{proof}
Clearly $A_6 = R_1$, so $A_5 = (A_5)_2 - R_1$ and $A_1 = (A_1)_2 - R_1$. If we consider the projection
$(A_1)_2, \dots, (A_5)_2$ and denote $A_I := \sum_{i \in I} A_i$ for every interval $I \subset [4]$, then
$(A_I)_2 = A_I$ if $1 \notin I$ and $(A_I)_2 - R_1 = A_I$ if $1 \in I$ and thus $A_I$ is strongly
left-orthogonal for every such $I$ and thus $(A_1)_2, \dots, (A_5)_2$ is a strongly exceptional toric
system on $X_2$ and $\mathcal{A}$ an augmentation.
\end{proof}

Denote $P_{(A)_s} := \{m \in M_\Q \mid l_i(m) \geq -c_i\}$ the rational polytope containing $G_{(A)_s}$.

\begin{lemma}\label{nonaugmentablelemma}
\begin{enumerate}[(i)]
\item\label{nonaugmentablelemmai}
Let $X$ be a toric surface and $\mathcal{A} = A_1, \dots, A_n$ a strongly exceptional toric system on $X$
such that $A = (A)_s$ and $P_{A_s}$ has no corners in $M$. Then $\mathcal{A}$ cannot be augmented to a strongly
exceptional sequence on any toric blow-up of $X$.
\item\label{nonaugmentablelemmaii}
In the cases where $X_s$ is one of 8a, 8c, 9, the polytope $P_{(A)_s}$ has no corners.
\end{enumerate}
\end{lemma}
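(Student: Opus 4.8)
The plan is to read off augmentability from the combinatorics of the distinguished chamber $G_{(A)_s}$ of the hyperplane arrangement of $(A)_s$, using the dictionary of Proposition \ref{cutoutproposition}. The decisive computation is for a blow-up of the fixed point of the cone spanned by two adjacent rays $l_p, l_q$, which adds the ray $l_r = l_p + l_q$ with exceptional class $R_r$ and corresponds to the cut with $\gamma_r = -1$. For this value one computes directly that $T^+_{-1} = \emptyset$ and $T^-_{-1} = \{m_0\}$, where $m_0$ is the single lattice point $H_p \cap H_q$; hence, by Proposition \ref{cutoutproposition}, the class $b_r^*(A)_s - R_r$ is strongly left-orthogonal if and only if $m_0 \in G_{(A)_s}$, that is, if and only if $m_0$ is a corner of $P_{(A)_s}$ lying in $M$.

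For part (\ref{nonaugmentablelemmai}) I would assume $\mathcal{A}$ has an augmentation to a strongly exceptional toric system $\mathcal{A}'$ on a toric blow-up $b_r \colon X' \to X = X_s$ and derive a corner. The augmentation inserts $R_r$ at a slot with the pattern $A_s - R_r, R_r, A_{s+1} - R_r$, so that $\sum_i A_i' = -K_{X'} = b_r^*(-K_X) - R_r$ and exactly two entries are diminished by $R_r$. Writing $A = \sum_{i=1}^{n-1} A_i = (A)_s$ and tracking the interval sums of $\mathcal{A}'$, one finds, when the distinguished entry of $\mathcal{A}'$ is the unchanged $A_n$, that the maximal required sum equals $b_r^*A - R_r$; and when the insertion is adjacent to $A_n$ one instead isolates an interval of type $[1,n-1]$ or $[2,n]$ with the same sum $b_r^*A - R_r$. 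In every such case the dictionary of the first paragraph forces $m_0 \in G_{(A)_s}$, contradicting the hypothesis that $P_{(A)_s}$ has no corner in $M$; hence no augmentation exists. The normal form guaranteed by Proposition \ref{straighteningextensionlemma}, together with the straightening property $d_i\big((A)_s\big) \ge 2$ for every torus-invariant $(-1)$-curve $D_i$ on $X_s$, is what pins down the distinguished entry and keeps the obstruction anchored at $P_{(A)_s}$.

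For part (\ref{nonaugmentablelemmaii}) the plan is a direct lattice-point computation via Corollary \ref{tilingcorollary}, carried out for each of $8a$, $8c$, $9$ with the straightened divisor taken from Table \ref{straightenedlist}. In each case $(A)_s = 4H - 2(R_{i_1} + R_{i_2} + R_{i_3}) - (\text{simple terms})$, so that starting from the lattice triangle $P_{4H}$ one excises three multiplicity-two corner triangles $T^-_{-2}$ together with the simple cuts $T^-_{-1}$. I would display the resulting arrangement explicitly, as in Figure \ref{straightenedfigure}, and check that each fixed point $H_i \cap H_{i+1}$ of $X_s$ is removed by one of the multiplicity-two subtractions, so that no $H_i \cap H_{i+1}$ lies in $G_{(A)_s}$; equivalently, every vertex of the convex region $P_{(A)_s}$ is a non-integral rational point. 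This is exactly the assertion that $P_{(A)_s}$ has no corner in $M$.

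The hard part is the interval bookkeeping in part (\ref{nonaugmentablelemmai}): one must verify, for every admissible slot and every admissible enumeration of $\mathcal{A}'$, that the failing sum really is $b_r^*A - R_r$, and not the sum of some cyclic subinterval whose polytope we do not control. The delicate enumerations are those in which $R_r$ is inserted next to the distinguished entry, since there the naive maximal interval yields only the pullback $b_r^*A$; for these one must instead either use that a required sum such as $A_2 + \dots + A_n$ fails strong left-orthogonality for a genuinely non-cyclic $\mathcal{A}$, or, in the cyclic case, exploit the symmetry of the configuration together with part (\ref{nonaugmentablelemmaii}). Once this case distinction is settled, part (\ref{nonaugmentablelemmaii}) is purely computational and poses no conceptual difficulty.
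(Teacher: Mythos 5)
Your core mechanism is the paper's: the computation $T^{+}_{-1} = \emptyset$, $T^{-}_{-1} = \{H_p \cap H_q\}$ combined with Proposition \ref{cutoutproposition} is exactly how the paper converts strong left-orthogonality of $(A)_s - R$ into the existence of a lattice corner of $P_{(A)_s}$, and part (\ref{nonaugmentablelemmaii}) is, as in the paper, settled by inspecting the explicit arrangements (Figure \ref{straightenedfigure}). Your single-insertion bookkeeping also works, and your closing worry about the slots adjacent to $A_n$ is unfounded: for insertion between $A_{n-1}$ and $A_n$ the entries of $\mathcal{A}'$ are $A_1, \dots, A_{n-2}, A_{n-1} - R_r, R_r, A_n - R_r$, and for insertion between $A_n$ and $A_1$ they are $A_1 - R_r, A_2, \dots, A_{n-1}, A_n - R_r, R_r$; in both cases the interval $[1, n-1]$ sums to $b_r^*A - R_r$, which is all you need. (Your alternative claim that $[2,n]$ has this sum is false in the second case --- that sum equals $b_r^*(-K_X - A_1) - R_r$, whose polytope you do not control --- and the fallbacks you sketch for it are neither needed nor justified.)

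The genuine gap is that you only treat augmentation by a single blow-up, whereas the lemma, and its use inside the proof of Theorem \ref{toricclassification} (where $X$ may lie many blow-ups above $X_s$), must exclude augmentations $\mathcal{A}' = A_1', \dots, A_{n+k}'$ with arbitrary $k$. You cannot reduce to your $k=1$ case by induction, because the intermediate stages of an iterated augmentation are not required to be strongly exceptional --- only the final system is. For general $k$ the obstructed interval sums have the form $b^*(A)_s - R_{i_1} - \cdots - R_{i_m}$ with $m \geq 1$, and the missing step is the paper's: no divisor of this form can be strongly left-orthogonal, because straightening would peel off the exceptional classes one at a time (each coefficient being $0$ or $-1$, so Lemma \ref{unwindinglemma}, Proposition \ref{cutoutproposition} and Corollary \ref{tilingcorollary} preserve strong left-orthogonality at every peel), producing at the last stage a strongly left-orthogonal divisor $b^*(A)_s - R_{i_j}$ over a single blow-up, which your own dictionary then converts into a lattice vertex of $P_{(A)_s}$ --- a contradiction. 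Your sentence invoking ``the straightening property $d_i\big((A)_s\big) \geq 2$'' gestures at this but does not carry it out; without this peeling argument the lemma is established only for one-point blow-ups of $X$.
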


\begin{proof}
Write $(A)_s = \sum_{i = 1}^n c_i D_i$. From \ref{cutoutproposition} it follows that for $(A)_s - R_{i_1}$ to be
strongly left-orthogonal, there must exist a lattice point $m \in G_D$ and $l_i, l_j$ such that $l_i(m) =
-c_i$ and $l_j(m) = -c_j$, i.e. $m$ is a corner of $P_{(A)_s}$, and moreover, $l_{i_1}$ must be contained in
the positive span of $l_i$ and $l_j$. So it follows that $(A)_s$ cannot be a straightening of a divisor living
on some blow-up of $X$ of the form $(A)_s - R_{i_1} - \dots - R_{i_k}$, where $i_1, \dots, i_k > t$. Now
consider $\mathcal{A}' = A_1', \dots, A_{n + k}$ a toric system which is an augmentation of $\mathcal{A}$.
As $(A)_s = (A')_{s'}$, where $s' = s + k$, the augmentation process can only happen between $A_{n - 1}$ and
$A_n$, or between $A_n$ and $A_1$. But then there exists $n' > l > {n - 1}$ such that $\sum_{i = 1}^l A_i'
= A_s - R_{i_l}$ with $i_l > t$, which cannot be strongly left-orthogonal, which proves
(\ref{nonaugmentablelemmai}). For (\ref{nonaugmentablelemmaii}) we refer to figure \ref{straightenedfigure}.
\end{proof}

We observe that the condition of lemma \ref{nonaugmentablelemma} are fulfilled for the remaining three cases.

\begin{proposition}\label{8astraightening}
Let $X$ be a toric surface isomorphic to 8a and $\mathcal{A} = A_1, \dots, A_8$ a strongly exceptional toric
system on $X$ such that $A = (A)_s = \sum_{i = 1}^7 A_i = 4H - 2(R_1 + R_2 + R_3) - R_4 - R_5$ in the
coordinates indicated in table \ref{straightenedlist}. Then $\mathcal{A}$ is cyclic strongly
exceptional and its normal form is an extension of the standard toric system on $\mathbb{P}^2$.
Without bringing it into normal form, the toric system cannot be extended to a strongly exceptional
toric system on any toric blow-up of $X$.
\end{proposition}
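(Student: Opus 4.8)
The plan is to treat the three assertions separately and to dispose of the non-augmentability statement first, as it is essentially immediate. Since the given divisor $A = (A)_s = 4H - 2(R_1 + R_2 + R_3) - R_4 - R_5$ is already straightened, Lemma~\ref{nonaugmentablelemma}(\ref{nonaugmentablelemmaii}) shows that the polytope $P_{(A)_s}$ has no corners in $M$ (this is precisely the picture for $8a$ in figure~\ref{straightenedfigure}). Lemma~\ref{nonaugmentablelemma}(\ref{nonaugmentablelemmai}) then gives directly that $\mathcal{A}$ cannot be augmented to a strongly exceptional toric system on any toric blow-up of $X$, which is the last sentence of the proposition; combined with Proposition~\ref{straighteningextensionlemma} the same corner-freeness forces $X = X_s$, so that $X$ is honestly the surface $8a$. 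I would stress that this rigidity attaches to the \emph{given} presentation of $\mathcal{A}$: after passing to normal form one obtains a different toric system whose straightening does have corners, and it is that normalized system which turns out to be an augmentable standard augmentation.

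Next I would pin down the combinatorics of $\mathcal{A}$ itself. From $-K_X = 3H - \sum_{i=1}^5 R_i$ and $\sum_{i=1}^7 A_i = A$ one reads off $A_8 = -K_X - A = -(H - R_1 - R_2 - R_3)$. The partial sums $S_k := \sum_{i=1}^k A_i$, with $S_0 = 0$ and $S_7 = A$, are all strongly left-orthogonal, and since $S_{k-1} . A_k = A_{k-1} . A_k = 1$ for $2 \le k \le 7$, Lemma~\ref{acyclicitylemma}(\ref{acyclicitycorollaryii}) telescopes to $\sum_{k=1}^7 h^0(A_k) = h^0(A) = 4$, the value being computed from formula~(\ref{eulercharp2}). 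This is a tight budget over seven strongly left-orthogonal classes: those $A_k$ with $h^0(A_k) = 0$ are exactly the $(-2)$-classes $R_i - R_j$ of Proposition~\ref{verticalleftorthogonals}, so the maximal blocks with $(A_k)_0 = 0$ are governed by Lemma~\ref{glueinglemma}, while the few blocks carrying positive $h^0$ are constrained by the intersection relations together with strong left-orthogonality of every $S_k$.

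The enumeration itself I would run through Corollary~\ref{tilingcorollary}: the chain $G_{S_0} \subset G_{S_1} \subset \dots \subset G_{S_7} = G_A$ enlarges one controlled step at a time, each $A_k$ cutting out the prescribed lattice points, and the empty interior $G_A^\circ = \emptyset$ (equivalently $h^2(-A) = 0$) leaves only a short list of sequences once the order-reversal and cyclic relabellings permitted for toric systems are factored out. For each survivor I would reorder to normal form by Proposition~\ref{normalorderingproposition} and check that the normal form is built from the unique standard system $H, H, H$ on $\mathbb{P}^2$ by successively inserting $R_1, \dots, R_5$; this yields the second assertion. To obtain the first --- that $\mathcal{A}$ is already cyclic strongly exceptional --- it remains to upgrade interval-vanishing on $[7]$ to cyclic-interval vanishing on $\on$, i.e. to verify strong left-orthogonality of the wrap-around sums $\sum_{i \in I} A_i$ with $8 \in I$. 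The groups $h^i(-A_I)$ for $i \neq 1$ vanish automatically from $\chi(-A_I) = 0$ and Serre duality against the complementary cyclic interval, so only the two $h^1$-vanishings $h^1(-A_I) = h^1(A_I) = 0$ are at stake, and these I would read off from Proposition~\ref{cutoutproposition} and Corollary~\ref{tilingcorollary} applied to the explicit regions $G_{A_I}$.

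The hard part will be the enumeration: making the case analysis genuinely exhaustive while quotienting correctly by the reversal and cyclic symmetries, and simultaneously retaining enough hyperplane-arrangement data to certify the wrap-around $h^1$-vanishings that cyclicity demands. A second, more clerical pitfall is the normal-form reordering: the move $A_{i-1}, A_i, A_{i+1} \mapsto A_{i-1} + A_i, -A_i, A_{i+1} + A_i$ alters which class is $A_8$ and hence alters $\sum_{i < n} A_i$, so one must confirm that the reordered sequence is still a toric system realizing the same surface $8a$ before identifying it with a standard augmentation of $\mathbb{P}^2$.
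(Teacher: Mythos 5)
Your handling of the last assertion matches the paper exactly (Lemma \ref{nonaugmentablelemma} does all the work), and your telescoping identity $\sum_{k=1}^{7} h^0(A_k) = h^0(A) = 4$ via Lemma \ref{acyclicitylemma} is correct. But the middle of your argument, where the two substantive claims (cyclicity and the normal form) would actually be established, contains a false step and an unexecuted step. The false step: you assert that the entries $A_k$ with $h^0(A_k) = 0$ ``are exactly the $(-2)$-classes $R_i - R_j$ of Proposition \ref{verticalleftorthogonals}.'' That proposition classifies strongly left-orthogonal divisors with $(D)_0 = 0$, which is a different condition from $h^0(D) = 0$; on 8a the classes $\pm(H - R_i - R_j - R_k)$ are also strongly left-orthogonal with $\chi = 0$ (they occupy the $\chi = 0$ row of the paper's table \ref{totallyacyclic8a} alongside the $R_i - R_j$), and entries of exactly this shape do occur in honest toric systems on such surfaces. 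Since your budget forces at least three of the seven $A_k$ to have $h^0 = 0$, an enumeration keyed to the wrong list of $\chi = 0$ classes is not exhaustive, and Lemma \ref{glueinglemma} (which requires $(A_k)_0 = 0$) cannot be used to organize those blocks. The unexecuted step: the enumeration of all candidate sequences, which you correctly identify as ``the hard part,'' is precisely where cyclicity and the identification of the normal form would be proved, and it is left as a plan rather than carried out.

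It is worth seeing how the paper sidesteps the enumeration entirely, because the mechanism is different in kind. Every $A_I$ with $I \subset [7]$ an interval is strongly left-orthogonal by hypothesis and satisfies $\chi(A_I) \leq 4$ (from $K_X^2 = 4$ and Lemma \ref{acyclicitylemma}), hence lies in the finite table \ref{totallyacyclic8a} of all strongly left-orthogonal divisors of Euler characteristic at most $4$. The paper then observes that this table is almost closed under $D \mapsto -K_X - D$; since the wrap-around complement is $A_{\on \setminus I} = -K_X - A_I$, cyclicity follows at once for every $A_I$ whose partner is in the table, and the handful of unpaired entries are excluded as values of $A_I$ by short intersection-number computations against $A_8 = -H + R_1 + R_2 + R_3$ (using that $A_I.(C_1 + C_2) = A_8.(C_1 + C_2) \in \{0,1,2\}$ for the complementary pieces $C_1, C_2$ of the toric system). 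Only after cyclicity is established does the normal form come, and then almost for free: inverting $A_8$ replaces $A$ by $A' = A + 2A_8 = 2H - R_4 - R_5$, whose straightening is $(A')_s = 2H$, so Proposition \ref{straighteningextensionlemma} identifies the normal form as an extension of $H, H, H$ on $\mathbb{P}^2$ --- this also addresses your (legitimate) worry about what the reordering does to $\sum_{i<n} A_i$. To salvage your route you would need to correct the list of $\chi = 0$ classes and then genuinely perform the exhaustive enumeration together with all wrap-around $h^1$ checks, a much heavier undertaking than the pairing argument.
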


\begin{proof}
The latter assertion follows by Lemma \ref{nonaugmentablelemma}. To prove the first claim, we have to check
that for any nonempty cyclic interval $\emptyset \neq I \subsetneq [8]$ the divisor $A_I := \sum_{i \in I}
A_i$ is strongly left-orthogonal. By assumption, this is true for every $I$ which does not contain $n$, and
it thus remains to check the complementary intervals $\on \setminus I$ for $n \notin I$. For $A_8 = -H + R_1
+ R_2 + R_3$ we have $\chi(A_8) = 0$ and with $K_X^2 = 4$ it follows that $\chi(A_I) \leq 4$ for every
$\emptyset \neq I \subsetneq [8]$ by Lemma \ref{acyclicitylemma} (\ref{acyclicitycorollaryii}). Using
Proposition \ref{cutoutproposition} and Corollary \ref{tilingcorollary} together with formulas (\ref{eulercharp2})
and (\ref{antieulercharp2}), it is a straightforward exercise to determine
all strongly left-orthogonal divisors with Euler characteristic at most $4$. These are shown in table
\ref{totallyacyclic8a}.
\begin{table}[htbp]
\centering
\begin{tabular}{|c|c|}\hline
$\chi(D)$ & $D$\\ \hline
$0$ & $R_i - R_j$ with $\{i, j\} \neq \{1, 5\}, \{3, 4\}$,Ê\\
& $\pm(H - R_i - R_j - R_k)$ with $i, j, k$ pairwise distinct and $\{i, j, k\} \neq \{1, 2, 5\}, \{2, 3, 4\}$ \\ \hline
$1$ & $R_i$ for $i \in \{1, 2, 3, 4, 5\}$, \\
& $H - R_i - R_j$ for $i \neq j$, \\
& $2H - R_1 - R_2 - R_3 - R_4 - R_5$, \\ \hline
$2$ & $H - R_i$ for $i \in \{1, 2, 3, 4, 5\}$, \\
& $2H - \sum_{i \neq j} R_j$ for $i \in \{1, 2, 3, 4, 5\}$ \\ \hline
$3$ & $H$, $2H - R_i - R_j - R_k$ with $i, j, k$ pairwise distinct,Ê\\
& $3H - 2R_i - \sum_{j \neq i} R_j$ for any $i$ \\ \hline
$4$ & $2H - R_i - R_j$ for $i \neq j$ \\
& $3H - 2 R_i - \sum_{j \neq i, k} R_j$ for $k \neq i$ and $(i, k) \neq (1, 5), (3, 4)$, \\
& $4H - 2(R_i + R_j + R_k) - R_l - R_m$ for $i, j, k, l, m$ pairwise distinct, \\
& $5H - 3R_i - 2(R_j + R_k + R_l) - R_m$ for $i, j, k, l, m$ pairwise distinct and $i \in \{1, 4, 5\}$ \\ \hline
\end{tabular}
\caption{Strongly left-orthogonal divisors with Euler characteristic $\leq 4$ on the variety 8a}
\label{totallyacyclic8a}
\end{table}
We see that almost all elements in this table can be paired, i.e. if some $D$ is in the table, then also
$-K_X - D$ is. So, because $-K_X = \sum_{i = 1}^8 A_i$, it follows that if $A_I$ is in the table, then
$A_{\on \setminus I}$ is and the proposition follows. The only exceptions which cannot be completed to a
strongly left-orthogonal pair are $2H - R_3 - R_4$, $2H - R_1 - R_5$, $3H - R_2 - R_3 - R_4 - 2R_5$,
$3H - R_1 - R_2 - 2 R_4 - R_5$, $4H - 2(R_1 + R_2 + R_5) - R_3 - R_4$, $4H - 2(R_2 + R_3 + R_4)
- R_1 - R_5$, and $5H - 3 R_i - 2(R_j + R_k + R_l) - R_m$. We show that these cannot be of the form
$A_I$ for $I \subset [n - 1]$.

The case $5H +$ rest can be excluded at once, as by assumption $\mathcal{A}$ is in normal form with
respect to $X_0$, hence we always have $(A_I)_0 = \beta H$ with $\beta < 4$.
With respect to $\mathcal{A}$ and $I = [k, l]$ with $1 \leq k < l < n$, we consider the following four
divisors: $C_1$, $A_I$, $C_2$, $A_8$, where $A_I$ as before and $A_8 = -H + R_1 + R_2 + R_3$ as
before, and $C_1 := \sum_{j = 1}^{k - 1} A_j$, $C_2 := \sum_{j = l + 1}^{n - 1} A_j$, where $C_1 = 0$ if
$k = 1$ and $C_2 = 0$ if $l = n - 1$. Because of the properties of toric systems, we have that
$A_8 . (C_1 + C_2) = A_I . (C_1 + C_2) \in \{0, 1, 2\}$, depending on the $C_i$ being nonzero or not.

Now let us assume that $A_I = 2H - R_3 - R_4$. Then $C_1 + C_2 = -K_X - A_8 - A_I = 2H - R_1 - 2(R_2 + R_3)
- R_5$ and $A_8 . (C_1 + C_2) = 3$, which is not possible.

If $A_I = 3H - R_2 - R_3 - R_4 - 2 R_5$, we get $C_1 + C_2 = H + R_5 - 2 R_1 - R_2 - R_3$ and
$(C_1 + C_2) . A_8 = 3$. Therefore this case is also excluded.

If $A_I = 4H - 2(R_1 + R_2 + R_5) - R_3 - R_5)$, then $(C_1 + C_2) = R_5 -R_3$ and $A_8 . (C_1 + C_2)
= -1$, which is not possible.

The remaining three cases differ only by enumeration from the first three and can be excluded
analogously. Altogether, under the conditions of the proposition, the strongly exceptional toric
system $\mathcal{A}$ is always cyclic. If we bring it into normal form by inverting $A_8$, we get
that $A' = 2H - R_4 - R_5$ and $(A')_s = 2H$. So by Proposition \ref{straighteningextensionlemma}
and the subsequent remark, the toric system is an extension of the toric system $H, H, H$ on
$\mathbb{P}^2$.
\end{proof}

\begin{proposition}\label{8cstraightening}
Let $X$ be a toric surface isomorphic to 8c and $\mathcal{A} = A_1, \dots, A_8$ a strongly exceptional toric
system on $X$ such that $A = (A)_s = \sum_{i = 1}^7 A_i = 4H - 2(R_1 + R_2 + R_4) - R_3 - R_5$ in the
coordinates indicated in table \ref{straightenedlist}. Then $\mathcal{A}$ is cyclic strongly
exceptional and its normal form is an extension of the standard toric system on $\mathbb{P}^2$.
Without bringing it into normal form, the toric system cannot be extended to a strongly exceptional
toric system on any toric blow-up of $X$.
\end{proposition}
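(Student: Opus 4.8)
The plan is to run the proof of Proposition \ref{8astraightening} essentially verbatim, the only genuine differences being the incidence geometry of the exceptional curves. The non-augmentability assertion is immediate from Lemma \ref{nonaugmentablelemma}, since $X_s \cong 8c$ is one of the cases in which $P_{(A)_s}$ has no corners. For the rest I would first record the numerics. From $A = \sum_{i=1}^{7} A_i = 4H - 2(R_1+R_2+R_4) - R_3 - R_5$ and $-K_X = 3H - \sum_{i=1}^{5} R_i$ one reads off $A_8 = -K_X - A = -H + R_1 + R_2 + R_4$, with $\chi(A_8) = 0$ by formula (\ref{eulercharp2}) and $K_X^2 = 4$. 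Since every $A_i$ is strongly left-orthogonal we have $\chi(A_i) \geq 0$, while $\sum_{i=1}^{8} \chi(A_i) = -K_X \cdot (-K_X) = 4$ by Lemma \ref{acyclicitylemma}(\ref{acyclicitylemmai}); hence $\chi(A_I) = \sum_{i \in I} \chi(A_i) \leq 4$ for every proper cyclic interval $I \subsetneq \on$.

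Next I would read the incidence structure of $R_1, \dots, R_5$ off the fan of $8c$ (equivalently from figure \ref{straightenedfigure}): the partial order is the disjoint union of the chains $R_1 \geq R_2 \geq R_3$ and $R_4 \geq R_5$, and the only collinear triples are $\{1,2,3\}$ (on $D_8 = H - R_1 - R_2 - R_3$) and $\{3,4,5\}$ (on $D_4 = H - R_3 - R_4 - R_5$). By Proposition \ref{verticalleftorthogonals} this fixes which $R_i - R_j$ are strongly left-orthogonal (the incomparable pairs), and a short base-locus computation fixes which $H - R_i - R_j - R_k$, $2H - \dots$, $3H - \dots$ are, the mechanism being that an irreducible curve meeting a divisor negatively forces a fixed component. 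Using Proposition \ref{cutoutproposition} and Corollary \ref{tilingcorollary} together with formulas (\ref{eulercharp2})--(\ref{antieulercharp2}) I would then compile the complete list of strongly left-orthogonal divisors with $\chi \leq 4$, the analogue of Table \ref{totallyacyclic8a}. Here the length-three chain $R_1 \geq R_2 \geq R_3$ does real work: candidates such as $3H - 2R_1 - R_3 - R_4 - R_5$ and $4H - 2(R_1+R_2+R_3) - R_4 - R_5$, which survive on $8a$, here acquire $D_8$ as a fixed component and fail $h^1$-vanishing (Lemma \ref{h1vanishinglemma}), hence drop out of the list.

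Cyclicity is then deduced from the involution $D \mapsto -K_X - D$. By Serre duality this sends left-orthogonal divisors to left-orthogonal divisors and satisfies $\chi(-K_X - D) = 4 - \chi(D)$, so it pairs the table with itself; because $-K_X = \sum_{i=1}^{8} A_i$, whenever both $A_I$ and its complement lie in the table the complementary cyclic interval is automatically strongly left-orthogonal. It therefore suffices to show that the finitely many \emph{exceptional} entries — the strongly left-orthogonal $D$ whose partner $-K_X - D$ is only left-orthogonal but not strongly so — never arise as an initial-segment sum $A_I$ with $I \subset [n-1]$. For most of them this is the computation of Proposition \ref{8astraightening}: writing $A_I = \sum_{i=k}^{l} A_i$, $C_1 = \sum_{j<k} A_j$, $C_2 = \sum_{l<j\leq n-1} A_j$, one has $A_8 \cdot (C_1 + C_2) = A_8 \cdot A - A_8 \cdot A_I = 2 - A_8 \cdot A_I \in \{0,1,2\}$; for instance with $A_I = 2H - R_4 - R_5$ this value is $3$, a contradiction. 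The delicate point — and the step I expect to be the main obstacle — is that the exceptions attached to the long chain, notably $2H - R_1 - R_2$ (whose partner is the prime $(-2)$-curve $D_4$), give intersection value exactly $2$ and are thus not excluded by the count alone. These I would dispose of by supplementing the count with the normal-form degree constraint $(A_I)_0 = \beta H$, $\beta \leq 4$, together with Euler-characteristic bookkeeping: the identity $\chi(C_1) + \chi(C_2) + C_1 \cdot C_2 = \chi(C_1 + C_2)$ with $C_1 \cdot C_2 = 0$ forces $C_1, C_2$ into the short list of $\chi = 0$ strongly left-orthogonal divisors, and checking these finitely many splittings (while extending $A_I$ one step and tracking the $\chi$-budget) yields the contradiction.

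Finally the normal form: since $\chi(A_8) = 0$ we may invert $A_8$, replacing the distinguished element by $-A_8 = H - R_1 - R_2 - R_4$ and producing $A' = -K_X + A_8 = 2H - R_3 - R_5$, whose straightening is $(A')_s = 2H$ on $X_0 \cong \mathbb{P}^2$. By Proposition \ref{straighteningextensionlemma} the normal form of $\mathcal{A}$ is therefore an augmentation of the standard system $H, H, H$ on $\mathbb{P}^2$, which completes the proof.
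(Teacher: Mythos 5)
Your proposal is correct and follows the same route as the paper's proof: Lemma \ref{nonaugmentablelemma} for non-augmentability, the bound $\chi(A_I) \leq 4$, the table of strongly left-orthogonal divisors with $\chi \leq 4$, the pairing $D \mapsto -K_X - D$, the intersection test against $C := A - D$ and $A_8$ for the unpaired entries, and finally inversion of $A_8$ giving $A' = 2H - R_3 - R_5$, $(A')_s = 2H$, and Proposition \ref{straighteningextensionlemma}. Your incidence data for 8c (chains $R_1 \geq R_2 \geq R_3$ and $R_4 \geq R_5$, collinear triples $\{1,2,3\}$ on $D_8$ and $\{3,4,5\}$ on $D_4$) is exactly right, and the list you would compile agrees with the paper's Table \ref{totallyacyclic8c}.

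The noteworthy point is that your ``delicate point'' is not only real --- it is a case the published test table omits. By the paper's own Table \ref{totallyacyclic8c}, the divisors $2H - R_i - R_j$ with both indices in one chain ($\{1,2\}$, $\{1,3\}$, $\{2,3\}$, $\{4,5\}$) are all strongly left-orthogonal and unpaired, yet Table \ref{testtotallyacyclic8c} lists only $2H - R_4 - R_5$. For $\{1,3\}$ and $\{2,3\}$ the omission is harmless, since there $C \cdot D = 3$ excludes them; but for $D = 2H - R_1 - R_2$, whose partner $D_4 = H - R_3 - R_4 - R_5$ is effective with $\chi = 0$, one computes $C \cdot D = C \cdot A_8 = 2$, so the paper's intersection test alone cannot exclude it. Your supplementary argument is exactly what is needed: with $C_1 \cdot C_2 = 0$ and $\chi(C) = -1$, Riemann--Roch forces $\chi(C_1) = \chi(C_2) = 0$ (or else one $C_i = 0$ and the other is an interval sum with $\chi = -1 < 0$), and no two entries of the $\chi = 0$ list can sum to $C = 2H - R_1 - R_2 - R_3 - 2R_4 - R_5$: that list has only $H$-degrees $0$ and $\pm 1$, so both $C_i$ would need degree one, but each degree-one entry carries exactly one index from $\{4,5\}$ while $C$ needs three, counting $R_4$ twice. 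So at this step your proof is more complete than the published one.

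Two small repairs. First, the additivity identity should read $\chi(C_1 + C_2) = \chi(C_1) + \chi(C_2) + C_1 \cdot C_2 - 1$ (consistent with Lemma \ref{acyclicitylemma}); your version, missing the $-1$, would give $\chi(C_1) + \chi(C_2) = -1$, which is not what Riemann--Roch says, although the conclusion you then draw (both equal to zero) is the one the correct identity yields. Second, the heuristic that $3H - 2R_1 - R_3 - R_4 - R_5$ drops out because it ``acquires $D_8$ as a fixed component'' is inaccurate: there $D \cdot D_8 = 0$; the divisor fails strong left-orthogonality because the conditions imposed by $2R_1 + R_3$ along the infinitesimal chain collapse to only three, so $h^0 = 5 > 4 = \chi$. (For $4H - 2(R_1 + R_2 + R_3) - R_4 - R_5$, where $D \cdot D_8 = -2$, your fixed-component mechanism is the correct one.) Since you verify the table with Proposition \ref{cutoutproposition} and Corollary \ref{tilingcorollary} in any case, neither point affects the validity of your plan.
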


\begin{proof}
In this case the arguments are completely analogous to the proof of proposition \ref{8astraightening}.
The only difference is the classification of strongly left-orthogonal divisors with Euler characteristic at
most four, which is shown in table \ref{totallyacyclic8c}.
\begin{table}[htbp]
\centering
\begin{tabular}{|c|c|}\hline
$\chi(D)$ & $D$\\ \hline
$0$ & $\pm(R_i - R_j)$ with $i \in \{1, 2, 3\}$, $j \in \{4, 5\}$,Ê\\
& $\pm(H - R_i - R_j - R_k)$ with $i \neq j \in \{1, 2, 3\}$, $k \in \{4, 5\}$ \\ \hline
$1$ & $R_i$ for any $i$, \\
& $H - R_i - R_j$ for $i \neq j$, \\
& $2H - R_1 - R_2 - R_3 - R_4 - R_5$, \\ \hline
$2$ & $H - R_i$ for any $i$, \\
& $2H - \sum_{i \neq j} R_j$ for any $i$ \\ \hline
$3$ & $H$, $2H - R_i - R_j - R_k$ with $i, j, k$ pairwise distinct,Ê\\
& $3H - 2R_i - \sum_{j \neq i} R_j$ for any $i$ \\ \hline
$4$ & $2H - R_i - R_j$ for $i \neq j$ \\
& $3H - 2 R_i - \sum_{j \neq i, k} R_j$ for $k \neq i$ and $(i, k) \neq (4, 5), (2, 3), (1, 3), (1, 2)$, \\
& $4H - 2(R_i + R_j + R_k) - R_l - R_m$ for $i, j, k, l, m$ pairwise distinct,\\
& $i, j \in \{1, 2, 3\}$, $k \in \{4, 5\}$, \\
& $5H - 3R_i - 2(R_j + R_k + R_l) - R_m$ for $i, j, k, l, m$ pairwise distinct and $i \in \{4, 5\}$ \\ \hline
\end{tabular}
\caption{Strongly left-orthogonal divisors with Euler characteristic $\leq 4$ on the variety 8c.}\label{totallyacyclic8c}
\end{table}
In table \ref{testtotallyacyclic8c} we list the divisors $D$ from table \ref{totallyacyclic8c} which are
candidates for some $A_I$ and do not have a strongly left-orthogonal partner together with $C := A - D$,
and the intersection numbers $C . D$, $C . A_8$. As we can see, we get in every case that the
intersection numbers are not compatible with $A_I$ coming of a toric system.
\begin{table}[htbp]
\centering
\begin{tabular}{|c|c|c|c|}\hline
$D$ & $C$ & $C . D$ & $C . A_8$ \\ \hline
$2H - R_4 - R_5$ & $2H - 2(R_1 + R_2) - R_3 - R_4$ & $3$ & $3$ \\ \hlineÊ
$3H - 2 R_5 - R_1 - R_2 - R_3$ & $H + R_5 - R_1 - R_2 - 2 R_4$ & $3$ & $0$ \\ \hline
$3H - 2 R_3 - R_1 - R_4 - R_5$ & $H + R_3 - R_1 - 2 R_2 - R_4$ & $3$ & $2$ \\ \hline
$3H - 2 R_3 - R_2 - R_4 - R_5$ & $H + R_3 - 2 R_1 - R_2 - R_4$ & $3$ & $2$ \\ \hline
$3H - 2 R_2 - R_3 - R_4 - R_5$ & $H - 2 R_1 - R_4$ & $2$ & 1 \\ \hline
\end{tabular}
\caption{Testing intersection numbers of some divisors of table \ref{totallyacyclic8c}.}\label{testtotallyacyclic8c}
\end{table}
So, under the conditions of the proposition,
the strongly exceptional toric system $\mathcal{A}$ is always cyclic. If we bring it into normal
form by inverting $A_8$, we get that $A' = 2H - R_3 - R_5$ and $(A')_s = 2H$. So by
Proposition \ref{straighteningextensionlemma} and the subsequent remark, the toric system
is an extension of the toric system $H, H, H$ on
$\mathbb{P}^2$.
\end{proof}

\begin{proposition}\label{9straightening}
Let $X$ be a toric surface isomorphic to 9 and $\mathcal{A} = A_1, \dots, A_9$ a strongly exceptional toric
system on $X$ such that $A = (A)_s = \sum_{i = 1}^7 A_i = 4H - 2(R_1 + R_3 + R_5) - R_2 - R_4 - R_6$ in the
coordinates indicated in table \ref{straightenedlist}. Then $\mathcal{A}$ is cyclic strongly
exceptional and its normal form is an extension of the standard toric system on $\mathbb{P}^2$.
Without bringing it into normal form, the toric system cannot be extended to a strongly exceptional
toric system on any toric blow-up of $X$.
\end{proposition}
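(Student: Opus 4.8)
The plan is to follow verbatim the strategy of Propositions \ref{8astraightening} and \ref{8cstraightening}, since surface $9$ is again a case where the straightening satisfies $(A)_s = A = \sum_{i=1}^{n-1} A_i$ and the associated polytope $P_{(A)_s}$ has no corners. The second assertion, that $\mathcal{A}$ cannot be extended to any toric blow-up without first passing to normal form, is immediate from Lemma \ref{nonaugmentablelemma}: by part (\ref{nonaugmentablelemmaii}) and figure \ref{straightenedfigure} the polytope attached to $4H - 2(R_1 + R_3 + R_5) - R_2 - R_4 - R_6$ has no corners in $M$.

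For the first assertion I would begin by recording the boundary datum. Using $-K_X = 3H - \sum_{i=1}^6 R_i$ a direct computation gives $A_9 = -K_X - A = -H + R_1 + R_3 + R_5$, whence $\chi(A_9) = 0$ by formula (\ref{eulercharp2}). Since $\rk \pic(X) = 7$ we have $K_X^2 = 3$, and for complementary proper cyclic intervals $I$ and $J = [9] \setminus I$ Lemma \ref{acyclicitylemma}(\ref{acyclicitylemmai}) yields $\chi(A_I) + \chi(A_J) = -K_X \cdot (A_I + A_J) = K_X^2 = 3$. As every interval $I \subset [8]$ already gives a strongly left-orthogonal $A_I$ with $\chi(A_I) = h^0(A_I) \geq 0$, each complementary divisor $A_J$ still to be tested satisfies $0 \leq \chi(A_J) \leq 3$.

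The main step, and the main computational obstacle, is then to produce the complete list of strongly left-orthogonal divisors on surface $9$ with Euler characteristic at most $3$, exactly as tables \ref{totallyacyclic8a} and \ref{totallyacyclic8c} do for 8a and 8c. This is a finite but lengthy application of Proposition \ref{cutoutproposition} and Corollary \ref{tilingcorollary} together with formulas (\ref{eulercharp2}) and (\ref{antieulercharp2}); because $\rk\pic(X) = 7$ here the list is longer than in the previous two cases, but the threefold symmetry $(R_1 R_3 R_5)(R_2 R_4 R_6)$ of surface $9$ together with the reflection symmetry should cut the casework down to a few orbit representatives. With this list in hand I would observe that it is almost closed under the involution $D \mapsto -K_X - D$: whenever both $D$ and $-K_X - D$ lie in the list, the relation $-K_X = \sum_{i=1}^9 A_i$ shows that if $A_I$ occurs then so does $A_{[9] \setminus I}$, which is precisely cyclicity. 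It therefore remains only to rule out, as genuine values $A_I$ with $I \subset [8]$, the finitely many exceptional divisors $D$ for which $-K_X - D$ is \emph{not} strongly left-orthogonal.

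To exclude these exceptions I would reuse the intersection-number test of Proposition \ref{8cstraightening}. For $A_I = D$ with $I = [k,l] \subset [8]$, set $C := A - D = \sum_{j \in [8] \setminus I} A_j = C_1 + C_2$, the two blocks surrounding $I$. Multiplicativity of the intersection form on a toric system forces $A_9 \cdot C = A_I \cdot C \in \{0,1,2\}$, according to how many of $C_1, C_2$ are nonzero. For each exceptional $D$ I would compute $C = A - D$ and the products $C \cdot D$ and $C \cdot A_9$ and verify, exactly as in table \ref{testtotallyacyclic8c}, that the resulting numbers are incompatible with $D$ being an $A_I$; by the symmetry the exceptions again fall into orbits, so only one representative per orbit needs an explicit computation. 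Once all exceptions are eliminated, $\mathcal{A}$ is cyclic. Finally, inverting $A_9$ to pass to normal form replaces $A$ by $A' = -K_X + A_9 = 2H - R_2 - R_4 - R_6$, whose straightening is $(A')_s = 2H$ on $\mathbb{P}^2$; hence by Proposition \ref{straighteningextensionlemma} and the subsequent remark the normal form of $\mathcal{A}$ is an augmentation of the standard toric system $H, H, H$ on $\mathbb{P}^2$, which completes the argument.
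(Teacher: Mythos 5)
Your proposal tracks the paper's own proof almost step for step: the same reduction to a table of strongly left-orthogonal divisors with $\chi \leq K_X^2 = 3$ (the paper's table \ref{totallyacyclic9}), the same pairing $D \mapsto -K_X - D$ to get cyclicity, the same intersection-number test against $A_9 = -H + R_1 + R_3 + R_5$ with the $\Z/3\Z$-symmetry reduction, and the same final step (inverting $A_9$, $A' = 2H - R_2 - R_4 - R_6$, $(A')_s = 2H$, then Proposition \ref{straighteningextensionlemma}). However, there is one concrete gap: your blanket claim that the intersection-number test eliminates \emph{every} entry that fails to pair is false. The table contains the $\Z/3\Z$-invariant entry $D = 5H - 2(R_1 + \cdots + R_6)$, which the paper explicitly singles out as the unpaired divisor. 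For this $D$ your test is passed rather than failed: $C := A - D = -H + R_2 + R_4 + R_6$, and one computes $C \cdot D = -5 + 6 = 1$ and $C \cdot A_9 = (-H + R_2 + R_4 + R_6)\cdot(-H + R_1 + R_3 + R_5) = 1$. The two numbers agree and lie in $\{0,1,2\}$, exactly what one expects when $I = [1,l]$ and one of the two blocks $C_1, C_2$ is empty; moreover $C$ itself is strongly left-orthogonal (it is $-(H - R_2 - R_4 - R_6)$, and $\{2,4,6\}$ satisfies the incomparability conditions in the $\chi = 0$ row of the table). So no contradiction can be extracted from intersection numbers here, and your argument leaves open the possibility $A_{[1,l]} = 5H - 2\sum_i R_i$.

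The paper closes this case by a different mechanism, which your proposal never invokes: the normal-form hypothesis. Since $\mathcal{A}$ is in normal form with respect to $X_0 = \mathbb{P}^2$, every $(A_i)_0$ with $1 \leq i \leq 8$ is either zero or strongly pre-left-orthogonal on $\mathbb{P}^2$, hence a non-negative multiple of $H$ by Proposition \ref{P2leftorthogonals}; as these multiples sum to $(A)_0 = 4H$, no interval sum $A_I$ can have $H$-coefficient $5$. This is the same dismissal of the ``$5H$ plus rest'' case as in the proof of Proposition \ref{8astraightening}, and you need to add it (and, correspondingly, to say that cyclicity is being verified for the toric system in normal form). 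A smaller point: to look up the divisors $A_I$, $I \subset [8]$, in the table you need $\chi(A_I) \leq 3$, but your displayed inequality only bounds the complements, and the lower bound $\chi(A_J) \geq 0$ you assert for them is equivalent to the bound you actually need, so as written the argument is circular at this point. This is repaired by the additivity statement of Lemma \ref{acyclicitylemma}: for nested intervals it gives $h^0(A_I) \leq h^0(A) = \chi(A) = 3$, which is how the paper justifies the corresponding bound in Proposition \ref{8astraightening}.
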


\begin{proof}
The proof is analogous to propositions \ref{8astraightening} and \ref{8cstraightening}. Here,
we have $\chi(A) = 3$, and table \ref{totallyacyclic9} shows the strongly left-orthogonal
divisors with Euler characteristic $\leq 3$.
\begin{table}[htbp]
\centering
\begin{tabular}{|c|c|}\hline
$\chi(D)$ & $D$\\ \hline
$0$ & $R_i - R_j$ with $\{i, j\} \neq \{1, 2\}, \{3, 4\}, \{5, 6\}$,Ê\\
& $\pm(H - R_i - R_j - R_k)$ with $i, j, k$ pairwise distinct, $\{i, j, k\} \setminus \{1, 2\} \neq \{5\}, \{6\}$; \\
& $\{i, j, k\} \setminus \{3, 4\} \neq \{1\}, \{2\}$; $\{i, j, k\} \setminus \{5, 6\} \neq \{3\}, \{4\}$, \\
& $2H - R_1 - R_2 - R_3 - R_4 - R_5 - R_6$ \\ \hline
$1$ & $R_i$ for any $i$, \\
& $H - R_i - R_j$ for $i \neq j$, \\
& $2H - \sum_{j \neq i} R_j$ for any $i$, \\ \hline
$2$ & $H - R_i$ for any $i$, \\
&  $2H - \sum_{k \neq i, j} R_k$ for any $i \neq j$, \\
&  $3H - 2 R_i- \sum_{j \neq i} R_j$ for any $i$ \\ \hline
$3$ & $H$, $2H - R_i - R_j - R_k$ with $i, j, k$ pairwise distinct,Ê\\
& $3H - 2R_i - \sum_{k \neq i, j} R_j$ for any $i \neq j$ and $j \neq i + 1$ if $i$ odd, \\
& $4H - 2(R_i + R_j + R_k) - R_l - R_m - R_n$ with $i, j, k, l, m, n$ pairwise distinct, \\
& $\{i, j, k\} \setminus \{1, 2\} \neq \{5\}, \{6\};$ 
$\{i, j, k\} \setminus \{3, 4\} \neq \{1\}, \{2\}$; $\{i, j, k\} \setminus \{5, 6\} \neq \{3\}, \{4\}$,\\
& $5H - 2(R_1 + R_2 + R_3 + R_4 + R_5 + R_6)$ \\ \hline
\end{tabular}
\caption{Strongly left-orthogonal divisors with Euler characteristic $\leq 3$ on the variety 9}\label{totallyacyclic9}
\end{table}
The unpaired divisor $5H - 2(R_1 + R_2 + R_3 + R_4 + R_5 + R_6)$ can be excluded as once, as
$\mathcal{A}$ is in normal form. For the other cases, we make use of the $\Z_3$-symmetry of the
table and consider only three cases, and the others follow the same way by exchanging indices.

Assume first $A_I = 3H - 2R_2 - R_3 - R_4 - R_5 - R_6$, then $C := A - A_I = H - 2 R_1 - R_3 - R_5$.
Then $C . A_9 = C. (-H + R_1 + R_3 + R_5) = 3$, which is not possible.

The next case is
$A_I = (2H - R_1 - R_2 - R_5)$. Then $C = 2H - R_1 - 2 R_3 - R_5 - R_6$ and $C. A_9 = -1$, which
is not possible.

The last case is $A_I = 4H - 2(R_1 + R_2 + R_5) - R_3 - R_4 - R_6$. Then $C = R_2 - R_3$ and
$C . A_9 = -1$, and this case also is excluded.

Again, altogether we get that under the conditions of the proposition,
the strongly exceptional toric system $\mathcal{A}$ is always cyclic. If we bring it into normal
form by inverting $A_9$, we get that $A' = 2H - R_2 - R_4 - R_6$ and $(A')_s = 2H$. So by
Proposition \ref{straighteningextensionlemma} and the subsequent remark, the toric system
is an extension of the toric system $H, H, H$ on $\mathbb{P}^2$.
\end{proof}


\end{document}